\newtheorem{corollary}{Corollary}
\newtheorem{definition}{Definition}
\newtheorem{theorem}{Theorem}
\newtheorem{lemma}[corollary]{Lemma}
\newtheorem{proposition}{Proposition}
\newcommand\numberthis{\addtocounter{equation}{1}\tag{\theequation}}
\newcommand{\PP} {{\mathbb P}}
\newcommand{\Z}{{\mathbb Z}}
\newcommand{\EE}{\mathbb{E}}
\newcommand{\hcap}{\text{hcap}}
\newcommand{\C}{{\mathbb C}}
\newcommand{\dist}{{\rm dist}}
\newcommand{\x}{{\bf x}}
\newcommand{\y}{{\bf y}}
\newcommand{\hp}{\mathbb{H}}
\newcommand{\bigo}{\text{O}}
\newcommand{\smo}{\text{o}}
\newcommand {{\cent}} {{\bf c}}
\def \bgamma {{\pmb \gamma}}
\def \sle {SLE_\kappa}
\def \Half {\mathbb{H}}
\def \sm {\setminus}
\def \Im {{\rm Im}}
\def \Re {{\rm Re}}
\def \p {\partial}
\def \hcap {{\rm hcap}}
\def \cc {\textbf{c}}
\def \AA {\textbf{A}}
\def \diam {\text{diam}}
\def \HH {\textbf{H}}
\def \eset {\emptyset}
\def \LL {\textbf{L}}
\def \disk {\mathbb{D}}
\def \Disk {{\disk}}
\def \xbar {\bar{u}'}
\def \x {u'}
\def \ybar {\bar{w}'}
\def \y {w'}
\def \ubar {\bar{u}}
\def \u {u}
\def \wbar {\bar{w}}
\def \w {w}
\def \Xbar {\bar{U}'}
\def \X {U'}
\def \Ybar {\bar {W}'}
\def \Y {W'}
\def \Ubar {\bar {U}}
\def \U {U}
\def \Wbar {\bar {W}}
\def \W {W}
\def \xx {\bar x}
\def \XX {\bar X}
\def \yy {\bar y}
\def \YY {\bar Y}
\def \hbar {\bar {h}}
\def \tg { {g}}
\def \zbar {\bar {z}}
\def \sm {\setminus}
\def \bgamma {\boldsymbol{\gamma}}
\def \prd {\text{prod}}
\def \pf {V}
\def \gg {\bar{g}_t}
\def \bfz {\textbf{z}}
\def \bfw {\textbf{w}}
\def \bfu {\textbf{u}}
\def \bfz {\textbf{z}}
\def \bfwb {\bar{\textbf{w}}}
\def \bfub {\bar{\textbf{u}}}
\title{Multiple-Paths $\sle$ in Multiply Connected Domains}
\author{Mohammad Jahangoshahi\\ Gregory F. Lawler}
\begin{document}

\maketitle
\begin{abstract}
We define multiple-paths Schramm-Loewner evolution ($\sle$) in multiply connected domains when $\kappa\leq 4$ and prove that in annuli, the partition function is smooth. Moreover, we give up-to-constant estimates for the partition function of bi-chordal annulus $\sle$ measure and we establish a connection between this measure   and two-sided $\sle$ in the unit disk.
\end{abstract}
\tableofcontents
\section{Introduction}



The Schramm-Loewner evolution ($\sle$) is a one parameter family of measures on planar curves discovered by Oded Schramm \cite{oded}.  It was created as a candidate for the scaling
limit of measures on lattice paths arising in statistical physics.  It is a family of  probability measures
$\mu_D^\#(z,w)$ indexed by domains $D \subset \C$ and 
points $z,w$ that can be boundary or interior points.  They
are supported
  on curves $\gamma:(0,t_\gamma) \rightarrow
D$ with $\gamma(0+) = z, \gamma(t_\gamma-) = w$. Here
$z \in \p D$ and $w$ can be in $D$ or $\partial D$.  He made the following
two assumptions based on the conjectured behavior of
the scaling limit of the lattice paths.  
\begin{itemize}
\item {\bf Conformal Invariance.}  If $f: D \rightarrow
f(D)$ is a conformal transformation, then the push forward
$f \circ \mu_D^\#(z,w)$ of the measure is the same as
$\mu_{f(D)}^\#(f(z),f(w))$.
\item  {\bf Domain Markov property.}  In the measure
$\mu_D^\#(z,w)$, given an initial segment of the path
$\gamma(s), 0 \leq s \leq t$, the distribution of the
remainder of the path is $\mu_{\tilde D}  ^\#
(\gamma(t), w)$.  where
$\tilde D$ is the connected component of $  D \setminus \gamma[0,t]$ with
$\gamma(t)$ and $w$ on its boundary.

\end{itemize}
If such a family of measures exists, 
 $D$ is simply connected, and $z \in \p D$, then  
$\tilde D$ is also simply connected and we can find
$f: D \rightarrow \tilde D$ with $f(z) = \gamma(t), f(w)
 =  w .$ Hence,  the measure $\mu_{\tilde D}^\#
(\gamma(t), w)$ is the  same as $f \circ \mu_D^\#(z,w)$.
This observation was the starting point for Schramm's
construction and he showed that there exists a one-parameter family of measures on curves (modulo reparametrization)
satisfying these conditions.  If $w \in \p D$ this is
called \textit{chordal} $\sle$ and if $w \in D$ this is called
\textit{radial} $\sle$. See \cite{greg_book} for the
basic facts about $\sle$ including the following.
\begin{itemize}
\item  If $0 < \kappa \leq 4$, $\mu_D^\#(z,w)$ is a measure
on simple (non self-intersecting) curves with $\gamma(0,t_\gamma) \subset D$.
\item  If $4 < \kappa < 8$, then $\mu_D^\#(z,w)$ is a measure
on intersecting curves in $\overline D$
 with $\gamma(0,t_\gamma) \cap \p D \neq \eset$.
 \item  If $\kappa \geq 8$, this gives a measure on
 plane filling curves with $\gamma(0,t_\gamma) \cap \p D \neq \eset$.
\end{itemize}

This paper will be concerned mostly with the $\kappa \leq 4$ case and let us assume that for the rest of this introduction.
Let us also be a little more precise about the parametrization
of the curves.  The measure $\mu_D^\#(z,w)$ can be given
in terms of \textit{naturally parametrized} curves \cite{rezaei} where the parametrization
is chosen so that the $(1 + \frac \kappa 8)$-Minkowski content
of $\gamma[0,t]$ is $t$.  Under conformal transformations,
the parametrization changes appropriately: the time
to traverse $f \circ \gamma[0,t]$ is
\[    \int_0^t |f'(\gamma(s))|^\alpha \, ds, \;\;\;\;
  \alpha = 1 + \frac \kappa 8.\]
This choice of parametrization has the property that  parametrization of a curve $\gamma$ does not depend on the domain $D$ in which lives; this is useful when comparing $\mu_D^\#(z,w)$ with $\mu_{\tilde D}^\#(z,w)$ with $\tilde D
\subset D$.   This is not the most convenient parametrization
when analyzing $\sle$ in a fixed domain $D$; here capacity
parametrizations as originally chosen by Schramm are more
convenient.  The capacity parametrizations do not
have the independence of domain property.    While we use the natural parametrization  in our definitions,
in our 
analyses we will use capacity parametrizations.  It follows
from the definition of $\sle$ and the strong Markov
property for Brownian motion, that it satisfies the ``strong'' domain Markov property, i.e., the $t$ in the definition
 can be chosen to be
a stopping time.

If we restrict to the $\kappa \leq 4$ case, then the
{\em boundary perturbation} or 
{\em generalized restriction} property, which we now describe, shows that if $D' \subset
D$ and $D,D'$ agree in neighborhoods of $D',D$ then
$\mu_{D'}^\#(z,w) \ll \mu_D^\#(z,w)$.  This is more nicely
expressed if we consider $\sle$ as a measure that is
not necessarily a probability measure.  Let us fix
some constants now that will be used throughout this paper,
\begin{equation}\label{constants}
a = \frac{2}{\kappa},\qquad b =  \frac{6 - \kappa}{2\kappa},\qquad \tilde{b} = \frac{b(\kappa-2)}{4},\qquad\cc = \frac{(6-\kappa)(3\kappa-8)}{2\kappa}.
\end{equation}
The measures $\mu_D(z,w)$ are defined if $z,w$ are  analytic boundary points (or $w \in D$).  They satisfy the conformal
covariance property
\begin{equation}  \label{sep19.1}
   f  \circ \mu_D(z,w) = |f'(z)|^b \, |f'(w)|^{b'}
 \, \mu_{f(D)}(f(z),f(w)), 
 \end{equation}
where $b' = b$ if $w \in  \p D$ and $b' = \tilde b$
if $w \in D$.  This  assumes that $f(z),f(w)$
are also nice boundary points in $f(D)$ (or $w \in f(D)$).  The total masses
can be determined by making the arbitrary choices
\[   \| \mu_\Half(0,1)\| = 1 , \;\;\;\; \|\mu_\Disk(1,0)\|
 = 1 , \]
 and using the implicit scaling rule from \eqref{sep19.1}.
Here $\Half,\Disk$ denote the upper half plane and unit
disk, respectively.
 We need to review the 
Brownian loop measure as first introduced in \cite{greg_loop}.
A loop $\ell \subset \mathbb{C}$ can be described by a triple $(z,t,\bar{\ell})$, where $z$ is the root $z = \ell(0)$, $t$ is the time duration of the loop and $\bar{\ell}(s) := (\ell(ts)-z)/\sqrt{t}$ is a loop of time duration 1. The Brownian loop measure $m_\mathbb{C}$ is the measure induced on unrooted loops by  
\[
\text{Area}\times \frac{1}{2\pi t^2}\,dt\times\,\text{Brownian bridge distribution}
\]
on triples $(z,t,\bar{\ell})$. 
 For a domain $D$, $m_D$ is restriction of $m_\mathbb{C}$ to the loops in $D$. 
  We can now state the generalized restriction or boundary perturbation rules.
\begin{itemize}
\item   If $D' \subset D$ and $D$ and $D'$ agree 
in neighborhoods of $z,w$, then $\mu_{D'}(z,w) \ll
\mu_D(z,w)$ with Radon-Nikodym derivative
\begin{equation}  \label{genres}
      1\{\gamma \subset D'\} \, \exp \left\{
\frac \cent 2 \, m_D(\gamma,D \setminus D') \right\}. 
\end{equation}
\end{itemize}
Here $\gamma =  \gamma(0,t_\gamma)$
   and  $m_D(\gamma,D \setminus D') $ denotes
the Brownian loop measure of loops in $D$ that
intersect both $\gamma$ and $D \setminus D'$. 
%

 The chordal
version of the boundary restriction property is a combination of results in \cite{greg_restrict,greg_loop,Parkcity}.
In this case for simply connected domains
$\|\mu_D(z,w)\| = H_D(z,w)^b$ where $H_D(z,w)$ is the boundary
Poisson kernel normalized so that $H_\Half(0,1) = 1$ (we use 
this normalization throughout this paper). 
As we were preparing with the paper, we could not find 
the radial version proved in the literature so we have
included a version here (the $\kappa = 8/3, \cent = 0$ radial case 
was done in \cite{greg_restrict}).  In this case, the
total mass $\|\mu_D(z,w)\|$ is not given by a power
of the Poisson kernel except for $\kappa = 2$ for which
$\tilde b = 0$.

The definition of $\sle$ uses the domain Markov property;
it describes the evolution of the path in terms of the path
up to that point.  One can similarly ask for the distribution
of the path given other parts of the path other than an
initial segment.  For chordal $\sle$, one can first consider the distribution given a final segment.  This is closely related to the reversibility of $\sle$ first proved by
 Zhan \cite{dapeng_2008}.  Roughly speaking it was shown
 how to grow a chordal $\sle$ path simultaneously at the
 starting point and the terminal point until they meet.
 If one stops this at a stopping time, the distribution
 of the remainder of the path is $\sle$ in the
 remaining domain between the two endpoints. 

A similar question can be asked for radial $SLE$.  The
definition gives the conditional distribution given an
initial segment.  The distribution given a final segment is a little trickier since a simply connected domain slit by an interior segment is no longer simply connected but rather conformally equivalent to an annulus.  One would like to extend the domain Markov property to state that the distribution of the remainder of the curve given a terminal segment is $SLE$ in the annulus.  This leads to the
general question of defining $SLE$ in annuli, and more
generally, in multiply connected domains.

%
  
Several authors have studied $\sle$ in multiply connected domains using different methods.
Bauer and Friedrich (\cite{bauer,bauer2}) used a generalization of the Loewner equation for multiply connected domains to describe the driving function of $\sle$. 
Zhan's approach in \cite{dapeng_2014} was similar, in that he used a generalization of Loewner equation to define annulus $\sle$. 
 In addition to conformal invariance and Markov property, he required $\sle$ to be reversible and used that to uniquely determine the driving function. 
These articles are based on the work of Komatu \cite{komatu} in 1950, who formulated a generalization of the Loewner equation in multiply connected domains.

Our approach goes back to \cite{michael} where it was
conjectured that   $\sle$ (either chordal or radial) is the scaling limit of a measure on self-avoiding paths weighted by
the measure of random walk loops that can be added.
This conjecture comes from the boundary perturbation
rule for $\sle$ involving the Brownian loop measure.
In \cite{greg_annulus}, it was suggested to use this
boundary perturbation rule as the definition of $\sle$ 
in all domains for $\kappa \leq 4$; see section \ref{mcsec}.
   It is not difficult to
show that it is well defined, but other issues arise as
we will see.  Finiteness of the measure for general domains
is still open for $8/3 < \kappa \leq 4$ ($\cent > 0$).
If the partition function is finite and signficantly smooth,
one can describe 
 the probability measure  by running $\sle$ and 
tilting by the normalized partition function. 
The Girsanov theorem then gives the drift of the driving
function in terms of a logarithmic derivative of the
partition function.   This analysis
requires the partition function to be sufficiently smooth
and this does not follow immediately from the
definition in \cite{greg_annulus}.   
In the case of the annulus, it
was shown there that the annulus $SLE$ with finite
partition function is well defined and sufficiently
smooth, and that the
corresponding probability measure on paths is the
same as discovered by Zhan in \cite{dapeng_2014}.  The distribution
of the (reversal of the) terminal part of radial $\sle$ 
is absolutely continuous with respect to whole
plane $\sle$ see \cite{field}.

Instead of growing a $\sle$ curve from one point to another, one might be interested in the simultaneous growth from the two ends,  which invites the study of  $\sle$ measures on multiple path
\[
\bgamma = (\gamma^1,\ldots,\gamma^n).
\] 
Unlike $\sle$ measures on single curves, conformal invariance and domain Markov property do not uniquely specify the measure when $2\leq n$.
In \cite{julien_2005, julien_2006}, Dub\'edat characterized multiple-paths $\sle$ in simply connected domains using a commutation relation for certain differential operators related to the driving functions. He also gave a discussion about multiply connected domains, but did not give a complete classification. In \cite{michael}, the process was required to satisfy the restriction property in addition to the domain Markov property and conformal invariance and  a global construction was given using the Brownian loop measure for $\kappa\leq4$.

A similar question concerns chordal $\sle$ conditioned
to go through an interior point.  For ease, let us consider
$\sle$ connecting two boundary points of the unit
disk conditioned to go through the origin.  While this
is conditioning on an event of probability zero, it
is not difficult to make sense of this for
$0 < \kappa < 8$ as $\sle$ tilted by the partition function.
The process is called two-sided radial $\sle$ since the
intuition is of a pair  of paths starting at the initial
and terminal points, both going to the interior point with an
interaction term.  The precise definition looks at one
path, tilts by the Green's function until it hits the
interior point, and then continues as a chordal $\sle$ for
the remainder.  This process motivates the results of
the paper.  In particular, we are interested in the
distribution of the process given an interior segment.
In other words, if $\eta$ is a curve in $\Disk$ going through
the origin, what is the distribution of chordal $\sle$ from $z$ to $w$ in $\Disk$ conditioned that it contains $\eta$?
The answer should be a pair of $\sle$ paths in the annulus
$\Disk \setminus \eta$.  The purpose of this paper is to make this idea precise.

 As mentioned before,  partition function of  chordal $\sle$ in simply connected domains is given by an exponent of the Poisson kernel. 
 In annuli, the partition function has been proved to be smooth and is described by a particular differential equation (\cite{greg_annulus, dapeng_2014}). 
 For multiple-paths $\sle$ in simply connected domains, Dub\'edat \cite{julien_2006} proved that partition function can be described as a family of Euler integrals taken on a specific set of cycles. In \cite{eve}, Peltola and Wu used technique from  partial differential equations  such as H\"ormander's theorem to show that the partition function satisfies a particular PDE when $\kappa\leq 4$. By only using techniques from probability, it was directly proved in \cite{multiple} that the partition function is smooth and satisfies the same PDE as in \cite{julien_2006,eve} when $\kappa<4$.

Our definition of multiple-paths $\sle$ in multiply connected domains is similar to the approach of \cite{multiple,michael} in simply connected domains. 
That is, we define it to be the measure absolutely continuous with respect to the product of chordal $\sle$ measures with a particular Radon-Nikodym derivative involving the Brownian loop measure. 
To that end, we build on the definition of annulus $\sle$ in \cite{greg_annulus}. 
We find this definition to be the most natural one because it provides a clear consistency with $\sle$ in simply connected domains.  We prove the following theorem regarding the partition function of this process.
\begin{theorem}\label{thmucsd}
Define $A_r = \{z\in\disk;\,|z|>e^{-r}\}$ and let $0\leq u_1,\ldots, u_n<2\pi,\,0\leq w_1,\ldots,w_n<2\pi$ be distinct numbers. Let $\bfub = (e^{iu_1},\ldots,e^{iu_n}),\,\bfwb = (e^{-r+iw_1},\ldots,e^{-r+iw_n})$. 
Let $\Psi_{A_r}(\bfub,\bfwb)$ denote the partition function of multiple-paths $\sle$ connecting $e^{iu_j}$ to $e^{-r+iw_j}$ for $1\leq j\leq n$. 
If $\kappa\leq 4$,  then $\Psi_{A_r}(\bfub,\bfwb)$ is a smooth function of $r,u_1,\ldots,u_n,w_1,\ldots,w_n$.

\end{theorem}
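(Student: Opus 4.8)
The plan is to derive a Loewner-flow identity for $\Psi_{A_r}$ that exhibits it as a weak solution of a hypoelliptic second-order equation on the parameter region
$$\mathcal{O}=\{(r,u_1,\dots,u_n,w_1,\dots,w_n):\,r>0,\ u_i\neq u_j\text{ and }w_i\neq w_j\text{ for }i\neq j\},$$
and then to upgrade this to $C^\infty$-regularity, following the pattern of the single-path annulus case in \cite{greg_annulus} and of the simply connected multi-path case in \cite{multiple}. First I would record the properties of $\Psi_{A_r}$ that are immediate from the loop-measure definition of Section \ref{mcsec}: it is locally bounded and continuous on $\mathcal{O}$ (this needs only the continuity of the single-path chordal and annulus $\sle$ measures, known e.g.\ from \cite{greg_annulus}, together with elementary bounds on the Brownian loop measure of loops meeting two disjoint curves with distinct endpoints; finiteness for $8/3<\kappa\le 4$ is part of the construction and is used here as an integrability input), it obeys conformal covariance \eqref{sep19.1} and the boundary-perturbation rule \eqref{genres}, and the family $\{\mu_{A_r}(\bfub,\bfwb)\}$ satisfies the domain Markov property. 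It is also convenient to factor $\Psi_{A_r}(\bfub,\bfwb)=\big[\prod_j\Psi_{A_r}(e^{iu_j},e^{-r+iw_j})\big]\,\Phi(r,\bfub,\bfwb)$; the single-path prefactor is already smooth by \cite{greg_annulus}, so one may work with $\Phi$ instead, though this is not logically essential.

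Next, the core step. Grow the $n$ curves simultaneously from the outer circle $\{|z|=1\}$ in a common capacity parametrization; let $K_t$ be the union of the initial segments and $g_t:A_r\setminus K_t\to A_{r(t)}$ the conformal map onto the annulus of (decreasing) modulus $r(t)$, with $U_j(t),W_j(t)$ the images of the $j$th tip and of $e^{-r+iw_j}$. Combining the domain Markov property with conformal covariance, and reading off the correction terms exactly as in the single-curve Komatu--Loewner treatment of \cite{greg_annulus}, one shows, using only the continuity of $\Psi$ established above, that
$$N_t=\Big[\prod_{j=1}^n |g_t'(e^{iu_j})|^{b}\,|g_t'(e^{-r+iw_j})|^{b}\Big]\,e^{\frac{\cent}{2}\Theta_t}\,\Psi_{A_{r(t)}}\big(\mathbf{U}(t),\mathbf{W}(t)\big)$$
is a local martingale, where $\Theta_t$ is the explicit Brownian-loop-measure functional of $K_t$ and the curves produced by iterating \eqref{genres}. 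The processes $(r(t),\mathbf{U}(t),\mathbf{W}(t))$ solve the $n$-curve annulus Loewner system $d\mathbf{X}=\sum_{k=1}^n\sigma_k(\mathbf{X})\,dB^k+\beta(\mathbf{X})\,dt$ with smooth coefficients (the $U_k$ carry the Brownian terms; the $W_k$ and $r$ are transported by the flow alone). Applying It\^o's formula to $N_t$ and equating the drift to zero shows that $\Psi_{A_r}$ is a distributional null solution of
$$\mathcal{L}=\frac{\kappa}{2}\sum_{k=1}^n X_k^2+X_0,$$
where $X_1,\dots,X_n$ are the diffusive vector fields on $\mathcal{O}$ (each essentially $\partial_{u_k}$ after a smooth change of coordinates) and $X_0$ is the drift field (the zeroth-order term, if kept, is harmless, and it is removed altogether by dividing out the smooth prefactor above).

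Finally I would conclude by hypoellipticity. The key is Hörmander's bracket condition for $\{X_0,X_1,\dots,X_n\}$ on $\mathcal{O}$: the reduced parameter space has dimension $2n$ (the $2n+1$ variables $r,u_j,w_j$ modulo the rotation $u_j\mapsto u_j+\theta,\ w_j\mapsto w_j+\theta$), the fields $X_1,\dots,X_n$ span the $n$-dimensional $u$-directions, and the first-order brackets $[X_k,X_0]$ supply the remaining $n$ directions (the $w_k$-directions and the modulus direction $\partial_r$), exactly as the single bracket $[X_1,X_0]$ supplies $\partial_r$ in the $n=1$ computation of \cite{greg_annulus}; the hypotheses that the $u_j$ are pairwise distinct and the $w_j$ are pairwise distinct are precisely what keeps these $2n$ fields linearly independent, the condition degenerating only on the excluded diagonals. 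Hörmander's theorem then gives that $\mathcal{L}$ is hypoelliptic on $\mathcal{O}$, so $\Psi_{A_r}\in C^\infty(\mathcal{O})$; equivalently, one can replace this appeal to Hörmander by running the Malliavin-calculus/coupling argument directly on the Loewner diffusion, bootstrapping from $C^1$, as in \cite{multiple}.

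I expect the main obstacle to be the rigorous construction of the $n$-curve annulus Loewner flow together with the derivation of the local-martingale property of $N_t$ with the correct conformal exponents and, above all, the correct loop-measure correction $\Theta_t$: unlike the simply connected case of \cite{multiple}, the modulus $r(t)$ genuinely evolves, so the iterated application of \eqref{genres} does not telescope in the same way and there is a genuine conservation law (relating $\Theta_t$ to the modulus change and the cross-terms between all $n$ curves and the moving boundary) that must be identified and its integrability checked. A secondary obstacle is verifying Hörmander's condition globally on $\mathcal{O}$, in particular checking that the interaction of the $n$ curves does not destroy the generation of the modulus direction $\partial_r$ by brackets that holds for a single curve.
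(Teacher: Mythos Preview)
Your proposal follows the same high-level strategy as the paper (a martingale built from the domain Markov property and conformal covariance, then H\"ormander hypoellipticity), but differs in one substantive choice: you grow all $n$ curves simultaneously, whereas the paper grows only the \emph{first} curve $\gamma^1$ and works with the single-slit annulus Loewner equation already developed in \cite{greg_annulus}. This gives the paper a single diffusion field $A_1=\sqrt{\kappa}\,\partial_{u^1}$, and the H\"ormander condition is then verified using \emph{higher-order} iterated brackets $[A_1,A_0],\,[A_1,[A_1,A_0]],\,\dots$; the spanning is reduced to showing that no nontrivial linear combination of translates of the elliptic function $\HH_r$ can be constant, which follows cleanly from its pole structure (see the end of the proof of Proposition~\ref{thmmain}). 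The single-curve approach also sidesteps entirely the multi-slit annulus Loewner construction and the loop-measure bookkeeping you flag as your main obstacle: the martingale is simply $\prod_{j\ge 2} Q_{A_r}(\ubar^j,\wbar^j;A_r\setminus\gamma^1_t)^b\cdot V(r-t,\mathbf{U}_t,\mathbf{W}_t)$, with each factor already controlled by Proposition~\ref{propn21} and the single-path results of Section~\ref{secsingle}.

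Your route could in principle work, but the H\"ormander verification you sketch has a genuine gap. With $n$ diffusion fields you assert that the first-order brackets $[X_k,X_0]$ supply the remaining $w$- and $r$-directions ``exactly as'' in the $n=1$ case. Concretely this requires the $n\times n$ matrix with entries $\partial_{u_k}(\text{drift of }w_j)\sim (\HH_r^R)'(w_j-u_k)$ to be nonsingular at every point of $\mathcal{O}$, which does not follow from pairwise distinctness of the $u_j$'s and $w_j$'s alone and is not implied by the $n=1$ computation you cite. If that matrix degenerates somewhere, you are forced back to higher brackets anyway, and the argument needed is then essentially the paper's analyticity/pole argument for $\HH_r$. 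So the paper's choice to grow a single curve is not just a convenience: it collapses both the Loewner-flow construction and the bracket verification to the already-handled single-path case, at the cost of needing arbitrarily high brackets rather than first-order ones.
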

Even though we will only prove  the smoothness  for $n$ crossing paths, other cases can be proved in a similar manner and we will omit the details here.

 In this work, we give a construction of two-sided $\sle$
by describing a Radon-Nikodym derivative with respect to the product of two independent radial $\sle$ measures. This 
 allows us to grow the curves from the endpoints $z, w$ at the same time. 
 We will use this construction to prove the following.
\begin{theorem}\label{serenade2}
The distribution of bi-chordal $\sle$ in $A_r:=\{z\in\disk;\,|z|>e^{-r}\}$ is absolutely continuous with respect to the distribution of two-sided $\sle$ grown simultaneously from the endpoints and stopped before reaching the boundary. Moreover, we can estimate the Radon-Nikodym derivative up to multiplicative constants.  
\end{theorem}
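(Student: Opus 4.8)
\emph{Proof proposal.} The plan is to realize the two laws as explicit densities against one common reference measure and then to estimate the ratio of these two densities from above and below by positive constants. For the reference measure take
\[
  \nu \;=\; \mu^{\mathrm{rad}}_{\disk}(e^{iu_1},0)\,\times\,\mu^{\mathrm{rad}}_{\disk}(e^{iu_2},0),
\]
the product of two independent radial $\sle$ measures aimed at the origin, each stopped at the stopping time $\tau$ at which the two curves of the two--sided $\sle$ are stopped. Throughout we work on the event
\[
  E_\delta \;=\; \bigl\{\, \gamma^1_\tau\cup\gamma^2_\tau\subset A_r \ \text{ and } \ \dist\!\bigl(\gamma^1_\tau\cup\gamma^2_\tau,\,\{|z|=e^{-r}\}\bigr)\ge\delta \,\bigr\},
\]
which ``stopped before reaching the boundary'' guarantees; the constants below are allowed to depend on $\delta$ in addition to $\kappa,r,u_1,u_2,w_1,w_2$. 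We write $\gamma^j_\tau=\gamma^j[0,\tau]$, we write $\gamma^j(\tau)$ for its tip, and we let $D_\tau$ be the component of $\disk\setminus(\gamma^1_\tau\cup\gamma^2_\tau)$ containing $0$; on the event that $\gamma^1_\tau\cap\gamma^2_\tau=\eset$ the domain $D_\tau$ is simply connected and $D_\tau\cap A_r$ is doubly connected.

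The first step is to write down the two densities. Arguing exactly as in the simply connected multiple--path case of \cite{michael,multiple} --- combining the boundary perturbation rule \eqref{genres}, the conformal covariance \eqref{sep19.1} with exponent $b$ at a boundary tip and $\tilde b$ at the interior point, and the definition of the chordal $\sle$ Green's function $G_D(\,\cdot\,,\cdot\,;\zeta)$ with one interior marked point $\zeta$ --- the two--sided $\sle$ grown from $e^{iu_1},e^{iu_2}$ and stopped at $\tau$ has density with respect to $\nu$ equal to
\[
  \frac{1}{Z^{\mathrm{2s}}}\,\mathbf 1\{\gamma^1_\tau\cap\gamma^2_\tau=\eset\}\,
  \exp\!\Bigl\{\frac{\cent}{2}\,m_{\disk}(\gamma^1_\tau,\gamma^2_\tau)\Bigr\}\,
  G_{D_\tau}\!\bigl(\gamma^1(\tau),\gamma^2(\tau);0\bigr)\,\mathcal J_\tau ,
\]
where $Z^{\mathrm{2s}}$ is a positive finite constant (proportional to $G_{\disk}(e^{iu_1},e^{iu_2};0)$) and $\mathcal J_\tau$ is a product of covariance Jacobians from \eqref{sep19.1}. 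Likewise, from the definition of bi--chordal annulus $\sle$ adopted in Section~\ref{mcsec} --- absolute continuity with respect to a product of two annulus $\sle$ measures of \cite{greg_annulus} with Radon--Nikodym derivative $\mathbf 1\{\gamma^1\cap\gamma^2=\eset\}\,\exp\{\tfrac{\cent}{2}m_{A_r}(\gamma^1,\gamma^2)\}$ --- together with the single--curve identification of annulus $\sle$ as a radial $\sle$ tilted by the ratio of the annulus and radial partition functions (whose finiteness, positivity and smoothness is Theorem~\ref{thmucsd} for $n=1$), the cascade property of the Brownian loop measure and the strong domain Markov property, the law under bi--chordal annulus $\sle$ of the stopped pair $(\gamma^1_\tau,\gamma^2_\tau)$ has density with respect to $\nu$ equal to
\[
  \frac{1}{Z^{\mathrm{bic}}}\,\mathbf 1\{\gamma^1_\tau\cap\gamma^2_\tau=\eset\}\,
  \exp\!\Bigl\{\frac{\cent}{2}\,m_{A_r}(\gamma^1_\tau,\gamma^2_\tau)\Bigr\}\,
  \Psi_{D_\tau\cap A_r}\!\bigl(\gamma^1(\tau),\gamma^2(\tau),\bfwb\bigr)\,\widetilde{\mathcal J}_\tau ,
\]
where $Z^{\mathrm{bic}}=\Psi_{A_r}(\bfub,\bfwb)$, finite and positive by Theorem~\ref{thmucsd}, and $\widetilde{\mathcal J}_\tau$ is again a product of covariance Jacobians (with a factor along the inner circle replacing the one at $0$).

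Dividing, the reference measure and the disjointness indicators cancel; writing $m^{\mathrm{in}}_\tau$ for the Brownian loop measure of loops in $\disk$ meeting all three of $\gamma^1_\tau$, $\gamma^2_\tau$ and $e^{-r}\overline{\disk}$, so that $m_{A_r}(\gamma^1_\tau,\gamma^2_\tau)-m_{\disk}(\gamma^1_\tau,\gamma^2_\tau)=-m^{\mathrm{in}}_\tau$, we get
\[
  \frac{d\,(\text{bi-chordal})}{d\,(\text{two-sided})}
  \;=\; \frac{Z^{\mathrm{2s}}}{Z^{\mathrm{bic}}}\;
  \exp\!\Bigl\{-\frac{\cent}{2}\,m^{\mathrm{in}}_\tau\Bigr\}\;
  \frac{\Psi_{D_\tau\cap A_r}\!\bigl(\gamma^1(\tau),\gamma^2(\tau),\bfwb\bigr)}{G_{D_\tau}\!\bigl(\gamma^1(\tau),\gamma^2(\tau);0\bigr)}\;
  \frac{\widetilde{\mathcal J}_\tau}{\mathcal J_\tau}.
\]
Now estimate the four factors on $E_\delta$. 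The prefactor $Z^{\mathrm{2s}}/Z^{\mathrm{bic}}$ is a fixed element of $(0,\infty)$. Every loop counted by $m^{\mathrm{in}}_\tau$ must cross the round sub--annulus of $A_r$ lying between $\{|z|=e^{-r}\}$ and $\gamma^1_\tau\cup\gamma^2_\tau$, whose modulus is bounded below by a constant depending only on $r,\delta$; since the Brownian loop measure of loops crossing a fixed annulus is finite, $0\le m^{\mathrm{in}}_\tau\le C(r,\delta)$ and the exponential lies in a fixed subinterval of $(0,\infty)$, whatever the sign of $\cent$. Finally, on $E_\delta$ the point $0$ lies at distance $\ge e^{-r}$ from $\p D_\tau$ and the conformal modulus of $D_\tau\cap A_r$ lies in an interval $[\rho_0,r]$ with $\rho_0=\rho_0(\delta)>0$; rewriting numerator and denominator via \eqref{sep19.1} through a conformal map of $D_\tau\cap A_r$ onto a round annulus and of $D_\tau$ onto $\disk$ respectively, both are continuous and strictly positive in their arguments with bounds depending only on $\kappa,r,\delta$ and $u_1,u_2,w_1,w_2$ (smoothness and positivity of $\Psi$ is Theorem~\ref{thmucsd}, and for $G$ one uses the standard one--point $\sle$ Green's function estimates in \cite{greg_book}), while the non--cancelling Jacobian factors in $\widetilde{\mathcal J}_\tau/\mathcal J_\tau$ --- essentially the conformal radius of $D_\tau$ at $0$ raised to the power $\tilde b$, together with derivatives of these maps along the inner circle --- are controlled by the same distance and modulus bounds. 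Multiplying the four estimates yields $c_1\le d(\text{bi-chordal})/d(\text{two-sided})\le c_2$ on $E_\delta$ for positive constants $c_1,c_2$, which is the claim.

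The step I expect to be the main obstacle is the second one: putting the bi--chordal law in the displayed form, i.e.\ keeping precise track of which domain each family of Brownian loops belongs to when the product of two annulus $\sle$ measures is rewritten in terms of the product of two radial ones, and simultaneously matching all partition--function and Jacobian factors so that the quotient collapses to the clean expression above; and, inside the final step, showing that $\Psi_{D_\tau\cap A_r}/G_{D_\tau}$ is bounded above and below uniformly over the $\sle$--random slit domains $D_\tau$, which requires the smoothness of the annulus partition function together with a Koebe--type distortion estimate. By contrast the loop--measure comparison $m_{A_r}-m_{\disk}$ is the easy part, once the stopping rule is used to hold the curves a definite distance away from the inner circle.
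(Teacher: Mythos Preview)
Your overall architecture --- write both laws as densities against the product of two independent radial $\sle$ measures, divide, and estimate the four factors --- is exactly the route the paper takes. You have also correctly located the hard step. But the tool you propose for that step is not strong enough, and this is a genuine gap.

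The issue is the ratio $\Psi_{D_\tau\cap A_r}\bigl(\gamma^1(\tau),\gamma^2(\tau),\bfwb\bigr)\big/ G_{D_\tau}\bigl(\gamma^1(\tau),\gamma^2(\tau);0\bigr)$. You argue that on $E_\delta$ the modulus of $D_\tau\cap A_r$ is bounded and then invoke smoothness (Theorem~\ref{thmucsd}) and Koebe distortion to conclude that numerator and denominator are each ``continuous and strictly positive'' with bounds depending only on the fixed data. This is false: the \emph{tip positions} $\gamma^1(\tau),\gamma^2(\tau)$ are random, and after mapping $D_\tau\cap A_r$ to a standard annulus (or $D_\tau$ to $\disk$) the angle $\Theta$ between the two images can be arbitrarily small with positive probability. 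When $\Theta\to 0$, both numerator and denominator tend to $0$; the Green's function behaves like $\sin_2(\Theta)^a$, and one has no a~priori reason the annulus partition function vanishes at the same rate. Smoothness says nothing about this boundary behaviour.

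What the paper actually proves, and what your argument needs, is the sharp two--sided estimate
\[
\Psi_{A_\rho}\bigl((\bar u,\bar x),(\bar w,\bar y)\bigr)\;\asymp\;\rho^{\cc/2}\,\sin_2(x-u)^a\,\sin_2(w-y)^a\,e^{\rho(2b-\tilde b-3a/4)}
\]
uniformly in the boundary points and for $\rho$ large (Theorem~\ref{handy}, obtained from Proposition~\ref{proplast}). This is a substantial piece of work (Lemmas~\ref{2slelem1}--\ref{2sleeq1}, Proposition~\ref{2sleprop1}, Corollary~\ref{2slecor2}) and is precisely what makes the $\sin_2(\Theta)^a$ factors cancel against the $Z_t\,\sin_2(\Theta_t)^a$ in the two--sided weight $O_t$ of \eqref{eq2sided3}. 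A second point: the paper's theorem is uniform in $r>r_*$ and in all four boundary arguments, and in fact identifies the Radon--Nikodym derivative explicitly as $\asymp r^{\cc/2}\,\sin_2(w-y)^a\,e^{r(2b-\tilde b-3a/4)}$; to get this one also needs Corollary~\ref{corback2} to control $\sin_2(W_t-Y_t)$ in terms of $\sin_2(w-y)$, which your $E_\delta$ assumption by itself does not give.
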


%


We finish this introduction by giving an outline of this paper. 
 In section \ref{background}, we establish our notation, give a brief review of chordal
 $\sle$ in multiply connected domains and define the multiple-paths (chordal)
  $\sle$ measure in multiply connected domains. 
The definitions here are not difficult but rely on 
the descriptions of $\sle$ as a nonprobability measure. 
The particular case of the annulus is discussed.  Although the annulus Loewner equation will be used in analyzing this, it is not part of the definition.   This defines the partition function for the single path and
multiple-path $\sle$ measures.  

Section \ref{confsec} 
contains a number of deterministic estimates about Brownian motion.  This section can be skipped on a first reading
and referred to when needed.   The next subsection reviews
the version of the annulus Loewner equation we will use.  This
section is deterministic --- given a curve in the annulus, what
is the Loewner equation that conformal map satisfies.  There is a little work to be done here since the conformal map
onto an annulus is only defined up to a rotation. 
We finish this section by deriving the equation for 
the probability measure $\mu^\#_{A_r}(\ubar,\wbar)$ 
  in terms of the annulus Loewner equation
with a driving function equal to Brownian motion with
a drift.  This is not the definition; rather it is a
derivation using the previous definition.

We prove that the partition function is smooth in section \ref{proof}. The main idea of the proof is to define appropriate martingales and apply the It\^o's formula using the H\"ormander's theorem. 
Finally, section \ref{twosided} consists of a number of  results about two-sided $\sle$ and bi-chordal annulus $\sle$.
 In particular, we show that two-sided $\sle$ can be constructed by weighting two independent radial $\sle$  by an appropriate martingale. In addition, we derive asymptotic estimates for the partition function of bi-chordal annulus $\sle$. 
We use that to show  two-sided $\sle$ can be approximated by bi-chordal annulus $\sle$  and prove Theorem \ref{serenade2}.

\section{Preliminaries and Definitions}\label{background}
\subsection{Notation}\label{secnotation}
In this article, unless mentioned otherwise, we will use the following notation. 
\begin{itemize}
\item\textbf{Notation.} We denote the unit disk by $\disk$ and the upper-half plane by $\hp$. For $r>0$, define 
\[
C_r = \{z\in\mathbb{C};\,|z| = e^{-r}\},\qquad \disk_r = e^{-r}\,\disk,
\]
\[
A_r = \{z\in\mathbb{C};\,e^{-r}<|z|<1\},\qquad S_r = \{ z \in \hp:\, \Im(z) < r\}.
\]
Define
\[
\psi(z) = e^{iz}
\]
which is a many-to-one (covering) map from $S_r$ onto $A_r$. 

If $K_1,K_2\subset D$, then we denote by $m_D(K_1,K_2)$ the Brownian loop measure of loops in $D$ that intersect both $K_1,K_2$.
Suppose $\gamma:(0,t_\gamma)\to {A_r}$ is a simple curve with $\gamma(0+) = \ubar \in C_0,\,\gamma(t_\gamma-) = \wbar\in C_r$. We will write $\gamma_t$ for $\gamma((0,t])$.
For any $t<t_\gamma$, there exists a unique $0<r(t)\leq r$ and a conformal transformation $\hbar_t$ such that
\begin{equation}\label{eq2.5}
\hbar_t : A_r\sm \gamma_t \to A_{r(t)},\qquad \hbar_t(C_r) =C_{r(t)}.
\end{equation} 
(e.g., see \cite{greg_book} for a proof). 
It is easy to see that the function $\hbar_t$ is unique up to a rotation. We will uniquely specify our choice of rotation in section \ref{seclow}. 

Choose $0\leq u,w<2\pi$ such that $\psi(u)=\ubar,\,\psi(w+ir)=\wbar$. For $\theta\in \mathbb{R}$, we let $\sin_2(\theta) = \sin(|\theta|/2)$.
Let $\eta_t\subset S_r$ be the unique continuous curve satisfying $\gamma_t = \psi\circ \eta_t,\,  \eta(0) = u$ and let
\begin{equation}\label{eq2.55}
\tilde\eta_t=\bigcup_{k\in\mathbb{Z}}[\eta_t+2k\pi].
\end{equation}
Define
\[
S_{r,t} = S_r \sm \tilde{\eta}_t
\]
and note that the transformation $\hbar_t$ can be raised to the covering space $S_{r,t}$ to yield a conformal transformation 
\begin{equation}\label{eq2.6}
 h_t : S_{r,t} \to S_{r(t)},\qquad \psi \circ h_t = \hbar_t \circ \psi.
\end{equation}
Similarly to $\hbar_t$, the function $h_t$ is unique up to a translation. However, regardless of the choice of translations, $h_t(z) - z$ is $2\pi$-periodic. 

Let $\bar{g}_t: \disk \sm \gamma_t \to \disk$ be the unique conformal transformation with $\gg(0) = 0,\,\gg'(0)>0$ and let 
\[
\bar{\xi} _ t = \gg(\gamma(t)).
\]
Let $\Ubar_t=\hbar_t(\gamma(t))$ and define $\bar\phi_t$ to be the unique conformal transformation satisfying
\begin{equation}\label{eq2.7}
\hbar_t = \bar\phi_t \circ \gg, \qquad \bar\phi_t(\bar\xi_t) = \Ubar_t.
\end{equation}
Similarly to $\hbar_t$, we can raise $\gg$ to the covering space $\hp\sm\tilde{\eta}_t$ to get a conformal transformation
\[
\tilde g_t:\hp\sm\tilde{\eta}_t\to \hp,
\]
such that $\xi_t=\tilde g_t(\eta(t)) $ is continuous and $\xi_0 = u,\,\psi(\xi_t) = \bar\xi_t$. Define the conformal transformation $\phi_t$ with 
\[
h_t = \phi_t \circ \tilde g_t.
\]
We let $g_t:\hp\sm\eta_t\to\hp$ be the unique conformal transformation with $g_t(z)= z+ o(1)$ as $z\to \infty$. We say $\gamma_t$ has the \emph{radial parametrization} if $\gg'(0) = at/2$ for any $t<t_\gamma$ and $\gamma_t$ has the \emph{annulus parametrization} if $r(t) = r-t$.
\item \textbf{Poisson kernel and excursion measure.}
Let $D'\subset D$ be domains in $\mathbb{C}$. Assume $z\in D$ and $w\in \partial D\cap \partial D'$ is an analytic boundary point of $D,\,D'$ with $\dist(w,D\sm D')>0$.
That is, there exists an analytic function $f:\disk \to \mathbb{C}$ such that $f(0) = z$ and $f(\disk)\cap D = f(\hp\cap \disk)$. 
 We denote by $H_D(z,w)$ the Poisson kernel in $D$ normalized so that $H_\disk(0,1) = 1/2$. Define
\[
Q_D(z,w;D') = \frac{H_{D'}(z,w)}{H_D(z,w)}.
\]
In other words, $Q_D(z,w;D')$ is the probability that an $h$-process started from $z$ and conditioned to exit $D$ at $w$ does not hit $D\sm D'$. If $w'\in\partial D\cap\partial D'$ is another analytic boundary point with $\dist(w',D\sm D')>0$, then
\[
H_{ D}(w',w) := \partial_{\textbf n_{w'}}H_D(w',w),
\]
where $\partial_{\textbf n_{w'}}$ denotes the inward normal derivative at $w'$. Similarly, we will write
\[
Q_{ D}(w',w;D') = \frac{H_{ D'}(w',w)}{H_{ D}(w',w)},
\]
which is the probability that a Brownian excursion from $w'$ to $w$ in $D$ does not intersect $D\sm D'$. Suppose $f:D\to f(D)$ is a conformal transformation such that $f(w),f(w')$ are analytic boundary points of $f(D)$. Since the Poisson kernel satisfies the conformal convariance property, $Q_D$ is conformally invariant and 
\begin{align*}
Q_D(z,w;D') &= Q_{f(D)}(f(z),f(w);f(D')),\\
 Q_{ D}(w',w;D') &= Q_{ f(D)}(f(w'),f(w);f(D')).
\end{align*}

Suppose $B_t$ is a Brownian motion and $\tau_D$ is the firs time it exits $D$. 
Let $V_1,V_2\subset \partial D$ be segments of the boundary and assume $V_1$ is smooth. 
For any $z\in D$, define
\[
\rho(z;V_2) := \PP^z[B_{\tau_D}\in V_2].
\]
 With an abuse of notation, for  $z\in V_1$ we let $\rho_D(z;V_2)$ denote the probability that a Brownian excursion starting from $z$ exits $D$ at $V_2$. In other words
\begin{equation*}
\rho_D(z;V_2) = \partial_{\textbf{n}_z} \PP^z[B_{\tau_D}\in V_2],
\end{equation*}
where  $\textbf{n}_z$ denotes the inward normal derivative at $z$. 
Let $f:D\to\tilde D$ be a conformal transformation such that $f(z)$ is an analytic boundary point. Using conformal invariance of Brownian motion, we have
\begin{equation}\label{eqsecond}
\rho_D(z;V_2) = |f'(z)| \rho_{\tilde{D}}(f(z);f(V_2)).
\end{equation}
The \emph{excursion measure} between $V_1,V_2$ is defined by
\[
\mathcal{E}_D(V_1,V_2) = \int_{V_1} \rho_D(z;V_2) |dz|.
\]
(Excursion measure is normally defined as a measure induced by Brownian motion on curves connecting two points on the boundary. What we call the excursion measure is the total mass of the aforementioned measure).
If $V_1,V_2$ are both smooth, then we can write
\[
\mathcal{E}_D(V_1,V_2) = \frac{1}{\pi}\int_{V_1}\int_{V_2} H_{ D}(w_1,w_2) |dw_2||dw_1|.
\] 
Using conformal covariance of the Poisson kernel, we can see that if $f:D\to \tilde{D}$ is a conformal transformation such that $f(V_1),f(V_2)$ are smooth, then
\begin{equation*}
\mathcal{E}_D(V_1,V_2) = \mathcal{E}_{f(D)}(f(V_1),f(V_2)).
\end{equation*}
This equality can  be used to define $\mathcal{E}_D(V_1,V_2) $ even if $V_1,V_2$ are rough. 
Note that 
\[
\mathcal{E}_{A_r}(C_0,C_r) = \int_{C_0} \rho_{A_r}(z;C_r)|dz| = \int_{C_0}\frac{1}{r}|dz| = \frac{2\pi}{r}.
\]
Therefore, by conformal invariance of the excursion measure,
$
r(t) = {2\pi}/{\mathcal{E}_{A_r\sm\gamma_t}(C_0\cup\gamma_t,C_r)}$,
where $r(t)$ is defined in \eqref{eq2.5}.

\item \textbf{$\sle$ Measures.} We use $\mu_D(w,w')$ to denote the $\sle$ measure from $w$ to $w'$ in $D$. This is considered as a measure with partition function $\Psi_D(w,w')=\|\mu_D(w,w')\|$ satisfying
\begin{equation}\label{eqdef0}
\Psi_D(w,w') = |f'(w)|^b\,|f'(w')|^b \,\Psi_{f(D)}(f(w),f(w')).
\end{equation}
If $D$ is a simply connected domain, then $\Psi_D(w,w')=H_{ D}(w',w)^b$. 
While \eqref{eqdef0} holds even if $D$ is not simply connected, it is no longer true that $\Psi_D(w,w')=H_{ D}(w',w)^b$ for multiply connected domains. 

\item \textbf{Half-plane capacity.} Suppose $K$ is a compact subset of $\hp$ such that $\hp\sm K$ is simply connected. The half-plane capacity of $K$ is defined as 
\[
\hcap(K) = \lim_{y\to \infty}y\EE^{iy}[\Im[B_\tau]],
\]
where $\tau$ is the first time a Brownian motion $B_t$ exits $\hp\sm K$.  
If $0\in \overline{K}$ and $d=\diam[K]$, then for all $|z|>2d$,  
\begin{equation}\label{tchy}
\EE^z[\Im[B_\tau]] = \Im[-1/z]\,\hcap(K)\,[1+\bigo(d/|z|)].
\end{equation}
Suppose $\gamma_t\subset \hp$ is a simple curve with $\gamma(0+)=0$. Let $V\subset \hp$ be a neighborhood of 0 and assume $\Phi$ is a conformal transformation defined on $V$ with  $\Phi(V)\subset\hp$ and $\Phi(\mathbb{R}\cap \bar V) \subset \mathbb{R}$. Then at $t=0$,
\begin{equation}\label{eqhalf}
\partial_t\hcap[\Phi(\gamma_t)] = \Phi'(0)^2\, \partial_t\hcap[\gamma_t].
\end{equation}
See \cite{greg_book} for a detailed discussion about the half-plane capacity.

\item \textbf{Convention.} For a function of the form $f_t(z)$, we use the dot derivative $\dot{f}_t(z)$ to denote the derivative with respect to $t$ and we use $f'_t(z)$ to denote the derivative with respect to $z$.
\end{itemize}
\subsection{\texorpdfstring{$\sle$}{LG}  in multiply connected domains}   \label{mcsec}
In this subsection, we briefly review $\sle$ measures in multiply connected domains, as discussed in \cite{greg_annulus}.
Assume $\kappa\leq 4$ and let $z,w$ be analytic boundary points of a domain $D\subset \mathbb{C}$. Let $\mu_D(z,w)$ denote the chordal $\sle$ measure from $z$ to $w$ in $D$
which is defined for simply connected $D$ and we now extend
to the multiply connected case.
If $D$ is a (possibly multiply connected) domain and 
 $D'\subset D$ is simply connected, define $\mu_D(z,w;D')$ by
\[
\frac{d \mu_D(z,w;D')}{d \mu_{D'}(z,w)}(\gamma) = 1\{\gamma
\subset D'\}\,  \exp\left\{-\frac{\cc}{2}m_D(\gamma,D\sm D')\right\}.
\]
The measure
$\mu_D(z,w)$ is defined to be  the  unique measure on continuous curves $\gamma$ connecting $z,w$ in $D$ such that for every simply   connected domain $D'\subset D$, $\mu_D(z,w)$ restricted to the curves $\gamma\subset D'$ is $\mu_D(z,w;D')$. It is not hard to show that $\mu_D(z,w)$ satisfies the domain Markov property and conformal covariance. Moreover, if $\tilde D\subset D$ is another domain  that agrees with $D$ in neighborhoods of $z,w$, then \eqref{genres}
holds:
\begin{equation}\label{theq6}
\frac{d\mu_{\tilde D}(z,w)}{d\mu_{D}(z,w)}(\gamma) =1\{\gamma \subset \tilde D\} \, \exp\left\{\frac{\cc}{2}m_{D}(\gamma,D_2\sm D_1)\right\}. 
\end{equation}
 In addition, if $\kappa\leq 8/3$, then  $\Psi_D(z,w)<\infty$ for any multiply connected domain $D$. It is still unknown if $\Psi_D(z,w)<\infty$ for $8/3<\kappa\leq 4$ for all 
multiply connected domains.  However, this is known
in the case of annulus $\sle$ which we discuss now. 

Let $\ubar\in C_0$ and $\wbar\in C_r$ and for ease
assume $\ubar = 1, \wbar = e^{-r + i\theta}$.  If
$  \gamma$ is a simple curve from $1$ to $\wbar$ in 
$A_r$, then as in section \ref{secnotation},
let $\eta$ be the 
  continuous curve in $S_r$
   from $0$ to $\theta + 2\pi k  + ir$
for some integer $k$ such that $\psi \circ \eta =
  \gamma$.    We write
\[   \mu_{A_r}(1,\wbar) = \sum_{k \in \Z}
     \psi \circ \nu_{S_r}(0,\theta + 2\pi k  + ir), \]
     for measures $\nu_{S_r}(0,x + ir)$ that
 are absolutely continuous with respect to
 $\mu_{S_r}(0,x + ir)$ with Radon-Nikodym derivative
 \[     I(\eta) \, \exp\left\{ \frac \cent 2[
     m_{S_r}^*(\eta)- \hat m_r] \right\}. \]
 Here,
 \begin{itemize}
 \item  $I(\eta)$ is the indicator function that
 $\eta$ does not hit any of its $2\pi k $ translates,
 that is, $\psi \circ \eta$ is simple;
 \item   $\hat m_r$ is the measure of loops   in $A_r$ of
 nonzero winding number  (these are the loops
 in $A_r$ that are not of the form
 $\psi \circ \ell$ for a loop in $S_r$);
 \item   
 $m_{S_r}^*(\eta)$ is the measure of loops in $S_r$
 that intersect $\eta$ but intersect
  a $2k\pi$ translate of $\eta$ before the intersection
  (this takes care of the multiple counting of loops when
 one takes $\psi \circ \ell$ for loops in $S_r$). The time
 ordering of the curve $\eta$ is used to determine 
 ``before''. 
 \end{itemize}
 Using this comparison, it can be shown that $\Psi_{A_r}(\bar u,\bar w) < \infty$ for all $\kappa \leq 4$ and that
 the function is $C^2$ (actually more) in $\bar u, \bar w$
 and $C^1$ in $A_r$ when the  curve has a capacity
 parametrization.

Let $\ubar\in C_0$ and $\wbar\in C_r$. Then there exist positive constants $c_*,\,q$ such that as $r\to\infty$,
\begin{equation}\label{ucsd1}
\Psi_{A_r}(\ubar,\wbar) = c_*\,r^{\cc/2}\,e^{(b-\tilde{b})r}\left[1+\bigo(e^{-qr})\right].
\end{equation}
(See Theorem 7.6 in \cite{greg_annulus}).  In particular,
the aymptotics are independent of $\ubar,\wbar$,
as would be expected.    Suppose $\gamma_t\subset A_r$  is a simple curve with $\,\gamma(0+) = \ubar$.  Then
\begin{equation}\label{ucsd1.5}
\frac{d\mu_{A_r}(\ubar,\wbar)}{d\mu_\disk(\ubar,0)}(\gamma_t) = \frac{|\gg'(\wbar)^b|}{\gg'(0)^{\tilde{b}}}\exp\left\{\frac{\cc}{2}m_\disk(\gamma_t,C_r)\right\}\Psi_{A_{r(t)}}(\Ubar_t,\hbar_t(\wbar)).
\end{equation}
 By using the last formula and \eqref{ucsd1}, we can see that if $\gamma_t$ has the radial parametrization (so that $\gg'(0) = at/2$), then
\begin{equation}\label{ucsd2}
\frac{d\mu_{A_r}(\ubar,\wbar)}{d\mu_\disk(\ubar,0)}(\gamma_t) = c_*\,r^{\cc/2}\,e^{(b-\tilde{b})r}\left[1+\bigo(e^{-q(r-at/2)})\right].
\end{equation}

\subsection{Defining multiple-paths \texorpdfstring{$\sle$}{LG}}\label{defs2}
Suppose 	$\textbf{z} = (z^1,z^2,\ldots,z^n)$ and $\textbf{w} = (w^1, w^2, \ldots, w^n)$ are distinct analytic boundary points of a domain $D$.
For $1\leq j\leq n$, let $\gamma^j$ be a $\sle$ path from $z^j$ to $w^j$ in $D$ with corresponding $\sle$ measure $\mu_{D}(z^j,w^j)$.  Recall that
\[
\Psi_{D}(z^j,w^j)= \| \mu_{D}(z^j,w^j) \|.
\]
For a measure $\mu$, we will use $\mu^\#$ to represent the probability measure $\mu/\|\mu\|$, given that $\|\mu\|<\infty$.
We define multiple-paths $\sle$ measure in $D$ similar to \cite{multiple,michael}.
\begin{definition} \label{def1} For $\kappa\leq 4$, we define  $\mu_{D}(\textbf{z},\textbf{w})$  to be the measure on $n$-tuple of paths ${\bgamma} = (\gamma^1, \ldots,\gamma^n)$ that is absolutely continuous with respect to the product measure $\mu_\text{prod}(\textbf{z},\textbf{w}):=\mu_{D}(z^1,w^1)\times \ldots \times \mu_{D}(z^n,w^n)$ with Radon-Nikodym derivative 
 \[
Y(\bgamma) = I(\bgamma)\exp\left\{\frac{\cc}{2}\sum_{j = 2} ^ n m[K_j(\bgamma)]\right\}.
 \]
Here $I(\boldsymbol\gamma)$ is the indicator function of the event that for all $i\neq j$, $\gamma^i \cap \gamma^j = \emptyset$ and
$m[K_j(\bgamma)]$ is the Brownian loop measure of loops that intersect at least j of the paths.
As before, the partition function of the measure $\mu_{D}(\textbf{z},\textbf{w})$ is the total mass
\[
\Psi_{D}(\textbf{z}, \textbf{w}) = \|\mu_{D}(\textbf{z},\textbf{w})\|.
\]
\end{definition}
Note that $\mu_{D}(\textbf{z},\textbf{w})=0$ if there exists $k$ such that $z^k,w^k$ are not on the boundary of the same connected component of 
\[
D\sm \bigcup_{j\neq k} \gamma^j.
\]
Moreover, it is clear from the definition that if $\sigma:\{1,2,\ldots,n\}\to \{1,2,\ldots,n\}$ is a permutation, then $Y(\bgamma)=Y(\bgamma^\sigma)$ and $\Psi_{A_r}(\textbf{z},\textbf{w}) = \Psi_{A_r}(\textbf{z}^\sigma,\textbf{w}^\sigma)$.

Let $\bgamma=(\bgamma',\gamma^n),\, \textbf{z}'=(z^1,z^2,\ldots,z^{n-1}),\,\textbf{w}'=(w^1,w^2,\ldots,w^{n-1}),$ and 
\[
D_k = D\sm \bigcup_{j=1}^{k-1} \gamma^j.
\]
\begin{lemma}\label{lemdef1}
We have
\begin{equation}\label{eqdef1}
\sum_{j=2}^n m[K_j(\bgamma)] = \sum _{j=2}^nm_{D}\left(\gamma^j,\,\gamma^1\cup\cdots\cup\gamma^{j-1}\right).
\end{equation}

\end{lemma}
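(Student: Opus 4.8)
The plan is to reorganize the sum $\sum_{j=2}^n m[K_j(\bgamma)]$ by classifying each Brownian loop $\ell$ according to the \emph{set} of indices $i$ with $\ell \cap \gamma^i \neq \eset$, and then to show that both sides of \eqref{eqdef1} assign to each such loop the same total weight. First I would fix a loop $\ell$ and let $J = J(\ell) = \{\, i : \ell \cap \gamma^i \neq \eset \,\} \subseteq \{1,\dots,n\}$, and set $|J| = m$. If $m = 0$ or $m = 1$ the loop is counted on neither side (on the left, $\ell$ intersects fewer than $2$ paths; on the right, for each $j$, the loop fails to meet either $\gamma^j$ or $\gamma^1\cup\cdots\cup\gamma^{j-1}$ when $|J|\le 1$), so we may assume $m \geq 2$.

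Next I would count the left-hand side: $\ell$ is counted in $m[K_j(\bgamma)]$ precisely when $\ell$ meets at least $j$ of the paths, i.e.\ when $j \leq m$. Since $j$ ranges over $2,\dots,n$, the loop $\ell$ contributes to exactly $m-1$ of the terms, hence has total multiplicity $m-1$ on the left. For the right-hand side, $\ell$ is counted in $m_D(\gamma^j, \gamma^1\cup\cdots\cup\gamma^{j-1})$ exactly when $j \in J$ and $J \cap \{1,\dots,j-1\} \neq \eset$, i.e.\ when $j$ is an element of $J$ other than its minimum. Writing $J = \{j_1 < j_2 < \cdots < j_m\}$, this happens for $j \in \{j_2,\dots,j_m\}$, again exactly $m-1$ values of $j$, so the total multiplicity on the right is also $m-1$. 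Integrating this identity of loop-multiplicities against the Brownian loop measure $m = m_D$ gives \eqref{eqdef1}; the interchange of sum and integral is justified since all terms are nonnegative (Tonelli), and one only needs the resulting quantities to make sense as extended reals, which they do since each $m_D(K_1,K_2)$ is a well-defined (possibly infinite) loop-measure quantity.

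The only real subtlety — and the step I expect to require the most care — is the bookkeeping at the boundary cases $|J|=1$ and the treatment of loops touching exactly one path versus the "at least $j$ paths" phrasing: one must confirm that $m[K_j(\bgamma)]$ really counts loops meeting $\geq j$ distinct paths (so that a loop meeting one path many times is not overcounted) and that loops meeting zero or one path contribute nothing to either side, so that the pointwise multiplicity identity "$m-1 = m-1$" holds uniformly over all loops. Once the combinatorial identity $\#\{\,2 \le j \le n : j \le m\,\} = \#\{\,2 \le j \le n : j \in J,\ \min J < j\,\}$ is established for every loop, the lemma follows immediately by integration. I would also remark that this is the natural generalization of the $n=2$ identity $m[K_2] = m_D(\gamma^2,\gamma^1)$ used implicitly in the simply connected case of \cite{multiple,michael}, and that the same argument shows $Y(\bgamma)$ factors compatibly when one conditions on $\bgamma'$.
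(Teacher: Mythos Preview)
Your proof is correct. Both sides assign multiplicity $m-1$ to a loop meeting exactly $m$ paths, and integrating this pointwise identity against the loop measure (using Tonelli, as you note) gives the lemma.

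Your approach differs from the paper's: the paper argues by induction on $n$, using the decomposition
\[
\sum_{j=2}^n m[K_j(\bgamma)] = \sum_{j=2}^{n-1} m[K_j(\bgamma')] + m_D\!\left(\gamma^n,\,\gamma^1\cup\cdots\cup\gamma^{n-1}\right)
\]
(quoted from \cite{multiple}) and then the induction hypothesis. Your direct combinatorial counting---classifying loops by their hit-set $J$ and observing that both sides count each such loop $|J|-1$ times---is arguably cleaner and avoids the external citation. The inductive form, on the other hand, makes explicit the recursive structure used later in the paper (Lemma~\ref{lemwc} and the marginal/conditional computations), so each organization has its merits.
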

\begin{proof}
We can see that
\[
\sum_{j=2}^n m[K_j(\bgamma)] = \sum _{j=2}^{n-1}m[K_j(\bgamma')] + m_{D}(\gamma^n, \gamma^1\cup\cdots\cup\gamma^{n-1})
\]
(This is straightforward. See Lemma 1 in \cite{multiple} for a proof). The proof follows from this and induction.
\end{proof}
The right-hand side of \eqref{eqdef1} was used in \cite {greg_annulus} to define  multiple-paths $\sle$ measure. The last lemma shows that the two definitions are equivalent. 
Suppose $z^1,\ldots,z^n\in C_0$ and $w^1,\ldots,w^n\in C_r$. 
The following important properties can be seen from Lemma \ref{lemdef1}. Similar properties were  stated in \cite{michael} for  multiple-paths $\sle$ in simply connected domains.
\begin{itemize}
\item \textbf{Marginal Measure:} Let $\mu'_{A_r}(\textbf{z}',\textbf{w}')$ be the marginal measure on $\bgamma'$ induced by $\mu_{A_r}(\textbf{z},\textbf{w})$. Then
\[
\frac{d\mu'_{A_r}(\textbf{z}',\textbf{w}')}{d\mu_{A_r}(\textbf{z}',\textbf{w}')}(\bgamma') = H_{D_n}(z^n,w^n)^b.
\]
More generally, if $k<n$ and $\mu'_{A_r}((z^1,\ldots,z^{k-1}),(w^1,\ldots,w^{k-1}))$ is the marginal measure on $(\gamma^1,\ldots,\gamma^{k-1})$ induced by $\mu_{A_r}(\textbf{z},\textbf{w})$, then 
\[
\frac{d\mu'_{A_r}((z^1,\ldots,z^{k-1}),(w^1,\ldots,w^{k-1}))}{d\mu_{A_r}((z^1,\ldots,z^{k-1}),(w^1,\ldots,w^{k-1}))}(\gamma^1,\ldots,\gamma^{k-1}) = \Psi_{D_{k}}((z^k,\ldots,z^n),(w^k,\ldots,w^n)).
\]
\item \textbf{Conditional distribution:} Given $\bgamma'$, the conditional distribution of $\gamma^n$ is $\mu^\#_{D_n}(z^n,w^n)$.  More generally, conditioned on $(\gamma^1,\gamma^2,\ldots,\gamma^{k-1})$, the probability distribution of $(\gamma^{k},\ldots,\gamma^n)$ is $\mu^\#_{D_k}((z^k,\ldots,z^n),\,(w^k,\ldots,w^n)).$
\end{itemize}

Let 
\[
\tilde{\Psi}_{A_r}(\textbf{z},\textbf{w}) = \frac{\Psi_{A_r}(\textbf{z},\textbf{w})}{\prod_{j=1}^n\Psi_{A_r}(z^j,w^j)}.
\]
We will write $\EE_{\text{prod}}$ for the expectation with respect to the product measure $\mu^\#_\text{prod}$. Using \eqref{theq6}, it is easy to see that
\begin{equation}\label{eqdef2}
\EE_\prd\left[Y(\bgamma)\middle|\bgamma'\right] = Y(\bgamma')\frac{H_{D_n}(z^n,w^n)^b}{\Psi_{A_r}(z^n,w^n)}.
\end{equation}
Here, we are using the fact that $D_n$ is simply connected and $\Psi_{D_n}(z^n,w^n) = H_{D_n}(z^n,w^n)^b$. 
Let $m^*(r)$  be the Brownian loop measure of the loops in $A_r$ with nonzero winding number. One can see that $0<m^*(r)<r/6$ (e.g.  Proposition 3.11 in \cite{greg_annulus}). 
 Since $D_n\subset A_r$ is simply connected,
\[
\frac{d\mu_{D_n}(z^n,w^n)}{d\mu_{A_r}(z^n,w^n)}(\gamma^n) = 1\{\gamma^n\subset D_n\} \exp\left\{\frac{\cc}{2}m_{A_r}(\gamma,\,A_r\sm D_n)\right\}.
\]
If $\kappa\leq 8/3$, then $\cc\leq 0$ and 
\begin{equation*}\label{eqdef3}
H_{D_n}(z^n,w^n)^b=\Psi_{D_n}(z^n,w^n)\leq e^{\frac{c}{2}m^*(r)}\Psi_{A_r}(z^n,w^n).
\end{equation*}
In this case, we can use \eqref{eqdef2} to see
\[
\tilde \Psi_{A_r}(\textbf{z},\textbf{w})=\EE_\prd \left[\EE_\prd\left[Y(\bgamma)\middle|\bgamma' \right]\right]\leq e^{\frac{\cc}{2}m^*(r)} \tilde{\Psi}_{A_r}(\textbf{z}',\textbf{w}')\leq e^{(n-1)\frac{\cc}{2}m^*(r)}\leq 1.
\]
We still do not know if this holds for $8/3<\kappa\leq 4$.

\section{Single-Path Results}\label{secsingle}
In this section, we prove the necessary estimates for  our main results in the next two sections. 
We start with  proving a number of deterministic estimates. 
The main tools are the Koebe-$1/4$ theorem and the distortion estimates. 
Parts of these results already appear in the literature (e.g. in \cite{greg_book, greg_annulus}), but we prove them here for completeness. 
Next, we derive a version of  the annulus Loewner equation motivated by the corresponding choices is simply connected domains.
 Our approach is similar to \cite{bauer,bauer2}, but our choices for uniquely determining $\hbar_t$ in \eqref{eq2.5} are different. 
This version of the annulus Loewner equation is also used in the study of annulus $\sle$ in \cite{greg_annulus, dapeng_2014}.
Finally, we give a proof for  \eqref{ucsd1.5}.
\subsection{Conformal transformations}  \label{confsec}

\begin{lemma}\label{lemback0}
Let $z'\in C_0,\,z\in C_r,\,\bar{z}\in A_r$. Then 
\[
H_{A_r}(\bar{z},z') = \frac{r+\log|\bar{z}|}{2r} + O(|\bar{z}|),
\]
\[
H_{A_r}(\bar{z},z) = \frac{-e^{r}\,\log|\bar{z}|}{2r} + O(|\bar{z}|^{-1}),
\]
\[
H_{A_r}(z',z) = e^{r}\left[\frac{1}{2r}+\bigo(e^{-r})\right].
\]
\end{lemma}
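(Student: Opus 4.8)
The plan is to treat the three displays in order, reducing the second to the first by a conformal symmetry of the annulus and the third to the first by differentiating at a point of $C_r$. For the first display, write $\tau$ for the exit time of $A_r$ and note that $\bar z\mapsto H_{A_r}(\bar z,z')$ and $\bar z\mapsto H_\disk(\bar z,z')$ are both harmonic on $A_r$, both vanish on $C_0\setminus\{z'\}$, and have the \emph{same} singularity at $z'\in C_0$ (since $\disk$ and $A_r$ coincide in a neighborhood of $z'$), while $H_\disk(\cdot,z')$ is bounded on $C_r$. Hence the bounded harmonic function
\[
w(\bar z):=\EE^{\bar z}\bigl[H_\disk(B_\tau,z')\,;\,B_\tau\in C_r\bigr],
\]
which has boundary values $0$ on $C_0$ and $H_\disk(\cdot,z')$ on $C_r$, is such that $H_\disk(\cdot,z')-w$ has precisely the defining properties of $H_{A_r}(\cdot,z')$, so $H_{A_r}(\bar z,z')=H_\disk(\bar z,z')-w(\bar z)$. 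From the explicit formula $H_\disk(v,z')=\frac{1-|v|^2}{2|v-z'|^2}$ one gets $H_\disk(v,z')=\tfrac12+O(e^{-r})$ uniformly for $v\in C_r$ and $H_\disk(\bar z,z')=\tfrac12+O(|\bar z|)$ for $|\bar z|$ bounded away from $1$, while the probability of exiting $A_r$ through the inner circle is the explicit radial harmonic function $\PP^{\bar z}[B_\tau\in C_r]=-\log|\bar z|/r\in(0,1)$; hence $w(\bar z)=\tfrac12\cdot(-\log|\bar z|/r)+O(e^{-r})$, and feeding this back (using $e^{-r}<|\bar z|$ to absorb the $O(e^{-r})$) gives the first display. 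I record for later that
\[
\tilde E(\bar z):=H_{A_r}(\bar z,z')-\frac{r+\log|\bar z|}{2r}=\bigl[H_\disk(\bar z,z')-\tfrac12\bigr]+O(e^{-r})
\]
is harmonic on $A_r$, vanishes on $C_r$, and satisfies $|\tilde E(\bar z)|=O(e^{-r})$ whenever $|\bar z|\le 2e^{-r}$.

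For the second display, $\iota(v)=e^{-r}/v$ is a conformal automorphism of $A_r$ that interchanges $C_0$ and $C_r$ and has $|\iota'(z)|=e^{-r}/|z|^2=e^{r}$ for $z\in C_r$. Conformal covariance of the Poisson kernel gives $H_{A_r}(\bar z,z)=e^{r}H_{A_r}(\iota(\bar z),\iota(z))$; since $\iota(z)\in C_0$, $|\iota(\bar z)|=e^{-r}/|\bar z|$ and $\log|\iota(\bar z)|=-r-\log|\bar z|$, the first display applied to the right-hand side gives $H_{A_r}(\iota(\bar z),\iota(z))=-\log|\bar z|/(2r)+O(e^{-r}/|\bar z|)$, and multiplying by $e^{r}$ yields the second display.

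For the third display, the boundary Poisson kernel $H_{A_r}(z',z)$ equals, by symmetry, the inward normal derivative at $z\in C_r$ of $\bar z\mapsto H_{A_r}(\bar z,z')$. Splitting with the first display, the main term $\partial_{\textbf{n}_z}\bigl[(r+\log|\bar z|)/(2r)\bigr]$ equals $e^{r}/(2r)$, because the unit inward normal at $z\in C_r$ is $z/|z|$ and $\langle\nabla\log|\bar z|,\,z/|z|\rangle=1/|z|=e^{r}$ at $\bar z=z$. For the remaining term, on the shell $\{e^{-r}<|\bar z|<2e^{-r}\}\subset A_r$ the function $\tilde E$ is harmonic, vanishes on the inner circle, and is $O(e^{-r})$; rescaling this shell by $e^{r}$ to the fixed annulus $\{1<|v|<2\}$ and invoking the standard boundary gradient estimate for harmonic functions that vanish on a smooth boundary arc gives $|\partial_{\textbf{n}_z}\tilde E|=O(1)$, whence $H_{A_r}(z',z)=e^{r}/(2r)+O(1)=e^{r}\bigl[1/(2r)+O(e^{-r})\bigr]$. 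The one point that is not a routine estimate is this last one: the error term in the first display cannot be differentiated by brute force, so one must exploit that it is harmonic, vanishes on $C_r$, and is as small as $O(e^{-r})$ in a thin shell next to $C_r$ — which is exactly the input a boundary gradient estimate needs at the correct scale. (Throughout, the error terms are understood uniformly as $|\bar z|\to 0$, which forces $r\to\infty$, for the first two displays, and as $r\to\infty$ for the third.)
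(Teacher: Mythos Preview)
Your proof is correct. For the first two displays your argument is essentially the paper's: you subtract the disk Poisson kernel and control the remainder via the harmonic measure of $C_r$ (the paper phrases this as the strong Markov property at $C_r$, but it is the same decomposition), and you use the same automorphism $\iota(v)=e^{-r}/v$ for the second display.

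For the third display you take a genuinely different route. The paper avoids differentiating the error term altogether: it writes
\[
H_{A_r}(1,z)=\frac{1}{\pi}\int_{C_{r-1}}H_{A_r\cap\disk_{r-1}}(z,v)\,H_{A_r}(v,1)\,|dv|,
\]
inserts the first display at the interior points $v\in C_{r-1}$ (where it gives $H_{A_r}(v,1)=\tfrac{1}{2r}+O(e^{-r})$), and then recognizes the remaining integral as the excursion measure $\mathcal{E}_{A_r\cap\disk_{r-1}}(z,C_{r-1})=e^{r}$. Your approach instead differentiates the first display directly at $z\in C_r$, and the crux is your observation that the error $\tilde E$ is harmonic, vanishes on $C_r$, and is $O(e^{-r})$ on a shell of width $\sim e^{-r}$, so a rescaled boundary gradient estimate bounds $\partial_{\textbf n_z}\tilde E=O(1)$. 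Your method is arguably more self-contained (no auxiliary circle or excursion computation), while the paper's method fits naturally with the excursion-measure machinery used repeatedly elsewhere in the section. Both give the same bound with the same uniformity.
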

\begin{proof}
By symmetry, it is enough to prove the lemma for $z'=1$. 
Note that
\begin{equation}\label{theq1}
H_{A_r}(1,z) = \frac{1}{\pi}\int _{C_{r-1}} H_{A_r\cap \disk_{r-1}}(z,v)\,H_{A_r}(v,1)\,|dv|.
\end{equation}
If $v\in C_{r-1}$, we can see from  the strong Markov property that
\[
H_{A_r}(v,1)=H_\disk(v,1) - \frac{1}{\pi}\int_{C_r} H_{A_r}(v,w)H_{\disk}(w,1)\,|dw|.
\]
Using the exact form of the Poisson kernel in $\disk$, we can see that for any $w\in \disk$
\[
H_\disk(w,1) = \frac{1}{2}[1+\bigo(|w|)].
\]
Therefore,
\begin{align*}
H_{A_r}(v,1) &= \frac{1}{2}[1+\bigo(e^{-r})] - \frac{1}{2\pi}[1+\bigo(e^{-r})] \int_{C_r} H_{A_r}(v,w) |dw|\\
& = \frac{1}{2}[1+\bigo(e^{-r})] - \frac{r-1}{2r}[1+\bigo(e^{-r})]\\
& = \frac{1}{2r} + \bigo(e^{-r}).
\end{align*}
Similarly,
\[
H_{A_r}(\bar{z},1) = \frac{1}{2}[1+\bigo(|\bar{z}|)] +\frac{\log |\bar{z}|}{2r}[1+\bigo(e^{-r})] = \frac{r+\log|\bar{z}|}{2r} + O(|\bar{z}|),
\]
which proves the first equality in the statement of the lemma. The second equality follows from this and the  transformation $z\mapsto e^{-r}/z.$
Using \eqref{theq1}, 
\begin{align*}
H_{A_r}(1,z) &= \left[\frac{1}{2r}+\bigo(e^{-r})\right] \frac{1}{\pi}\int_{C_{r-1}}H_{A_r\cap \disk_{r-1}}(z,v)|dv| \\
&= \left[\frac{1}{2r}+\bigo(e^{-r})\right] \mathcal{E}_{A_r\cap\disk_{r-1}}(z,C_{r-1})\\&
 = e^{r}\left[\frac{1}{2r}+\bigo(e^{-r})\right],
\end{align*}
which proves the last estimate.
\end{proof}
\begin{lemma}\label{lemback1}
Suppose $\gamma:(0,t_\gamma)\in\disk\sm\{0\}$ is a simple curve with $\gamma(0+) = 1$. Let $\Phi$ be a conformal transformation in a neighborhood of $1$ that locally maps $\disk$ to $\hp$ and $C_0$ to $\mathbb{R}$. Let $B_t$ be a complex Brownian motion and define $\tau_t$ to be the first time $B_t$ exits $\disk\sm\gamma_t$. Then at $t=0$,
\[
\partial_t\EE^0[\log|B_{\tau_t}|] =-\partial_t\log \gg'(0)= \frac{-1}{2|\Phi'(1)|^2}\partial_t\hcap[\Phi(\gamma_t)].
\]
\end{lemma}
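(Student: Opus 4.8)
The statement is the conjunction of two facts: the exact identity $\partial_t\EE^0[\log|B_{\tau_t}|]=-\partial_t\log\gg'(0)$, valid for every $t<t_\gamma$, and its evaluation at $t=0$ in terms of the half-plane capacity of a conformal image of $\gamma_t$. The plan is to obtain the first by identifying an explicit harmonic function on $\disk\sm\gamma_t$, and the second by transporting the left-hand side to $\hp$ through an explicit M\"obius map and applying the half-plane capacity expansion \eqref{tchy} together with the covariance rule \eqref{eqhalf}.

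\emph{First equality.} Let $u_t(z)=\EE^z[\log|B_{\tau_t}|]$, the bounded harmonic function on $\disk\sm\gamma_t$ whose boundary value at $w\in C_0\cup\gamma_t$ is $\log|w|$ (the same from both sides of the slit). Since $\gg:\disk\sm\gamma_t\to\disk$ is a conformal bijection with $\gg(0)=0$, the function $z\mapsto\log|z|-\log|\gg(z)|$ is harmonic on $\disk\sm\gamma_t$: its only candidate singularities are the logarithmic poles of the two terms at $0$, and these cancel because $\gg(z)=\gg'(0)\,z\,(1+\smo(1))$, so near $0$ the function equals $-\log\gg'(0)+\smo(1)$ and is bounded. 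On $\partial(\disk\sm\gamma_t)=C_0\cup\gamma_t$ we have $|\gg(w)|=1$, so the function equals $\log|w|$ there. By uniqueness of bounded solutions of the Dirichlet problem, $u_t(z)=\log|z|-\log|\gg(z)|$, and letting $z\to0$ gives $\EE^0[\log|B_{\tau_t}|]=u_t(0)=-\log\gg'(0)$. Differentiating in $t$ yields the first equality.

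\emph{Second equality.} Fix $F(z)=i(1-z)/(1+z)$, a conformal map of $\disk$ onto $\hp$ with $F(C_0)=\R$, $F(1)=0$, $F(0)=i$, $|F'(1)|=1/2$, and $\Im F(z)=(1-|z|^2)/|1+z|^2$. Set $\eta_t=F(\gamma_t)\subset\hp$, a simple curve issuing from $0$, with $d_t:=\diam(\eta_t)\to0$ as $t\to0$. By conformal invariance of Brownian motion, $\EE^0[\Im F(B_{\tau_t})]$ equals the expected imaginary part, at the exit of $\hp\sm\eta_t$, of a Brownian motion started at $F(0)=i$; by \eqref{tchy} (valid once $d_t<1/2$) and $\Im(-1/i)=1$ this equals $\hcap(\eta_t)[1+\bigo(d_t)]$. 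Since $\Im F\equiv0$ on $C_0$, only the event $\{B_{\tau_t}\in\gamma_t\}$ contributes to $\EE^0[\log|B_{\tau_t}|]$, and on $\gamma_t$ a Taylor expansion at $1$ gives $\log|w|=-2\,\Im F(w)+\bigo(d_t^2)$; combined with the harmonic-measure bound $\PP^0[B_{\tau_t}\in\gamma_t]=\bigo(d_t)$ this yields
\[
-\log\gg'(0)=\EE^0[\log|B_{\tau_t}|]=-2\,\hcap(F(\gamma_t))\,[1+\bigo(d_t)].
\]
Differentiating at $t=0$ — the $\bigo(d_t)$ factor contributes nothing because $d_t\to0$ and $\hcap(F(\gamma_t))=\bigo(t)$ — gives $\partial_t\log\gg'(0)=2\,\partial_t\hcap(F(\gamma_t))=\tfrac1{2|F'(1)|^2}\partial_t\hcap(F(\gamma_t))$ at $t=0$. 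Finally, for a general $\Phi$ as in the statement, the map $\Phi\circ F^{-1}$ sends a neighborhood of $0$ in $\hp$ into $\hp$ and $\R$ into $\R$, so \eqref{eqhalf} gives $\partial_t\hcap[\Phi(\gamma_t)]=((\Phi\circ F^{-1})'(0))^2\,\partial_t\hcap[F(\gamma_t)]$ at $t=0$; since $(\Phi\circ F^{-1})'(0)=\Phi'(1)/F'(1)$ is real and positive, this equals $(|\Phi'(1)|^2/|F'(1)|^2)\,\partial_t\hcap[F(\gamma_t)]$, and substituting gives $\tfrac{-1}{2|\Phi'(1)|^2}\partial_t\hcap[\Phi(\gamma_t)]=-2\,\partial_t\hcap[F(\gamma_t)]=-\partial_t\log\gg'(0)$ at $t=0$.

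\emph{Main obstacle.} The delicate part is the second equality: one must make the error terms in the displayed asymptotic precise enough to differentiate at $t=0$, i.e.\ verify that $\hcap(F(\gamma_t))$ is differentiable in the given parametrization with $\hcap(F(\gamma_t))=\bigo(t)$, that the Taylor remainder $\log|w|+2\,\Im F(w)=\bigo(d_t^2)$ is uniform over $w\in\gamma_t$, and that $\PP^0[B_{\tau_t}\in\gamma_t]=\bigo(d_t)$. All three follow from the Koebe and distortion estimates collected in Section \ref{confsec} together with the explicit form of $F$ near $1$; the conformal transport of Brownian motion and the reduction from $F$ to a general $\Phi$ via \eqref{eqhalf} are then routine.
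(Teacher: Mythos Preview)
Your first equality matches the paper's exactly: both identify $-\log\gg'(0)$ as the value at $0$ of the bounded harmonic function $\log|z|-\log|\gg(z)|$. For the second equality both arguments pass to $\hp$ via the same M\"obius map $F(z)=i(1-z)/(1+z)$ and conclude with \eqref{eqhalf}, but the mechanisms differ. The paper writes $\gg'(0)=|g_t'(i)|/\Im g_t(i)$ with $g_t$ the hydrodynamically normalized map of $\hp\sm F(\gamma_t)$, expands both numerator and denominator from \eqref{tchy} together with a harmonic--gradient estimate on $\Im[g_t(z)-z-\hcap/z]$, and obtains $\log\gg'(0)=2\,\hcap+O(d_t\,\hcap)$. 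You instead use the pointwise Taylor comparison $\log|w|=-2\,\Im F(w)+O(d_t^2)$ on $\gamma_t$ and transport $\EE^0[\Im F(B_{\tau_t})]$ to $\hp$ by conformal invariance of Brownian motion. Your route is a bit more direct in that it avoids estimating $g_t'(i)$, but the price is an error of the form $O(d_t\,\hcap)+O(d_t^3)$ rather than $O(d_t\,\hcap)$: the Taylor remainder times the harmonic--measure bound gives an $O(d_t^3)$ contribution that cannot in general be absorbed into $O(d_t\,\hcap)$, since a simple curve grazing $C_0$ can have $d_t^2\gg\hcap$. Thus your differentiation step implicitly needs $d_t^3=o(t)$ in addition to $\hcap=O(t)$. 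This holds in every use of the lemma in the paper (the parametrizations give $d_t\asymp\sqrt t$), but the paper's route through $g_t'(i)$ differentiates cleanly for an arbitrary simple curve.
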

\begin{proof}
By Schwarz lemma, $f_t(z)=\log(\gg(z)/z)$ is a well-defined bounded analytic function on $\disk\sm \gamma_t$ with $f_t(0) = \log\gg'(0)\in\mathbb{R}$. Hence, $\Re[f_t(z)]$ is a bounded harmonic function and
\[
\Re[f_t(z)] = \EE^z\left[\Re[f_t(B_{\tau_t})]\right] = -\EE^z[\log|B_{\tau_t}|].
\]
In particular,
\[
\log \gg'(0) = -\EE^0[\log|B_{\tau_t}|].
\]
Note that $\phi(z) = i(1-z)/(1+z)$ is a conformal transformation from $\disk$ to $\hp$ with $\phi(0) = i$. Let $\tilde{\gamma}_t = \phi(\gamma_t)$ and define $ g_t:\hp\sm\eta_t\to \hp$  to be the unique conformal transformation with ${g}_t (z) =  z+\smo(1)$ as $z\to\infty$. Let 
\[
\phi_t(z)=\frac{z-\Re[{g}_t(i)]}{\Im[{g}_t(i)]},
\] be a conformal transformation $\phi_t:\hp\to\hp$ satisfying $\phi_t\circ{g}_t(i) = i$.
Then 
\begin{equation}\label{eqdef4}
\gg'(0) = |\partial_z\,\phi^{-1}\circ\phi_t\circ{g}_t\circ\phi(0)| = \frac{{g}'_t(i)}{\Im[{g}_t(i)]}.
\end{equation}
If $|z|>2r_t$, then \eqref{tchy} and the fact that  $\Im[z-g_t(z)]$ is a bounded harmonic function imply that 
\begin{equation}\label{theq0.2}
\Im[z]-\Im[{g}_t(z)] = \Im[-1/z]\,\hcap[{\eta}_t]\,\left[1+\bigo(r_t/|z|)\right].
\end{equation}
Let $f_t(z) = g_t(z) - z - \hcap[\eta_t]/z$ and $v_t(z) = \Im[f_t(z)]$. 
Using \eqref{theq0.2}, we can see that there exists a constant $c$ such that for every $|z|>3\,r_t/2$,
\[
|v_t(z)| \leq c\,r_t\,\hcap[\eta_t] \,|z|^{-2}.
\]
By the mean value property of harmonic functions we can see that for $|z|>2r_t$,
\[
|f'_t(z)| = \lvert g'_t(z) - 1 + \hcap[\eta_t]/z^2\rvert \leq c\,r_t\,\hcap[\eta_t] \,|z|^{-3}.
\]
It follows from this and \eqref{theq0.2} that for small enough $t$,
\begin{align*}
\Im[{g}_t(i)] &= 1 - \hcap[{\eta}_t] + \bigo(r_t\hcap[{\eta}_t]),\\
|{g}'_t(i)| = & 1 + \hcap[{\eta}_t] + \bigo(r_t\hcap[{\eta}_t]).
\end{align*}
Substituting these into \eqref{eqdef4} gives
\[
\gg'(0) = 1+\hcap[\eta_t]\,\left[2 + \bigo(\hcap[\eta_t])\right].
\]
In addition, \eqref{eqhalf} implies that at $t=0$,
\[
\partial_t\hcap[\Phi(\gamma_t)] = (\Phi\circ \phi^{-1})'(0)^2\,\partial_t\hcap[{\eta}_t],
\]
from which the result follows.
\end{proof}
\begin{lemma}\label{lemback4}
Suppose $D\subset \disk$ is a simply connected domain with $C_r\subset D$. Let $K = \disk\sm D$ and $\hat{r}=2\pi/\mathcal{E}_{A_r\cap D}(C_r, \partial D).$  Define $g:D\to\disk$ to be the unique conformal transformation with $g(0) = 0,\,g'(0)>0$. Then 
\begin{equation}\label{theq2}
\hat{r} = r-\log g'(0) +\bigo(e^{-r+\log g'(0)}),
\end{equation}
and
\[
m_\disk(K, C_r) = \log({r}/{\hat r})+\bigo(e^{-\hat{r}}).
\]
\end{lemma}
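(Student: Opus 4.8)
The plan is to establish both estimates by conformal transport to the standardized annulus $A_{\hat r}$, exploiting the fact that the excursion measure $\mathcal{E}$ between boundary components is a conformal invariant, while the Brownian loop measure term is handled by its known behavior under the restriction property. First I would use $g$ to push forward: since $g:D\to\disk$ with $g(0)=0,\,g'(0)>0$, the image $g(C_r)$ is a closed curve near the circle $C_{\hat r}$ (this is the content of the definition $\hat r = 2\pi/\mathcal{E}_{A_r\cap D}(C_r,\partial D)$ together with conformal invariance of excursion measure, exactly as in the remark following the definition of $\mathcal{E}_{A_r}(C_0,C_r)=2\pi/r$ in Section \ref{secnotation}). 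To get the sharper statement \eqref{theq2} that $\hat r$ and $r-\log g'(0)$ differ by only $\bigo(e^{-r+\log g'(0)})$, I would compare $g$ with the identity on the inner circle $C_r$: the map $z\mapsto g(z)$ restricted to a neighborhood of $0$ satisfies $g(z)=g'(0)z + O(|z|^2)$, and since $C_r$ has radius $e^{-r}$, on $C_r$ we have $|g(z)| = g'(0)e^{-r}\,[1+O(e^{-r})]$, so $-\log|g(z)| = r - \log g'(0) + O(e^{-r})$. Because $g(C_r)$ is not exactly a circle, I would then invoke the Schwarz lemma / Koebe distortion bounds (the "main tools" advertised at the start of Section \ref{secsingle}) to control the modulus of the genuine annulus bounded by $g(\partial D)=C_0$ and $g(C_r)$, showing its "log-modulus" agrees with $-\log|g(z)|$ on $C_r$ up to the stated error; the error is $\bigo(e^{-r})$ which, since $\log g'(0)\le 0$ by the Schwarz lemma, can be written as $\bigo(e^{-r+\log g'(0)})$.

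Next, for the loop-measure identity, I would use the conformal invariance of the Brownian loop measure: $m_\disk(K,C_r)$ counts loops in $\disk$ meeting both $K=\disk\setminus D$ and $C_r$; equivalently, by restriction, it is the difference between the loop measure in $\disk$ of loops hitting $C_r$ and the loop measure in $D$ of loops hitting $C_r$. Under $g:D\to\disk$, the latter becomes the loop measure in $\disk$ of loops hitting $g(C_r)$. Using the standard formula for the Brownian loop measure of loops in $\disk$ surrounding (or meeting) a concentric disk of radius $\rho$ — which is $\log\log(1/\rho')$-type; more precisely, the measure of loops in $\disk$ that separate $0$ from $C_0$ and meet $C_s$ behaves like $\frac12\log(\text{something})$ — I would reduce $m_\disk(K,C_r)$ to the difference of two such quantities evaluated at "radius parameters" $r$ and $\hat r$, yielding the leading term $\log(r/\hat r)$. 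The $\bigo(e^{-\hat r})$ error comes from: (i) the fact that $g(C_r)$ is an approximate rather than exact circle (controlled via the first part and distortion estimates, contributing $\bigo(e^{-\hat r})$ since $\hat r \approx r - \log g'(0) \le r$), and (ii) the exponentially small corrections in the explicit loop-measure formula for the annulus, analogous to the $\bigo(e^{-r})$ terms in Lemma \ref{lemback0} and in \eqref{ucsd1}.

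The main obstacle I anticipate is the second part — getting the loop-measure asymptotics with the correct error term — because it requires an explicit (or sufficiently explicit) formula for the Brownian loop measure of loops in a disk that intersect a small concentric curve, valid not just for concentric circles but for the slightly distorted curve $g(C_r)$, together with a clean argument that the distortion only perturbs the answer by $\bigo(e^{-\hat r})$. I expect the right strategy is to first prove the identity for the exact annulus $A_{\hat r}$ (where $m_{A_r}$-type computations as in Proposition 3.11 of \cite{greg_annulus} or the bound $0<m^*(r)<r/6$ recalled in Section \ref{defs2} are available) and then transfer, absorbing the discrepancy between the true domain and $A_{\hat r}$ using the Koebe-$1/4$ theorem and distortion bounds exactly as in Lemma \ref{lemback1}; the periodicity and covering-space bookkeeping from Section \ref{secnotation} is not needed here since everything takes place in simply connected $D\subset\disk$. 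The first part \eqref{theq2} should be comparatively routine given Schwarz's lemma and the distortion theorem.
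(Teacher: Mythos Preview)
Your outline for \eqref{theq2} has the right shape (apply Koebe/distortion to $g$ near the origin), but the bookkeeping is off. Since $g^{-1}:\disk\to D\subset\disk$ fixes $0$, the Schwarz lemma gives $g'(0)\ge 1$, not $\le 1$; moreover the Taylor remainder $O(|z|^2)$ hides a constant depending on $g''(0)$, which by Bieberbach/distortion is only bounded by a multiple of $g'(0)^2$, so the relative error on $C_r$ is $O(g'(0)e^{-r})=O(e^{-r+\log g'(0)})$ directly, not $O(e^{-r})$. The paper makes this precise by applying the distortion theorem to $g$ on the disk $\disk_s$ with $s=\log(4g'(0))$ (this is where the Koebe-$1/4$ theorem enters, to guarantee $\disk_s\subset D$), obtaining $|g(w)-g'(0)w|\le c\,g'(0)\,e^{-r+s}|w|$ for $|w|\le e^{-r}$, from which \eqref{theq2} follows.

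The more serious gap is in your plan for the loop-measure identity. Your proposed decomposition, ``$m_\disk(K,C_r)=[\text{loops in }\disk\text{ hitting }C_r]-[\text{loops in }D\text{ hitting }C_r]$'', is formally an $\infty-\infty$: both terms are infinite because of small loops accumulating on $C_r$, so conformally transporting the second term to $[\text{loops in }\disk\text{ hitting }g(C_r)]$ does not produce a usable difference, and the ``standard formula'' you allude to (measure of loops in $\disk$ separating $0$ from $C_0$ and meeting $C_s$) is not a finite quantity either. The paper avoids this entirely by a direct bubble-measure computation: it writes
\[
m_\disk(K,C_r)=\frac{1}{\pi}\int_r^\infty\int_0^{2\pi} e^{-2t}\,\Gamma_{A_t}(e^{-t+i\theta},K)\,d\theta\,dt,
\]
rooting each loop at its point of minimum modulus $e^{-t}$ (so $t\ge r$), and then computes $\Gamma_{A_t}(w,K)$ for $w\in C_t$ by expressing it as an integral of $H_{A_t}(z,w)$ against the excursion measure $\mathcal{E}_{A_t\cap D}(w,dz)$ over $z\in\disk\cap\partial D$. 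Using Lemma~\ref{lemback0} and the relation $\mathcal{E}_{A_t\cap D}(C_t,\partial D)=2\pi/(t-\log g'(0))\,[1+O(e^{-t+\log g'(0)})]$ (which is just \eqref{theq2} with $r$ replaced by $t$), one finds
\[
\Gamma_{A_t}(w,K)=\frac{e^{2t}}{2}\Bigl[\frac{1}{t-\log g'(0)}-\frac{1}{t}\Bigr]\bigl[1+O((t-s)e^{-t+s})\bigr],
\]
and integrating over $t\ge r$ gives $\log\bigl(r/(r-\log g'(0))\bigr)+O(e^{-r+s})=\log(r/\hat r)+O(e^{-\hat r})$. The key idea you are missing is this rooted-loop (bubble) decomposition; without it there is no clean way to reduce the problem to the annulus Poisson-kernel estimates of Lemma~\ref{lemback0}.
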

\begin{proof}
Let $s=\log (4\,g'(0))$ and note that by Koebe-$1/4$ theorem, $C_s\subset D$. Without the loss of generality, assume $s<r$. Distortion theorem for the transformation $g$ restricted to $\disk_s$ implies that there exists a constant $c$ such that for any $|w|\leq e^{-r}$,
\[
|g'(w)-g'(0)| \leq cg'(0) e^{-r+s}.
\]
By taking integral of this we get
\[
|g(w)-w\,g'(0)| \leq c|w|\,g'(0)\,e^{-r+s},
\] 
from which \eqref{theq2} follows.

For $x\in C_t$, let $\Gamma_{A_t}(x,K)$ be the bubble measure of loops rooted at $x$ that intersect $K$ (That is, the measure induced by Brownian excursions rooted at $x$ in $A_t$ restricted to the loops that intersect $k$. See \cite{greg_loop} or Section 5.5 in \cite{greg_book} for more details). Then
\begin{equation}\label{theq3}
m_\disk(K,C_r) = \frac{1}{\pi}\int_r^\infty\int_0^{2\pi} e^{-2t}\,\Gamma_{A_t}(e^{-t+i\theta},K)\,d\theta\,dt.
\end{equation}
For $z\in A_r$, the probability that a Brownian motion starting at $z$ exits $A_r$ at $C_0$ is $1+\log|z|/t$. Using this and the strong Markov property for Brownian motion, we can see that
\begin{align*}
\frac{2\pi}{t}=\mathcal{E}_{A_t}(C_t,C_0) &= \int_{ \partial D}\left[1+\frac{\log|z|}{t}\right]\,\mathcal{E}_{A_t\cap D}(C_t, dz)\\
& = \mathcal{E}_{A_t\cap D}(C_t,\partial D) + \int_{ \disk \cap \partial D}\frac{\log|z|}{t}\,\mathcal{E}_{A_t\cap D}(C_t, dz).
\end{align*}
Using equation \eqref{theq2}, we can write
\[
\mathcal{E}_{A_t\cap D}(C_t,\partial D) =\int_{\partial D}\mathcal{E}_{A_t\cap D}(C_t,dz)= \frac{2\pi}{t-\log g'(0)}\left[1 + \bigo(e^{-t+\log g'(0)})\right].
\]
Therefore,
\[
\int_{\disk\cap \partial D} \frac{-\log|z|}{t}\,\mathcal{E}_{A_t\cap D}(C_t,dz) = \frac{2\pi}{t-\log g'(0)}-\frac{2\pi}{t} + \bigo\left(\frac{e^{-t+\log g'(0)}}{t-\log g'(0)}\right).
\]
Let $V\subset \partial D$ and recall that for any $z\in D$, $\rho_D(z;V)$ denotes the probability that a Brownian motion starting at $z$ exits $D$ at $V$. Since $C_s\subset D$, for any $w\in C_t$ we have 
\begin{align*}
\mathcal{E}_{A_t\cap D}(C_t,V) &= \frac{1}{\pi}\int_{C_t}\int_{C_s}H_{A_t\cap \disk_s}(v,u)\,\rho_D(u;V)\,|du|\,|dv|\\
& = \frac{\left[1+\bigo\left((t-s)e^{-t+s}\right)\right]}{\pi}\int_{C_t}\int_{C_s}H_{A_t\cap \disk_s}(w,u)\,\rho_D(u;V)\,|du|\,|dv|\\
& = e^{-t}\,\mathcal{E}_{A_t\cap D}(w,V)\,[1+\bigo\left((t-s)e^{-t+s}\right)],
\end{align*}
where the second equality follows from Lemma \ref{lemback0}. 

Using this, Lemma \ref{lemback0} and the fact that $\dist(0,\partial D) > 4\,g'(0)$, we can see that for $w\in C_t$,
\begin{align*}
\Gamma_{A_t}(w, K) &= \int_{\disk\cap\partial D} H_{A_t}(z,w) \,\mathcal{E}_{A_t\cap D}(w,dz) \\
& = \frac{e^{2t}}{2\pi}\,\left[1+\bigo\left((t-s)e^{-t+s}\right)\right]\int_{\disk\cap\partial D} \frac{-\log|z|}{2t}\mathcal{E}_{A_t\cap D}(C_t,dz)\\
& = \frac{e^{2t}}{2}\,\left[1+\bigo\left((t-s)e^{-t+s}\right)\right]\left[\frac{1}{t-\log g'(0)}-\frac{1}{t} + \bigo\left(\frac{e^{-t+\log g'(0)}}{t-\log g'(0)}\right)\right].
\end{align*}
Plugging this into \eqref{theq3}, we get
\[
m_\disk(K,C_r) = \log\left(\frac{r}{r-\log g'(0)}\right) + \bigo(e^{-r+s}).
\]
The result follows from this and \eqref{theq2}.
\end{proof}

\begin{lemma}  \label{lemback3}
Suppose $D\subset \disk$ is a simply connected domain such that $\disk\sm D\subset A_r$ and let $K=D\cap A_r$. Define $f:K\to A_{\hat r}$ to be a conformal transformation satisfying $f(C_r) = C_{\hat{r}}$ and let $g:D\to\disk$ be the unique conformal transformation with $g(0) = 0,\, g'(0)>0$. Then for $z\in C_r$,
\[
|f'(z)| = e^{r-\hat{r}}\left[1+O\left(e^{-\hat r}\right)\right]
= g'(0) \left[1+O\left(e^{-\hat r}\right)\right],
\] 
where the error term is independent of $z$.
\end{lemma}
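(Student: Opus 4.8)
The plan is to combine Lemma~\ref{lemback4} with the distortion theorem and a Schwarz‑reflection argument, the point being to factor $f$ through $g$. First note that the $\hat r$ in the statement agrees with the one in Lemma~\ref{lemback4}: conformal invariance of the excursion measure under $f$ gives $\mathcal{E}_{A_r\cap D}(C_r,\partial D)=\mathcal{E}_{A_{\hat r}}(C_{\hat r},C_0)=2\pi/\hat r$, so $\hat r=2\pi/\mathcal{E}_{A_r\cap D}(C_r,\partial D)$, and the hypothesis $\disk\setminus D\subset A_r$ forces $\overline{\disk_r}\subset D$, in particular $C_r\subset D$, so Lemma~\ref{lemback4} applies to $D$. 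Its equation~\eqref{theq2} then reads $\log g'(0)=r-\hat r+O(e^{-\hat r})$, i.e. $e^{r-\hat r}=g'(0)\,[1+O(e^{-\hat r})]$, which is the second equality in the statement. Hence it remains to prove $|f'(z)|=g'(0)\,[1+O(e^{-\hat r})]$ for $z\in C_r$. (We treat the case $\hat r\geq 1$, say, which is the regime in which the estimate is nontrivial and is applied; for bounded $\hat r$ the $O$‑term is of order one.)

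Since $\overline{\disk_r}\subset D$ we have $K=D\cap A_r=D\setminus\overline{\disk_r}$ and $g(K)=\disk\setminus\overline{g(\disk_r)}$, a doubly connected domain whose \emph{outer} boundary is the circle $C_0$ and whose inner boundary is the analytic curve $g(C_r)$; moreover $F:=f\circ g^{-1}\colon g(K)\to A_{\hat r}$ is conformal with $F(C_0)=C_0$, $F(g(C_r))=C_{\hat r}$, and $|f'(z)|=|F'(g(z))|\,|g'(z)|$ for $z\in C_r$. For the factor $|g'(z)|$: by Koebe-$1/4$ applied to $g^{-1}$, $\disk_s\subset D$ with $e^{-s}=1/(4g'(0))$; since $g'(0)\asymp e^{r-\hat r}$ the circle $C_r$ lies at relative radius $4e^{-r}g'(0)=O(e^{-\hat r})$ inside $\disk_s$, so the distortion theorem gives $|g'(z)|=g'(0)\,[1+O(e^{-\hat r})]$ on $C_r$, and also shows that $g(C_r)$ lies within relative error $O(e^{-\hat r})$ of the circle $\{|w|=g'(0)e^{-r}\}$, whose radius is $e^{-\hat r}\,[1+O(e^{-\hat r})]$ by~\eqref{theq2}.

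The heart of the proof is to show $|F'(\zeta)|=1+O(e^{-\hat r})$ on $g(C_r)$. Because the outer boundary of both $g(K)$ and $A_{\hat r}$ is the unit circle, Schwarz reflection of $F^{-1}$ across $C_0$ — the \emph{exact} inversion $w\mapsto 1/\bar w$ on both sides — extends $F^{-1}$ to a conformal map $\widehat{F^{-1}}$ of the annulus $\{e^{-\hat r}<|w|<e^{\hat r}\}$; its image omits $0$ and it has winding number one around $0$ on circles, so $G(w):=\log\bigl(\widehat{F^{-1}}(w)/w\bigr)$ is a well-defined analytic function there. On $C_{\hat r}$ we have $|\widehat{F^{-1}}(w)|=|F^{-1}(w)|\in g(C_r)$, of modulus $e^{-\hat r}\,[1+O(e^{-\hat r})]$, so $\Re G=O(e^{-\hat r})$ on $C_{\hat r}$; and on $|w|=e^{\hat r}$ one computes $|\widehat{F^{-1}}(w)|=1/|F^{-1}(1/\bar w)|=e^{\hat r}\,[1+O(e^{-\hat r})]$, so $\Re G=O(e^{-\hat r})$ there as well. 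By the maximum principle $|\Re G|=O(e^{-\hat r})$ throughout $\{e^{-\hat r}<|w|<e^{\hat r}\}$; together with the boundary value $\Re G=O(e^{-\hat r})$ on $C_{\hat r}$, a standard boundary gradient estimate for harmonic functions — applied after rescaling the circle $C_{\hat r}$ of radius $e^{-\hat r}$ to the unit circle — yields $|G'(w)|=O(1)$, hence $|wG'(w)|=O(e^{-\hat r})$, on $C_{\hat r}$. Since $\widehat{F^{-1}}(w)=w\,e^{G(w)}$, this gives $|(F^{-1})'(w)|=e^{\Re G(w)}\,|1+wG'(w)|=1+O(e^{-\hat r})$ on $C_{\hat r}$, i.e. $|F'(\zeta)|=1+O(e^{-\hat r})$ on $g(C_r)$. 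Combining the three estimates, $|f'(z)|=|F'(g(z))|\,|g'(z)|=g'(0)\,[1+O(e^{-\hat r})]=e^{r-\hat r}\,[1+O(e^{-\hat r})]$ on $C_r$, with error independent of $z$.

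The main obstacle is exactly this last step: controlling the conformal map between two copies of ``$\disk$ minus a small, nearly round hole''. The decisive idea is to reflect across the \emph{outer} circle $C_0$, where the reflection is exact and contributes no error, rather than across the small inner circle; reflecting inward across $C_{\hat r}$ would drag the (possibly very irregular) shape of $\partial D$ into the analytic continuation and ruin the estimate. What remains is careful but routine bookkeeping — making the distortion-theorem errors uniform in $z$ and running the boundary gradient estimate at the correct scale near the small circle $C_{\hat r}$, whose curvature is of order $e^{\hat r}$.
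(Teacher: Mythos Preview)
Your approach differs genuinely from the paper's: you factor $f=F\circ g$ and attack $|F'|$ on the inner boundary via Schwarz reflection across the outer circle $C_0$, whereas the paper never differentiates at the boundary. It uses the conformal covariance identity $\rho_K(z;\partial D)=|f'(z)|\,e^{\hat r}/\hat r$ from~\eqref{eqsecond} and then computes the excursion density $\rho_K(z;\partial D)$ probabilistically, by conditioning on the first visit to the interior circle $C_{r-1}$ and applying the distortion theorem there.

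There is, however, a real gap at your ``boundary gradient estimate''. From $|\Re G|=O(e^{-\hat r})$ on the open annulus $\{e^{-\hat r}<|w|<e^{\hat r}\}$ one cannot conclude $|wG'(w)|=O(e^{-\hat r})$ on $C_{\hat r}$: a sup bound on a harmonic function controls its normal derivative at a smooth boundary, but not its tangential derivative, and $|G'|$ needs both. Concretely, for any fixed $n\ge 1$ the function $G(w)=M\,(e^{\hat r}w)^{-n}$ is analytic on the annulus with $|\Re G|\le M$ everywhere, yet $wG'(w)=-nG(w)$ has $|wG'(w)|=nM$ on $C_{\hat r}$; so no ``standard'' estimate of the form you invoke can exist. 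What would rescue the argument is an analytic continuation of $G$ a fixed multiplicative factor past $C_{\hat r}$, together with a bound $|\Re G|=O(e^{-\hat r})$ on that larger annulus; then an \emph{interior} gradient estimate at $C_{\hat r}$ would give what you want. Such a continuation is available, since $F^{-1}(C_{\hat r})=g(C_r)$ is an analytic curve and one can reflect $F^{-1}$ across it, but showing the extension reaches $\{|w|>c^{-1}e^{-\hat r}\}$ for an absolute $c>1$ with the required bound on $\Re G$ takes genuine additional work. The paper's excursion-measure computation avoids this difficulty entirely by trading the boundary derivative for a hitting probability that can be estimated from the interior.
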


\begin{proof}
  By conformal invariance of the excursion measure,
\begin{equation}\label{eqfirst}
\frac{2\pi}{\hat{r}}=\mathcal{E}_{A_{\hat{r}}}(C_0,C_{\hat{r}}) = \mathcal{E}_{g(K)}(C_0,g(C_r))=\mathcal{E}_K(\partial D,C_r).
\end{equation}
For $z\in C_r$, recall that $\rho_K(z;\partial D)$ is the probability that a Brownian excursion starting from $z$  exits  $K$ at $\partial D$.
Using  \eqref{eqfirst} gives us
\[
\frac{2\pi}{\hat{r}}=\int_{C_r} \rho_K(z;\partial D)|dz|.
\]
Equation  \eqref{eqsecond} indicates that
\begin{equation}\label{eqlem3-1}
\rho_K(z;\partial D) = |f'(z)|\,\frac{e^{\hat{r}}}{\hat{r}}.
\end{equation}
If $s = r-1$, we can  write
\[
\rho_K(z;\partial D) =\frac{1}{\pi}\int_{C_{{r-1}}} H_{\disk_{{r-1}}\cap A_{{r}}}(z,w)\,\rho_K(w;\partial D)\,|dw|. 
\]
If we prove that uniformly over $w\in C_{r-1}$,
\begin{equation}\label{ryan}
\rho_K(w;\partial D) = \frac{1}{\hat{r}}\left[1+\bigo(e^{-\hat{r}})\right],
\end{equation}
then 
\begin{align*}
\rho_K(z;\partial D) &=\frac{1}{\hat r}\left[1+\bigo(e^{-\hat{r}})\right]\, \frac{1}{\pi}\int_{C_{{r-1}}} H_{\disk_{{r-1}}\cap A_{{r}}}(z,w)\,|dw|\\
& = \frac{1}{\hat r}\left[1+\bigo(e^{-\hat{r}})\right]\, \mathcal{E}_{\disk_{r-1}\cap A_r}(z,C_{r-1}) \\
 & = \frac{e^{r}}{\hat r}\left[1+\bigo(e^{-\hat{r}})\right],
\end{align*}
and the first equality in the statement of the lemma follows from \eqref{eqlem3-1}.

To show \eqref{ryan}, let ${s} = \log(4g'(0))$.  Koebe-$1/4$ theorem implies that $C_{ s} \subset D$. It follows from the distortion estimates for the transformation $g$ restricted to $\disk_{ s}$ that there exists a constant $c$ such that for all $|w|=e^{-r+1}$,
\begin{gather*}
|g'(w)-g'(0)|\leq c\,g'(0)e^{-r+s},\\
|g(w) - w\,g'(0)|\leq c\,g'(0) e^{-2r+s},\\
 |g(w)| = g'(0)\,e^{-r+1}\left[ 1 +\bigo(e^{-r+s})\right]  .
\end{gather*}
Using this and \eqref{theq2}, we get
\[
\rho_K(w;\partial D) = \rho_{A_{\hat{r}}}(f(w); C_0) = 1+\frac{\log|f(w)|}{\hat{r}}=\frac{1}{\hat{r}}\left[1+\bigo(e^{-\hat{r}})\right].
\]
The second equality in the statement of the lemma follows from   \eqref{theq2}.
\end{proof}
\begin{corollary}\label{corback1}
Suppose $D\subset \disk$ is a simply connected domain containing the origin. Let $K=\disk \sm \bar D$ and choose $\hat r$ such that there exists a conformal transformation $f:K\to A_{\hat{r}}$  satisfying $f(C_0) = C_{0}$. Then for $z\in C_0$,
\[
|f'(z)| = \left[1+O(e^{-\hat{r}})\right].
\]
\end{corollary}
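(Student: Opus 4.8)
The plan is to convert the statement into an estimate on harmonic measure and then to run the same kind of argument as in Lemma~\ref{lemback3}, with the roles of the inner and outer circles interchanged. Since $f$ is conformal and $f(C_0)=C_0$, it sends $\partial D$ to $C_{\hat r}$; so for $z\in C_0$ the conformal covariance of the Poisson kernel \eqref{eqsecond} gives $\rho_K(z;\partial D)=|f'(z)|\,\rho_{A_{\hat r}}(f(z);C_{\hat r})$, and since $z\mapsto\PP^z[B\text{ exits }A_{\hat r}\text{ at }C_{\hat r}]=-\log|z|/\hat r$ has constant inward normal derivative $1/\hat r$ on $C_0$, we get $\rho_{A_{\hat r}}(f(z);C_{\hat r})=1/\hat r$ and hence
\[
|f'(z)|=\hat r\,\rho_K(z;\partial D),\qquad z\in C_0 .
\]
Conformal invariance of excursion measure already gives $\mathcal E_K(C_0,\partial D)=\mathcal E_{A_{\hat r}}(C_0,C_{\hat r})=2\pi/\hat r$, so $\rho_K(\,\cdot\,;\partial D)$ averages to $1/\hat r$ over $C_0$; the task is to upgrade this to the uniform bound $\rho_K(z;\partial D)=\tfrac1{\hat r}[1+\bigo(e^{-\hat r})]$.

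The first real step is to show that $\bar D$ is a tiny set near the origin once $\hat r$ is large. The ring domain $K=\disk\sm\bar D$ separates the continuum $\bar D$, which contains $0$, from $C_0$, and it has modulus $\hat r/(2\pi)$; by Gr\"otzsch's inequality this forces $\max_{z\in\bar D}|z|=\bigo(e^{-\hat r})$, hence $\diam(\bar D)=\bigo(e^{-\hat r})$. Here is precisely where the hypothesis $0\in D$ is used: otherwise $\bar D$ could be a small disk centered away from the origin, $f$ would pick up a M\"obius factor, and $|f'|$ would not be close to $1$ on $C_0$.

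Next comes a distortion/perturbation estimate, as behind \eqref{ryan}. Pass to the exterior uniformization $\Psi:\widehat{\C}\sm\bar D\to\widehat{\C}\sm\overline{\disk_{\rho}}$ with $\Psi(\infty)=\infty$, $\Psi'(\infty)=1$, $\rho=\Cp(\bar D)=\bigo(e^{-\hat r})$; the smallness of $\bar D$ gives $\Psi(z)=z+\bigo(e^{-\hat r})$ and $|\Psi'(z)|=1+\bigo(e^{-\hat r})$ on $C_0$, and $\Psi$ carries $K$ to a ring domain with inner boundary the round circle $C_\rho$ and outer boundary $\bigo(e^{-\hat r})$-close to $C_0$. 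Composing with the conformal map $\Theta$ of that ring domain onto $A_{\hat r}$ (outer boundary to $C_0$), one has $f=(\text{rotation})\circ\Theta\circ\Psi$, and since $\Theta$ differs from the identity only by the $\bigo(e^{-\hat r})$ perturbation of the outer boundary, $|\Theta'|=1+\bigo(e^{-\hat r})$ on $C_0$; thus $|f'(z)|=|\Psi'(z)|\,|\Theta'(\Psi(z))|=1+\bigo(e^{-\hat r})$. One may instead stay in the Brownian picture: run an excursion from $z\in C_0$ to a fixed circle $C_s$ with $\bar D\subset\disk_s$, apply the strong Markov property, and repeat the distortion computation of \eqref{ryan}.

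The main obstacle is this last step — improving the qualitative bound $\rho_K(\,\cdot\,;\partial D)\asymp 1/\hat r$ on $C_0$ to agreement with $1/\hat r$ up to a factor $1+\bigo(e^{-\hat r})$, which needs uniform control of the conformal map near $C_0$. Once the Gr\"otzsch estimate has placed $\bar D$ deep inside $\disk$ near the origin, this follows from the distortion estimates exactly as in Lemma~\ref{lemback3}, the rest being routine bookkeeping with the Poisson-kernel asymptotics of Lemma~\ref{lemback0}.
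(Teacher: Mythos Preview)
Your argument is correct, but the paper's proof is much shorter and takes a different route. The paper simply picks $r$ with $C_r\subset D$, sets $g_1(z)=e^{-r}/z$ and $g_2(z)=e^{-\hat r}/z$, and observes that $\hat f:=g_2\circ f\circ g_1$ is a conformal map from $g_1(K)$ onto $A_{\hat r}$ sending $C_r$ to $C_{\hat r}$, i.e.\ exactly the setting of Lemma~\ref{lemback3}; the chain rule then gives $|f'(z)|=e^{\hat r-r}|\hat f'(g_1^{-1}(z))|=1+O(e^{-\hat r})$ for $z\in C_0$. In other words, the corollary is reduced to Lemma~\ref{lemback3} by a single inversion that swaps the roles of the two boundary circles.

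Your approach instead reproves the analogue of Lemma~\ref{lemback3} from scratch: you use Gr\"otzsch's inequality to show that $\bar D$ has diameter $O(e^{-\hat r})$, then invoke an exterior uniformization and a boundary-perturbation estimate (or, equivalently, the Brownian/strong Markov computation behind \eqref{ryan}). This is a valid and self-contained argument, and it has the merit of making the geometric mechanism---that $\bar D$ must be tiny when $\hat r$ is large---explicit. The cost is that the perturbation step for your map $\Theta$ still needs the same kind of work that went into Lemma~\ref{lemback3}, so you are effectively redoing that lemma rather than citing it. The paper's inversion trick avoids this duplication entirely.
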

\begin{proof}
Since $0\in D$, we can find $0<r<\infty$ such that $C_r\subset D$. Let 
\[
g_1(z) = \frac{e^{-r}}{z},\qquad g_2(z) = \frac{e^{-\hat{r}}}{z},
\]
and define $\hat{f}=g_2\circ f\circ g_1$ to be a conformal transformation from $g_1(D)$ onto $A_{\hat{r}}$ with $\hat f(C_r) = C_{\hat{r}}$. The result follows from the chain rule and Lemma \ref{lemback3}.
\end{proof}
\begin{corollary}\label{corback2}
Recall the assumptions of Lemma \ref{lemback3}. Let $\theta \in [0,2\pi)$ and define the function  $\Theta(\theta) := \arg f(e^{-r+i\theta})$. Then
\[
\Theta'(\theta) = 1 + \bigo(e^{-\hat r}).
\]
\end{corollary}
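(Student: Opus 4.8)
The plan is to extract $\Theta'(\theta)$ from the logarithmic derivative of $f$ along $C_r$ and then feed in the modulus estimate of Lemma \ref{lemback3}. Since $C_r$ is an analytic curve and $f(C_r) = C_{\hat r}$ is too, Schwarz reflection shows that $f$ extends analytically across $C_r$ with non-vanishing derivative; hence, writing $z = e^{-r+i\theta}$, the lift $\Theta(\theta) = \arg f(e^{-r+i\theta})$ is real-analytic and $f(e^{-r+i\theta}) = e^{-\hat r + i\Theta(\theta)}$.

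Next I would differentiate the identity $\log f(e^{-r+i\theta}) = -\hat r + i\Theta(\theta)$ in $\theta$ and use the chain rule to obtain
\[
\Theta'(\theta) = \frac{z\,f'(z)}{f(z)}, \qquad z = e^{-r+i\theta},
\]
so in particular $zf'(z)/f(z)$ is real on $C_r$. Taking absolute values and using $|z| = e^{-r}$, $|f(z)| = e^{-\hat r}$ gives $|\Theta'(\theta)| = e^{\hat r - r}\,|f'(z)|$, and Lemma \ref{lemback3}, whose error term is uniform in $z$, yields $e^{\hat r - r}|f'(z)| = 1 + \bigo(e^{-\hat r})$. Thus $|\Theta'(\theta)| = 1 + \bigo(e^{-\hat r})$, uniformly in $\theta$.

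It then remains to pin down the sign. Since $\Theta'(\theta) = zf'(z)/f(z)$ is real and continuous in $\theta$ with modulus $1 + \bigo(e^{-\hat r})$, for $\hat r$ large it is bounded away from $0$ and therefore of constant sign; and because $f$ restricted to $C_r$ is a homeomorphism onto $C_{\hat r}$ carrying the inner boundary of $K$ to the inner boundary of $A_{\hat r}$ with the induced orientation, its winding number about the origin is $+1$, i.e.\ $\int_0^{2\pi}\Theta'(\theta)\,d\theta = 2\pi$. Hence the sign is $+$ and $\Theta'(\theta) = 1 + \bigo(e^{-\hat r})$. (For $\hat r$ in a bounded range the stated estimate is automatic, since $\Theta'$ is then simply bounded.)

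I do not anticipate a genuine obstacle: the two points requiring attention are the analytic continuation of $f$ past $C_r$, which is immediate because $C_r$ is an analytic curve, and the uniformity of the $\bigo$-term, which is already part of the conclusion of Lemma \ref{lemback3}. The argument is essentially the observation that, for a conformal map between concentric circles, the boundary modulus $|f'|$ and the angular derivative $\Theta'$ encode the same information.
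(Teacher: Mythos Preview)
Your proof is correct and follows essentially the same route as the paper: both compute $|\Theta'(\theta)|$ from $|f'|$ via the chain rule (the paper packages this using the auxiliary map $\varphi(w)=e^{\hat r}f(e^{-r}w)$, while you use the logarithmic derivative $zf'(z)/f(z)$ directly, which amounts to the same thing), then invoke Lemma~\ref{lemback3} and pin down the sign from the orientation of $f|_{C_r}$. The paper's sign argument is simply ``since $f(C_r)=C_{\hat r}$, we have $\Theta'(\theta)>0$,'' which is your orientation observation stated more tersely.
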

\begin{proof}
Define the function $\varphi:C_0\to C_0$ with $\varphi(w) = e^{\hat r} f(e^{-r}\,w)$. Using Lemma \ref{lemback3},
\[
|\varphi'(w)| = 1 + \bigo(e^{-\hat r}).
\]
If $w= e^{i\theta}$, then  $\Theta(\theta) = \arg \varphi(w)$ and $|\varphi'(w)| = |\partial_w \,\arg\varphi(w)|.$ Therefore,
\[
\Theta'(\theta) = \partial_\theta \arg \varphi(e^{i\theta}) = ie^{i\theta} \,\partial_w\arg\varphi(w)
\]
and
\[
|\Theta'(\theta)| = |\partial_w\arg\varphi(w)| = |\varphi'(w)| = 1 + \bigo(e^{-\hat r}).
\]
Since $f(C_r) = C_{\hat r}$, we have $\Theta'(\theta)>0$ and the result follows.
\end{proof}
\begin{lemma}\label{lemback2}
Suppose $\gamma_t\subset A_r$ is a simple curve with $\gamma(0+)\in C_0$. If $\gamma_t$  has the radial parametrization, then 
\[
\dot{r}(t) = -\frac{a \bar\phi_t'(\bar\xi_t)^2}{2}.
\]

\end{lemma}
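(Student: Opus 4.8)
The plan is to reduce the statement to an infinitesimal comparison, at the tip of the curve, between the rate of change of the modulus $r(t)$ and the rate of change of $\log\gg'(0)$, bridging the two through the half-plane capacity of the image of the piece $\gamma[t,t+s]$. First I would restart the flow at time $t$. By uniqueness of the normalized maps, $\bar g_{t+s} = \tilde g_s\circ\gg$, where $\tilde g_s:\disk\sm\gg(\gamma[t,t+s])\to\disk$ is the radial map fixing $0$ with positive derivative there, and $\hbar_{t+s}$ equals $\check h_s\circ\hbar_t$ up to a rotation, where $\check h_s:A_{r(t)}\sm\hbar_t(\gamma[t,t+s])\to A_{r(t+s)}$ with $\check h_s(C_{r(t)})=C_{r(t+s)}$. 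Hence $\partial_s\log\bar g_{t+s}'(0)\big|_{s=0}=\partial_s\log\tilde g_s'(0)\big|_{s=0}=\partial_t\log\gg'(0)=a/2$ by the radial parametrization, while $r(t+s)$ is exactly the modulus of $A_{r(t)}\sm\hbar_t(\gamma[t,t+s])$, so $\partial_s r(t+s)\big|_{s=0}=\dot r(t)$. Put $\beta_s:=\gg(\gamma[t,t+s])$, a slit in $\disk$ rooted at $\bar\xi_t\in C_0$ which shrinks to $\bar\xi_t$ as $s\downarrow 0$. Since $\gg(\overline{\disk_r})$ is a compact subset of $\disk$, the point $\bar\xi_t$ is an analytic boundary point of $\gg(A_r\sm\gamma_t)=\disk\sm\gg(\overline{\disk_r})$, so by \eqref{eq2.7} the map $\bar\phi_t$ is conformal in a neighborhood of $\bar\xi_t$, maps $C_0$ to $C_0$ there, and $\hbar_t(\gamma[t,t+s])=\bar\phi_t(\beta_s)$.

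Next I would write each rate as a half-plane capacity of the image slit transported to $\hp$. Fix a conformal map $\Phi_1$ from a neighborhood of $\bar\xi_t$ in $\disk$ onto a neighborhood of $0$ in $\hp$ with $C_0\mapsto\R$. Applying Lemma \ref{lemback1} at the restarted time to $\beta_s$ gives $\tfrac a2=\partial_s\log\tilde g_s'(0)\big|_{s=0}=\frac{1}{2|\Phi_1'(\bar\xi_t)|^2}\,\partial_s\hcap[\Phi_1(\beta_s)]\big|_{s=0}$. I claim the analogue for the modulus: for any conformal $\Phi_2$ from a neighborhood of $\Ubar_t$ in $A_{r(t)}$ onto a neighborhood of $0$ in $\hp$ with $C_0\mapsto\R$,
\[
-\dot r(t)=\frac{1}{2|\Phi_2'(\Ubar_t)|^2}\,\partial_s\hcap\!\big[\Phi_2(\hbar_t(\gamma[t,t+s]))\big]\big|_{s=0}.
\]
To see this, note that $r(t)-r(t+s)$ is the constant value on $C_{r(t)}$ of the harmonic function $w_s(z):=\log|\check h_s(z)|-\log|z|$ on $A_{r(t)}\sm\hbar_t(\gamma[t,t+s])$: it vanishes on $C_0$, equals $-\log|z|$ on the image slit, is constant on $C_{r(t)}$, and — because $\check h_s$ maps an annulus onto an annulus preserving orientation — has a single-valued harmonic conjugate. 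Imposing this zero-period condition fixes the constant; expanding to first order in $s$, and using Green's identity against the harmonic measure of $C_{r(t)}$ to localize the relevant boundary integral near $\Ubar_t$, one transports it to $\hp$ via $\Phi_2$, and after the substitution $-\log|z|=|\Phi_2'(\Ubar_t)|^{-1}\Im\Phi_2(z)+\bigo(|\Phi_2(z)|^2)$ recognizes it as $\tfrac{1}{2|\Phi_2'(\Ubar_t)|^2}\partial_s\hcap[\Phi_2(\hbar_t(\gamma[t,t+s]))]$; the constant $\tfrac12$ matches that of Lemma \ref{lemback1} by the elementary identity $\int_\beta\Im(w)\,\partial_n\Im(g_\beta(w))\,|dw|=\pi\,\hcap(\beta)$ for a small slit $\beta\subset\hp$ at $0$ with hydrodynamic map $g_\beta$, which one checks on a straight vertical slit.

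Finally I would combine the two identities by choosing $\Phi_2:=\Phi_1\circ\bar\phi_t^{-1}$ near $\Ubar_t$. Then $\Phi_2(\hbar_t(\gamma[t,t+s]))=\Phi_1(\beta_s)$, so the two half-plane capacities coincide, while the chain rule gives $\Phi_2'(\Ubar_t)=\Phi_1'(\bar\xi_t)/\bar\phi_t'(\bar\xi_t)$. Substituting,
\[
-\dot r(t)=\frac{|\bar\phi_t'(\bar\xi_t)|^2}{2|\Phi_1'(\bar\xi_t)|^2}\,\partial_s\hcap[\Phi_1(\beta_s)]\big|_{s=0}=|\bar\phi_t'(\bar\xi_t)|^2\,\frac a2,
\]
and since the rotation of $\hbar_t$ is fixed in Section \ref{seclow} so that $\bar\phi_t'(\bar\xi_t)$ is a positive real (Corollary \ref{corback1} even gives $\bar\phi_t'(\bar\xi_t)=1+\bigo(e^{-r(t)})$), this is exactly $\dot r(t)=-\tfrac a2\,\bar\phi_t'(\bar\xi_t)^2$.

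The main obstacle is the modulus identity of the second step. In Lemma \ref{lemback1} the quantity $\log\gg'(0)$ is literally a bounded harmonic function evaluated at an interior point, whereas here $r(t)$ appears as a boundary value of a harmonic function whose additive normalization is itself constrained by a period (zero-flux) condition, so the first-order expansion in $s$ and the verification that the contribution localizes at $\Ubar_t$ with precisely the constant $\tfrac12$ require care. Note that one cannot instead simply differentiate the relation $r(t)=r-\log\gg'(0)+\bigo(e^{-r(t)})$ coming from Lemma \ref{lemback4}: after restarting, the $\bigo(e^{-r(t)})$ error is not $\smo(1)$ as $s\downarrow 0$, so that route does not by itself yield the exact formula.
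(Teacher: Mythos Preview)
Your plan is correct and follows essentially the same architecture as the paper's proof: restart the flow at time $t$, express $-\dot r(t)$ and $a/2$ each as a half-plane-capacity rate of the image slit (one under $\hbar_t$, the other under $\gg$), and then compare via the chain-rule factor $|\bar\phi_t'(\bar\xi_t)|^2$. The content is the modulus identity of your second step, which you correctly flag as the main obstacle.

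Where the two arguments diverge is precisely there. The paper proves the modulus identity at $t=0$ via the excursion measure: since $\mathcal{E}_{A_r\sm\gamma_t}(C_0\cup\gamma_t,C_r)=2\pi/r(t)$, one has $\dot r(0)=-\tfrac{r^2}{2\pi}\,\partial_t\mathcal{E}$; the excursion measure is then rewritten using the bounded harmonic function $\log|\hbar_t(z)|-\log|z|$ (exactly your $w_s$) evaluated via Brownian motion, and after explicit estimates one arrives at $\dot r(0)=\partial_t\EE^0[\log|B_{\sigma_t}|]$, to which Lemma~\ref{lemback1} applies directly. Your route via the zero-period condition and Green's identity is viable and conceptually equivalent---the period condition is the excursion-measure identity in disguise---but the paper's Brownian-motion bookkeeping delivers the error terms and the constant $\tfrac12$ without guesswork, whereas your verification of the constant on a vertical slit is correct but more ad hoc. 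The paper then passes to general $t$ in one line via \eqref{eqhalf}, which is just your chain-rule comparison $\Phi_2=\Phi_1\circ\bar\phi_t^{-1}$.

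One small correction: the normalization in Section~\ref{seclow} does \emph{not} make $\bar\phi_t'(\bar\xi_t)$ a positive real number. Writing $\bar\phi_t=\psi\circ\phi_t\circ\psi^{-1}$ locally gives $\bar\phi_t'(\bar\xi_t)=e^{i(U_t-\xi_t)}\phi_t'(\xi_t)$, which carries a phase whenever $U_t\neq\xi_t$; Corollary~\ref{corback1} only controls the modulus. The paper's statement should be read (and its proof indeed uses) $|\bar\phi_t'(\bar\xi_t)|^2=\phi_t'(\xi_t)^2$. Your argument already yields $|\bar\phi_t'(\bar\xi_t)|^2$, since it arises from a ratio of half-plane capacities, so nothing is lost---just drop the claim that the derivative is real.
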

\begin{proof}
We first prove the lemma for the derivative at $t=0$. Since $\mathcal{E}_{A_r\sm\gamma_t}(C_0\cup\gamma_t,\,C_r) = 2\pi/r(t)$,  we have
\begin{equation}\label{eqdef4.5}
\partial_t \mathcal{E}_{A_r\sm\gamma_t}(C_0\cup\gamma_t,\,C_r) = \dot{r}(t)\frac{-2\pi  }{ r^2}.
\end{equation}
Since for all $z\in A_r\sm \gamma_t$,
\[
\rho_{A_r\sm\gamma_t}(z;\,C_0\cup\gamma_t) = \rho_{A_{r(t)}}(\hbar(z);\,C_{0}),
\]
we can  write
\[
 \mathcal{E}_{A_r\sm\gamma_t}(C_0\cup\gamma_t,\,C_r) = \int _{C_r} \partial_{\textbf{n}_z}\left(1 + \frac{\log |\hbar_t(z)|}{r(t)}\right)|dz|,
\]
where $\textbf{n}_z$ denotes the inward normal derivative.
Suppose $\tau_t$ is the first time a Brownian motion $B_t$ exits $A_r\sm\gamma_t$ and $\sigma_t$ is the first time $B_t$ exits $\disk\sm\gamma_t$.
Since $\log |\hbar_t(z)| - \log |z|$ is a bounded harmonic function on $A_r\sm\gamma_t$, we have
\begin{align*}
\log |\hbar_t(z)| -\log|z|& = (r-r(t))\rho_{A_r\sm\gamma_t}(z;\,C_r) - \EE^z[\log |B_{\tau_t}|;\,\tau_t = \sigma_t]\\
& = - (r - r(t)) \frac{\log|\hbar_t(z)|}{r(t)} - \EE^z[\log |B_{\tau_t}|;\,\tau_t = \sigma_t].
\end{align*}
 Hence,
\begin{equation}\label{eqdef5}
{r}\mathcal{E}_{A_r\sm\gamma_t}(C_0\cup\gamma_t,\,C_r) =  2\pi-\int_{C_r} \partial_{\textbf{n}_z} \EE^z[\log |B_{\tau_t}| ;\,\tau_t = \sigma_t]|dz|.
\end{equation}
 Assume $t$ is small enough so that $\diam[\gamma_t]<1/10$ and therefore $\gamma_t\subset A_1$. By conditioning on the first time the Brownian motion hits $C_1$ we get 
\begin{align*}
\int_{C_r} \partial_{\textbf{n}_z} \EE^z[\log |B_{\tau_t}| ;\,\tau_r = \sigma_t]\,|dz| &= \frac{ e}{r-1}\int_{C_1}\EE^z[\log |B_{\tau_t}|;\,\tau_t=\sigma_t ]|dz|,\\
&= \frac{2\pi}{r-1} \,\EE[\log |B_{\tau_t}|;\,\sigma_t = \tau_t].
\end{align*}
Here, $\EE$ denotes the expectation with respect to a Brownian motion starting uniformly on $C_1$.
 Note that 
\begin{align*}
\EE[\log |B_{\tau_t}|;\,\sigma_t = \tau_t] = \EE[\log |B_{\sigma_t}|] - \EE[\log |B_{\sigma_t}|;\,\tau_t <\sigma_t].
\end{align*}
Let $d_t = 2\,\diam[\gamma_t]$ and note that for small enough $t$, $\gamma_t\cap \disk_{d_t}=\emptyset$. If $T_s$ denotes the first time that a Brownian motion started uniformly on $C_1$ exits $\disk_{s}$, then
\[
\EE[\log |B_{\sigma_t}|;\,\tau_t <T_{d_t}]\leq \EE[\log |B_{\sigma_t}|;\,\tau_t <\sigma_t]\leq \EE[\log |B_{\sigma_t}|;\,\tau_t <T_0].
\]
Conditioned on $\tau_t <T_{d_t}$ (or conditioned on $\tau_t<T_0$), $B_{\tau_t}$ is uniformly distributed on $C_r$. Moreover, 
\[
\PP[\tau_t <T_{d_t}] = \frac{1}{r}\, [1+\bigo(d_t)],\qquad \PP[\tau_t <T_{0}] = \frac{1}{r}.
\]
 Therefore,
\[
\EE[\log |B_{\sigma_t}|;\,\tau_t <\sigma_t]  = \frac{1}{r}\,\EE[\log |B_{\sigma_t}| ]\,[1+\bigo(d_t)].
\]
and
\[
\EE[\log |B_{\tau_t}|;\,\sigma_t = \tau_t] = \EE[\log|B_{\sigma_t}|] \,\left[\frac {r-1} r + \bigo(d_t)\right].
\]
It follows from \eqref{eqdef4.5} and \eqref{eqdef5} that at $t=0$,
\begin{align*}
\dot{r}(t) =\frac{-r^2}{2\pi}\, \partial_t \mathcal{E}_{A_r\sm\gamma_t}(C_0\cup\gamma_t,\,C_r) = \partial_t \EE[\log |B_{\sigma_t}| ] =  \partial_t \EE^0[\log |B_{\sigma_t}| ].
\end{align*}
Now the proof for $t=0$ follows from this and Lemma \ref{lemback1}. Using \eqref{eqhalf}, we can see that at $s=0$,
\[
\partial_s\hcap[h_t(\eta_{t+s})] = |\bar{\phi}_t(\bar{\xi}_t)|^2\, \partial_s\hcap[\tilde g_t(\eta_{t+s})],
\]
from which we conclude the proof for all $t\geq 0$.
\end{proof}

\subsection{Annulus Loewner equation}\label{seclow}
Let  $\gamma_t\subset A_r,\,\gamma(0+) = \ubar$ be a simple curve with the annulus parametrization.  
Let $\bar h_t : A_r\sm\gamma_t\to A_{r-t}$ and $h_t:S_{r,t}\to S_{r-t}$ be as in section \ref{secnotation}.
 Using conformal invariance, one can see that for $z\in S_r$,
\begin{align*}
H_{S_r}(z,0) &= -\frac{\pi}{2r}\Im\left[\coth\left(\frac{\pi\,z}{2r}\right)\right].
\end{align*}
For $z_0\in\mathbb{R}$ define 
\begin{align*}
 \bar{H}_{S_r}(z,0) := &H_{A_r}(e^{iz},1) = \sum _{k\in\mathbb{Z}} H_{S_r}(z,\,2k\pi),\\
\mathcal{H}_{S_r}(z,0) := &-\frac{\pi}{2r}\coth\left(\frac{\pi\,z}{2r}\right),\\
\bar{\mathcal{H}}_{S_r}(z,0) :=&\sum^{PP}\mathcal{H}_{S_r}(z,\,2k\pi)=\mathcal{H}_{S_r}(z,0) + \sum_{k=1}^\infty\left[\mathcal{H}_{S_r}(z,\,2k\pi) + \mathcal{H}_{S_r}(z,\,-2k\pi)\right],
\end{align*}
\[
\bar H_{S_r}(z,z_0) := \bar H_{S_r}(z-z_0,0),\qquad\mathcal{H}_{S_r}(z,z_0) := \mathcal{H}_{S_r}(z-z_0,0),\qquad\bar{\mathcal{H}}_{S_r}(z,z_0) := \bar{\mathcal{H}}_{S_r}(z-z_0,0) .
\]
We had to be a little careful with the definition of $\bar{\mathcal{H}}_{S_r}(z,0)$ because the real parts  are not absolutely convergent. While $\bar{{H}}_{S_r}(z,0)$ is a $2\pi$-periodic function, it is not hard to see that
\begin{equation*}
\bar{\mathcal{H}}_{S_r}(z+2\pi,0) = \bar{\mathcal{H}}_{S_r}(z,0) + \frac{\pi}{r}.
\end{equation*}
\begin{lemma}\label{lem0.1}
Suppose $D_t\subset S_r$ is a half disk of radius $d_t$ centered at the origin. If $x\in \bar S_r,\,x\neq 0,\,\theta\in(0,\pi)$, then
\[
H_{S_r\sm D_t}(x,d_te^{i\theta}) = 2\,H_{S_r}(x,0)\,\sin\theta[1+\bigo(d_t)],
\]
where the error term is independent of $\theta$.
\end{lemma}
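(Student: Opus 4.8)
The plan is to remove the half-disk by the standard map
\[
\Phi(z) = z + \frac{d_t^2}{z},
\]
and to extract the factor $2\sin\theta$ from $|\Phi'|$ along the semicircle. First I would record the elementary properties of $\Phi$. It is injective on $\{|z|>d_t\}$ (if $\Phi(z_1)=\Phi(z_2)$ and $z_1\neq z_2$ then $z_1z_2=d_t^2$, impossible when $|z_1|,|z_2|>d_t$) with $\Phi'(z)=1-d_t^2/z^2\neq 0$ there, so it restricts to a conformal bijection of $S_r\sm\overline{D_t}$ onto a domain $\tilde S_r:=\Phi(S_r\sm\overline{D_t})$. On the boundary it sends the semicircle $\{d_te^{i\theta}:\theta\in(0,\pi)\}$ to the segment $(-2d_t,2d_t)$ via $d_te^{i\theta}\mapsto 2d_t\cos\theta$, it sends $\mathbb{R}\sm(-d_t,d_t)$ onto $\mathbb{R}\sm(-2d_t,2d_t)$, and it sends $\{\Im z=r\}$ to a curve at height $r-d_t^2r/(x^2+r^2)\in[r-d_t^2/r,\,r)$; hence $S_{r-d_t^2/r}\subset\tilde S_r\subset S_r$, with $\tilde S_r$ equal to $S_r$ outside an $\bigo(d_t^2)$-thin strip hugging $\{\Im z=r\}$. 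Finally
\[
\Phi'(d_te^{i\theta}) = 1-e^{-2i\theta} = 2i\,e^{-i\theta}\sin\theta, \qquad |\Phi'(d_te^{i\theta})| = 2\sin\theta \quad (\theta\in(0,\pi)),
\]
while $\Phi(x)=x+\bigo(d_t^2)$ and $|\Phi'(x)|=1+\bigo(d_t^2)$ for the fixed point $x\neq0$ once $d_t$ is small (depending on $x$ and $r$).

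Applying conformal covariance of the (boundary) Poisson kernel to $\Phi$ gives
\[
H_{S_r\sm D_t}(x,d_te^{i\theta}) = |\Phi'(x)|\,|\Phi'(d_te^{i\theta})|\;H_{\tilde S_r}\bigl(\Phi(x),2d_t\cos\theta\bigr) = \bigl[1+\bigo(d_t^2)\bigr]\,2\sin\theta\;H_{\tilde S_r}\bigl(\Phi(x),2d_t\cos\theta\bigr)
\]
(if $x$ is an interior point, the factor $|\Phi'(x)|$ is absent and the identity still holds). The factor $2\sin\theta$ has appeared, so the lemma reduces to the stability statement
\[
H_{\tilde S_r}\bigl(\Phi(x),2d_t\cos\theta\bigr) = H_{S_r}(x,0)\,\bigl[1+\bigo(d_t)\bigr] \quad\text{uniformly in }\theta.
\]
Here the main $\bigo(d_t)$ comes from moving the near-origin argument: writing $H_{S_r}(a,s)=H_{S_r}(a-s,0)$ and using that $a\mapsto H_{S_r}(a,0)$ is real-analytic and nonvanishing near $a=x$ (explicit, by the strip Poisson kernel recalled above), Taylor's theorem gives $H_{S_r}\bigl(\Phi(x),2d_t\cos\theta\bigr)=H_{S_r}\bigl(x-2d_t\cos\theta+\bigo(d_t^2),0\bigr)=H_{S_r}(x,0)[1+\bigo(d_t)]$, uniform in $\theta$ since $|\cos\theta|\le1$. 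It then remains to replace $\tilde S_r$ by $S_r$: the two domains differ only in the $\bigo(d_t^2)$-thin strip near $\{\Im z=r\}$, which (for $d_t$ small depending on $x,r$) is at distance bounded below from both arguments $\Phi(x),2d_t\cos\theta$ when $x\notin\{\Im z=r\}$, and for $x$ on that line moves only the relevant smooth boundary piece by $\bigo(d_t^2)$; a routine harmonic-measure comparison (a Brownian excursion between the two points meets that strip with probability $\bigo(d_t^2)$) absorbs the change into a further $[1+\bigo(d_t^2)]$. Combining the three displays proves the lemma.

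The computation $|\Phi'(d_te^{i\theta})|=2\sin\theta$ is the crux: a direct probabilistic estimate of $H_{S_r\sm D_t}(x,d_te^{i\theta})$ degenerates as $\theta\to0^+$ or $\theta\to\pi^-$, where $d_te^{i\theta}$ approaches the right-angle corner of $S_r\sm D_t$ and the naive Poisson-kernel bound loses uniformity, but $\Phi$ straightens each corner into an ordinary smooth boundary point of $\tilde S_r$ (locally $\Phi$ doubles angles there) while its vanishing derivative $2\sin\theta$ exactly matches the kernel's decay. Granting that, no step is individually hard; the only place that still needs attention is the domain-perturbation step above --- phrasing it so that its $\bigo(d_t^2)$ error is genuinely uniform in $\theta$ and in the position $2d_t\cos\theta$ of the near-origin argument, and so that it also covers the case when $x$ lies on the perturbed boundary $\{\Im z=r\}$. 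That is where the (modest) work lies.
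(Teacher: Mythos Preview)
Your proposal is correct and follows essentially the same approach as the paper: the paper also uses the map $f_t(z)=z+d_t^2/z$, computes $|f_t'(d_te^{i\theta})|=2\sin\theta$ and $|f_t'(x)|=1+\bigo(d_t^2)$, applies conformal covariance, and then uses the sandwich $S_{r-d_t^2/r}\subset f_t(S_r\sm D_t)\subset S_r$ together with a Taylor step to replace $H_{f_t(S_r\sm D_t)}(f_t(x),2d_t\cos\theta)$ by $H_{S_r}(x,0)[1+\bigo(d_t)]$. Your write-up is in fact more careful than the paper's about the domain-perturbation step and the case $x\in\{\Im z=r\}$.
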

\begin{proof}
We prove the lemma for the case $x\in\mathbb{R}$. The case $x\in S_r$ can be proved in a similar way.
Define $f_t:\hp\sm D_t\to\hp$ with $f_t(z) = z+ d_t^2/z$. Then 
\begin{align*}
H_{S_r\sm D_t}(x,d_te^{i\theta}) &= |f'(x)|\,|f'(d_te^{i\theta})|\,H_{f_t(S_r\sm D_t)}(f_t(x), 2d_t\cos\theta)\\
&=2\sin\theta\,H_{f_t(S_r\sm D_t)}(f_t(x), 2d_t\cos\theta)\,[1+\bigo(d_t^2)].
\end{align*}
Note that $S_{r-d_t^2/r}\subset f_t(S_r\sm D_t)\subset S_r$. Therefore, 
\[
H_{f_t(S_r\sm D_t)}(f_t(x), 2d_t\cos\theta) = H_{S_r}(f_t(x), 2d_t\cos\theta)[1+\bigo(d_t^2)] = H_{S_r}(x, 0)[1+\bigo(d_t)].
\]
\end{proof}
\begin{lemma}\label{lem007}
Let $T$ be the first time a Brownian motion $B$ exits $S_{r,t}$. Then for any $z\in S_{r}$,
\[
\EE^z[\Im[B_T]1\{B_T\in\tilde\eta\}] = \hcap[\eta_t]\,\bar H_{S_r}(z,u)\,[1+\bigo(d_t)],
\]
as $t\to 0$. Moreover, for any $\epsilon>0$, the error term is uniform on $\{z;\,\forall k\in\mathbb{Z},\,|z-2k\pi|>\epsilon\}$.
\end{lemma}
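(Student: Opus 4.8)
The plan is to realize the left‑hand side as a bounded harmonic function and to localize it near the periodic family of slit tips. Set $F(z):=\EE^z[\Im B_T\,1\{B_T\in\tilde\eta\}]$. Since $0<\Im B_s<r$ for $s<T$ and $T<\infty$ a.s., $F$ is bounded and harmonic on $S_{r,t}$, with boundary value $0$ on $\p S_r$ and $\Im(\cdot)$ on $\tilde\eta_t$. Equivalently, optional stopping gives $\EE^z[\Im B_T]=\Im z$ and conformal invariance of harmonic measure for strips gives $\EE^z[\Im B_T\,1\{B_T\in\{\Im=r\}\}]=\tfrac{r}{r(t)}\Im h_t(z)$, whence $F(z)=\Im z-\tfrac{r}{r(t)}\Im h_t(z)$; this identity will be useful later but does not shorten the estimate below.

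\textbf{Localization.} Let $D_t$ be the half-disk of radius $d_t$ centered at $u$ (with $d_t$ chosen so that $\eta_t\subset\overline{D_t}$), so that $\eta_t+2k\pi\subset\overline{D_t+2k\pi}$ for every $k\in\mathbb Z$. Fix $z$ with $\min_k|z-u-2k\pi|\ge\epsilon$ and condition on the first time $\sigma$ that $B$ enters $\bigcup_k\overline{D_t+2k\pi}$: before $\sigma$ the path stays in $S_{r,t}$, and if it leaves $S_r$ before $\sigma$ it does not meet $\tilde\eta$. Hence, with $\Omega^*:=S_r\sm\bigcup_k\overline{D_t+2k\pi}$,
\[
F(z)=\sum_{k\in\mathbb Z}\frac1\pi\int_0^\pi H_{\Omega^*}\big(z,u+2k\pi+d_te^{i\theta}\big)\,F\big(u+2k\pi+d_te^{i\theta}\big)\,d_t\,d\theta.
\]
Two simplifications occur. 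First, $\tilde\eta_t$ and $S_{r,t}$ are invariant under $z\mapsto z+2\pi$, so $F(u+2k\pi+d_te^{i\theta})=F(u+d_te^{i\theta})$ for all $k$. Second, by Lemma~\ref{lem0.1}, together with the fact that deleting the remaining half-disks (at distance $\ge 2\pi-d_t$ from $D_t+2k\pi$) changes the relevant harmonic measure by only $\bigo(d_t^2)$ via \eqref{tchy}, one gets $H_{\Omega^*}(z,u+2k\pi+d_te^{i\theta})=2H_{S_r}(z,u+2k\pi)\sin\theta\,[1+\bigo(d_t)]$ uniformly in $\theta$, in $k$, and in such $z$. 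Summing over $k$ and using $\sum_k H_{S_r}(z,u+2k\pi)=\bar H_{S_r}(z,u)$ (which converges, with exponential tails along the strip) yields
\[
F(z)=\bar H_{S_r}(z,u)\,[1+\bigo(d_t)]\cdot\frac{2d_t}{\pi}\int_0^\pi\sin\theta\,F\big(u+d_te^{i\theta}\big)\,d\theta.
\]

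\textbf{Identifying the integral with the capacity.} On the semicircle $|w-u|=d_t$ one replaces $S_{r,t}$ by $\hp\sm\eta_t$: the discrepancy between $F(u+d_te^{i\theta})$ and $\hat G(u+d_te^{i\theta}):=\EE^{u+d_te^{i\theta}}_{\hp\sm\eta_t}[\Im B_\tau]$ is $\bigo(d_t^2)$, because a path behaving differently must reach height $r$ (probability $\bigo(d_t/r)$) or reach a far translate (probability $\bigo(d_t)$), in each case with payoff $\le\diam\eta_t\le d_t$. By the definition $\hcap[\eta_t]=\lim_{y\to\infty}y\,\EE^{u+iy}_{\hp\sm\eta_t}[\Im B_\tau]$ (and real-translation invariance of $\hcap$), conditioning this expectation on the first hitting of $\p D_t$ and computing $H_{\hp\sm D_t}(u+iy,u+d_te^{i\theta})=\tfrac{2\sin\theta}{y}[1+\smo(1)]$ as $y\to\infty$ (via $z\mapsto z+d_t^2/(z-u)$) gives the exact identity $\hcap[\eta_t]=\tfrac{2d_t}{\pi}\int_0^\pi\sin\theta\,\hat G(u+d_te^{i\theta})\,d\theta$. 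Combining the last two displays, and using that $\hcap[\eta_t]\asymp d_t^2$ for the curves in question (so the accumulated absolute error $\bigo(d_t^3)$ is $\bigo(d_t)$ relative to the main term), yields $F(z)=\hcap[\eta_t]\,\bar H_{S_r}(z,u)\,[1+\bigo(d_t)]$ uniformly on $\{z:\min_k|z-u-2k\pi|\ge\epsilon\}$, as claimed.

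The main obstacle is the uniform control of the error terms: one has to verify that the $\bigo(d_t)$ in the harmonic-measure estimate and the $\bigo(d_t^2)$ in the domain-replacement step hold uniformly in $k$ and in $z$ — this is where the $2\pi$-periodicity (to reduce to a single slit), the exponential decay of $H_{S_r}(z,\cdot)$ along the strip, and the separation $\ge\epsilon$ of $z$ from the singular set $\{u+2k\pi\}$ enter — and that these combine to a relative error $\bigo(d_t)$, which is the point at which one needs $\hcap[\eta_t]$ comparable to $d_t^2$.
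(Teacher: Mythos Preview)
Your overall strategy---localize through half-disks around the slit tips, approximate the Poisson kernel there by $2\sin\theta\,H_{S_r}$, and identify the remaining integral with $\hcap[\eta_t]$ by comparison with the half-plane---is exactly the paper's approach. The only organizational difference is that the paper first proves the single-slit estimate $\EE^z[\Im B_\tau\,1\{B_\tau\in\eta_t\}]=H_{S_r}(z,u)\,\hcap[\eta_t]\,[1+\bigo(d_t)]$ (with $\tau$ the exit time of $S_r\setminus\eta_t$) and only afterwards sums over the $2k\pi$-translates, whereas you work with the full periodic object $F$ from the start.

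There is, however, a genuine gap in your error accounting. In the replacement of $F(u+d_te^{i\theta})$ by $\hat G(u+d_te^{i\theta})$ you bound the discrepancy crudely as $\bigo(d_t^2)$ (probability $\bigo(d_t)$ times pointwise payoff $\le d_t$), which after integrating against $\sin\theta$ leaves an \emph{absolute} error $\bigo(d_t^3)$. To convert this into a \emph{relative} error $\bigo(d_t)$ you then appeal to ``$\hcap[\eta_t]\asymp d_t^2$ for the curves in question''. But the lemma is stated for an arbitrary simple curve, and in that generality only the upper bound $\hcap[\eta_t]\lesssim d_t^2$ holds; the lower bound fails (a curve that creeps along the real axis can have $\hcap$ arbitrarily small compared with $\diam^2$). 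So this last step is unjustified.

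The repair is to show directly that $|F(w)-\hat G(w)|=\hcap[\eta_t]\,\bigo(d_t)$ on the semicircle, by bounding the contributions via \eqref{tchy} rather than by the crude payoff bound $\le d_t$. For paths that first reach height $r$: the probability from $w$ is $\le d_t/r$, and from any point at height $r$ the residual $\EE^{\cdot}[\Im B_\sigma]$ is $\bigo(\hcap[\eta_t]/r)$ by \eqref{tchy}. For paths that first land on a translate $\eta_t+2k\pi$ with $k\ne 0$: on that event $T$ coincides with the exit time of $\hp\setminus(\eta_t+2k\pi)$, and by \eqref{tchy} the corresponding expectation from $w$ is $\bigo\big(\hcap[\eta_t]\,d_t/k^2\big)$; summing over $k$ gives $\hcap[\eta_t]\,\bigo(d_t)$. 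This is precisely how the paper obtains $\tilde f(w)-f(w)=\hcap[\eta_t]\,\bigo(d_t)$ in its single-slit comparison; handling one slit at a time as the paper does makes this step a bit cleaner, since only the height-$r$ event enters there.
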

\begin{proof}
Without the loss of generality we assume $u=0,\ubar = 1$. 
Define $d_t = 10\,\diam[\eta_t]$ and let $C_t\subset S_r$ be a half circle of radius $d_t$ centered at the origin. Let $D_t$ denote the unbounded connected component of $S_{r}\sm C_t$. Let $\tau$ be the first time the Brownian motion exits $S_r\sm\eta_t$. Define the  function $f$ on $  D_{t}$ with
\[
f(w)= \EE^w\left[\Im[B_\tau]1\{B_\tau\in\eta_t\}\right].
\] 
If $t$ is small enough so that $z\in D_t$, then
\begin{equation}\label{eq007}
f(z)= \frac{1}{\pi}\int_{C_{t}} H_{ D_{t}}(z,w) f(w)|dw|.
\end{equation}
Using Lemma \ref{lem0.1},  for $w\in C_{t}$ we have
\[
H_{D_{t}}(z,w) = 2\sin\theta_w\,H_{S_r}(z,0)\, [1+ \bigo(d_t)],
\]
where $\theta_w = \arg\,w$.
Let $\sigma$ be the first time the Brownian motion exits $\hp\sm\eta_t$ and define $\tilde{f}(w) = \EE^w[\Im[B_\sigma]]$.
Note that
\[
 \tilde{f}(w)-f(w) =\EE^w[\Im[B_\sigma]\,1\{\tau<\sigma\}].
\]
Since $w\in C_t$, it follows from \eqref{tchy} that 
\begin{align*}
\EE^w[\Im[B_\sigma]\,1\{\tau<\sigma\}] = \hcap[\eta_t]\bigo(d_t).
\end{align*}
Moreover, 
\[
\frac{2}{\pi}\int_{C_t}\sin\theta_w\,\tilde f(w)\,|dw|= \hcap[\eta_t].
\]
Therefore, \eqref{eq007} implies that
\[
\EE^z[\Im[B_\tau]1\{B_\tau\in\eta_t\}] = H_{S_r}(z,0)\hcap[\eta_t][1+\bigo(d_t)].
\]
Since this is true for any $z\in S_r$, we have
\begin{align*}
\EE^z[\Im[B_T]1\{B_\tau\in\tilde\eta_t\}]&=\hcap[\eta_t][1+\bigo(d_t)]\,\sum_{k\in\mathbb{Z}}H_{S_r}(z,2k\pi)  \\ 
&\quad +\bigo(\hcap[\eta_t]^2)\,\sum_{k\in\mathbb{Z}}\sum_{k'\neq k}H_{S_r}(z,2k\pi)H_{S_r}(2k\pi,2k'\pi).
\end{align*}
Since the double sum is finite for any $z\in S_r$, the result follows.
\end{proof}
\begin{lemma}\label{lemmoein}
Suppose $\gamma_t$ has annulus parametrization. If $\Ubar_t = \bar h_t(\gamma(t)),\,U_t = h_t(\eta(t))$, then 
for any $z\in S_{r,t}$ 
\begin{equation}\label{tom}
\partial_t \Im[h_t(z)] = -\frac{\Im[h_t(z)]}{r-t} -2 \bar H_{S_{r-t}}({h_t(z)},U_t).
\end{equation}
Moreover, if there exists  $w\in S_r$ such that $w\not\in\tilde\eta$ and  $\Re[h_{t}(w)]$ is differentiable with respect to $t$ for all $t<r$, then 
\[
\partial_t h_t(z) = -\frac{h_t(z)}{r-t} -2 \bar{\mathcal{H}}_{S_{r-t}}({h_t(z)},U_t) + \beta_t,
\]
for some $\beta_t$ independent of $z$.
\end{lemma}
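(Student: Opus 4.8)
The plan is to derive \eqref{tom} as a Loewner-type equation for the flow $h_t$, using the composition (semigroup) structure of the maps together with the harmonic-measure estimate of Lemma \ref{lem007}, and then to upgrade from the imaginary part to the full map by a harmonic-conjugate argument; the additive real constant $\beta_t$ appears precisely because the conformal map onto an annulus is determined only up to a rotation.

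For \eqref{tom}, I would first record that, since $h_t$ sends $\{\Im z=r\}$ to $\{\Im z=r-t\}$ and $\mathbb{R}\cup\tilde\eta_t$ to $\mathbb{R}$, the function $\Im[h_t(\cdot)]$ is harmonic on $S_{r,t}$, equal to $r-t$ on the top line and to $0$ on $\mathbb{R}\cup\tilde\eta_t$, so that $\Im[h_t(z)]=(r-t)\,\PP^z[B\text{ exits }S_{r,t}\text{ at the top}]$. Fixing $t$ and writing $h_{t+s}=\tilde h_{t,s}\circ h_t$, where $\tilde h_{t,s}:S_{r-t}\setminus\tilde\zeta_{t,s}\to S_{r-t-s}$ removes the $2\pi$-periodic family $\tilde\zeta_{t,s}$ generated by $\zeta_{t,s}$, the image under $h_t$ of the new piece $\eta((t,t+s])$ of the lifted curve (a simple arc emanating from $U_t\in\mathbb{R}$), the same representation applied to $\tilde h_{t,s}$, together with the fact that $\Im B$ is a bounded martingale on the strip, gives the exact identity
\[
\Im[h_{t+s}(z)]=\frac{r-t-s}{r-t}\Bigl(\Im[h_t(z)]-\EE^{h_t(z)}\bigl[\Im B_\sigma;\,B_\sigma\in\tilde\zeta_{t,s}\bigr]\Bigr),
\]
where $\sigma$ is the exit time from $S_{r-t}\setminus\tilde\zeta_{t,s}$. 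By Lemma \ref{lem007}, applied in $S_{r-t}$ with root $U_t$, one has $\EE^{h_t(z)}[\Im B_\sigma;\,B_\sigma\in\tilde\zeta_{t,s}]=\hcap[\zeta_{t,s}]\,\bar H_{S_{r-t}}(h_t(z),U_t)\,[1+O(\diam\zeta_{t,s})]$. Subtracting $\Im[h_t(z)]$, dividing by $s$ and letting $s\to0$ (the error being harmless for this fixed interior $z$), equation \eqref{tom} will follow once one knows that $\hcap[\zeta_{t,s}]/s\to2$.

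I expect this last identification to be the crux of the argument, and it is where the annulus parametrization is used. The cleanest route seems to be an excursion-measure computation in the spirit of Lemmas \ref{lemback3} and \ref{lemback4}: with $\bar\zeta=\psi(\zeta_{t,s})$ a small hull on $C_0$ in $A_{r-t}$, expand the probability that Brownian motion from a point of $C_{r-t}$ exits $A_{r-t}\setminus\bar\zeta$ in $C_0\cup\bar\zeta$ as the unperturbed $1+\log|\cdot|/(r-t)$ plus a correction which, lifted by $\psi$, equals $\tfrac{1}{r-t}\EE[\Im(\text{lifted }B_\sigma);\,B_\sigma\in\tilde\zeta_{t,s}]$; evaluating the resulting boundary integral with Lemma \ref{lem007} and the identity $\int\bar H_{S_{r-t}}(\xi,U_t)\,|d\xi|=\pi/(r-t)$ over one period of $\{\Im\xi=r-t\}$ (a restatement of $\rho_{S_{r-t}}(U_t;\text{top})=1/(r-t)$) gives
\[
\mathcal{E}_{A_{r-t}\setminus\bar\zeta}(C_0\cup\bar\zeta,C_{r-t})=\frac{2\pi}{r-t}+\frac{\pi\,\hcap[\zeta_{t,s}]}{(r-t)^2}\,[1+o(1)].
\]
Since the annulus parametrization forces the left-hand side to be $2\pi/r(t+s)=2\pi/(r-t-s)$, comparing the expansions yields $\hcap[\zeta_{t,s}]=2s\,[1+o(1)]$, as required. (Alternatively one could pass through Lemma \ref{lemback2} and \eqref{eqhalf} to convert the radial-parametrization capacity rate into the annulus one, but this direct computation looks shorter.)

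For the second assertion, set $F_t(z):=-\,h_t(z)/(r-t)-2\,\bar{\mathcal H}_{S_{r-t}}(h_t(z),U_t)$, which is holomorphic in $z$ on $S_{r,t}$; since $\bar H_{S_{r-t}}=\Im\,\bar{\mathcal H}_{S_{r-t}}$, equation \eqref{tom} says exactly $\Im F_t(z)=\partial_t\Im[h_t(z)]$. Because $S_{r,t}$ is simply connected, $\Re[h_t(z)]-\Re[h_t(w)]$ equals the integral of the harmonic-conjugate differential of $\Im[h_t]$ along any path from $w$ to $z$, which we choose to avoid the translates of the tip $\eta(t)$ so that it lies in $S_{r,t'}$ for $t'$ near $t$. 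Differentiating this relation in $t$ — the interchange of $\partial_t$ with the path integral being legitimate because, by \eqref{tom} and Cauchy-type estimates for harmonic functions, the difference quotients of $\Im[h_{t'}]$ and of its spatial derivatives converge locally uniformly as $t'\to t$ — and using $\partial_t\Im[h_t]=\Im F_t$, one gets $\partial_t\Re[h_t(z)]-\partial_t\Re[h_t(w)]=\Re F_t(z)-\Re F_t(w)$. Invoking the hypothesis that $\partial_t\Re[h_t(w)]$ exists for some $w\notin\tilde\eta$, this gives $\partial_t\Re[h_t(z)]=\Re F_t(z)+\beta_t$ with $\beta_t:=\partial_t\Re[h_t(w)]-\Re F_t(w)$ real and independent of $z$; combined with $\partial_t\Im[h_t(z)]=\Im F_t(z)$, this is precisely $\partial_t h_t(z)=F_t(z)+\beta_t$.
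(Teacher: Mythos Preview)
Your argument is correct and the first part is essentially the paper's: the exact identity
\[
\Im[h_{t+s}(z)]=\tfrac{r-t-s}{r-t}\bigl(\Im[h_t(z)]-\EE^{h_t(z)}[\Im B_\sigma;\,B_\sigma\in\tilde\zeta_{t,s}]\bigr)
\]
is just the composition version of the paper's identity for $I_t(z)=\Im[z-h_t(z)]$ at $t=0$, and both then feed in Lemma~\ref{lem007}. For the rate $\hcap[\zeta_{t,s}]/s\to2$ the paper simply quotes Lemma~\ref{lemback2} (radial parametrization gives $\dot r(0)=-a/2$ and $\partial_t\hcap[\eta_t]|_{t=0}=a$, so the time change to annulus parametrization multiplies by $2/a$); your excursion-measure derivation is a valid alternative. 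One cosmetic point: what you integrate over a period of the top line is the inward normal derivative $\partial_{n_\xi}\bar H_{S_{r-t}}(\xi,U_t)$, not $\bar H$ itself (which vanishes there); with that correction your identification with $\pi\,\rho_{S_{r-t}}(U_t;\text{top})=\pi/(r-t)$ is exactly right.

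For the second assertion you take a genuinely different and cleaner route than the paper. The paper argues by hand: it sets $f_t(z)=\tfrac{r}{r-t}h_t(z)-z+\hcap[\eta_t]\,\bar{\mathcal H}_{S_r}(z,u)$, bounds $|\Im f_t|$ by $O(d_t\,\hcap[\eta_t])$ via Lemma~\ref{lem007}, upgrades to a bound on $|f_t'|$ by a mean-value (Cauchy) estimate, and then integrates from $w$ to $z$ using quasi-periodicity of $h_t$ and $\bar{\mathcal H}$ to control the difference $f_t(z)-f_t(w)$, concluding $\partial_t f_t(z)=\partial_t f_t(w)$. Your harmonic-conjugate argument bypasses all of this: since $F_t$ is single-valued holomorphic on $S_{r,t}$ and $\partial_t\Im h_t=\Im F_t$ with locally uniform convergence of the difference quotients (hence of their spatial gradients, by Cauchy estimates), differentiating $\Re h_t(z)-\Re h_t(w)=\int_\gamma *d\Im h_t$ under the integral gives $\partial_t\Re h_t(z)-\partial_t\Re h_t(w)=\Re F_t(z)-\Re F_t(w)$ directly. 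This is shorter and more conceptual; the paper's approach, on the other hand, yields explicit $O(d_t\,\hcap[\eta_t])$ error bounds on $f_t$ and $f_t'$, which could be useful if one needed quantitative control rather than just differentiability.
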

\begin{proof}
It suffices to prove this for $t=0$.
The function
\[
I_t(z)=\Im[z -h_t(z)]
\]
 is a bounded harmonic function on $S_{r,t}$. Considering the values of $I_t(z)$ at the boundaries, we have
\begin{align*}
I_t(z) &= t\,\PP^z[B_\tau\in \mathbb{R}+ir]+ \EE^z\left[\Im[B_\tau]1\{B_\tau\in\tilde \eta_t\}\right]\\
&= t\,\frac{\Im[h_t(z)]}{r-t} + \EE^z\left[\Im[B_\tau]1\{B_\tau\in\tilde \eta_t\}\right],\numberthis\label{eq0025}
\end{align*}
where $\tilde{\eta}_t$ is defined in \eqref{eq2.55} and $\tau$ is the first time  Brownian motion $B$ exits $S_{r,t}$. It follows from Lemma \ref{lemback2} that under the annulus parametrization,
\[
\partial_t\,\hcap[\eta_t]|_{t=0} = 2.
\]
Equation \eqref{tom} follows from this, Lemma \ref{lem007} and \eqref{eq0025}.

To see the second equality in the statement of the lemma, define 
\[
f_t(z) =  \frac{r\,h_t(z)}{r-t}-z+ \hcap[\eta_t]\bar{\mathcal{H}}_{S_{r}}(z,u),
\]
and let $v_t(z) = \Im[f_t(z)]$. Then by using Lemma \ref{lem007} and \eqref{eq0025} we can see that for any $\epsilon>0$, there exists a constant $c_*$ such that for all $\{z\in S_r;\, \forall k,\, |z-2k\pi-u|>\epsilon\}$,
\[
|v_t(z)|<c_*\,d_t\,\hcap[\eta_t]\,\bar{H}_{S_r}(z,u).
\]
Since $v_t(z)$ is harmonic, there exists  $c=c(z)$ such that
\[
|\nabla v_t(z)|<c\,d_t\,\hcap[\eta_t]\,\bar{H}_{S_r}(z,u).
\]
Therefore,
\[
|f'_t(z)|=|\nabla v_t(z)|<c\,d_t\,\hcap[\eta_t]\,\bar{H}_{S_r}(z,u).
\]
Define $a_k = w+2k\pi $ and let $k^* = \arg\min_k |z-a_k|$.
Since $\bar{H}_{S_r}(z,u)$ is uniformly bounded on $\{z\in S_r;\, \forall k,\, |z-2k\pi-u|>\epsilon\}$ and $h_t(z),\bar{\mathcal{H}_{S_r}}(z,0)$ are quasi-periodic functions, for some constant $C$
\begin{align*}
\lvert f_t(z)-f_t(w)+f_t(w)-f_t(a_{k^*})\rvert&=\lvert f_t(z)-f_t(w) - \frac{2k^*\,t\pi}{r-t}-\frac{k^*\pi}{r}\hcap[\eta_t]\rvert\\
&<C\,d_t\,\hcap[\eta_t].
\end{align*}
Since $f_t(w)$ is differentiable with respect to $t$, we get
\[
\partial_t f_t(z)=\lim _{t\downarrow0}\frac{f_t(z)}{t} = \partial_t f_t(w).
\]
Therefore, $h_t(z)$ is differentiable with respect to $t$ and the second equality follows.

\end{proof}
\begin{lemma}\label{lemlink}
For any continuous path $\gamma$ with the annulus parametrization, there exists a collection of transformations $h_t:S_{r,t}\to S_{r-t}$ such that
\[
\partial_t h_t(z) = -\frac{h_t(z)-U_t}{r-t} -2 \bar{\mathcal{H}}_{S_{r-t}}({h_t(z)},U_t).
\]
\end{lemma}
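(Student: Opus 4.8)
The plan is to take \emph{any} valid family of lifts $h_t : S_{r,t}\to S_{r-t}$, which exists but is only determined up to a real translation (Section~\ref{secnotation}), and then to fix this translational freedom in two stages so that the Loewner vector field acquires the normalized form in the statement.

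First I would arrange the hypothesis of Lemma~\ref{lemmoein}, namely the existence of a point $w\in S_r\setminus\tilde\eta$ at which $\Re[h_t(w)]$ is differentiable in $t$ for all $t<r$. Since $\tilde\eta$ is a countable union of curves, $S_r\setminus\tilde\eta$ is nonempty; fix $w$ there, so $w\notin\tilde\eta_t$ and $h_t(w)$ is defined (up to a real shift) for every $t<r$. Starting from a $t$-continuous choice of $h_t$ (which exists by the standard Carath\'eodory-type convergence of the maps $\hbar_t$ together with a continuous choice of the rotation), I would renormalize the translations so that $\Re[h_t(w)]\equiv\Re[w]$ for all $t<r$. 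With this normalization $h_0$ is the identity map, the function $t\mapsto\Re[h_t(w)]$ is constant, hence differentiable, and $h_t$ still depends continuously on $t$. Lemma~\ref{lemmoein} then applies and gives, with $U_t:=h_t(\eta(t))$,
\[
\partial_t h_t(z) = -\frac{h_t(z)}{r-t} - 2\,\bar{\mathcal{H}}_{S_{r-t}}(h_t(z),U_t) + \beta_t ,
\]
where $\beta_t$ does not depend on $z$. I would then record three regularity facts: $U_t$ is real (it is the image under $h_t$ of a point of $\partial S_{r,t}$ lying on the part of the boundary coming from $\mathbb{R}$) and $t\mapsto U_t$ is continuous; $\beta_t$ is real (compare imaginary parts with the first identity of Lemma~\ref{lemmoein}, using that $\Im\,\bar{\mathcal{H}}_{S_{r-t}}(\cdot,x)=\bar H_{S_{r-t}}(\cdot,x)$ for real $x$); and $t\mapsto\beta_t$ is continuous on $[0,r)$ (this comes from the continuity in $t$ of $h_t(z)$ and of $\partial_t h_t(z)$, as in the proof of Lemma~\ref{lemmoein}).

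Second, I would absorb $\beta_t$ by a time-dependent real translation. Define $\tilde h_t(z)=h_t(z)+c(t)$, where $c(0)=0$ and $c'(t)=\frac{U_t}{r-t}-\beta_t$; this is well defined and $C^1$ on $[0,r)$ by the continuity just mentioned, and since $c(t)\in\mathbb{R}$ the map $\tilde h_t$ is again a conformal transformation of $S_{r,t}$ onto $S_{r-t}$. Set $\tilde U_t:=\tilde h_t(\eta(t))=U_t+c(t)$. Since $\bar{\mathcal{H}}_{S_{r-t}}(z,z_0)$ depends on $(z,z_0)$ only through $z-z_0$, we have $\bar{\mathcal{H}}_{S_{r-t}}(h_t(z),U_t)=\bar{\mathcal{H}}_{S_{r-t}}(\tilde h_t(z),\tilde U_t)$, and substituting $h_t(z)=\tilde h_t(z)-c(t)$ into the displayed equation yields
\[
\partial_t \tilde h_t(z) = -\frac{\tilde h_t(z)}{r-t} - 2\,\bar{\mathcal{H}}_{S_{r-t}}(\tilde h_t(z),\tilde U_t) + \Big(\frac{c(t)}{r-t}+\beta_t+c'(t)\Big).
\]
By the choice of $c$ the parenthesized term equals $\frac{U_t+c(t)}{r-t}=\frac{\tilde U_t}{r-t}$, so this rearranges to $\partial_t\tilde h_t(z)=-\frac{\tilde h_t(z)-\tilde U_t}{r-t}-2\,\bar{\mathcal{H}}_{S_{r-t}}(\tilde h_t(z),\tilde U_t)$, which is the asserted equation after relabelling $\tilde h_t,\tilde U_t$ as $h_t,U_t$.

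The conceptual point is that Lemma~\ref{lemmoein} only pins down $h_t$ up to the unknown drift $\beta_t$, and the present lemma says that drift can always be removed by solving a single scalar ODE for the translation. The step I expect to require the most care is the regularity of $t\mapsto\beta_t$ needed to define $c$, together with checking that the renormalized family remains continuous in $t$ --- both handled above --- whereas the ``differentiability of $\Re[h_t(w)]$'' hypothesis of Lemma~\ref{lemmoein} is obtained for free by the pinning trick. No difficulty arises as $t\uparrow r$ since the claim concerns only $t<r$.
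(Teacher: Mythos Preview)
Your proof is correct and follows essentially the same approach as the paper: the paper also fixes a reference point $w\in S_r\setminus\tilde\eta$, first translates so that $\Re[h^*_t(w)]$ becomes manageable, and then adds an explicit real-valued integral in $t$ to kill the residual drift $\beta_t$. The only cosmetic difference is that the paper writes down both translations in a single formula for $h_t$, whereas you carry them out in two stages (pinning, then solving the scalar ODE for $c$); unwinding your $c(t)$ reproduces exactly the paper's integral term.
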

\begin{proof}
Choose $w\in S_r$ such that $w\not\in \tilde{\eta}$. Let $h^*_t:S_{r,t}\to S_{r-t}$ be a conformal transformation and let $\U_t^* = h^*_t(\eta_(t))$. We can assume $h^*_t(w)$ is  differentiable with respect to $t$ (otherwise, we consider $h^*_t(w) + c_t$ for an appropriate $c_t\in\mathbb{R}$). Define
\[
h_t(z) = h^*_t(z) - \Re[h^*_t(w)] + \int_0^t\frac{U_s^*-\Re[h^*_s(w)]}{r-s}\,-2\,\Re[\bar{\mathcal{H}}_{S_{r-s}}(h^*_s(w),U^*_s)]\,ds.
\]
Note that this is well defined for all $t<r$ because $\bar{\mathcal{H}}_{S_{r-s}}(h^*_s(w),U^*_s)$ is a continuous function of $s$.
Using Lemma \ref{lemmoein}, we have
\[
\partial_th_t^*(z) = \partial_th^*_t(w) + \frac{h^*_t(w)-h^*_t(z)}{r-t} - 2[\bar{\mathcal{H}}_{S_{r-t}}(h^*_t(z),U_t)-\bar{\mathcal{H}}_{S_{r-t}}(h^*_t(w),U_t)].
\]
Therefore,
\[
\partial_t \Re[h_t(z)] = -\frac{\Re[h^*_t(z)]-U^*_t}{r-t} - 2\,\bar{\mathcal{H}}_{S_{r-t}}(h^*_t(z),U^*_t).
\]
Using this, the result follows from  the fact that $h_t(z)-U_t = h^*_t(z) - U^*_t$ and \eqref{tom}.
\end{proof}

In \cite{komatu,bauer2}, $h_t(z)$ is  specified by requiring $\hbar_t(e^{-r}) = e^{-(r-t)}$ and $\beta_t$ is determined according to this condition. 
Instead, we have uniquely specified $h_t(z)$ by requiring $\beta_t = U_t/(r-t)$ and $h_0(z) = z$ in Lemma \ref{lemlink}.
This is equivalent to requiring that 
\[
\partial_t \Re[h_t(z)] = 0, \text{ for }\, \{z\in S_{r,t};\,\exists k\in\mathbb{Z}\,\,\Re[h_t(z)] = U_t + 2k\pi\},\]
which is analogous to the usual conditions for chordal Loewner equation in $\hp$. 
We summarize our discussion with the following proposition.
\begin{proposition}\label{proplow}
For $z\in S_r,\,\zbar = e^{iz},\,x\in \mathbb{R}$ define
\begin{equation}\label{eq5.2}
\HH_r(z) = -\frac{z}{r} - 2\bar{\mathcal{H}}_{S_r}(z,0),
\end{equation}
\[
\HH^R_r(x) =\Re[\HH_r(x+ir)]=  -\frac{x}{r} + \frac{\pi}{r}\sum^{PP}_k\tanh\left(\frac{\pi(x+2k\pi)}{2r}\right),
\]
\[
\bar{\HH}_r(\zbar) = i\HH_r(z).
\]
Then
\[
\partial_t h_t(z) = \HH_{r-t}(h_t(z) - U_t),\qquad h_0(z) = z,
\]
\[
\partial_t \Re[h_t(x + ir)]  = \HH^R_{r-t}(\Re[h_t(x+ir)]-U_t),
\]
\begin{equation}\label{eqlow2}
\partial_t \hbar_t(z) = \hbar_t(z) \bar \HH_{r-t}(\hbar_t(z)/\Ubar_t),\qquad \hbar_0(z) =z.
\end{equation}
\end{proposition}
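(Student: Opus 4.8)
The plan is to obtain the three assertions by translating Lemma~\ref{lemlink}, the boundary behaviour of the maps $h_t$, and the covering relation $\psi\circ h_t=\hbar_t\circ\psi$ into the notation \eqref{eq5.2}; no new estimates are needed. First, Lemma~\ref{lemlink} already furnishes transformations $h_t:S_{r,t}\to S_{r-t}$ with $h_0=\mathrm{id}$ and $\partial_t h_t(z)=-\tfrac{h_t(z)-U_t}{r-t}-2\bar{\mathcal H}_{S_{r-t}}(h_t(z),U_t)$; since $\bar{\mathcal H}_{S_r}(\zeta,z_0)=\bar{\mathcal H}_{S_r}(\zeta-z_0,0)$, the right-hand side is by definition $\HH_{r-t}(h_t(z)-U_t)$, which is the first equation. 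I would also record at this point that this $h_t$ is the one from section~\ref{secnotation}: in the proof of Lemma~\ref{lemlink} it is constructed from a geometric map $h^*_t$ — which descends to the annulus, so $h^*_t(\zeta)-\zeta$ is $2\pi$-periodic — by adding a function of $t$ alone, so $h_t(\zeta)-\zeta$ remains $2\pi$-periodic and $\hbar_t(\zbar):=e^{ih_t(z)}$ (with $\zbar=e^{iz}$) is a well-defined conformal map with $\hbar_0=\mathrm{id}$ and $\psi\circ h_t=\hbar_t\circ\psi$.

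For the boundary equation, observe that $h_t$ maps the free side $\R+ir$ of $S_{r,t}$ onto $\R+i(r-t)$, so Schwarz reflection across these two horizontal lines extends it to a conformal map on a neighbourhood of $\R+ir$. On $\R+i(r-t)$ the function $\HH_{r-t}$ is regular (there $\coth$ is evaluated at arguments of the form $\text{(real)}+i\pi/2$), so both sides of the first equation are analytic in $z$ near $\R+ir$ and, agreeing on $S_{r,t}$, agree on $\R+ir$ too. Taking $z=x+ir$, using $h_t(x+ir)=\Re[h_t(x+ir)]+i(r-t)$ and passing to real parts gives $\partial_t\Re[h_t(x+ir)]=\Re\,\HH_{r-t}\bigl((\Re[h_t(x+ir)]-U_t)+i(r-t)\bigr)=\HH^R_{r-t}(\Re[h_t(x+ir)]-U_t)$ by the definition of $\HH^R_r$. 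The closed form of $\HH^R_r$ is then the identity $\coth(w+i\pi/2)=\tanh w$: each summand $\mathcal H_{S_r}(\,\cdot\,,2k\pi)=-\tfrac{\pi}{2r}\coth\!\bigl(\tfrac{\pi}{2r}(\,\cdot\,-2k\pi)\bigr)$ evaluated at argument $x+ir$ equals the real number $-\tfrac{\pi}{2r}\tanh\!\bigl(\tfrac{\pi}{2r}(x-2k\pi)\bigr)$, and substituting into \eqref{eq5.2} and taking the real part yields the stated ($PP$-regularised) sum.

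For the annulus equation, differentiate $\hbar_t(\psi(z))=e^{ih_t(z)}$ in $t$ with $\psi(z)$ held fixed: $\partial_t\hbar_t(\zbar)=i\,\partial_t h_t(z)\,e^{ih_t(z)}=i\,\HH_{r-t}(h_t(z)-U_t)\,\hbar_t(\zbar)$ for $\zbar=e^{iz}$. Applying $\psi\circ h_t=\hbar_t\circ\psi$ at the tip gives $\Ubar_t=\hbar_t(\gamma(t))=e^{ih_t(\eta(t))}=e^{iU_t}$, so $e^{i(h_t(z)-U_t)}=\hbar_t(\zbar)/\Ubar_t$. Since $\HH_r$ is $2\pi$-periodic — which is exactly what the quasi-periodicity of $\bar{\mathcal H}_{S_r}$ recorded above gives, and is also what makes $\bar\HH_r(\zbar):=i\HH_r(z)$ well defined — the relation $\bar\HH_{r-t}(e^{iw})=i\HH_{r-t}(w)$ converts $i\HH_{r-t}(h_t(z)-U_t)$ into $\bar\HH_{r-t}(\hbar_t(\zbar)/\Ubar_t)$, giving $\partial_t\hbar_t(\zbar)=\hbar_t(\zbar)\,\bar\HH_{r-t}(\hbar_t(\zbar)/\Ubar_t)$, with $\hbar_0=\mathrm{id}$ from $h_0=\mathrm{id}$.

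I expect the only genuinely delicate points to be: (i) justifying that the ODE of Lemma~\ref{lemlink}, stated for interior points, persists on the free boundary $\R+ir$ — handled above by analytic continuation through a Schwarz reflection after checking that the singular set of $\HH_{r-t}$ is avoided there; and (ii) the bookkeeping with the conditionally convergent ($PP$) sums — checking $\coth(w+i\pi/2)=\tanh w$ termwise, and confirming that the quasi-periodicity of $\bar{\mathcal H}_{S_r}$ is consistent with $h_t(\zeta)-\zeta$ staying $2\pi$-periodic along the flow (equivalently, that $\HH_r$ is $2\pi$-periodic), which is what legitimises passing from $h_t$ to $\hbar_t$ and defining $\bar\HH_r$. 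Everything else is direct substitution of the definitions.
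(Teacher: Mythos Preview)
Your proposal is correct and follows the same route as the paper, which simply states that the proposition is a straightforward consequence of Lemma~\ref{lemlink}. You have filled in the details the paper omits --- the substitution $\HH_{r-t}(h_t(z)-U_t)=-\tfrac{h_t(z)-U_t}{r-t}-2\bar{\mathcal H}_{S_{r-t}}(h_t(z)-U_t,0)$, the analytic continuation to the free boundary via Schwarz reflection, the identity $\coth(w+i\pi/2)=\tanh w$ for the closed form of $\HH^R_r$, and the passage to $\hbar_t$ through the covering map --- and your handling of the delicate points (periodicity of $\HH_r$, well-definedness of $\bar\HH_r$) is sound and indeed matches what the paper records immediately after the proposition, where $\HH_r$ is noted to be elliptic with period~$2\pi$.
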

\begin{proof}
This is an straightforward consequence of Lemma \ref{lemlink}.
\end{proof}

The function $\HH_r(z)$ has several interesting properties.  
\begin{itemize}
\item $\HH_r(z)$ is an odd elliptic function. In other words, it is a meromorphic doubly periodic function in $\mathbb{C}$, with periods $2\pi, 2ir$. 
\item Let $\Gamma(r)$ be the measure of Brownian bubbles in $\disk$ that are rooted at 1 and intersect $\disk\sm A_r$. Since  $\disk$ and $A_r$ have smooth boundaries, we can write
\[
\Gamma(r) = \frac{1}{\pi}\int_{C_r}H_{ A_r}(1,w)H_{\disk}(w,1)|dw|.
\]
(The constant $1/\pi$ in the last equation is because of our choice of normalization for the Poisson kernel. It is normalized so that
$H_\disk(0,1) = 1/2$.)
Starting from the definition \eqref{eq5.2}, one can show that
\begin{equation}\label{eqidk}
\HH_r(z) = \frac{2}{z} + z\left(2\Gamma(r) -\frac{1}{r} - \frac{1}{6}\right) + \bigo(|z|^3),\qquad z \to 0.
\end{equation}
See Lemma 3.16 in \cite{greg_annulus} for more details. It follows that $\HH_r(z)$ is analytic on $\mathbb{C}\sm \{2k\pi + i2mr;\,k,m\in\mathbb{Z}\}$ and has poles of degree 1 at points $\{2k\pi + i2mr;\,k,m\in\mathbb{Z}\}$.
\item Let $\wp$ be the Weierstrass elliptic function with periods $2\pi, i2r$. Then 
\[
\partial_z \HH_r(z) = - 2\wp(z) + \zeta_r, 
\]
where $\zeta_r$ is a constant depending on $r$ \cite{dapeng_2004}.
\end{itemize} 
\subsection{Comparing annulus \texorpdfstring{$\sle$}{LG} to radial \texorpdfstring{$\sle$}{LG}}
 Suppose with respect to a probability space $(\Omega,\mathcal{F},\mathbb{P})$, $\gamma_t$ is a radial $\sle$ path from $\ubar$ to 0 in $\disk$. 
We assume $\gamma_t$ has the radial parametrization $\log\gg'(0) = at/2$. 
With respect to $\mathbb{P}$, $\xi_t$ is a Brownian motion and $\bar{\xi}_t=\bar{g}_t({\gamma}(t))=e^{i\xi_t}$.
Denote by $\mu_\disk(\bar{u}, 0)$  the distribution of $\gamma_t$ and let $\mu_{A_r}(\bar u,\bar w)$ be the distribution of  $\sle$ from $\bar u$ to $\bar w$ in $A_r$. Recall our notation in section \ref{secnotation} and let $\Wbar_t =\hbar_t(\wbar)$.
The following result is stated in section 7.2 of \cite{greg_annulus}.  Here, we give a proof with more details.
\begin{proposition} \label{proprad}
Let $\tau_r=\inf\{t;\,\gamma_t\cap C_r\neq \emptyset\}$  and suppose $t<\tau_r$. Then
\begin{equation}\label{eqidk1.9} 
\frac{d\mu_{A_r}(\ubar,\wbar)}{d\mu_\disk(\ubar,0)}(\gamma_t) = \frac{|\hbar'_t(\bar w)|^b|\bar {\phi}'_t(\bar{\xi}_t)|^b}{\bar{g}'_t(0)^{\tilde{b}}}\exp\left\{m_\disk(C_r,\gamma_t)\right\}\Psi_{A_{r(t)}}(\Ubar_t,\Wbar_t).
\end{equation}
\end{proposition}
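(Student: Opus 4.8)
The plan is to derive \eqref{eqidk1.9} by combining the definition of annulus $\sle$ as a measure on curves in $A_r$ that is absolutely continuous with respect to chordal $\sle$ in the covering space $S_r$, together with the generalized restriction (boundary perturbation) rule \eqref{genres}–\eqref{theq6} applied to the inclusion $\disk \setminus \gamma_t \subset \disk$ (or more precisely, to compare $\mu_\disk(\ubar,0)$ restricted to initial segments with $\mu_{A_r}(\ubar,\wbar)$). First I would fix the setup: since $t < \tau_r$, the curve $\gamma_t$ stays in $A_r$, so $\hbar_t : A_r \setminus \gamma_t \to A_{r(t)}$ with $\hbar_t(C_r) = C_{r(t)}$ is well defined, and the factorization $\hbar_t = \bar\phi_t \circ \gg$ from \eqref{eq2.7} holds, where $\gg:\disk\setminus\gamma_t\to\disk$ with $\gg(0)=0,\gg'(0)>0$. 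The key observation is that conditioning annulus $\sle$ $\mu_{A_r}(\ubar,\wbar)$ on an initial segment $\gamma_t$ gives, by the domain Markov property for $\sle$ in multiply connected domains, the measure $\mu_{A_r\setminus\gamma_t}(\gamma(t),\wbar)$ on the remainder; applying the conformal covariance \eqref{eqdef0}–\eqref{sep19.1} (with exponent $b$ at both boundary points) through $\hbar_t$ converts this to $\mu_{A_{r(t)}}(\Ubar_t,\Wbar_t)$ with Jacobian factor $|\hbar_t'(\gamma(t))|^b\,|\hbar_t'(\wbar)|^b$, whose total mass is $\Psi_{A_{r(t)}}(\Ubar_t,\Wbar_t)$.

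The core computation is then to express the Radon--Nikodym derivative of $\mu_{A_r}(\ubar,\wbar)$ with respect to $\mu_\disk(\ubar,0)$ at the level of initial segments. I would proceed as follows: (i) restrict attention to curves $\gamma\subset A_r$ that begin with $\gamma_t$; (ii) for radial $\sle$ from $\ubar$ to $0$ in $\disk$, the conditional law of the remainder given $\gamma_t$ is, after pushing forward by $\gg$ and normalizing, radial $\sle$ in $\disk$ again, with the weight $\bar g_t'(0)^{\tilde b}$ coming from the conformal covariance \eqref{sep19.1} at the interior point $0$ (exponent $\tilde b$) — this is where $\tau_r$-stopping and the radial parametrization $\log\gg'(0)=at/2$ enter; (iii) for annulus $\sle$, the analogous conditional weight involves $|\hbar_t'(\wbar)|^b$ at the boundary target $\wbar$ together with $|\bar\phi_t'(\bar\xi_t)|^b$ (since $\hbar_t'(\gamma(t))$ must be interpreted via $\bar\phi_t$ at the image point $\bar\xi_t = \gg(\gamma(t))$, because $\gg'$ blows up at $\gamma(t)$ — this is exactly the role of the factorization \eqref{eq2.7}); and (iv) the exponential Brownian-loop term: by \eqref{theq6} applied to $\disk\setminus\gamma_t\subset\disk$ combined with the loop term already built into the definition of $\mu_{A_r}$ via $m_{S_r}^*$, $\hat m_r$, the net contribution of loops in $\disk$ meeting both $\gamma_t$ and $C_r$ is $\exp\{m_\disk(C_r,\gamma_t)\}$ — note the coefficient here is $1$, not $\cent/2$, reflecting that this is already the ratio of two $\sle$ measures each carrying their own loop weights, analogous to \eqref{ucsd1.5}. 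Assembling (ii)–(iv) and the total-mass identification from the previous paragraph yields \eqref{eqidk1.9}.

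I expect the main obstacle to be the careful bookkeeping of the Brownian loop measure terms and verifying that the coefficient in the exponential is exactly $1$ (rather than $\cent/2$ or some other constant). The subtlety is that $\mu_{A_r}(\ubar,\wbar)$ is not simply $\mu_\disk(\ubar,0)$ tilted by a restriction weight relative to a common simply connected reference domain; rather, it is defined through the covering-space construction with its own loop corrections ($m_{S_r}^*$, $\hat m_r$, and the $C^2$ smoothness of $\Psi_{A_r}$). One must show that when one forms the ratio $d\mu_{A_r}(\ubar,\wbar)/d\mu_\disk(\ubar,0)$ at time $t$, all the loop-measure contributions combine — using the restriction property \eqref{theq6}, the decomposition of loops in $\disk$ into those staying in $\disk\setminus\gamma_t$ versus those hitting $\gamma_t$, and the fact that loops in $A_r$ hitting $\gamma_t$ correspond (via the loop-measure restriction and conformal invariance of $m$) to the $\Psi_{A_{r(t)}}$ factor — to leave precisely $\exp\{m_\disk(C_r,\gamma_t)\}$. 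A secondary technical point is justifying the interchange of "conditioning on $\gamma_t$" with "restricting the measure to curves extending $\gamma_t$," i.e.\ the strong domain Markov property at the stopping time $t\wedge\tau_r$, which is available for $\sle$ and extends to the annulus case by the definition in section \ref{mcsec}; and confirming that \eqref{eqhalf}-type identities or the radial parametrization are not actually needed here since the statement is for a fixed deterministic $t<\tau_r$ and makes no reference to a time derivative. Once the loop-term coefficient is pinned down, the rest is a routine concatenation of conformal covariance factors.
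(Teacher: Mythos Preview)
Your proposal contains a genuine error: you have been misled by a typo in the displayed statement and are trying to justify a loop-measure coefficient of $1$ in the exponential. The correct factor is $\exp\{\tfrac{\cc}{2}\,m_\disk(\gamma_t,C_r)\}$, exactly as in \eqref{ucsd1.5}---which you yourself cite as the analogous formula, and which visibly carries the $\cc/2$. Any bookkeeping via $m_{S_r}^*$ and $\hat m_r$ that produces coefficient $1$ is therefore wrong, and the covering-space decomposition is in any case unnecessary for this proposition.

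The paper's route is also quite different from yours. Rather than comparing $\mu_{A_r}(\ubar,\wbar)$ to $\mu_\disk(\ubar,0)$ directly via the domain Markov property (which, as you note, would require making sense of derivative factors at the moving tip), the paper introduces an auxiliary simply connected $D\subset A_r$ agreeing with $A_r$ near $\ubar,\wbar$ together with an auxiliary boundary point $\zbar\in C_0\cap\partial D$, and chains five explicit Radon--Nikodym derivatives: $\mu_{A_r}(\ubar,\wbar;D)$ against $\mu_{A_r}(\ubar,\wbar)$ and against $\mu_D(\ubar,\wbar)$; the target change $\mu_D(\ubar,\wbar)/\mu_D(\ubar,\zbar)$ (available since $D$ is simply connected); the boundary perturbation $\mu_D(\ubar,\zbar)/\mu_\disk(\ubar,\zbar)$; and the chordal--radial comparison $\mu_\disk(\ubar,\zbar)/\mu_\disk(\ubar,0)$. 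Multiplying these, all $D$- and $\zbar$-dependent factors cancel (via $\hbar_t=\bar\phi_t\circ\gg$), the two loop exponentials combine through the elementary identity $m_\disk(\gamma_t,\disk\sm D)=m_{A_r}(\gamma_t,A_r\sm D)+m_\disk(\gamma_t,C_r)$ to give $\tfrac{\cc}{2}\,m_\disk(\gamma_t,C_r)$, and the only surviving tip factor is the well-defined $|\bar\phi_t'(\bar\xi_t)|^b$. Since the answer is independent of $D$, one concludes the formula for $\mu_{A_r}(\ubar,\wbar)$ itself.
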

\begin{proof}
Let $D\subset A_r$ be a simply connected domain that agrees with $A_r$ in neighborhoods of $\ubar,\,\wbar$. 
Let $D_t = \hbar_t(D\sm\gamma_t)$  and $\Psi_{A_{r(t)}}(\Ubar_t,\Wbar_t;\,D_t) = \lvert\lvert\mu_{A_{r(t)}}(\Ubar_t,\Wbar_t;\,D_t)\rvert\rvert$.
It is easy to see that
\[
\frac{d\mu_{A_r}(\ubar,\wbar;\,D)}{d\mu_{A_r}(\ubar,\wbar)}(\gamma_t) = 1\{\gamma_t\subset D\}\,\frac{\Psi_{A_{r(t)}}(\Ubar_t,\Wbar_t; D_t)}{\Psi_{A_{r(t)}}(\Ubar_t,\Wbar_t)}.
\]
By the definition of $\sle$ in $A_r$,
\[
\frac{d\mu_{A_r}(\ubar,\wbar;\,D)}{d\mu_D(\ubar,\wbar)}(\gamma_t) = \exp\left\{-\frac{\cc}{2}m_{A_r}(\gamma_t,A_r\sm D)\right\}1\{\gamma_t\subset D\} \frac{\Psi_{A_{r(t)}}(\Ubar_t,\Wbar_t; D_t)}{\Psi_{D_t}(\Ubar_t,\Wbar_t)}.
\]
Let $|\zbar|=1$ be another analytic boundary point of $D$ and let $\bar Z_t = \hbar_t(\zbar)$. Since $D$ is simply connected, we have
\[
\frac{d\mu_D(\ubar,\wbar)}{d\mu_D(\ubar,\zbar)}(\gamma_t)=\frac{\lvert\hbar_t'(\wbar)\rvert^b\,\Psi_{D_t}(\Ubar_t,\Wbar_t)}{\lvert\hbar_t'(\zbar)\rvert^b\,\Psi_{D_t}(\Ubar_t,\bar Z_t)}.
\]
Moreover, by comparing chordal $\sle$ in $D$ and $\disk$ we get
\[
\frac{d\mu_D(\ubar,\zbar)}{d\mu_\disk(\ubar,\zbar)}(\gamma_t) = \exp\left\{\frac{\cc}{2}m_{\disk}(\gamma_t,\disk\sm D)\right\}1\{\gamma_t\subset D\}\frac{\Psi_{\gg(D\sm\gamma_t)}(\bar\xi_t,\gg(\wbar))}{\Psi_\disk(\bar\xi_t,\gg(\zbar))}.
\]
Finally,  comparing chordal and radial $\sle$ in $\disk$ gives
\[
\frac{d\mu_\disk(\ubar,\zbar)}{d\mu_\disk(\ubar,0)}(\gamma_t) = \frac{\lvert\gg'(\zbar)\rvert^b\,\Psi_\disk(\bar \xi_t,\gg(\zbar))}{\gg'(0)^{\tilde b}}.
\]
Note that
\[
\Psi_{D_t}(\Ubar_t,\bar Z_t) = \lvert\bar\phi'_t(\bar\xi_t)\rvert^{-b}\,\lvert\bar\phi'_t(\gg(\zbar))\rvert^{-b}\,\Psi_{\gg(D\sm\gamma_t)}(\bar\xi_t,\gg(\zbar))
\]
and
\[
m_{\disk}(\gamma_t,\disk\sm D)=m_{A_r}(\gamma_t,A_r\sm D)+m_{\disk}(\gamma_t,C_r).
\]
Therefore,
\[
\frac{d\mu_{A_r}(\ubar,\wbar;\,D)}{d\mu_\disk(\ubar,0)}(\gamma_t) =\frac{|\hbar'_t(\bar w)|^b|\bar {\phi}'_t(\bar{\xi}_t)|^b}{\bar{g}'_t(0)^{\tilde{b}}} \exp\left\{\frac{\cc}{2}m_{\disk}(\gamma_t,C_r)\right\}1\{\gamma_t\subset D\}\,\Psi_{A_{r(t)}}(\Ubar_t,\Wbar_t; D_t).
\]
The result follows since this is true for any simply connected domain $D$.
\end{proof}
\begin{lemma}\label{lemplatten}
For $s>0,\,x\in\mathbb{R}$, define 
\[
\LL_s(x)=-\kappa\frac{\partial_x\Psi_{A_s}(\psi(x),e^{-s})}{\Psi_{A_s}(\psi(x),e^{-s})}. 
\]
If $\gamma_t$ has the distribution of $\sle$ from $\ubar$ to $\wbar$ in $A_r$, then 
\begin{equation}\label{eqidk3}
dU_t = \LL_{r-t}(W_t-U_t)dt + \sqrt{\kappa}dB_t,
\end{equation}
where $B_t$ is a Brownian motion.
\end{lemma}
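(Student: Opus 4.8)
The plan is to deduce \eqref{eqidk3} from the explicit Radon--Nikodym derivative of Proposition \ref{proprad} by a Girsanov argument, and then to rewrite the resulting equation in terms of the annulus-Loewner driving function and of the annulus-capacity time. It is convenient to run the stochastic calculus first in the radial parametrization $\log\bar g'_t(0)=at/2$, for which the radial $\sle$ measure $\mathbb P$ makes $\xi_t$ a \emph{standard} Brownian motion (the factor $a/2=1/\kappa$ is exactly what turns the usual $\sqrt\kappa$-driving into a standard one); only at the end do I pass to the annulus parametrization $r(t)=r-t$ of \eqref{eqidk3}.

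Set
\[
M_t=\frac{|\hbar'_t(\wbar)|^{b}\,|\bar\phi'_t(\bar\xi_t)|^{b}}{\bar g'_t(0)^{\tilde b}}\,\exp\!\Big\{\tfrac{\cc}{2}\,m_\disk(C_r,\gamma_t)\Big\}\,\Psi_{A_{r(t)}}(\Ubar_t,\Wbar_t).
\]
By Proposition \ref{proprad}, $M_t/M_0$ is the density of $\mu^\#_{A_r}(\ubar,\wbar)$ with respect to $\mathbb P$ on $\{t<\tau_r\}$, so $M_t$ is a positive $\mathbb P$-martingale there. The first step is to expand $d\log M_t$ by It\^o's formula. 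Every factor besides $\Psi_{A_{r(t)}}$ is a function of $\bar\xi_t$ and of the conformal maps $\bar g_t$ and $\hbar_t=\bar\phi_t\circ\bar g_t$, each of which satisfies a Loewner-type ODE (the radial equation for $\bar g_t$; equation \eqref{eqlow2}, rescaled to the radial time via Lemma \ref{lemback2}, for $\hbar_t$ and hence $\bar\phi_t$), while $\exp\{\frac{\cc}{2}m_\disk(C_r,\gamma_t)\}$ has finite variation; thus $d\log M_t=(\cdots)\,dt+\sigma_t\,d\xi_t$ for an explicit $\xi$-sensitivity $\sigma_t$. Since $M_t$ is a local martingale the $dt$-part is forced, and Girsanov's theorem gives that under $\mu^\#_{A_r}(\ubar,\wbar)$, $d\xi_t=\sigma_t\,dt+d\hat B_t$ with $\hat B$ a standard Brownian motion.

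Now pass to the annulus driving function. By \eqref{eq2.7} and the covering-space constructions of section \ref{secnotation} we have $\Ubar_t=\bar\phi_t(\bar\xi_t)$ and, on the covering space, $U_t=\phi_t(\xi_t)$ for a map $\phi_t\colon\mathbb R\to\mathbb R$ near $\xi_t$ with $\phi'_t(\xi_t)=|\bar\phi'_t(\bar\xi_t)|$ (from $\psi\circ\phi_t=\bar\phi_t\circ\psi$ and $|\psi'|\equiv1$ on $\mathbb R$). Applying It\^o to $U_t=\phi_t(\xi_t)$ and using $d\langle\xi\rangle_t=dt$,
\[
dU_t=\Big(\dot\phi_t(\xi_t)+\sigma_t\,\phi'_t(\xi_t)+\tfrac12\phi''_t(\xi_t)\Big)\,dt+\phi'_t(\xi_t)\,d\hat B_t.
\]
Finally reparametrize by annulus capacity: by Lemma \ref{lemback2}, $\dot r(t)=-\tfrac a2\,\bar\phi'_t(\bar\xi_t)^2=-\tfrac a2\,\phi'_t(\xi_t)^2$ in the radial time, so the time change $s=r-r(t)$ is precisely the annulus parametrization and $ds=\tfrac a2\,\phi'_t(\xi_t)^2\,dt$. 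In the new time the martingale part of $U$ has quadratic variation $\phi'_t(\xi_t)^2\,dt=(2/a)\,ds=\kappa\,ds$, which produces the $\sqrt\kappa\,dB_s$ of \eqref{eqidk3}; and the $\Psi_{A_{r(t)}}$-part of $\sigma_t$ contributes, to the drift term $\phi'_t(\xi_t)\sigma_t\,dt$ after the time change, exactly $-\kappa\,\partial_x\log\Psi_{A_{r-s}}(\psi(\,\cdot\,),e^{-(r-s)})$ evaluated at $W_s-U_s$, that is $\LL_{r-s}(W_s-U_s)\,ds$. Here one uses the rotational symmetry of $A_r$, so that $\Psi_{A_{r(t)}}(\Ubar_t,\Wbar_t)$ is a function of $U_t-W_t$ and $r(t)$ only (hence $\LL$ depends on a single variable), together with its reflection symmetry, which fixes the sign.

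It remains to see that the other terms in the drift cancel, i.e. that $\dot\phi_t(\xi_t)+\tfrac12\phi''_t(\xi_t)+\phi'_t(\xi_t)\cdot\big(\sigma_t-(\Psi\text{-part})\big)=0$, where $\sigma_t-(\Psi\text{-part})$ is the $\xi$-sensitivity of $\log\big(|\hbar'_t(\wbar)|^{b}|\bar\phi'_t(\bar\xi_t)|^{b}\bar g'_t(0)^{-\tilde b}\big)+\tfrac{\cc}{2}m_\disk(C_r,\gamma_t)$. This is the statement that for \emph{radial} $\sle$, where there is no tilting, the image of the driving point under the annulus maps already evolves with the conformal-covariance and Brownian-loop corrections built in, so nothing survives; it is forced by the conformal covariance of the partition function \eqref{eqdef0} together with the normalizing constants \eqref{constants}, and by the behaviour of $h_t$ near the tip controlled by the expansion \eqref{eqidk} of $\HH_r$. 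Concretely, one differentiates along the (radially rescaled) Loewner flow the evolutions of $\bar g'_t(0)$, $\hbar'_t(\wbar)$, $\bar\phi'_t(\bar\xi_t)$ and of $m_\disk(C_r,\gamma_t)$ (this last via the bubble-measure computations behind Lemmas \ref{lemback3}--\ref{lemback4}), compares with the second-order expansion of $\phi_t$ at $\xi_t$, and the cancellation --- and hence \eqref{eqidk3} --- follows. I expect this verification to be the main, essentially the only, substantial computation in the proof.
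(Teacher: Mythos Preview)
Your overall strategy is exactly the paper's: Girsanov from the density of Proposition \ref{proprad}, It\^o on $U_t=\phi_t(\xi_t)$, then time-change to annulus capacity via Lemma \ref{lemback2}. You overcomplicate the final cancellation, however. The factors $|\hbar'_t(\wbar)|^b$, $\bar g'_t(0)^{-\tilde b}$, and $\exp\{\tfrac{\cc}{2}m_\disk(C_r,\gamma_t)\}$ are all finite-variation processes (each evolves by an ODE in $t$), so they contribute nothing to $\sigma_t$; the only non-$\Psi$ piece of the $\xi$-sensitivity comes from $\phi'_t(\xi_t)^b$ and equals $b\,\phi''_t(\xi_t)/\phi'_t(\xi_t)$. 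Your cancellation therefore reduces to the single identity
\[
\dot\phi_t(\xi_t)=-\Big(\tfrac12+b\Big)\,\phi''_t(\xi_t)=-\tfrac{3a}{2}\,\phi''_t(\xi_t),
\]
which the paper obtains by writing $\dot h_t=\dot\phi_t\circ\tilde g_t+(\phi'_t\circ\tilde g_t)\,\dot{\tilde g}_t$, substituting the annulus and radial Loewner equations, and letting $z\to\xi_t$ with the expansion \eqref{eqidk}. No conformal-covariance or loop-measure input is needed at this point, and Lemmas \ref{lemback3}--\ref{lemback4} play no role here.
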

\begin{proof}
Let $M_t$ denote the Radon-Nikodym derivative given in the statement of Proposition \ref{proprad} and note that $|\bar\phi_t'(\bar{\xi}_t)| = \phi_t'(\xi_t)$. With respect to $\mathbb{P}$, $M_t$ is a  martingale and
\[
dM_t = M_t \left[b\frac{{\phi}''_t({\xi}_t)}{{\phi}'_t({\xi}_t)}+ i{\phi}'_t({\xi}_t)\bar{\phi}_t(\bar{\xi}_t)\frac{\partial_1\Psi_{A_{r(t)}}(\Ubar_t,\bar{W}_t)}{\Psi_{A_{r(t)}}(\Ubar_t,\bar{W}_t)}\right]d\xi_t.
\]
Here, $\partial_1$ denotes the derivative  with respect to the first argument. Using the Girsanov's theorem,  there exists a probability measure $\mathbb{P}'$ such that
\begin{equation}\label{eqidk2}
d\xi_t = \left[b\frac{{\phi}''_t({\xi}_t)}{{\phi}'_t({\xi}_t)}+ i{\phi}'_t({\xi}_t)\bar{\phi}_t(\bar{\xi}_t)\frac{\partial_1\Psi_{A_{r(t)}}(\Ubar_t,\bar{W}_t)}{\Psi_{A_{r(t)}}(\Ubar_t,\bar{W}_t)}\right]dt + dB_t,
\end{equation}
where $B_t$ is a standard Brownian motion with respect to $\mathbb{P}'$.

For $z\in S_{r,t}$, the transformation $\tilde g_t$ satisfies the radial Loewner equation 
\[
\partial_t {\tilde{g}}_t(z) = \frac{a}{2}\cot \left(\frac{\tilde g_t(z)-\xi_t}{2}\right).
\]
By using the chain rule we get
\[
\partial_t {h}_t(z) = \dot{\phi}_t(\tilde g_t(z)) + \phi_t'(\tilde g_t(z)) \partial _t{\tilde g}_t(z).
\]
Hence,
\[
-\dot{r}(t)\HH_{r(t)}(h_t(z) - U_t) -\frac{a}{2} \phi_t'(\tilde g_t(z)) \cot \left(\frac{\tilde g_t(z)-\xi_t}{2}\right)= \dot{\phi}_t(\tilde g_t(z)).
\]
From Lemma \ref{lemback2} we know $\dot{r}(t) = -a\phi_t'(\xi_t)^2/2$.
Moreover, $\cot(z) = 1/z + \bigo(|z|)$ as $z\to 0$. By using  equation \eqref{eqidk} we can see that 
\[
\dot{\phi}_t(\xi_t) = -\frac{3a}{2} \phi''_t(\xi_t) = -(\frac{1}{2}+b) \phi_t''(\xi_t).
\]
Therefore,  \eqref{eqidk2} implies that
\begin{align*}
dU_t = d\phi_t(\xi_t) &= -b\phi''_t(\xi_t)dt+ \phi'_t(\xi_t)d\xi_t\\
&=  \left[i{\phi}'_t({\xi}_t)^2\bar{\phi}_t(\bar{\xi}_t)\frac{\partial_1\Psi_{A_{r(t)}}(\Ubar_t,\bar{W}_t)}{\Psi_{A_{r(t)}}(\Ubar_t,\bar{W}_t)}\right]dt + \phi_t'(\xi_t)dB_t.
\end{align*}
We can use Lemma \ref{lemback2} one more time to see that with annulus parametrization the last equation can be written as
\[
dU_t = i\kappa\Ubar_t\frac{\partial_1\Psi_{A_{r-t}}(\Ubar_t,\bar{W}_t)}{\Psi_{A_{r-t}}(\Ubar_t,\bar{W}_t)}dt + \sqrt{\kappa}dB_t.
\]
From conformal covaraince of the partition function, we get $
\Psi_{A_{r-t}}(\Ubar_t,\Wbar_t) = \Psi_{A_{r-t}}(\psi(W_t-U_t),e^{-r+t}).
$ Moreover,
\[
\partial_{(W_t-U_t)}\Psi_{A_{r-t}}(\psi(W_t-U_t),e^{-r+t}) = i\psi(W_t-U_t)\,\partial_1\Psi_{A_{r-t}}(\psi(W_t-U_t),e^{-r+t})
\]
and
\[
\partial_1\Psi_{A_{r-t}}(\Ubar_t,\Wbar_t) = -\psi(W_t-2U_t)\,\partial_1\Psi_{A_{r-t}}(\psi(W_t-U_t),e^{-r+t}),
\]
which completes the proof.
\end{proof}
It is not hard to verify that for $x\in(-\pi,\pi)$, $\Psi_{A_r}(\psi(x),e^{-r})$ is an even function that is decreasing in $|x|$.
Hence, $\LL_r(x)$ is an odd function with $\LL_r(x)\geq 0$ for $x\in[0,\pi)$.
For more details see section 5 of \cite{greg_annulus}. 

\section{Proof of Theorem 1}\label{proof}
\subsection{The case with two paths}

We consider the measure on paths $( \gamma, \gamma')$, where $\gamma,\gamma'$ are $\sle$ paths from $|\ubar| = 1$ to $|\wbar| = e^{-r}$ and  from $|\xbar|=1$ to $|\ybar| = e^{-r}$,  respectively. 
Let $\bfub = (\ubar,\xbar),\,\bfwb = (\wbar,\ybar)$.
By \eqref{eqdef2}, 
 the partition function can be written as 
\[
\Psi_{A_r}\left(\bfub,\,\bfwb\right)=\Psi_{A_r}\left((\u,\x),(\w,\y)\right)=\Psi_{A_r}(\ubar,\wbar)\EE\left[H_{A_r\sm\gamma}(\xbar,\ybar)^b\right],
\]
where $\EE$ denotes the expectation with respect to the distribution of $\gamma$.  Recall the function $\psi(z) = e^{iz}$ and choose $0\leq  \u,\x,\w,\y < 2\pi$ such that 
\[
\ubar = \psi(\u),\,\xbar = \psi(\x),\,\wbar = \psi(\w+ir),\,\ybar = \psi(\y+ir).
\]
Consider the function
\begin{equation}\label{eq2}
\pf(r,\bfu,\bfw)=\frac{\Psi_{A_r}(\bfub,\,\bfwb)}{\Psi_{A_r}(\ubar,\wbar)H_{A_r}(\xbar,\ybar)^b}=\EE\left[Q_{A_r}(\xbar,\ybar;A_r\sm\gamma)^b\right].
\end{equation}
We will show that $\pf$ is a smooth function of $r,\u,\x,\w,\y$. It is clear that $ \pf(r,\bfu,\bfw)$ can be written as a function of $r, (\x - \u), (\w - \u), (\y - \u)$. However, it is easier to prove the smoothness if we consider it as $ \pf(r,\bfu,\bfw)$.

Let $\eta_t\subset S_r$ be the unique continuous curve satisfying $\psi(\eta_t) = \gamma_t,\,\eta(0+) = \u$. Define $\hbar_t,\gg,h_t,g_t,\text{etc}.$ for $\gamma_t$ as in section \ref{secnotation}. Define,
\[
\U_t = h_t(\eta(t)),\,\X_t = h_t(\x),\, \W_t = h_t(\w+ir),\,\Y_t = h_t(\y+ir),
\]
\[
\Ubar_t = \hbar_t(\gamma(t)),\, \Xbar_t = \hbar_t(\xbar),\,\Wbar_t = \hbar_t(\wbar),\,\Ybar_t = \hbar_t(\ybar).
\]

Note that for $t<\tau_r$, we can write $H_{A_r\sm \gamma_t}(\xbar,\ybar)$ in terms of the Poisson kernel in the covering space $S_{r,t}$,
\begin{equation*}
H_{A_r\sm \gamma_t}(\xbar,\ybar) = e^r\sum _{k\in\mathbb{Z}} H_{ S_{r,t}}(\x,\y + 2k\pi).
\end{equation*}
For now, suppose $\gamma_t$ has the radial parametrization.
\begin{lemma}\label{lem1}
Suppose $t<\tau_r$ and let $z\in\mathbb{R}, z'\in\mathbb{R} + ir$ and $Z_t = h_t(z), Z'_t = h_t(z')$. 
Then
\begin{equation}\label{eq2.8}
\partial_t\log Q_{S_r}(z,z'; S_{r,t}) =2\,\dot r(t)\,\textbf{F}(r(t),Z_t - U_t,Z'_t - U_t),
\end{equation}
where $U_t = h_t(\eta(t))$, $r(t)$ are defined in \eqref{eq2.5} and
\[
\textbf{F}(r,z,z'):=\sum_{k\in\mathbb{Z}}\frac{H_{ S_{r}}(z ,2k\pi)\,H_{ S_{r}}(2k\pi,z')}{H_{ S_{r}}(z ,z')}.
\]
\end{lemma}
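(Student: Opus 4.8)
The plan is to reduce \eqref{eq2.8} to a first-passage estimate for a Brownian excursion that meets a family of small boundary slits, carried out in the image strip $S_{r(t)}$. Since $Q_{S_r}(z,z';S_{r,t})=H_{S_{r,t}}(z,z')/H_{S_r}(z,z')$ with $t$-independent denominator, it suffices to compute $\partial_t\log H_{S_{r,t}}(z,z')$; and since both sides of \eqref{eq2.8} transform by the same factor under a reparametrization of $\gamma$, we may as well assume the annulus parametrization, so that $r(t)=r-t$ and $\dot r(t)\equiv-1$. The map $h_t\colon S_{r,t}\to S_{r(t)}$ sends $\mathbb{R}$ into $\mathbb{R}$ and $\mathbb{R}+ir$ onto $\mathbb{R}+ir(t)$, and $h_t(\zeta)-\zeta$ is $2\pi$-periodic, so conformal covariance of the Poisson kernel gives, for small $s>0$,
\[
\frac{H_{S_{r,t+s}}(z,z')}{H_{S_{r,t}}(z,z')}=\frac{H_{\Omega_s}(Z_t,Z'_t)}{H_{S_{r(t)}}(Z_t,Z'_t)}=Q_{S_{r(t)}}(Z_t,Z'_t;\Omega_s),
\]
where $\Omega_s:=h_t(S_{r,t+s})=S_{r(t)}\setminus\bigcup_{k\in\mathbb{Z}}[K_s+2k\pi]$ and $K_s:=h_t(\eta_{t+s}\setminus\eta_t)$ is a small slit attached to $\mathbb{R}$ at $U_t=h_t(\eta(t))$. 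The half-plane capacity computation in the proof of Lemma \ref{lemmoein} (together with Lemma \ref{lemback2}) gives $\hcap[K_s]=2s+\smo(s)$ as $s\downarrow0$, the capacity being taken in the evident $\hp$-coordinate at $U_t$, which is unambiguous because $\mathbb{R}$ is a straight boundary line.

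The second ingredient is the standard first-passage asymptotics. With our normalization $H_\disk(0,1)=1/2$ --- equivalently $H_\hp(x,y)=(x-y)^{-2}$ for boundary points $x,y$ --- a short computation with the map $w\mapsto\sqrt{w^2+2\epsilon}$ shows that a Brownian excursion between boundary points $a,b$ of $\hp$ meets a slit of half-plane capacity $\epsilon$ rooted at $0$ with probability $\epsilon\,H_\hp(a,0)\,H_\hp(0,b)/H_\hp(a,b)+\smo(\epsilon)$. Transferring this by conformal invariance to the smooth boundary point $U_t+2k\pi$ of $S_{r(t)}$, the excursion from $Z_t$ to $Z'_t$ in $S_{r(t)}$ meets $K_s+2k\pi$ with probability $\hcap[K_s]\,H_{S_{r(t)}}(Z_t,U_t+2k\pi)\,H_{S_{r(t)}}(U_t+2k\pi,Z'_t)/H_{S_{r(t)}}(Z_t,Z'_t)+\smo(s)$. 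As the slits are pairwise disjoint for small $s$, inclusion--exclusion gives
\[
Q_{S_{r(t)}}(Z_t,Z'_t;\Omega_s)=1-\sum_{k\in\mathbb{Z}}\hcap[K_s]\,\frac{H_{S_{r(t)}}(Z_t,U_t+2k\pi)\,H_{S_{r(t)}}(U_t+2k\pi,Z'_t)}{H_{S_{r(t)}}(Z_t,Z'_t)}+\smo(s),
\]
and translation invariance of the strip Poisson kernel identifies the sum as $\hcap[K_s]\,\textbf{F}(r(t),Z_t-U_t,Z'_t-U_t)$. With $\hcap[K_s]=2s+\smo(s)$ this reads $Q_{S_{r(t)}}(Z_t,Z'_t;\Omega_s)=1-2s\,\textbf{F}(r(t),Z_t-U_t,Z'_t-U_t)+\smo(s)$; dividing by $s$, letting $s\downarrow0$, and recalling $\dot r(t)=-1$ yields $\partial_t\log H_{S_{r,t}}(z,z')=2\dot r(t)\,\textbf{F}(r(t),Z_t-U_t,Z'_t-U_t)$, which is \eqref{eq2.8}.

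The step I expect to be the main obstacle is controlling the errors uniformly over the infinitely many translates $k\in\mathbb{Z}$: one must verify that $\sum_k$ of the error in the first-passage asymptotic, and $\sum_{k<j}\mathbb{P}(\text{the excursion meets both }K_s+2k\pi\text{ and }K_s+2j\pi)$, are $\smo(s)$. Both follow from the exponential decay of $H_{S_\rho}(x,2k\pi)$ in $|k|$ --- the same decay that makes $\textbf{F}$ converge --- once one observes that the first-passage error near each root is $\smo(\hcap[K_s])=\smo(s)$, that $\diam[K_s]\to0$ so the slits really are small (with a controlled rate, using that $\eta$ has a uniform modulus of continuity in this parametrization), and that the two-slit probabilities are $\bigo(s^2)$ times an exponentially small factor by the strong Markov property applied at the first hit of $K_s+2k\pi$. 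A purely computational alternative, bypassing first-passage estimates, would be to differentiate $\log H_{S_{r,t}}(z,z')=\log h_t'(z)+\log h_t'(z')+\log H_{S_{r(t)}}(Z_t,Z'_t)$ using the annulus Loewner equation of Proposition \ref{proplow} --- so that $\partial_t\log h_t'(\zeta)=\HH_{r-t}'(h_t(\zeta)-U_t)$ --- together with the explicit $\operatorname{sech}^2$ and $\operatorname{csch}^2$ formulas for the boundary Poisson kernel of a strip; this route is more laborious and less transparent.
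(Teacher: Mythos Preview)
Your proposal is correct and follows essentially the same approach as the paper: both arguments interpret $1-Q$ as the probability that a Brownian excursion meets the family of $2\pi$-translated slits, expand that probability to leading order in the half-plane capacity via the standard first-passage estimate (which the paper derives inline from Lemma~\ref{lem0.1}), and control the sum over $k$ and the double-hit terms by the exponential decay of the strip Poisson kernel. The only cosmetic differences are that the paper keeps the radial parametrization and reduces explicitly to $t=0$ before mapping forward, whereas you assume annulus parametrization and push forward by $h_t$ at a general time; both organizations yield the same computation.
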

\begin{proof}
We only need to prove the claim for the right derivative with respect to $t$ since the right-hand side of \eqref{eq2.8} is continuous in $t$. Moreover, we only need to prove the claim for the derivative at $t=0$ because
\begin{align*}
\partial_t\log Q_{S_r}(z,z'; S_{r,t}) & = \lim _{s\downarrow0}\frac{1}{s} \log Q_{S_{r,t}}(z,z';S_{r,t+s})\\
& =\lim _{s\downarrow0}\frac{1}{s} \log Q_{S_{r(t)}}\left(Z_t,Z'_t;S_{r(t)}\sm h_t\circ \tilde{\eta} \,(t,t+s)\right).
\end{align*}
First, note that by using \eqref{eqhalf} and Lemma \ref{lemback2} we get
\[
\lim_{s\downarrow0}\frac{1}{s} \,\hcap\left[h_t\circ \eta \, (t,t+s)\right] = a\phi_t'(\xi_t)^2 = -2\dot{r}(t).
\]
At $t=0$ we have
\[
\partial_t \log  Q_{S_r}(z,z';S_{r,t}) = \partial_t Q_{S_r}(z,z';S_{r,t}) = - \partial_t [1- Q_{S_r}(z,z';S_{r,t})].
\]
The term $1- Q_{S_r}(z,z';S_{r,t})$ is the probability that  a Brownian excursion from $z$ to $z'$ in $S_r$ hits $\tilde{\eta}_t$.
We first calculate the probability that the Brownian excursion hits $\eta_t$.
We can write
\begin{align*}
1 -Q_{S_r}(z,z';S_r\sm\eta_t)&= \frac{E^z[H_{S_r}(B_\tau,z')1\{B_\tau\in \eta_t\}]}{H_{S_r}(z,z')} ,
\end{align*}
where $B_s$ is a Brownian excursion in $\hp$ started from $z$ and $\tau$ is the first time $B_s$ exits $S_r\sm\eta_t$.
Let $D\subset S_r$ be a half disk centered at $\u$ such that $z\not\in\bar D$.
Assume $t$ is small enough so that $\eta_t \subset D$ and let $d_t = 2\diam[\eta_t]$.
Using the exact form of the Poisson kernel in $D$, we can see that if $B_\tau\in\eta_t$, then for any $w\in \partial D \cap \hp$,
 \[
H_D(B_\tau,w) = \Im(B_\tau) H_{ D}(\u,w) [ 1 + O(d_t)].
 \]
Using this, we get
\begin{equation*}\label{eq3}
H_{S_r}(B_\tau,z')\, 1\{B_\tau\in \eta_t\} = \text{Im}(B_\tau)\,  H_{S_r}(u,z') \, 1\{B_\tau\in\eta_t\}\,[1 + O(d_t)].
\end{equation*}
Let $f(w)$ be the unique bounded harmonic function on $S_r\sm\eta_t$ with boundary condition $\Im(w)1\{w\in\eta_t\}$. 
Similarly to  Lemma \ref{lem007}, we can show that
\begin{equation}\label{eq3.4}
E^z[f(B_\tau)] = \hcap[\eta_t] H_{S_r}(z,u) [1 + O(d_t)].
\end{equation}
To see this, suppose $D_t\subset S_r$ is a half disk of radius $d_t$ centered at $\u$ and let $w\in S_r\cap \partial D_t$.  Lemma \ref{lem0.1} implies that
\[
H_{ S_r\sm \bar{D}_t}(z,w) = 2\sin\theta_w H_{ S_r}(z,\u)[1+O(d_t)],
\]
where $\theta_w = \arg w$. Using this, we get
\begin{equation}\label{eq3.5}
E^z[f(B_\tau)] = \frac{1}{\pi}H_{ S_r}(z,\u)[1+O(d_t)] \int _{S_r\cap \partial D_t} 2\sin\theta_w E^w[f(B_\tau)] |dw|.
\end{equation}
Let $\sigma$ be the first time $B_s$ exits $\hp\sm \eta_t$. Note that $\tau\leq \sigma$ and for $w\in S_r\cap\partial D_t$,
\begin{align*}
E^w[f(B_\tau)] &= E^w[\Im(B_\sigma)] - E^w[\Im(B_\sigma) 1\{\tau<\sigma\}],\\
E^w[\Im(B_\sigma) 1\{\tau<\sigma\}] & = O(d_t) \hcap[\eta_t], \\
\frac{1}{\pi}\int _{S_r\cap \partial D_t} 2\sin\theta_w E^w[\Im(B_\sigma)]|dw| &= \hcap[\eta_t].
\end{align*}
Plugging this into \eqref{eq3.5} proves  \eqref{eq3.4}. Therefore we can see that at $t=0$, for any $k\in\mathbb{Z}$
\begin{equation*}
\partial_t [1-Q_{S_r}(z,z';S_r\sm\eta_t + 2k\pi)] = a\,\frac{H_{ S_r}(z ,\u + 2k\pi)H_{ S_r}(\u + 2k\pi,z')}{H_{ S_r}(z,z')}.
\end{equation*}
One can use a similar argument to see that the probability of the Brownian excursion hitting at least two copies of $\eta_t$ is of order $O(\hcap[\eta_t]^2)$. Therefore, 
\[
1- Q_{S_r}(z,z';S_{r,t}) = \sum_{k=-\infty}^\infty \left[1-Q_{S_r}(z,z';S_r\sm\eta_t + 2k\pi)\right] + \bigo (\hcap[\eta_t]^2),
\]
and 
\[
\partial_t [1- Q_{S_r}(z,z';S_{r,t})] = \partial_t \sum_{k\in\mathbb{Z}} [1- Q_{S_r}(z,z';S_r\sm\eta_t + 2k\pi)].
\]
The result for general $t$ follows from this and Lemma \ref{lemback2}.
\end{proof}
If $\gamma_t$ has the annulus parametrization, then $\tau_r = r$ and for any $t<r$
 we can write \eqref{eq2.8} as
\begin{equation}\label{eq3.9}
\partial_t\log Q_{S_r}(z,z'; S_{r,t}) =-2\,\textbf{F}(r-t,h_t(z) - U_t,h_t(z') - U_t).
\end{equation}
\begin{proposition}\label{propn21}
For $s>0$, $ z,z'\in \mathbb{R}$,  define
\[
{\textbf{A}}(s, z, z') = \frac{H_{ A_s}(\psi(z),1) H_{ A_s}(1,\psi(z'+is))}{H_{ A_s}(\psi(z),\psi(z'+is))}.
\]
If $\gamma_t$ has the annulus parametrization, then for all $t<r$,
\begin{equation}\label{eq4}
\partial_t \log Q_{A_r}(\xbar,\ybar;A_r\sm\gamma_t) = -2\, {\textbf{A}}(r-t, \X_t - \U_t , \Y_t - \U_t).
\end{equation} 
\end{proposition}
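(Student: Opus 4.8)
The plan is to reduce the statement to Lemma~\ref{lem1} via the covering map $\psi$: for each fixed $t$, $Q_{A_r}(\xbar,\ybar;A_r\sm\gamma_t)$ is a convex combination, with $t$-independent weights, of the strip quantities $Q_{S_r}(\x,\y+ir+2k\pi;S_{r,t})$, $k\in\Z$, whose $t$-derivatives are supplied by \eqref{eq3.9}; after substituting, the resulting sum of strip Poisson kernels collapses into $\textbf{A}$. Concretely, dividing the covering-space expression $H_{A_r\sm\gamma_t}(\xbar,\ybar)=e^r\sum_kH_{S_{r,t}}(\x,\y+ir+2k\pi)$ by the same formula with $\gamma_t=\eset$ gives
\[
Q_{A_r}(\xbar,\ybar;A_r\sm\gamma_t)=\frac{\sum_kH_{S_{r,t}}(\x,\y+ir+2k\pi)}{\sum_kH_{S_r}(\x,\y+ir+2k\pi)}=\sum_kp_k\,Q_k(t),
\]
where $Q_k(t):=Q_{S_r}(\x,\y+ir+2k\pi;S_{r,t})$ and $p_k:=H_{S_r}(\x,\y+ir+2k\pi)\big/\sum_jH_{S_r}(\x,\y+ir+2j\pi)$. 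The key point is that $p_k$ involves only the full strip, hence does not depend on $t$ (probabilistically, $p_k$ is the chance that the lift to $S_r$ of a Brownian excursion from $\xbar$ to $\ybar$ in $A_r$ ends at $\y+ir+2k\pi$, and $Q_k(t)$ the conditional chance it then avoids $\tilde\eta_t$). Both series converge geometrically, since $H_{S_r}$ decays exponentially in horizontal distance, which also legitimizes differentiating term by term.

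Then I would differentiate. By \eqref{eq3.9}, applied with $z=\x\in\R$ and $z'=\y+ir+2k\pi\in\R+ir$, and using $h_t(\y+ir+2k\pi)=\Y_t+2k\pi$ (because $h_t(\zeta)-\zeta$ is $2\pi$-periodic), we get $\partial_tQ_k(t)=-2\,Q_k(t)\,\textbf{F}(r-t,\X_t-\U_t,\Y_t+2k\pi-\U_t)$, and hence
\[
\partial_t\log Q_{A_r}(\xbar,\ybar;A_r\sm\gamma_t)=-2\,\frac{\sum_kp_kQ_k(t)\,\textbf{F}(r-t,\X_t-\U_t,\Y_t+2k\pi-\U_t)}{\sum_kp_kQ_k(t)}.
\]

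To recognize the right side as $-2\,\textbf{A}(r-t,\X_t-\U_t,\Y_t-\U_t)$, I would first note that conformal covariance of the boundary Poisson kernel under $h_t\colon S_{r,t}\to S_{r-t}$, the $2\pi$-periodicity of $h_t'$, and translation invariance of $H_{S_{r-t}}$ along $\R$ give $p_kQ_k(t)=c(t)\,H_{S_{r-t}}(\X_t-\U_t,\Y_t+2k\pi-\U_t)$ with $c(t)$ independent of $k$. Substituting the definition of $\textbf{F}$ in the form $\textbf{F}(s,z,z')\,H_{S_s}(z,z')=\sum_jH_{S_s}(z,2j\pi)H_{S_s}(2j\pi,z')$ then makes these weights cancel, and the numerator turns into the absolutely convergent double sum $\sum_k\sum_jH_{S_{r-t}}(\X_t-\U_t,2j\pi)\,H_{S_{r-t}}(2j\pi,\Y_t+2k\pi-\U_t)$. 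Interchanging the summations and reindexing, the inner $k$-sum becomes independent of $j$, and the three sums over $\Z$ that survive are, by the annulus/strip Poisson-kernel correspondence (the same used above and in the definition of $\bar H_{S_s}$), equal to $H_{A_{r-t}}(\psi(\X_t-\U_t),1)$, $H_{A_{r-t}}(1,\psi(\Y_t-\U_t))$ and $H_{A_{r-t}}(\psi(\X_t-\U_t),\psi(\Y_t-\U_t))$ up to covering-space normalization factors that cancel in the ratio; the ratio is then $\textbf{A}(r-t,\X_t-\U_t,\Y_t-\U_t)$, proving \eqref{eq4}.

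The step I expect to be the main obstacle is this final identification: keeping straight which circle ($C_0$ or $C_{r-t}$) each Poisson-kernel argument lies on, justifying the interchange of the two summations (and the resulting $j$-independence of the inner sum) via absolute convergence, and matching the outcome against the precise definition of $\textbf{A}$ so that its covering-space normalization factors cancel correctly. A secondary technical point, which I would state explicitly, is the term-by-term differentiation of $\sum_kp_kQ_k(t)$: this follows from the geometric decay of $H_{S_r}$ together with bounds on the quantities appearing in \eqref{eq3.9} that are uniform for $t$ in compact subintervals of $[0,r)$.
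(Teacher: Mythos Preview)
Your proposal is correct and follows essentially the same route as the paper: lift to the strip via the covering identity $H_{A_r\sm\gamma_t}(\xbar,\ybar)=e^r\sum_k H_{S_{r,t}}(\x,\y+ir+2k\pi)$, apply \eqref{eq3.9} termwise, and collapse the resulting double sum into $\textbf{A}$ using the periodic summation that defines $\bar H_{S_s}$. The only organizational difference is that the paper first reduces to $t=0$ (via the Markov-type shift and continuity of the right-hand side), which makes $h_0=\mathrm{id}$ and so eliminates your conformal-covariance step~3; you instead work at general $t$ and invoke covariance of the boundary Poisson kernel together with the $2\pi$-periodicity of $h_t'$ to get $p_kQ_k(t)=c(t)\,H_{S_{r-t}}(\X_t-\U_t,\Y_t+2k\pi-\U_t)$, which is equally valid and in fact makes the uniform-in-$t$ structure more transparent.
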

\begin{proof}
We only need to prove the claim for the right derivative and $t=0$, since the right-hand side of \eqref{eq4} is continuous in $t$ and 
\[
\lim _{s\downarrow0} \frac{1}{s}[\log Q_{A_r}(\xbar,\ybar;A_r\sm\gamma_{t+s})- \log Q_{A_r}(\xbar,\ybar;A_r\sm\gamma_t)] = \partial _ s  \log Q_{A_{r-t}}(\Xbar_t,\Ybar_t; A_{r-t} \sm \bar\gamma_s)|_{s=0},
\]
where $\bar\gamma_s = h_t \circ \gamma(t,t+s]$ is a $\sle$ curve in $A_{r-t}$ starting from $\Ubar_t$.  

 At $t=0$ we have
\begin{align}\label{eq5}
\partial_t \log Q_{A_r}(\xbar,\ybar;A_r\sm\gamma_t) &= \partial_t\log H_{A_r}(\xbar,\ybar;A_r\sm\gamma_t) \\\nonumber
& = \frac{\partial_t H_{A_r\sm\gamma_t}(\xbar,\ybar)}{H_{A_r}(\xbar,\ybar)}.
\end{align}
By using \eqref{eq3.9} we get
\begin{align*}
\partial_t H_{A_r\sm\gamma_t}(\xbar,\ybar) & = \partial_t \,e^r \sum_{k'\in\mathbb{Z}} H_{S_{r,t}} (\x , \y + 2k'\pi) \\
& = -2\,e^r \sum_{k'\in\mathbb{Z}} \sum_{k\in\mathbb{Z}} H_{ S_r}(\x-\u, 2k\pi)\,H_{ S_r}(2k\pi,\y + 2k'\pi -\u)\\
&= -2\,\sum_{k\in\mathbb{Z}} H_{ S_r}(\x-\u, 2k\pi) \sum_{k'\in\mathbb{Z}} e^r\, H_{ S_r}(2k\pi,\y + 2k'\pi - \u)\\
& = -2\,H_{A_r}(1,\ybar/\ubar) \sum_{k\in\mathbb{Z}} H_{ S_r}(\x-\u, 2k\pi)\\
& = -2\,H_{A_r}(1,\ybar/\ubar)\,H_{A_r}(\xbar/\ubar,1).
\end{align*}
The result follows from this and \eqref{eq5}.
\end{proof}

Before we prove our main result, we recall the H\"ormander's theorem. Let $\Omega\subset \mathbb{R}^n$ be an open set. A linear differential operator $\mathcal{L}$ with $C^\infty$ coefficients on $\Omega$ is called \emph{hypoelliptic} if for every distribution $u$ on $\Omega$, $u$ is $C^\infty$ when $\mathcal{L}u$ is $C^\infty$. Assume $X_0,X_1,\ldots,X_k$ are first order homogeneous differential operators with $C^\infty$ coefficients on $\Omega$. Let
\[
\mathcal{L} = \sum_{j=1}^k X_j^2 + X_0 + c,
\]
where $c$ is a smooth function on $\Omega$. 
In \cite {hormander}, H\"ormander established a characterization of hypoelliptic second order differential operators with $C^\infty$ coefficients.  In particular, he proved the following theorem.
\begin{theorem}\label{hor}
If at all points in $\Omega$ the rank of the Lie algebra generated by the vector fields $X_0,\,X_1,\ldots,X_k$ equals $n$, then $\mathcal{L}$ is hypoelliptic. 
\end{theorem}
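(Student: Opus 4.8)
The plan is to derive Theorem~\ref{hor} from a \emph{subelliptic a priori estimate} together with the elliptic-regularity bootstrap. Hypoellipticity is local, so fix $x_0\in\Omega$, a relatively compact neighborhood $\omega$ with $\overline\omega\subset\Omega$, write $\Lambda^s=(1-\Delta)^{s/2}$ for the Bessel potentials and $\|\cdot\|_{(s)}$ for the $H^s$ norm, and aim to produce an $\epsilon>0$ such that for every $s\in\R$ there is $C_s$ with
\begin{equation}\label{eqsubell}
  \|v\|_{(s+\epsilon)}\ \le\ C_s\big(\|\mathcal L v\|_{(s-\epsilon)}+\|v\|_{(s)}\big),\qquad v\in C_c^\infty(\omega).
\end{equation}
Granting \eqref{eqsubell} for all $s$, one concludes as in ordinary elliptic regularity: given a distribution $u$ with $\mathcal Lu\in C^\infty$ near $x_0$, apply \eqref{eqsubell} to $\chi_\delta*(\varphi u)$ for a cutoff $\varphi$ and a mollifier $\chi_\delta$, bound the mollified commutators $[\chi_\delta*,X_j]$ uniformly in $\delta$ by Friedrichs' lemma, let $\delta\downarrow0$ to improve $u$ from $H^\sigma_{\mathrm{loc}}$ to $H^{\sigma+\epsilon}_{\mathrm{loc}}$, and iterate until $u\in H^s_{\mathrm{loc}}$ for all $s$, hence $u\in C^\infty$ near $x_0$. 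Thus everything reduces to \eqref{eqsubell}.

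The first ingredient is the basic $L^2$ estimate. Each $X_j$ has smooth real coefficients, so $X_j^*=-X_j+a_j$ with $a_j\in C^\infty$, and integrating $(-\mathcal Lv,v)$ by parts gives
\[
  \Re\big(-\mathcal Lv,v\big)=\sum_{j=1}^k\|X_jv\|_{(0)}^2+\Re\big((X_0+c)v,v\big)-\sum_{j=1}^k\Re\big(a_jX_jv,v\big);
\]
since $X_0$ is first order and $c$ is bounded, the last two sums are at most $\tfrac12\sum_j\|X_jv\|_{(0)}^2+C\|v\|_{(0)}^2$ by Cauchy--Schwarz, so
\begin{equation}\label{eqbasic}
  \sum_{j=1}^k\|X_jv\|_{(0)}^2\ \le\ C\big(|(\mathcal Lv,v)|+\|v\|_{(0)}^2\big)\ \le\ C'\big(\|\mathcal Lv\|_{(-\epsilon)}\|v\|_{(\epsilon)}+\|v\|_{(0)}^2\big).
\end{equation}
Thus $X_1,\dots,X_k$ applied to $v$ are controlled with a ``derivative budget'' $1$, while the drift $X_0$ will be assigned budget $2$, being accessible only through $\mathcal L$ (which spends two of the $X_j$'s).

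The heart of the argument --- the only place the rank hypothesis of Theorem~\ref{hor} is used --- is the gain of $\epsilon$ derivatives from iterated brackets. Assign weight $1$ to each $X_j$, $j\ge1$, weight $2$ to $X_0$, and to an iterated bracket the sum of the weights of its entries; call a first-order field $Y$ with smooth coefficients \emph{$\alpha$-good} if $\|\Lambda^{\alpha-1}Yv\|_{(0)}\le C(\|\mathcal Lv\|_{(-\epsilon)}+\|v\|_{(\epsilon)})$ with $C$ independent of $v$. By hypothesis the fields of weight $\le m$ span $T_x\Omega$ at each $x$; openness of the spanning condition and compactness of $\overline\omega$ give one finite $m$ working uniformly on $\overline\omega$, where then $\partial_1,\dots,\partial_n$ are smooth combinations of brackets of weight $\le m$. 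By \eqref{eqbasic}, $X_1,\dots,X_k$ are $1$-good. The inductive step is a quantitative commutator estimate: for smooth vector fields $Y,Z$ and $\sigma\in\R$,
\[
  \big|\big([Y,Z]v,\Lambda^{2\sigma}v\big)\big|\ \le\ C\Big(\|Yv\|_{(\sigma)}\|Zv\|_{(\sigma)}+\|Yv\|_{(\sigma)}\|v\|_{(\sigma)}+\|v\|_{(\sigma)}^2\Big),
\]
obtained by writing $[Y,Z]=YZ-ZY$, moving one field onto $\Lambda^{2\sigma}v$ by integration by parts, and absorbing the commutators $[\Lambda^\sigma,Y]$ (of order $\le\sigma$) through the pseudodifferential calculus and the $L^2$-boundedness of order-zero operators; it yields that if $Y$ is $\alpha$-good and $Z$ is $\beta$-good then $[Y,Z]$ is roughly $\tfrac12\min(\alpha,\beta)$-good. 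Iterating, brackets of weight $\ell$ are $2^{1-\ell}$-good --- the drift $X_0$ being incorporated here via its weight-$2$ assignment together with a self-improvement of the $\|X_jv\|$ bounds along the induction. Since $\partial_1,\dots,\partial_n$ are combinations of weight-$\le m$ brackets, they are $2^{1-m}$-good, which (by the characterization $\|v\|_{(\alpha)}^2\simeq\|v\|_{(0)}^2+\sum_j\|\Lambda^{\alpha-1}\partial_jv\|_{(0)}^2$ for $0\le\alpha\le1$) reads $\|v\|_{(\epsilon)}\le C(\|\mathcal Lv\|_{(-\epsilon)}+\|v\|_{(\epsilon)}+\|v\|_{(0)})$ with $\epsilon=2^{1-m}$; absorbing the $\|v\|_{(\epsilon)}$ term gives \eqref{eqsubell} for $s=0$, and inserting $\Lambda^s$ throughout, using $[\mathcal L,\Lambda^s]$ of order $\le s+1$, gives it for general $s$.

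The main obstacle is this third step: the quantitative fractional-derivative gain from iterated brackets, whose rigorous execution needs careful pseudodifferential bookkeeping --- orders of the commutators $[\Lambda^s,X_j]$, $L^2$-boundedness of order-zero operators, the way each bracket simultaneously spends a derivative and, via \eqref{eqbasic}, buys back a fractional one --- and the delicate treatment of the drift $X_0$ through its weight-$2$ assignment and the self-improvement step. The rest is comparatively routine: \eqref{eqbasic} is a single integration by parts, and the passage from \eqref{eqsubell} to hypoellipticity is the standard Friedrichs-mollifier argument of elliptic regularity. This is essentially H\"ormander's original proof \cite{hormander}; the gain $\epsilon=2^{1-m}$ is not sharp (Rothschild and Stein later obtained $\epsilon=1/m$), but any positive $\epsilon$ suffices.
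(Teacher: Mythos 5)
The paper does not prove Theorem~\ref{hor}; it is stated as a black-box citation of H\"ormander's 1967 paper. So there is no in-paper proof to compare against, and your proposal is filling a gap that the authors deliberately left to the reference. That said, your sketch correctly reproduces the architecture of H\"ormander's original argument: reduction of hypoellipticity to the subelliptic a priori estimate \eqref{eqsubell}, the basic $L^2$ energy estimate \eqref{eqbasic} from integration by parts, the iterated-bracket gain of $\epsilon$ derivatives where the rank hypothesis enters, and the Friedrichs-mollifier bootstrap to pass from a priori estimates on $C_c^\infty$ to regularity of distributional solutions. This matches what the cited reference actually does.

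The place where your sketch is genuinely incomplete --- as you yourself flag --- is the commutator bookkeeping. Two points to tighten if this were to be a real proof. First, the claim that ``$[Y,Z]$ is roughly $\tfrac12\min(\alpha,\beta)$-good'' is a caricature: the actual estimate in H\"ormander requires moving $\Lambda$-powers around both entries and controlling $[\Lambda^\sigma, Y]$, $[\Lambda^\sigma, Z]$ as operators of order $\sigma$, and the resulting inequality has a more complicated shape than a clean bilinear bound in $\|Yv\|_{(\sigma)}$ and $\|Zv\|_{(\sigma)}$. Second, the treatment of $X_0$ is not merely ``assign it weight $2$'': one must explicitly write $X_0 = \mathcal L - \sum_j X_j^2 - c$ and show that $\big|(X_0 v, \Lambda^{2\sigma}v)\big|$ is controlled by $\|\mathcal L v\|$, $\sum_j\|X_j v\|$, and lower-order terms, which is a separate integration-by-parts step and is where the factor of two in its weight comes from. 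Your phrase ``self-improvement of the $\|X_j v\|$ bounds along the induction'' gestures at this but is too vague to verify. For the present paper's purposes --- which only \emph{uses} the theorem as a tool --- the citation is the right level of detail, and your sketch is a fair summary of what that citation contains.
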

We say $\mathcal{L}$ satisfies the \emph{H\"ormander's condition} if it satisfies the requirements of Theorem \ref{hor}.
Having this result, we prove the smoothness of $\pf$.
\begin{proposition}\label{thmmain}
If $\bfu = (\u,\x),\,\bfw = (\w,\y)$, then 
$\pf(r,\bfu,\bfw)$ is a positive smooth function satisfying
\begin{align}
\partial_r\pf& = -2b\AA(r,\x-\u,\y-\u) \pf+ \LL_r(\w-\u) \partial_{\u}\pf+ \HH_r(\x-\u)\partial_{\x} \pf \\
&\,\,\,\,+\HH_r^R(\w-\u)\partial_{\w}\pf + \HH_r^R(\y-\u)\partial_{\y}\pf + \frac{\kappa}{2}\partial_{\u\u}\pf = 0.\nonumber
\end{align}
\end{proposition}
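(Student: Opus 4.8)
The plan is to realize $\pf(r,\bfu,\bfw)$ as the value of a martingale and then invoke H\"ormander's hypoellipticity theorem. First I would fix the annulus parametrization for $\gamma$, so that $\tau_r = r$, and set $N_t = Q_{A_r}(\xbar,\ybar; A_r\sm\gamma_t)^b$. By the definition of $\pf$ in \eqref{eq2} together with the conditional distribution / domain Markov property, the process $N_t$ satisfies $\EE[N_r^{-}\mid \mathcal{F}_t] \overset{?}{=}$ something expressible through $\pf$ evaluated at the driving data at time $t$; more precisely, using conformal covariance of the partition function and of the Poisson kernel, one rewrites the conditional expectation of $Q_{A_r\sm\gamma}(\xbar,\ybar;A_r\sm\gamma)^b$ given $\gamma_t$ as $N_t\cdot\pf(r-t,\U_t-\X_t,\ldots)$ type expression; here I would use that $h_t$ conjugates the remaining picture to a copy of $S_{r-t}$ with the four marked points $\U_t,\X_t,\W_t,\Y_t$, and that $\pf$ depends only on the differences. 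Thus $M_t := N_t\,\pf(r-t,\X_t-\U_t,\W_t-\U_t,\Y_t-\U_t)$ is a bounded martingale under $\PP$ (boundedness because $0 \le Q \le 1$ and, as noted after Lemma~\ref{lemplatten}, the relevant quantities are controlled). Applying It\^o's formula to $M_t$ and setting the drift to zero yields a second-order PDE for $\pf$; the first-order coefficients come from the annulus Loewner equation of Proposition~\ref{proplow} (terms $\HH_r,\HH_r^R$), the drift of $U_t$ from Lemma~\ref{lemplatten} (the term $\LL_r$), the second-order term $\tfrac\kappa2\partial_{\u\u}$ from the quadratic variation of $U_t$, and the zeroth-order term $-2b\,\AA$ from $\partial_t\log Q$ computed in Proposition~\ref{propn21}.

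The key point is that a priori we only know $\pf$ is, say, continuous (or we only know the martingale identity holds in a weak/distributional sense), so we cannot directly differentiate. To get around this I would first verify the martingale identity for $M_t$ rigorously using only the already-established regularity — the $C^2$-in-boundary-points, $C^1$-in-$r$ smoothness of the single-path annulus partition function from Section~\ref{mcsec}, which suffices to make sense of and prove the identity $\EE[M_s\mid\mathcal F_t]=M_t$ — and then argue that the identity forces $\pf$ to be a distributional solution of $\mathcal L\pf = 0$, where
\[
\mathcal L = \frac\kappa2\,\partial_{\u\u} + \LL_r\,\partial_\u + \HH_r(\x-\u)\partial_\x + \HH_r^R(\w-\u)\partial_\w + \HH_r^R(\y-\u)\partial_\y - \partial_r - 2b\,\AA(r,\x-\u,\y-\u).
\]
Here one must be a little careful: $\pf$ is only known to be defined on the open region where $\u,\x,\w,\y$ are distinct mod $2\pi$ and $r>0$, and the coefficients $\LL_r$, $\HH_r$, $\HH_r^R$, $\AA$ are smooth there (smoothness of $\LL_r$ is exactly the single-path smoothness already in hand; smoothness of $\HH_r$ follows from \eqref{eqidk} and its elliptic-function description; smoothness of $\AA$ from the explicit Poisson-kernel formulas of Lemma~\ref{lemback0}-type computations in the covering space). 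Then $\mathcal L$ is a degenerate second-order operator with smooth coefficients.

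The decisive step is to check H\"ormander's bracket condition for $\mathcal L$ in the five variables $(r,\u,\x,\w,\y)$. Writing $\mathcal L = X_1^2 + X_0 + c$ with $X_1 = \sqrt{\kappa/2}\,\partial_\u$ and $X_0 = \LL_r\partial_\u + \HH_r(\x-\u)\partial_\x + \HH_r^R(\w-\u)\partial_\w + \HH_r^R(\y-\u)\partial_\y - \partial_r$, I would compute iterated Lie brackets: $[X_1,X_0]$ produces $\partial_\x, \partial_\w, \partial_\y$ directions weighted by $\HH_r'(\x-\u)$, $(\HH_r^R)'(\w-\u)$, $(\HH_r^R)'(\y-\u)$ respectively (plus a $\partial_\u$ component), and then $[X_1,[X_1,X_0]]$ and higher brackets peel off these one at a time; since $\HH_r$ has a simple pole structure and $\HH_r(z) = 2/z + O(z)$ near $0$, its derivatives do not vanish identically, so on a dense set we recover $\partial_\x,\partial_\w,\partial_\y$, hence with $\partial_\u$ four of the five directions. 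The last direction, $\partial_r$, is then obtained directly from $X_0$ itself once the other four are spanned. One must argue the rank-five condition holds at \emph{every} point of the domain, not just generically; this is where the explicit quasi-periodic/elliptic structure of $\HH_r$ and $\HH_r^R$ is used to rule out degenerate loci — I expect this verification, especially handling points where some first derivative of $\HH_r$ might vanish and showing a higher bracket still supplies the missing direction, to be the main obstacle. Granting the bracket condition, Theorem~\ref{hor} gives that $\mathcal L$ is hypoelliptic, so the distributional solution $\pf$ is $C^\infty$; positivity of $\pf$ is immediate since $Q_{A_r}(\xbar,\ybar;A_r\sm\gamma)>0$ almost surely, and the stated PDE then holds classically.
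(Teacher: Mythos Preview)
Your approach is essentially the paper's: the same martingale $M_t = Q_{A_r}(\xbar,\ybar;A_r\sm\gamma_t)^b\,\pf(r-t,\U_t,\X_t,\W_t,\Y_t)$, the same passage to a weak/distributional PDE via the generator, and the same appeal to H\"ormander. For the bracket verification you flag as the main obstacle, the paper's argument is clean: the $(n{+}2)$-th iterated bracket $[A_1,[\ldots,[A_1,A_0]\ldots]]$ has $(\partial_{\x},\partial_{\w},\partial_{\y})$-components equal to the $n$-th $u$-derivatives of $\HH_r(\x-\u),\,\HH_r^R(\w-\u),\,\HH_r^R(\y-\u)$; if for some point these vectors failed to span $\R^3$ for all $n\ge 1$, then a nontrivial combination $\tilde f(\epsilon)=a_1\HH_r(z_1+\epsilon)+a_2\HH_r(z_2+ir+\epsilon)+a_3\HH_r(z_3+ir+\epsilon)$ would have all derivatives at $0$ vanish, hence be constant on the largest disk of analyticity, contradicting the fact that $\HH_r$ has a genuine pole on the boundary of that disk.
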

\begin{proof}
For $t< r$, we have
\begin{align*}
Q_{A_r}(\xbar,\ybar;A_r\sm\gamma_t)&=\exp\left\{\int_0^t\partial_t\log Q_{A_r}(\xbar,\ybar;A_r\sm\gamma_s)ds\right\}.
\end{align*}
This is also true for $t=r$ because $Q_{A_r}(\xbar,\ybar;A_r\sm\gamma_t)$ is continuous at $t=r$.
Proposition \ref{propn21} indicates that
\begin{equation*}
 \pf(r,\bfu,\bfw)=\EE\left[\exp\left\{-2b\int_0^r\textbf{A}(r-s,\X_s - \U_s,\Y_s - \U_s) ds\right\}\right].
\end{equation*}
If $\mathcal{F}_t$ denotes the $\sigma$-algebra generated by $\gamma_t$, then 
\begin{align*}
 M_t & := \EE\left[\exp\left\{-2b\int_0^r\textbf{A}(r-s,\X_s - \U_s,\Y_s - \U_s) ds\right\} \middle | \mathcal{F}_t\right]\\
 & = \exp\left\{-2b\int_0^t\textbf{A}(r-s, \X_s - \U_s,\Y_s - \U_s) ds\right\} \pf(r-t,\U_t,\X_t,\W_t,\Y_t) 
\end{align*}
is a martingale. 
Recall that as in Proposition \ref{proplow}, $\HH_t^R(z) = \Re[\HH_t(z + it)]$ for $z\in\mathbb{R}$. Using Lemma \ref{lemplatten} and Proposition \ref{proplow},
\begin{align*}
d\U_t &= {\LL}_{r-t}(\W_t -\U_t)dt + \sqrt{\kappa}dB_t,\\
d\X_t &= \HH_{r-t}( \X_t - \U_t)dt,\\
d\W_t &= \HH^R_{r-t}( \W_t - \U_t)dt,\\
d\Y_t &= \HH^R_{r-t}( \Y_t - \U_t)dt.
\end{align*}

Consider the process
\[
Z_t =\left(r-t,\,Q_{A_r}(\x,\y;A_r\sm\gamma_t),\,\U_t,\,\X_t,\,\W_t,\,\Y_t\right).
\]
This is a diffusion process with infinitesimal generator
\begin{align}\label{eqn2-1}
A\phi(\bfz)&=-2b\AA \,\partial_{z_2}\phi -\partial_{z_1}\phi+ \LL_{z_1}(z_5-z_3) \partial_{z_3}\phi+ \HH_{z_1}(z_4-z_3)\partial_{z_4}\phi  \\\nonumber
&\qquad + \HH^R_{z_1}(z_5-z_3)\partial_{z_5}\phi +\HH^R_{z_1}(z_6-z_3)\partial_{z_6}\phi  + \frac{\kappa}{2}\partial_{z_3z_3},
\end{align}
for any $C^2$ function $\phi\in \mathcal{D}_A$ and suitable $\bfz=(z_1,\ldots,z_6)\in\mathbb{R}^6$. Here, $\mathcal{D}_A$ is the domain of the generator $A$ and we write $\AA$ in short for $\AA(z_1,z_4-z_3,z_6-z_3)$.  Using the  Fokker--Planck equation, we can see that \eqref{eqn2-1} holds for any $\phi\in\mathcal{D}_A$ as long as the derivatives are interpreted in the weak sense. Let 
\[
f(z) = z_2^b\, V(z_1,z_3,z_4,z_5,z_6).
\]
Since $M_t = f(Z_t)$ is a martingale, we have $\EE(f(Z_t)) - f(Z_0) = 0$ for all $Z_0$ and $t\leq r$. In particular, $f\in \mathcal{D}_A$ and $Af =0$. Therefore, at least in the weak sense $\mathcal{L} V = 0$ for all $r,\u,\x,\y,\w$, where
\begin{align*}
\mathcal{L}&=-2b\AA(r,\x-\u,\y-\u) -\partial_r +\LL_r(\w-\u) \partial_{\u}+ \HH_r(\x-\u)\partial_{\x}  \\
&\quad + \HH^R_r(\w-\u)\partial_{\w}+ \HH^R_r(\y-\u)\partial_{\y}  + \frac{\kappa}{2}\partial_{\u\u}.
\end{align*}
We now prove that $\mathcal{L}$ satisfies the H\"ormander's condition, from which we conclude $\mathcal{L}$ is hypoelliptic and $\pf$ is a smooth function using Theorem \ref{hor}.
Note that 
\[
\mathcal{L} = \frac{1}{2}A_1^2 + A_0 + C,
\]
where 
\begin{align*}
A_1 &= \sqrt{\kappa}\partial_\u\\
A_0 = &-\partial_r +\LL_r(\w-\u) \partial_{\u}+ \HH_r(\x-\u)\partial_{\x} + \HH^R_r(\w-\u)\partial_{\w} +\HH^R_r(\y-\u)\partial_{\y} \\
C &= -2b\,\AA(r,\x-\u,\y-\u).
\end{align*}
We will show that the Lie algebra generated by the vector fields
\begin{equation}\label{eqmj}
A_0,\,A_1,\,[A_1,A_0],\,[A_1,[A_1,A_0]],\,\ldots
\end{equation}
has rank 5 for every $r,\u,\x,\w,\y$ satisfying $r>0,\,\u\neq \x,\,\w\neq\y$.
It is not hard to see that for every $n\in\mathbb{N}$, the $(n+2)$-th term in \eqref{eqmj} can be written as
\[
[\partial_{u^{(n)}} L_r(\w-\u)]\partial_{\u}+[\partial_{u^{(n)}}\HH_r(\x-\u)]\partial_{\x} + [\partial_{u^{(n)}}\HH^R_r(\w-\u)]\partial_{\w} +[\partial_{u^{(n)}}\HH^R_r(\y-\u)]\partial_{\y},
\]
where $\partial_{u^{(n)}}$ denotes the $n$-th derivative with respect to $u$.
Among the vector fields given in \eqref{eqmj}, $A_0$ is the only vector field with nonzero coefficient for $\partial_r$. 
Also in $A_1$, only $\partial_u$ has nonzero coefficient. 
Therefore, it is enough to show that the span of the vector fields $\partial_{\x},\,\partial_{\w},\,\partial_{\y}$ is a subspace of the span of
\[
[A_1,A_0],\,[A_1,[A_1,A_0]],\,[A_1,[A_1,[A_1,A_0]]],\,\ldots\,.
\] 
For a fixed $r>0$, define the  functions $f_0(z) = \HH_r(z),\, f_1(z) = \HH_r(z+ir)$. Note that $f_1'(z) = \partial_z \HH_r^R(z)$ for $z\in\mathbb{R}$, since $\Im[\HH_r(z+ir)] = 1$.
We want to show that for all $0<z_1<2\pi$ and $0\leq z_2<z_3<2\pi$, there exist three linearly independent vectors among
\[
v_k:=(f_0^{(k)}(z_1),\,f_1^{(k)}(z_2),\,f_1^{(k)}(z_3)),\qquad k\geq 1.
\]
Suppose the claim is not true and there exist constants $a_j,\,j\in\{1,2,3\}$ such that they are not all equal to $0$ and
$
a_1 f_0^{(k)}(z_1)+a_2 f_1^{(k)}(z_2)+a_3 f_1^{(k)}(z_3)=0
$
for all $k\geq 1$. If
\[
\tilde{f}(\epsilon) := a_1 f_0(z_1 + \epsilon)+a_2 f_1(z_2+\epsilon)+a_3 f_1(z_3+\epsilon),
\]
then $\tilde{f}^{(k)}(0)=0$ for all $k\geq1$.
Let 
\[
\epsilon_0:=\min\{z_1,\,(2\pi-z_1),\, |z_2+ir|,\,|z_2+ir-2\pi|,\, |z_3+ir|,\,|z_3+ir-2\pi|\}
\] 
and let $B_{\epsilon_0}(0)$ be the open ball of radius $\epsilon_0$ around the origin.
Since $\HH_r(z)$ is an elliptic function with periods $2\pi, 2ir$ and poles at $2k\pi + i2mr$, the function $\tilde{f}(\epsilon)$ is analytic on $B_{\epsilon_0}(0)$. 
Therefore, for some  constant $c$,  $\tilde f\equiv c$ on $B_{\epsilon_0}(0)$. 
But this is a contradiction because $\tilde f$ is continuous and there exists $\epsilon\in\partial B_{\epsilon_0}(0)$ such that $\tilde f(\epsilon)=\infty$.
\end{proof}
\subsection{The case \texorpdfstring{$n>2$}{LG}}
In this section we explain how the proof of Proposition \ref{thmmain} can be extended to the case $n>2$. 
Let $0\leq u^1,\ldots,u^n<2\pi$ and $0\leq w^1,\ldots,w^n<2\pi$ be distinct numbers and for $1\leq j\leq n$,  define $\ubar^j = \psi(\u^j),\, \wbar^j = \psi(ir+\w^j)$ as before. Let 
$\bfu = (u^1,\ldots,u^n),\,\bfw = (w^1,\ldots,w^n)$ and 
$\bfub=(\ubar^1,\ldots,\ubar^n),\,\bfwb = (\wbar^1,\ldots,\wbar^n)$.
For $1\leq j\leq n$, let $\gamma^j$ be a continuous path connecting $\ubar^j$ to $\wbar^j$ in $A_r$ and let $\bgamma=(\gamma^1,\ldots,\gamma^n)$. Define
\[
\pf(r,\,\bfu,\,\bfw) = \frac{\Psi_{A_r}(\bfub,\bfwb)}{\Psi_{A_r}(\ubar^1,\wbar^1)\prod_{j=2}^n H_{A_r}(\ubar^j,\wbar^j)^b}=\frac{\prod_{j=2}^n \Psi_{A_r}(\ubar^j,\wbar^j)}{\prod_{j=2}^n H_{A_r}(\ubar^j,\wbar^j)^b} \EE[Y(\bgamma)],
\]
where $\EE$ denotes the expectation with respect to $\mu^\#_{A_r}(\ubar^1,\wbar^1)\times\ldots\times \mu^\#_{A_r}(\ubar^n,\wbar^n)$. 
\begin{lemma}\label{lemwc}
Let $m_t$ denote the Brownian loop measures of loops in $A_r\sm \gamma^1_t$ and let $m=m_0$. 
Let $\bfub_t = (\gamma^1(t),\ubar^2,\ldots,\ubar^n)$ and define $\bgamma_t$ to be $n$-tuples of paths connecting $\bfub_t$ to $\bfwb$ in $A_r\sm\gamma_t$. 
Then
\[
\sum_{j=2}^nm[K_j(\bgamma)] = \sum_{j=2}^nm_t[K_j(\bgamma_t)] + \sum_{j = 2}^n m(\gamma^1_t,\,\gamma^j).
\]
\end{lemma}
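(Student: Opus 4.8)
The plan is to reduce the identity to Lemma \ref{lemdef1} together with the defining property $m_D = m_{\C}|_{\{\text{loops in }D\}}$ of the Brownian loop measure, after which the statement becomes pure bookkeeping. Write $\tilde\gamma^1 := \gamma^1((t,t_{\gamma^1}))$ for the part of $\gamma^1$ traced after time $t$, so that $\gamma^1 = \gamma^1_t\cup\tilde\gamma^1$ and $\bgamma_t = (\tilde\gamma^1,\gamma^2,\ldots,\gamma^n)$; set $G_{j-1} := \gamma^1\cup\cdots\cup\gamma^{j-1}$ and $\tilde G_{j-1} := \tilde\gamma^1\cup\gamma^2\cup\cdots\cup\gamma^{j-1}$. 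Applying Lemma \ref{lemdef1} in the domain $A_r$ gives $\sum_{j=2}^n m[K_j(\bgamma)] = \sum_{j=2}^n m_{A_r}(\gamma^j, G_{j-1})$, and applying the same lemma in $A_r\sm\gamma^1_t$ (whose Brownian loop measure is $m_t$) gives $\sum_{j=2}^n m_t[K_j(\bgamma_t)] = \sum_{j=2}^n m_{A_r\sm\gamma^1_t}(\gamma^j, \tilde G_{j-1})$. Hence it suffices to establish, for each fixed $j\in\{2,\ldots,n\}$, the single-index identity
\[
m_{A_r}(\gamma^j, G_{j-1}) = m_{A_r}(\gamma^j,\gamma^1_t) + m_{A_r\sm\gamma^1_t}(\gamma^j, \tilde G_{j-1}),
\]
and then sum over $j$, recalling $m=m_{A_r}$ so that $\sum_{j=2}^n m_{A_r}(\gamma^j,\gamma^1_t)=\sum_{j=2}^n m(\gamma^1_t,\gamma^j)$.

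To prove the single-index identity I would split the loops in $A_r$ that meet both $\gamma^j$ and $G_{j-1}$ according to whether or not they meet $\gamma^1_t$. Since $\gamma^1_t\subset G_{j-1}$, a loop meeting $\gamma^j$ and $\gamma^1_t$ automatically meets $\gamma^j$ and $G_{j-1}$, so the first class is exactly the set of loops in $A_r$ meeting both $\gamma^j$ and $\gamma^1_t$, of total mass $m_{A_r}(\gamma^j,\gamma^1_t)$. A loop in the second class avoids $\gamma^1_t$ but meets $G_{j-1}=\gamma^1_t\cup\tilde G_{j-1}$, hence meets $\tilde G_{j-1}$; conversely any loop in $A_r$ meeting $\gamma^j$ and $\tilde G_{j-1}$ but avoiding $\gamma^1_t$ is of this type. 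Because the loops in $A_r$ avoiding the compact set $\gamma^1_t$ are precisely the loops contained in the open set $A_r\sm\gamma^1_t$, the restriction of $m_{A_r}$ to such loops is $m_{A_r\sm\gamma^1_t}$ (directly from $m_D = m_{\C}|_{\{\text{loops in }D\}}$), so the second class contributes $m_{A_r\sm\gamma^1_t}(\gamma^j, \tilde G_{j-1})$. Adding the two contributions gives the single-index identity, and summing over $j$ finishes the proof.

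I expect no serious obstacle; the only points needing care are that the two classes genuinely partition the loops counted on the left-hand side with no overlap and no omission — in particular a loop through the shared endpoint $\gamma^1(t)$ belongs (exactly once) to the first class since $\gamma^1(t)\in\gamma^1_t$ — and that "avoids $\gamma^1_t$'' is precisely the hypothesis under which the loop-measure restriction property applies. Since $Y(\bgamma)$ vanishes unless the $\gamma^i$ are pairwise disjoint, in any application one may harmlessly regard $\bgamma_t$ as an honest $n$-tuple of disjoint curves in $A_r\sm\gamma^1_t$, although the loop-measure identity above holds for arbitrary curves.
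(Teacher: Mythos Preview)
Your proof is correct and rests on the same two ingredients as the paper's: Lemma~\ref{lemdef1} to rewrite the $K_j$-sums as sums of two-set loop measures, and the restriction property of the loop measure to split each term according to whether the loop meets $\gamma^1_t$. The only difference is organizational: the paper proceeds by induction on $n$, peeling off $\gamma^n$ at each step, whereas you apply Lemma~\ref{lemdef1} once in each domain and then compare term by term, which is slightly more direct.
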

\begin{proof} 
We prove this by induction. It is easy to see that the claim is true for $n=2$. Assuming the claim holds for $n-1$, we prove it for $n$.
Let $\bgamma=(\bgamma',\gamma_n)$ and $\bgamma_t=(\bgamma'_t,\gamma_n)$. Using Lemma \ref{lemdef1},
\[
\sum_{j=2}^nm[K_j(\bgamma)]  = \sum_{j=2}^{n-1}m[K_j(\bgamma')] + m(\gamma^1_t,\gamma^n) + m_t(\bgamma'_t,\gamma^n).
\]
Hence, by the induction hypothesis for $n-1$,
\[
\sum_{j=2}^nm[K_j(\bgamma)]  = \sum_{j=2}^{n-1}m_t[K_j(\bgamma'_t)] +\sum_{j=2}^{n} m(\gamma_t^1,\gamma^j) + m_t(\bgamma'_t,\gamma^n).
\]
 Using Lemma \ref{lemdef1} one more time gives the result.
\end{proof}
\begin{proof}[Proof of Theorem \ref{thmucsd}]
Since $\Psi_{A_r}(\ubar^j,\wbar^j),\,H_{A_r}(\ubar^j,\wbar^j)$ are smooth, it is  enough to to prove that $\pf$ is smooth and this can be proved in a similar way to Proposition \ref{thmmain}.
Suppose $\gamma^1_t$ has the annulus parametrization. Let $\hbar_t:A_r\sm\gamma^1_t\to A_{r-t}$ be our usual conformal transformation. Let $U^j_t = h_t(u^j),\, W^j_t = \Re[h_t(ir+w^j)],\, \textbf{U}_t = (U^1_t,\ldots,U^n_t),\, \textbf{W}_t = (\W_t,\ldots,W^n_t)$ and $\Ubar_t^j = \hbar(\ubar^j),\,\Wbar_t^j = \hbar (\wbar^j)$. 
Using Lemma \ref{lemwc} and equations \eqref{theq6}, \eqref{eqdef0}, we can see that
\begin{align*}
M_t&:=\frac{\prod_{j=2}^n \Psi_{A_r}(\ubar^j,\wbar^j)}{\prod_{j=2}^n H_{A_r}(\ubar^j,\wbar^j)^b} \EE[Y(\bgamma)|\gamma^1_t] \\
&= \frac{\prod_{j=2}^n \Psi_{A_r}(\ubar^j,\wbar^j)}{\prod_{j=2}^n H_{A_r}(\ubar^j,\wbar^j)^b} \, \frac{\prod_{j=2}^n \Psi_{A_r\sm\gamma^1_t}(\ubar^j,\wbar^j)}{\prod_{j=2}^n \Psi_{A_r}(\ubar^j,\wbar^j)} \,\EE[Y(\bgamma_t)|\gamma_t^1]\\
&= \frac{\prod_{j=2}^n \Psi_{A_r\sm\gamma^1_t}(\ubar^j,\wbar^j)}{\prod_{j=2}^n H_{A_r}(\ubar^j,\wbar^j)^b} \,\frac{\prod_{j=2}^n H_{A_{r-t}}(\Ubar^j_t,\Wbar^j_t)^b}{\prod_{j=2}^n \Psi_{A_{r-t}}(\Ubar^j_t,\Wbar^j_t)} \,\pf(r-t, \textbf{U}_t,\,\textbf{W}_t)\\
&= \prod_{j=2}^n Q_{A_r}(\ubar^j,\wbar^j; A_r\sm\gamma^1_t)^b\, \pf(r-t, \textbf{U}_t,\,\textbf{W}_t),
\end{align*}
is a martingale. At least in the weak sense, $M_t$ satisfies a  differential equation related to the infinitesimal generator of the process
\[
(r-t,Q_{A_r}(\u^2,\w^2; A_r\sm\gamma^1_t),\ldots,Q_{A_r}(u^n,w^n; A_r\sm\gamma^1_t),\textbf{U}_t,\textbf{W}_t).
\] 
Proposition \ref{propn21} gives the time derivative of $Q_{A_r}(\ubar^j,\wbar^j; A_r\sm\gamma^1_t)$. The derivatives of $V(r-t,{\bf U}_t,{\bf W}_t)$ can be described using Proposition \ref{proplow}. 
Using these, we can see that if 
\begin{align*}
A_1 & = \sqrt{\kappa}\partial_{u^1},\\
A_0 & = -\partial_r + L_r(w^1-u^1)\partial_{u^1} + \sum _{j=2}^n \HH_r(u^j-u^1)\partial_{u^j} + \sum_{j=1}^n \HH_r^R(w^j-u^1)\partial_{w^j},\\
C & = -2b\sum_{j=2}\AA(r,u^j-u^1,w^j-u^1),\\
\mathcal{L} &= \frac{1}{2}A_1^2+A_0+C,
\end{align*}
then $\mathcal{L} \pf = 0$ for all $r>0$ and distinct $\bfu,\bfw$. 
Verifying the H\"ormander's conditions
can be done as in the proof of Proposition \ref{thmmain}. 
\end{proof}
\section{Two-Sided Measures}\label{twosided}
In this section, we start with reviewing the boundary perturbation property for radial $\sle$. This is similar to the same property for chordal $\sle$ described in \cite{greg_restrict,Parkcity}. Similar to the chordal case, this allows for obtaining radial $\sle$ in a smaller domain by a change of measure.

Next, we construct  two-sided $\sle$ using two independent radial $\sle$ and a change of measure.
Finally, we make a connection between two-sided $\sle$ in $\disk$ and bi-chordal annulus $\sle$. In particular, we show that before reaching the boundary, the two measures are absolutely continuous and we give an estimate for the Radon-Nikodym derivative.

\subsection{Boundary perturbation for radial \texorpdfstring{$\sle$}{LG}}
Suppose $D\subset\disk$ is a simply connected domain such that $0\in D$ and $D$ agrees with $\disk$ in a neighborhood of $1$. Let $K=\disk\sm D$.
Let $\bar{G}: D\to \disk$ be the unique conformal transformation with $\bar G(0) = 0$ and $\bar G'(0)>0$. 
Let $\gamma_t$ be a radial $\sle$ from $1$ to 0 in $\disk$ (continuity at 0 is shown in \cite{greg_twosided}) and let $\gg:\disk\sm\gamma_t\to \disk$ be the unique conformal transformation satisfying $\gg(0) =0,\,\gg'(0) >0$. 
Let $\bar\gamma_t = \bar G(\gamma_t)$,  define $\bar\varphi_t:\disk\sm \bar\gamma_t\to \disk$ to be the conformal transformation satisfying $\varphi_t(0) = 0,\,\varphi'_t(0)>0$ and let $\bar \Phi_t :=\bar\varphi_t\circ \bar{G}\circ \gg^{-1}$. Let $\tg_t$ be the unique conformal transformation that is continuous in $t$ and satisfies
\[
\gg(e^{iz}) = e^{i\tg_t(z)},\qquad \tg_0(z) = z.
\]
Define $G,\,\varphi_t,\,\Phi_t$ in a similar way for $\bar G, \,\bar\varphi_t,\,\bar\Phi_t$ (see figure \ref{pic0}). As before, let  $\psi(z)= e^{iz}$. 
Let $U_t$ be the continuous process satisfying $\psi(U_t) = \gg (\gamma(t)),\,U_0 = 0$. If $\gamma_t$ has the radial parameterization, then we know that $\gg$ satisfies the radial Loewner equation
\[
\partial_t\gg(z) = \frac{a}{2}\,\gg(z)\,\frac{e^{iU_t}+\gg(z)}{e^{iU_t}-\gg(z)},\qquad \bar{g}_0(z) = z,
\]
for any $z\in \disk\sm \gamma_t$. Equivalently, if $\cot_2(z):=\cot(z/2)$, then
\[
\partial_t \tg_t(z) = \frac{a}{2}\cot_2\left(\tg_t(z) - U_t\right), \qquad \tg_0(z) = z.
\]
If $\gamma_t$ does not have radial parameterization, then the term $a/2$ is substituted with $\partial_t \log\gg'(0)$. 
\begin{figure}
  \centering
    \includegraphics[width=0.9\textwidth]{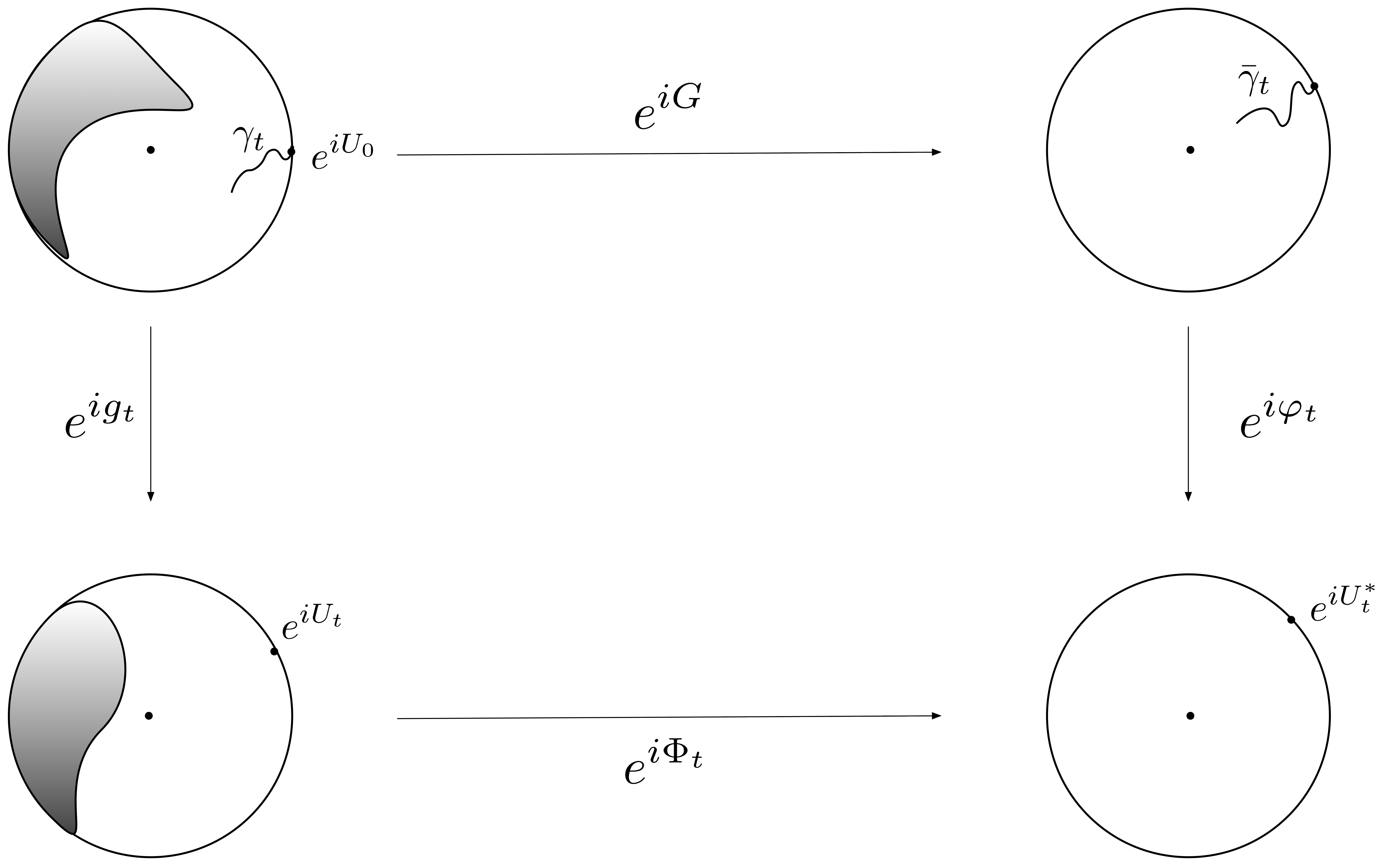}
    \caption{Shaded area in unit disk on the top left represents $K=\disk\sm D$. }
    \label{pic0}
\end{figure}
\begin{lemma}\label{lem2sided1}
The Brownian loop measure of loops with nonzero winding number in $\disk$ that intersect $\disk\sm D$ is 
\[
\frac{\log \bar G'(0)}{6}.
\]
\end{lemma}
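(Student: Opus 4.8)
The plan is to identify the Brownian loop measure of loops with nonzero winding number in $\disk$ that intersect $K = \disk \setminus D$ with a difference of two such "winding loop" measures, and then to invoke Lemma \ref{lemback4} (in a limiting form). Observe that a loop $\ell \subset \disk$ has nonzero winding number around the origin precisely when it is not of the form $\psi \circ \ell'$ for a loop $\ell'$ in the strip, i.e. it is one of the loops counted by the quantity $m^*(\cdot)$ appearing in Section \ref{mcsec}. The key point is additivity: the set of loops in $\disk$ with nonzero winding number that intersect $K$ equals (the set of loops in $\disk$ with nonzero winding number) minus (the set of loops in $D$ with nonzero winding number), since any winding loop that avoids $K$ lies in $D$, and conversely. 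Hence the desired quantity is $m^*_\disk - m^*_D$, where $m^*_E$ denotes the Brownian loop measure of loops in $E$ with nonzero winding number around $0$.

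First I would make the conformal reduction: applying the conformal transformation $\bar G : D \to \disk$ with $\bar G(0)=0,\ \bar G'(0)>0$, and using conformal invariance of the Brownian loop measure together with the fact that winding number around $0$ is preserved (because $\bar G$ fixes $0$ and has positive derivative there, so it is orientation-preserving and does not change the winding number of any loop around the origin), we get $m^*_D = m^*_\disk$ computed in the image — but that is just $m^*_\disk$ again, which is not directly helpful. The correct route is instead to relate $m^*_D$ to $m^*$ of an annulus. Concretely, for each $\rho \in (0,1)$ let $A'_\rho = \{ \rho < |z| < 1\}$ (an annulus), and note $m^*_{\disk} = \lim_{\rho \to 0} m^*_{A'_\rho}$, while the loops in $D$ of nonzero winding number are, up to the loops that come arbitrarily close to $0$ (a null set issue handled by the same limit), the loops in $D$ surrounding a small circle $C_s$ with $s = \log(1/(\text{small radius}))$. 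The cleanest implementation: write
\[
m_\disk(K, \text{winding loops}) = \lim_{s\to\infty}\big[ m^*(\text{loops in } \disk \text{ of nonzero winding, avoiding } \disk_s) - m^*(\text{loops in } D \text{ of nonzero winding, avoiding } \disk_s)\big],
\]
and then recognize each bracketed term via conformal invariance as a winding-loop measure in an annulus. The difference then becomes the Brownian loop measure of loops in $\disk$ that wind and that intersect $K$ — which by the computation in Lemma \ref{lemback4} (with $D$ playing the role there and $g = \bar G$) is exactly $m_\disk(K, C_r) = \log(r/\hat r) + O(e^{-\hat r})$ with $\hat r = r - \log \bar G'(0) + o(1)$; letting $r \to \infty$ and telescoping, $\log(r/(r - \log \bar G'(0))) \to 0$ but the accumulated winding contribution is $\log \bar G'(0)$ after dividing by the relevant normalization — here the factor $1/6$ enters from the known fact (Proposition 3.11 of \cite{greg_annulus}, used in Section \ref{mcsec}) that $m^*(r) = r/6 + O(1)$ as $r \to \infty$, so the linear-in-$r$ parts cancel and the coefficient of $\log \bar G'(0)$ is $1/6$.

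The main obstacle I anticipate is making the "telescoping to infinity" rigorous: one must control the error terms uniformly and justify interchanging the limit $s \to \infty$ (or $r \to \infty$) with the loop-measure bookkeeping, and in particular confirm that the $O(e^{-\hat r})$ error in Lemma \ref{lemback4} and the subleading $O(1)$ term in $m^*(r) = r/6 + O(1)$ do not contribute in the limit to anything other than a constant that is pinned down to be $\log \bar G'(0)/6$. An alternative, possibly cleaner, approach is to differentiate in a one-parameter family: embed $D$ in a continuous family $D_\lambda$ interpolating from $\disk$ ($\lambda = 0$) to $D$, show the winding-loop measure is differentiable in $\lambda$ with derivative equal to $\tfrac16 \partial_\lambda \log \bar G_\lambda'(0)$ using the conformal restriction / loop-measure derivative formulas already developed, and integrate. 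I would present whichever of these requires the least new machinery; the first (explicit annulus computation via Lemma \ref{lemback4}) is most in the spirit of this section and reuses the estimates already proved.
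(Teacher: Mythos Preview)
Your alternative approach (differentiate along a one-parameter family $D_\lambda$ and integrate $\tfrac16\,\partial_\lambda \log \bar G_\lambda'(0)$) is essentially what the paper does: it observes that by conformal invariance and infinitesimal perturbation the quantity must be a linear function of $\log \bar G'(0)$, and then identifies the constant $1/6$ via the Brownian bubble measure, citing Corollary~3.12 of \cite{greg_annulus}. So that route is fine and matches the paper.

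Your primary approach, however, contains a genuine confusion. You correctly set up the regularized difference $m^*(\text{winding loops in }A_s) - m^*(\text{winding loops in }D\cap A_s)$ and correctly note that this equals the measure of winding loops in $A_s$ that intersect $K$. But you then identify this with $m_\disk(K,C_r)$ from Lemma~\ref{lemback4}, and that is wrong: $m_\disk(K,C_r)$ is the measure of \emph{all} loops in $\disk$ intersecting both $K$ and the circle $C_r$, with no winding condition, and indeed Lemma~\ref{lemback4} shows it tends to $0$ as $r\to\infty$. The ``telescoping'' and ``accumulated winding contribution'' language that follows is not a valid deduction from that lemma.

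The salvageable version of your first approach is this: by the modulus formula \eqref{theq2} in Lemma~\ref{lemback4}, the doubly connected domain $D\cap A_s$ has modulus $\hat s = s - \log \bar G'(0) + O(e^{-s+\log \bar G'(0)})$, and since the winding-loop measure is a conformal invariant of doubly connected domains, the regularized difference equals $m^*(s) - m^*(\hat s)$. To conclude, you need not merely $m^*(r) = r/6 + O(1)$ but the sharper statement that $m^*(r) - r/6$ converges (equivalently, that $(m^*)'(r)\to 1/6$), so that $m^*(s) - m^*(\hat s) \to (s-\hat s)/6 \to \log \bar G'(0)/6$. That sharper asymptotic is available from \cite{greg_annulus}, but you should state and use it explicitly rather than invoking the unrelated conclusion about $m_\disk(K,C_r)$.
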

\begin{proof}
Using conformal invariance and infinitesimal perturbations
it is not hard to see that the measure must be a
linear function of $\log \bar G'(0)$.  One can compute
the constant using the Brownian bubble measure; see
 corollary 3.12 in \cite{greg_annulus}.
\end{proof}
\begin{lemma}\label{lemextra1}
Let $\gamma_t\subset D$ be a  curve with $\gamma(0+)
 \in \p \Disk$ such that $\log\gg'(0) = at/2$ and $\gamma(t)\to 0$ as $t\to \infty$. If $\tau_r=\inf\{t:\,\gamma_t\cap C_r\neq \emptyset\}$, then
\[
 \lim_{t\to \infty} \bar\Phi_t'(0) =1,\qquad \lim_{r\to \infty} \Phi_{\tau_r}'(U_{\tau_r}) =1.
\]
\end{lemma}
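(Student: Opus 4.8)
The plan is to express $\bar\Phi_t'(0)$ as a ratio of conformal radii, use the Koebe distortion theorem to show that the ``obstacle'' $\gg(K)$ recedes to $\p\disk$, and then deduce the boundary statement from this together with a harmonic-measure estimate on the position of $\gg(K)$ relative to the moving point $\bar\xi_t:=\gg(\gamma(t))=e^{iU_t}$. Write $\mathrm{cr}(z_0,\Omega)$ for the conformal radius of $\Omega$ at $z_0$ and set $d_K:=\dist(0,K)>0$. For the first limit, differentiate $\bar\Phi_t=\bar\varphi_t\circ\bar G\circ\gg^{-1}$ at the common fixed point $0$; using that $\bar G$ maps $D\sm\gamma_t$ conformally onto $\disk\sm\bar\gamma_t$ fixing $0$ one obtains
\[
\bar\Phi_t'(0)=\frac{\bar\varphi_t'(0)\,\bar G'(0)}{\gg'(0)}=\frac{\mathrm{cr}(0,\disk\sm\gamma_t)}{\mathrm{cr}(0,D\sm\gamma_t)}\ \ge\ 1 ,
\]
the inequality because $D\sm\gamma_t\subset\disk\sm\gamma_t$. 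For the reverse bound, observe that $\bar\Phi_t$ is the normalized Riemann map of the simply connected domain $\disk\sm\gg(K)$, so $\bar\Phi_t'(0)=1/\mathrm{cr}(0,\disk\sm\gg(K))$, and it suffices to show $\gg(K)$ recedes to $\p\disk$. Since $\log\gg'(0)=at/2$, the univalent map $\gg^{-1}\colon\disk\to\disk\sm\gamma_t$ has $(\gg^{-1})'(0)=e^{-at/2}$, and Koebe distortion gives $|\gg^{-1}(w)|\le e^{-at/2}(1-|w|)^{-2}$; hence any $z$ with $|z|\ge d_K$, in particular any $z\in K$, has $|\gg(z)|\ge 1-C_{d_K}\,e^{-at/4}$, so $\dist(0,\gg(K))\to1$ and $\bar\Phi_t'(0)\to1$.

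For the second limit, note first that $\tau_r\to\infty$ and, by Koebe's $1/4$ theorem, $e^{-a\tau_r/2}=\mathrm{cr}(0,\disk\sm\gamma_{\tau_r})\le 4\dist(0,\gamma_{\tau_r})\le 4|\gamma(\tau_r)|=4e^{-r}$, so the estimates above hold at time $\tau_r$ with error $\bigo(e^{-r/2})$. Since $\psi\circ\Phi_t=\bar\Phi_t\circ\psi$ and $\Phi_t(U_t)\in\mathbb R$, we have $\Phi_t'(U_t)=|\bar\Phi_t'(\bar\xi_t)|$, so it remains to show the boundary derivative of the Riemann map of $\disk\sm\gg(K)$ at $\bar\xi_t$ tends to $1$. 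The obstacle $\gg(K)$ lies in a shell $\{1-\epsilon_r\le|w|<1\}$ with $\epsilon_r=\bigo(e^{-r/2})$; I also claim $\dist(\bar\xi_t,\gg(K))$ is bounded below (addressed below). Granting this, lift to the covering space: $\omega_t(z):=\Im\Phi_t(z)-\Im z$ is bounded and harmonic on $\hp\sm\psi^{-1}(\gg(K))$, it vanishes on a fixed-size segment of $\mathbb R$ around $U_t$, it is $\ge-\epsilon_r$ on $\psi^{-1}(\gg(K))$ (there $\Im z\le\epsilon_r$), and it tends to $-\log\bar\Phi_t'(0)$ at $\infty$; the maximum principle gives $|\omega_t|\le\max(\epsilon_r,\log\bar\Phi_t'(0))\to0$, and a Schwarz-reflection gradient bound at $U_t$ then yields $|\Phi_t'(U_t)-1|=|\p_{\mathbf n}\omega_t(U_t)|\to0$.

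The step I expect to be delicate is the claim that $\gg(K)$ stays a definite distance from the moving boundary point $\bar\xi_t$; this is the only place where one must work rather than quote Koebe, since the receding obstacle could a priori creep up to the very point at which the derivative is taken. I would argue it through harmonic measure: for $t$ large the ball $B(\gamma(t),d_K/2)$ avoids $K$ and contains $0$, so $\Omega_t:=\gg\bigl(B(\gamma(t),d_K/2)\sm\gamma_t\bigr)$ is disjoint from $\gg(K)$, has $\bar\xi_t$ on its boundary, and (using Koebe again, since $\gg'(0)\dist(0,\gamma_t)\ge\tfrac14$) satisfies $\mathrm{cr}(0,\Omega_t)\ge\tfrac14$; because the long slit $\gamma_t$ is hit by Brownian motion started at $0$ with probability tending to $1$, and mostly near its tip $\gamma(t)$, the set $\Omega_t$ in fact fills a neighborhood of $\bar\xi_t$ of definite size, which confines $\gg(K)$ — whose preimage $K$ sits a macroscopic distance from both $\gamma(t)$ and $0$ — to a vanishing sub-arc of $\p\disk$ well separated from $\bar\xi_t$. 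The resulting separation is comfortably larger than the exponentially small $\epsilon_r$, which is all the maximum-principle step requires.
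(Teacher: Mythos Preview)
Your treatment of the first limit is correct and in fact a bit cleaner than the paper's: you get $\dist(0,\gg(K))\to 1$ directly from the Koebe growth theorem applied to $\gg^{-1}$, whereas the paper bounds $\bar\Phi_t'(0)\le 1/\dist(0,\gg(K))$ via the Schwarz lemma and then argues that the harmonic measure of $K$ from $0$ in $D\setminus\gamma_t$ tends to $0$ by the Beurling estimate. Both routes are short.

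For the second limit there is a real gap, exactly at the step you yourself flag. The inequality $\mathrm{cr}(0,\Omega_t)\ge 1/4$ tells you nothing about the shape of $\Omega_t$ near the boundary point $\bar\xi_t$: a domain can have conformal radius $1/4$ at $0$ and still reach $\bar\xi_t$ only through an arbitrarily thin finger, so $\Omega_t$ need not contain any fixed half-disk at $\bar\xi_t$, and your Schwarz-reflection gradient bound $|\Phi_t'(U_t)-1|\lesssim M_r/\rho$ is useless without control of $\rho=\dist(U_t,\psi^{-1}(\gg(K)))$. In particular, nothing you have written rules out $\rho\to 0$; the heuristic that Brownian motion ``mostly hits near the tip'' is not a proof, and the fact that $K$ sits at macroscopic distance from $\gamma(t)$ and $0$ does not by itself force $\gg(K)$ away from $\bar\xi_t$ once the curve has wandered between them.

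The paper does \emph{not} try to bound $\dist(\bar\xi_{\tau_r},\bar K_r)$ below. Instead it controls the ratio $\diam(\bar K_r)/\dist(\bar\xi_{\tau_r},\bar K_r)$. Fixing $d$ with $e^{-d}\le\dist(0,K)$, there is a unique sub-arc $\eta\subset C_d$ with endpoints on $\gamma_{\tau_r}$ that separates $K$ from $0$ in $\disk\setminus\gamma_{\tau_r}$; a crosscut estimate (quoted from \cite{greg_twosided}) gives
\[
\diam\bigl(\bar g_{\tau_r}(\eta)\bigr)\ \le\ c_0\,e^{-(r-d)/2}\,\min\{|l_1|,|l_2|\},
\]
where $l_1,l_2$ are the two arcs of $C_0\setminus\{\bar\xi_{\tau_r}\}$ determined by the endpoints of $\bar g_{\tau_r}(\eta)$. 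Since $\bar g_{\tau_r}(\eta)$ still separates $\bar K_r$ from $0$, this bounds $\diam(\bar K_r)$ by the same quantity, while $\min\{|l_1|,|l_2|\}\le\pi\,\dist(\bar\xi_{\tau_r},\bar K_r)$. One then finishes with the elementary Poisson-kernel estimate
\[
1-\Phi_{\tau_r}'(U_{\tau_r})\ \lesssim\ \Bigl(\diam(\bar K_r)\big/\dist(\bar\xi_{\tau_r},\bar K_r)\Bigr)^{2}\ \lesssim\ e^{-(r-d)},
\]
without ever needing an absolute lower bound on the separation. If you want to salvage your maximum-principle approach, you would need either a genuine proof that $\rho$ stays bounded below, or---more in the spirit of what actually holds---a proof that $\rho$ decays no faster than your $M_r$; the crosscut lemma above is precisely the tool for that.
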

\begin{proof} For the first limit, note that the 
Schwarz lemma   implies that
\[
1\leq \bar\Phi_t'(0)\leq \frac{1}{\dist(0,\gg(K))}.
\]
Hence it suffices to show that the harmonic measure
of $\gg(K)$ from
$0$ in $\Disk \setminus \gg(K)$ tends to zero.  But this
is the same as the harmonic measure of
$K$  from $0$ in $D \setminus \gamma_t$ and this goes to 0
monotically by the Beurling estimate.  

To prove the second equality, we need the following fact (see Lemma 2.6 in \cite{greg_twosided} for more details and a proof). 
Choose $d$ such that $e^{-d}\leq  \dist(0,K)$ and let $r>d$. 
There exists a unique open connected arc $\eta(0,1)\subset C_d$ such that $\eta(0+),\eta(1-)\in\gamma_{\tau_r}$ and $\eta(0,1)$ disconnects $K$ from 0 in $\disk\sm\gamma_{\tau_r}$. 
Consider the disjoint union $C_0 = l_1\cup l_2\cup l_3\cup \{\psi(U_{\tau_r})\}$, where $l_3$ is the unique closed connected arc  with endpoints $\bar{g}_{\tau_r}(\eta(0+)),\bar{g}_{\tau_r}(\eta(1-))$ and $l_1,l_2$ are open connected arcs. Then 
\[
\diam[\bar{g}_{\tau_r}\circ\eta(0,1)]\leq c_0\,e^{-(r-d)/2} \min\{|l_1|,\,|l_2|\},
\]
where $| \cdot |$ denotes length and $c_0$ is an absolute constant. Let $\bar K_r = \bar g_{\tau_r}(K)$. Since $\eta(0,1)$ disconnects $K$ from $0$ in $\disk\sm \gamma_{\tau_r}$, the curve $\bar g_{\tau_r}\circ\eta(0,1)$ disconnects $\bar K_r$ from 0 in $\disk$. Therefore, for large enough $r$,
\begin{equation}\label{eqextra2}
\diam(\bar K_r)\leq c e^{-(r-d)/2} \min\{|l_1|,\,|l_2|\},\qquad \min\{|l_1|,\,|l_2|\}\leq \pi\,\dist(\psi(U_{\tau_r}),\bar K_r),
\end{equation}
where $c$ is an absolute constant. Note that $\Phi'_{\tau_r}(\psi(U_{\tau_r}))$ is the probability that a Brownian motion from 0 to $\psi(U_{\tau_r})$ in $\disk$ does not hit $\bar K_r$.  One can verify that for some constant $c_*$,
\[
1-\Phi'_{\tau_r}(U_{\tau_r}) \leq c_* \diam(\bar K_r)^2 \dist(U_{\tau_r},\bar K_r)^{-2}.
\]
To see this let $\sigma$ be the first time Brownian motion $B$ starting from the origin exits $\disk\sm \bar K_r$. Then 
\[
1-\Phi'_{\tau_r}(U_{\tau_r}) = \frac{\EE[H_\disk(B_\sigma,U_{\tau_r})1\{B_\sigma\in\disk\}]}{H_\disk(0,U_{\tau_r})}.
\]
Moreover, 
\[
\PP[B_\sigma\in \disk] = \bigo(\diam(\bar K_r)). 
\]
Using the exact form of Poisson kernel in $\disk$, there exists a constat $c$ such that for any $w\in \bar K_r$,
\[
H_\disk(w,U_{\tau_r}) < c \frac{\diam(\bar K_r)}{\dist(\psi(U_{\tau_r}),\bar K_r)^2}.
\]
The result follows from this and equation \eqref{eqextra2}.
\end{proof}
\begin{proposition}\label{prop2sided1}
Suppose $\gamma_t$ is a radial $\sle$ from $1$ to 0 in $\disk$. Let $\tau = \inf\{t;\,\gamma_t\not\subset D\}$ and denote by $U_t$  the continuous process satisfying $e^{iU_t} = \gg (\gamma(t)),\,U_0 = 0$. Then
\begin{equation}\label{eq2sided-}
M_t =1\{t<\tau\}\;  \Phi'_t(U_t)^b\; \bar{\Phi}'_t(0)^{\tilde{b}}\;\exp\left\{\frac{\cc}{2}\,m_\disk(\gamma_t,K)\right\},\qquad \tilde{b}= \frac{b}{6} +\frac{\cc}{12},
\end{equation}
is a uniformly integrable martingale and
\[
\frac{d\mu_{D}(1,0)}{d\mu_\disk(1,0)}(\gamma_t) = M_t.
\]
Moreover, 
\[
M_\infty := \lim_{t\to \infty} M_t = 1\{\gamma\subset D\}\,\exp\left\{\frac{\cc}{2}\,m_\disk(\gamma,\disk\sm D)\right\}.
\]
\end{proposition}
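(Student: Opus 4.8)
The plan is to follow the usual template for $\sle$ restriction properties, adapted to the radial/interior-point setting: verify by It\^o's formula that $M_t$ is a local martingale for the radial $\sle$ flow, then localize, use Girsanov to identify the tilted law as radial $\sle$ in $D$, and finally read off $M_\infty$.

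\emph{Local martingale property.} Work in the radial parametrization $\log\gg'(0)=at/2$, so that $\gg$ solves the radial Loewner equation with driving term $e^{iU_t}$ and $U_t$ is a standard Brownian motion. I would apply It\^o's formula to the three factors of $M_t$ on $\{t<\tau\}$. For $\Phi'_t(U_t)$ and $\bar\Phi'_t(0)$ one uses that $\bar\varphi_t$ also solves a radial Loewner equation, now driven by the image $\bar\Phi_t(U_t)$ of the tip, with a time change governed by the local behaviour of $\bar\Phi_t$ at the tip; combined with $\bar\Phi'_t(0)=\bar\varphi'_t(0)\,\bar G'(0)\,e^{-at/2}$ this gives the evolution of $\log\Phi'_t(U_t)$ and $\log\bar\Phi'_t(0)$. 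For the loop term I would use the variation formula for the Brownian loop measure under a Loewner flow (as in \cite{greg_loop,greg_annulus}); in the disk this splits into a ``chordal-type'' Schwarzian contribution and a contribution from loops winding around the origin, the latter being handled by (the infinitesimal form of) Lemma \ref{lem2sided1} and combining with the factor $\bar G'(0)^{\tilde b}$ precisely because $\tilde b=b/6+\cc/12$. Only the $\Phi'_t(U_t)^b$ factor carries a $dB_t$-part; collecting the $dt$-terms, the drift of $M_t$ vanishes exactly because of the identities $a=2/\kappa$, $b=(6-\kappa)/(2\kappa)$, $\cc=(6-\kappa)(3\kappa-8)/(2\kappa)$ and $\tilde b=b/6+\cc/12$. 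This drift cancellation, and in particular getting the disk loop-measure variation right, is the main computation.

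\emph{True martingale and identification of the tilted law.} For $\epsilon>0$ let $\sigma_\epsilon$ be the first time $\dist(\gamma_t,K)\le\epsilon$ (or the first time $\gamma_t$ comes within $\epsilon$ of $0$); for $t\le\sigma_\epsilon$ all three factors of $M_t$ are bounded above and below by constants depending only on $\epsilon$ --- here $\Phi'_t(U_t)$ equals the probability that an $h$-process from $0$ to the tip avoids $\gg(K)$ and so lies in $(0,1]$, $\bar\Phi'_t(0)\ge1$ is bounded above by Koebe, and $m_\disk(\gamma_t,K)$ is bounded since the two sets stay at distance $\ge\epsilon$. Hence $M_{t\wedge\sigma_\epsilon}$ is a bounded martingale, and by Girsanov the driving term acquires a drift proportional to $\Phi''_t(U_t)/\Phi'_t(U_t)$; feeding this into the Loewner equations and reparametrizing $\bar\gamma=\bar G(\gamma)$ by its conformal radius seen from $0$, one checks that $\bar\gamma$ becomes radial $\sle$ in $\disk$ from $\bar G(1)$ to $0$, so $\gamma$ is radial $\sle$ in $D$ from $1$ to $0$ stopped at $\sigma_\epsilon$. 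Since $M_0=|\bar G'(1)|^b\,\bar G'(0)^{\tilde b}=\Psi_D(1,0)=\|\mu_D(1,0)\|$ by conformal covariance (and rotational symmetry of $\disk$), this reads $d\mu_D(1,0)=M_t\,d\mu_\disk(1,0)$ on $\mathcal{F}_{t\wedge\sigma_\epsilon}$. Letting $\epsilon\downarrow0$ and using that radial $\sle$ in $D$ a.s. stays in $D$ and is continuous up to $0$, a standard limiting argument gives $\tfrac{d\mu_D(1,0)}{d\mu_\disk(1,0)}(\gamma_t)=M_t$ for all $t$; in particular $M_t$ is a uniformly integrable martingale.

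\emph{Computation of $M_\infty$.} By martingale convergence $M_t\to M_\infty$ a.s. Lemma \ref{lemextra1} gives $\Phi'_t(U_t)\to1$ and $\bar\Phi'_t(0)\to1$, $1\{t<\tau\}\to1\{\gamma\subset D\}$, and $m_\disk(\gamma_t,K)\uparrow m_\disk(\gamma,\disk\sm D)$ by monotone convergence of the loop measure; the limit is finite on $\{\gamma\subset D\}$ because then $\overline\gamma$ and $\overline K$ lie at positive distance (as $D$ agrees with $\disk$ near $1$). Hence $M_\infty=1\{\gamma\subset D\}\exp\{\tfrac\cc2 m_\disk(\gamma,\disk\sm D)\}$. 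I expect the main obstacle to be the It\^o bookkeeping in the first step --- especially the loop-measure variation in the disk, where the winding loops force the use of Lemma \ref{lem2sided1}; the localization and limiting arguments are routine once the local-martingale property and the uniform bounds up to $\sigma_\epsilon$ are in place.
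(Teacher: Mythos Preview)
Your proposal is correct and follows essentially the same route as the paper: an It\^o computation on $\Phi'_t(U_t)^b$ producing a Schwarzian drift, identification of the $dt$-terms with the Brownian loop measure split into zero and nonzero winding loops via Lemma \ref{lem2sided1}, localization by distance of $\gamma_t$ to $K$, Girsanov to recognize the tilted law as radial $\sle$ in $D$, and finally Lemma \ref{lemextra1} together with $\dist(\overline\gamma,\overline K)>0$ on $\{\gamma\subset D\}$ to compute $M_\infty$. The paper's proof carries out exactly these steps, with the only cosmetic difference being the choice of stopping times $\tau_n=\min(n,\inf\{t:\dist(\gamma_t,K)<e^{-n}\})$ in place of your $\sigma_\epsilon$.
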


The proof follows the outline   of 
  boundary perturbation for chordal $\sle$
  with one
extra consideration. 
\begin{itemize}
\item Use stochastic calculus to find an appropriate
local martingale.
\item Identify some of the terms in the martingale as
Brownian loop terms.  For the radial case one separates
the loops of zero winding number from those of
nonzero winding number.
\item  Use Girsanov theorem to see that the process
obtained by tilting by the local martingale is the same
as $\sle$ in the smaller domain and use this to conclude
that the martingale is uniformly integrable.
\end{itemize}
What the proof will show is that radial $\sle$ in $D$
is radial $\sle$ in $\disk$ ``weighted locally by
$\Phi_t'(U_t)^b$''  Indeed the local martingale $M_t$ can be
viewed as $C_t \, \Phi_t'(U_t)^b$ where $C_t$ is the
differentiable process needed to make it a local martingale.
\begin{proof}
Assume $\gamma_t$ is a radial $\sle$ with respect to a probability space $(\Omega,\mathcal{F},\PP)$. 
With respect to this probability space, $U_t$ is a standard Brownian motion. If $t<\tau$, then
 we can see from Lemma \ref{lemback2} that
\begin{equation}\label{eq2sided0.7}
\log\bar\varphi_t'(0) = \frac{a}{2}\int_0^t \Phi'_s(U_s)^2\,ds.
\end{equation}
Note that if $h_t:=\tg_t^{-1}$, then $\Phi_t=\varphi_t\circ G\circ h_t$. Using this, the chain rule and the radial Loewner equation, we can see that 
\begin{align*}
\dot\Phi_t(z) &= \dot{\varphi}_t(G\circ h_t(z)) + \varphi_t'(G\circ h_t(z))\,G'(h_t(z))\,\dot h_t(z),\\
& = \frac{a\Phi_t'(U_t)^2}{2}\cot_2(\Phi_t(z)-U_t^*) + \Phi_t'(z)\,\tg_t'(h_t(z))\,\dot h_t(z),\\
& = \frac{a\Phi_t'(U_t)^2}{2}\cot_2(\Phi_t(z)-U_t^*) - \frac{a\Phi_t'(z)}{2}\cot_2(z-U_t).
\end{align*}
Taking limit as $z\to U_t$ gives 
\begin{equation}\label{eq2sided0.8}
\dot\Phi_t(U_t) = -3a\,\Phi''_t(U_t)/2.
\end{equation}
 Moreover, we can take derivative (with respect to $z$) of the right-hand-side of the equation above and let $z\to U_t$ to get 
\begin{equation}\label{eq2sided0.9}
\dot{\Phi}_t'(U_t) = \frac{a}{2}\left[\frac{\Phi_t''(U_t)^2}{2\Phi'_t(U_t)}-\frac{4\Phi_t'''(U_t)}{3}\right].
\end{equation}
Therefore, an application of the It\^o's formula gives
\begin{equation}\label{eq2sided1}
dU^*_t =d\Phi_t(U_t)= -b\, \Phi''_t(U_t) \,dt + \Phi'_t(U_t)\, dU_t.
\end{equation}
If $\gamma_t$ is a $\sle$ in $D$, then $\bar\gamma_t$ is a (time change of) $\sle$ in $\disk$ and $U^*_t$ is a Brownian motion (with an appropriate time change). Let $Z_t = \Phi_t'(U_t)^b$. Using the It\^o's formula and \eqref{eq2sided0.9}, we get
\[
\frac{dZ_t}{Z_t}= \left[\frac{a\cc}{12}S\Phi_t(U_t) + \frac{ab}{12}(1-\Phi'_t(U_t)^2)\right]dt + b\frac{\Phi''_t(U_t)}{\Phi'_t(U_t)}dU_t.
\]
Here, $S$ denotes the Schwarzian derivative
\[
Sf(z) = \frac{f'''(z)}{f'(z)} -\frac{3f''(z)^2}{2f'(z)^2}.
\]

So far we have done straightforward stochastic calculus.
   Now we view these quantities in terms
of Brownian loops that intersect both $K$ and $\gamma_t$.
We recall  (see \cite{greg_loop} or \cite[Proposition 5.22]{greg_book}), that if $K' \subset \Half$ and $\gamma$ is
a chordal $SLE$ starting at the origin, then the measure
of loops that intersect both $K'$ and $\gamma_t$ is
$-S\Phi (0)at/6 + o(t)$ where $\Phi: \Half \setminus K'
\rightarrow \Half$ is a conformal transformation.
Using this we can see that 
if $K=\disk\sm D$, then
\[
\hat m_\disk(\gamma_t,K) := -\frac{ a}{6}\int_0^t S\Phi_s(U_s)ds
\]
is the Brownian loop measure of loops in $\disk$ that have zero winding number and intersect both $\gamma_t$ and $K$ (having
zero winding number means that it is an image under $\psi$
of a loop in $\Half$). 
By using  Lemma \ref{lem2sided1} and an straightforward inclusion-exclusion argument we can see that the Brownian loop measure of loops with nonzero winding number that intersect both $\gamma_t$ and $K$ is
\[
\frac{1}{6}\,[ \log \bar{G}'(0) - \log \bar\Phi_t'(0)]= \frac{a}{12}\int_0^t(1-\Phi_s'(U_s)^2)ds.
\]
Here for the second equality, we have used \eqref{eq2sided0.7} and the fact  $\bar\Phi_t\circ \gg= \bar\varphi_t\circ \bar G$.
Therefore, for $t<\tau$,
\[
\left[\frac{a\cc}{12}S\Phi_t(U_t) + \frac{ab}{12}(1-\Phi'_t(U_t)^2)\right] = -\frac{\cc}{2}\,m_\disk(\gamma_t,\,K) + \tilde{b}\,[ \log \bar\Phi_t'(0)-\log \bar{G}'(0)]
\]
and
\[
M_t = \Phi'_t(U_t)^b\, \bar{\Phi}'_t(0)^{\tilde{b}}\,\exp\left\{\frac{\cc}{2}\,m_\disk(\gamma_t,K)\right\},\qquad t<\tau,
\]
is a local martingale satisfying 
\[
dM_t = b\frac{\Phi''_t(U_t)}{\Phi'_t(U_t)}\,M_t \, dU_t,\qquad M_0 = G'(0)^b \,\bar{G}'(0)^{\tilde{b}}.
\]
Let $\tau_n = \min(n,\,\inf\{t;\,\dist(\gamma_t,K)<e^{-n}\})$.
It is easy to see that 
for all $n\in \mathbb{Z}^+$, the local martingale $M_{t\wedge\tau_n}$ is uniformly bounded and hence is a martingale.

Let $\hat\PP_n$ be the probability measure obtained from weighting $\PP$ by $M_{\tau_n}/M_0$. With respect to $\hat\PP_n$, equation \eqref{eq2sided1} becomes
\[
dU^*_t = \Phi'_t(U_t)\,  dW_t,\qquad t<\tau_n,
\]
where $W_t$ is a standard Brownian motion. That is, with respect to $\hat\PP_n$ the curve $\gamma_{t\leq \tau_n}$  has the distribution of (a time change of) radial $\sle$ from 1 to 0 in $D$. 
Let $\hat\PP=\hat\PP_\infty$ denote the probability measure obtained from applying the Kolmogorov extension theorem to the consistent measures $\hat\PP_n$. Since $\tau_n\uparrow\tau$, with respect to $\hat\PP$ and for any $t<\tau$ the curve $\gamma_t$ has the distribution of radial $\sle$ in D. Since radial $SLE_{\kappa\leq 4}$ in $D$ stays at a positive distance from $K$ and goes to 0, we have $m_\disk(\gamma_\infty,K)<\infty$. It follows from this and Lemma \ref{lemextra1} that with $\hat\PP$-probability one $M_\tau=M_\infty<\infty$. From this, we get
\[
\frac{d\hat\PP}{d\PP} = \frac{M_\tau}{M_0}
\]
(e.g. see Theorem 5.3.3 in \cite{durrett}).
Since $\kappa\leq 4$,  we have $M_\tau =0$ in case $\tau<\infty$. If $\tau = \infty$, the result follows from  Lemma \ref{lemextra1}. 
\end{proof}

\subsection{Two-sided \texorpdfstring{$\sle$}{LG}}
Suppose $\gamma^1_t,\gamma^2_s$ are two independent radial $\sle$ in $\disk$, as shown in figure \ref{pic1}
each parametrized by its own capacity. Define $\tau = \inf\{t:\,\gamma^1_t\cap\gamma^2_t \neq \emptyset\}$ to be the first time the curves intersect.
Let $t,s<\tau$ and define the following
(see Figure \ref{pic1}). 

\begin{itemize}
\item Let $\gg:\disk\sm\gamma^1_t \to \disk$ be the unique conformal transformation satisfying $\gg(0) = 0,\,\gg'(0)>0$.

\item Let $ \tg_t$ be the conformal transformation that is continuous in $t$ and  
\[
\gg(e^{iz}) = e^{i\tg_t(z)},\qquad \tg_0(z) = z.
\]

\item Let $\bar\gamma^2_{t,s} = \gg(\gamma^2_s)$. 

\item Similarly  to $\gg,\,\tg_t$, define the conformal transformations $\bar{G}_s,\, G_s$ for $\gamma^2_s$ and let $\bar\gamma^1_{t,s}  = G_s(\gamma^1_t)$.  

\item Define $\bar \Phi_{t,s} = e^{i \Phi_{t,s}}$
and $ \bar  \phi_{t,s} = e^{i \phi_{t,s}}$ as in the 
figure and let
\[    \bar g_{t,s} = \bar \Phi_{t,s} \circ \bar g_t
   = \bar \phi_{t,s} \circ \bar G_s.\]

\item  We assume $\gamma_t^1,\gamma_t^2$ each
have independent radial parametrizations so that 
 $\log \gg'(0) = at/2,$ $\,\log \bar{G}'_s(0) = as/2$.
 
\item Let $U_t,X_s,U_{t,s}^*,X_{t,s}^*$ be as indicated by
Figure \ref{pic1}.  Under this parametrization, $U_t,\,X_s$ are independent standard Brownian motions with respect to a probability space $(\Omega,\mathcal{F},\PP)$. 

\item We will write $\Phi_t,\,\varphi_t,\,U^*_t,\,X^*_t,\,Z_t,\,\text{etc.}$ for $\Phi_{t,t},\,\varphi_{t,t},\,U^*_{t,t},\,X^*_{t,t},\,Z_{t,t},\,\text{etc.}$.

\item Let 
\begin{equation}\label{eq2sided1.5}
Z_{t,s} =  \Phi'_{t,s}(U_t)^b\,\varphi'_{t,s}(X_s)^b,
\end{equation}
\begin{equation}\label{fundiego}
\Theta_t = X^*_t - U^*_t.
\end{equation}

\end{itemize}

\begin{figure}
  \centering
    \includegraphics[width=0.9\textwidth]{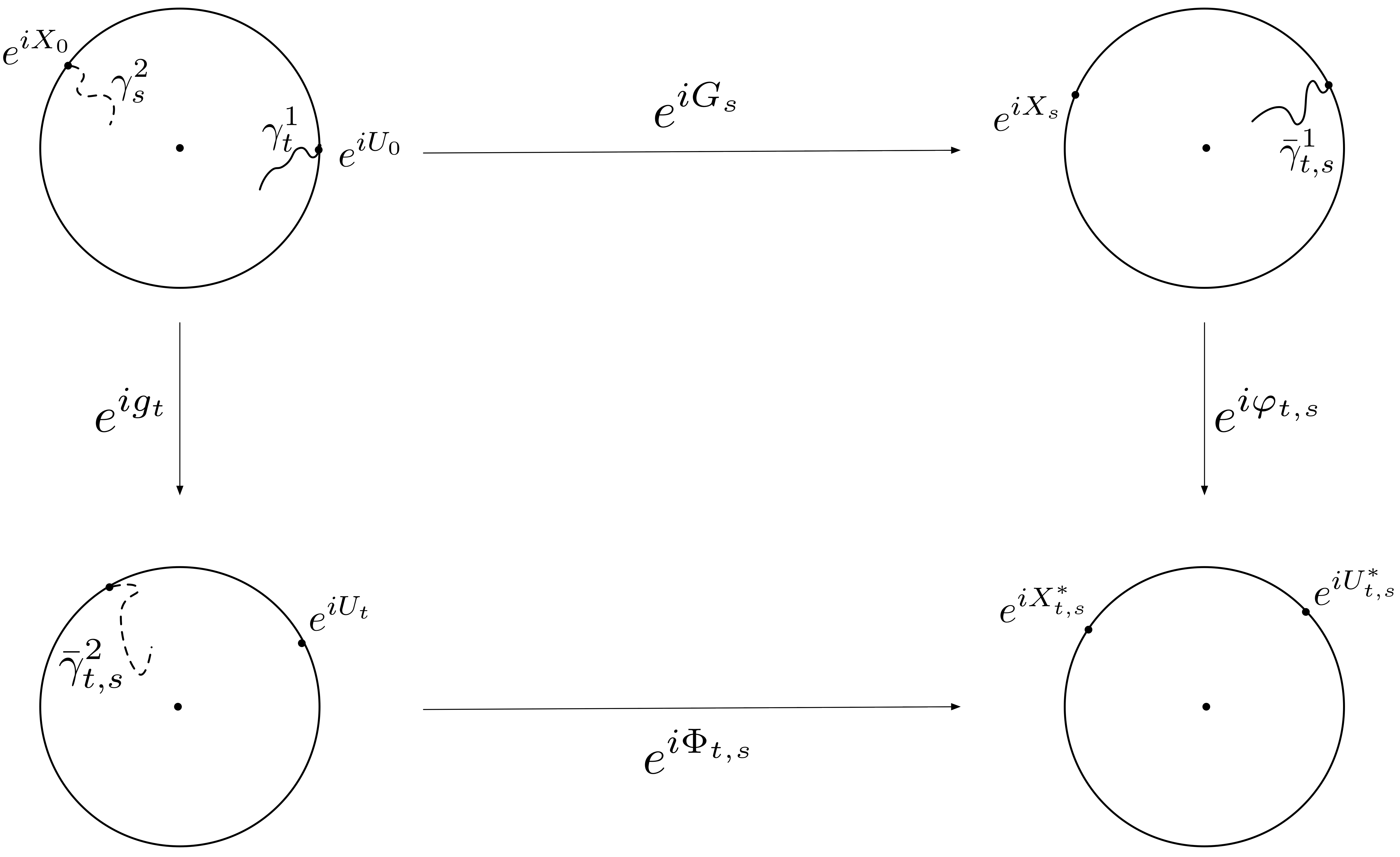}
    \caption{Two-sided $\sle$}
    \label{pic1}
\end{figure}

We will now consider a process that can be called
``locally independent $\sle$''.  It is the process such
that given $\gamma^1_t$ and $\gamma^2_s$, the
paths are moving like independent radial $\sle$ paths
in $\Disk \setminus (\gamma^1_t \cup \gamma^2_s)$. 
In analogy to the boundary perturbation case, we can
describe the process as independent $\sle$ paths ``weighted
locally by $Z_{t,s}$''.  To make this precise, we first find an appropriate compensator to make this a local martingale.

\begin{lemma}\label{lem2sided2}
The Brownian loop measure of loops in $\disk$ that intersect both $\gamma^1_t$ and $\gamma^2_s$ and have nonzero winding number is
\[
\frac{\log\bar G_s'(0)-\log \bar \Phi'_{t,s}(0)}{6} = \frac{\log\gg'(0)-\log \bar \varphi'_{t,s}(0)}{6}.
\]
\end{lemma}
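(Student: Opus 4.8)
The plan is to reduce everything to the single-curve statement of Lemma \ref{lem2sided1} by inclusion--exclusion, and then to pass between the two expressions in the conclusion using only the chain rule at the origin for $\bar g_{t,s}=\bar\Phi_{t,s}\circ\gg=\bar\varphi_{t,s}\circ\bar G_s$.

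For a set $A$ write $m^*(A)$ for the Brownian loop measure of loops in $\disk$ that have nonzero winding number about the origin and meet $A$. For $t,s<\tau$ the arcs $\gamma^1_t$ and $\gamma^2_s$ are disjoint simple curves running from distinct boundary points of $\disk$ to interior points, neither reaching $0$, so $\disk\sm\gamma^1_t$, $\disk\sm\gamma^2_s$ and $\disk\sm(\gamma^1_t\cup\gamma^2_s)$ are simply connected domains containing $0$ whose normalized uniformizing maps onto $\disk$ are $\gg$, $\bar G_s$ and $\bar g_{t,s}$ respectively. Applying Lemma \ref{lem2sided1} to each of these three domains — the distinguished point $1$ there plays no role, since $m^*$ and the derivative at $0$ are rotation invariant and each of the three domains agrees with $\disk$ in a neighborhood of some boundary point — gives the finite quantities
\[
m^*(\gamma^1_t)=\tfrac16\log\gg'(0),\qquad m^*(\gamma^2_s)=\tfrac16\log\bar G_s'(0),\qquad m^*(\gamma^1_t\cup\gamma^2_s)=\tfrac16\log\bar g_{t,s}'(0).
\]

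Since meeting $\gamma^1_t\cup\gamma^2_s$ is the same as meeting $\gamma^1_t$ or $\gamma^2_s$, and the first two masses above are finite, inclusion--exclusion gives
\[
m^*\big(\{\text{loops meeting both }\gamma^1_t\text{ and }\gamma^2_s\}\big)=\tfrac16\big[\log\gg'(0)+\log\bar G_s'(0)-\log\bar g_{t,s}'(0)\big].
\]
Now the chain rule at the origin, applied to $\bar g_{t,s}=\bar\Phi_{t,s}\circ\gg=\bar\varphi_{t,s}\circ\bar G_s$, gives $\log\bar g_{t,s}'(0)=\log\bar\Phi_{t,s}'(0)+\log\gg'(0)=\log\bar\varphi_{t,s}'(0)+\log\bar G_s'(0)$; substituting the first representation into the previous display produces $\tfrac16[\log\bar G_s'(0)-\log\bar\Phi_{t,s}'(0)]$ and the second produces $\tfrac16[\log\gg'(0)-\log\bar\varphi_{t,s}'(0)]$, which is the claim.

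The only point requiring care is the use of Lemma \ref{lem2sided1} for domains whose complement is a boundary-touching slit rather than a compact subset of $\disk$, which is handled by the rotation invariance noted above. Alternatively one can avoid the domain $\disk\sm(\gamma^1_t\cup\gamma^2_s)$ altogether: a loop meeting $\gamma^2_s$ but not $\gamma^1_t$ lies in $\disk\sm\gamma^1_t$, and its image under $\gg$ has the same winding number about $0$ since $\gg$ fixes $0$; hence by conformal invariance of the loop measure and Lemma \ref{lem2sided1} applied to $\disk\sm\bar\gamma^2_{t,s}$, whose normalized uniformizing map is $\bar\Phi_{t,s}$, the $m^*$-mass of such loops is $\tfrac16\log\bar\Phi_{t,s}'(0)$, whence $m^*(\text{both})=\tfrac16[\log\bar G_s'(0)-\log\bar\Phi_{t,s}'(0)]$ directly.
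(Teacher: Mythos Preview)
Your proof is correct and follows exactly the approach the paper indicates: apply Lemma \ref{lem2sided1} and use inclusion--exclusion, with the chain rule for $\bar g_{t,s}=\bar\Phi_{t,s}\circ\gg=\bar\varphi_{t,s}\circ\bar G_s$ at the origin supplying the equality of the two expressions. The alternative argument you give at the end, pushing loops forward by $\gg$ and applying Lemma \ref{lem2sided1} directly to $\disk\sm\bar\gamma^2_{t,s}$, is in fact the same computation that appears in the proof of Proposition \ref{prop2sided1}.
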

\begin{proof}
This follows from Lemma \ref{lem2sided1} and an easy inclusion-exclusion argument.
\end{proof}
\begin{lemma}
Assume $t<\tau$, $U_t,X_t$ are independent standard
Brownian motions, and let $K_t = \Phi'_t(U_t)^2 + \varphi'_t(X_t)^2$. Then
\[
M_t = Z_t \bar{\Phi}'_t(0)^{\tilde{b}}e^{-\tilde{b}at/2} \exp\left\{\frac{\cc}{2}m_\disk(\gamma^1_t,\gamma^2_t) + \frac{ab}{4}\int_0^t\frac{K_r}{\sin_2(\Theta_r)^2} dr\right\},
\]
is a local martingale satisfying 
\[
dM_t = bM_t\left[\frac{\Phi''_t(U_t)}{\Phi_t'(U_t)}dU_t +\frac{\varphi''_t(X_t)}{\varphi'_t(X_t)}dX_t\right].
\]
Here, $\tilde{b}$ is the interior scaling exponent defined in \eqref{constants}.
\end{lemma}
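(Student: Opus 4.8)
The plan is to carry out the two‑parameter analogue of the proof of Proposition~\ref{prop2sided1}: track how the ``boundary distortion'' factors $\Phi_t'(U_t)$ and $\varphi_t'(X_t)$ evolve under the joint growth of $\gamma^1$ and $\gamma^2$, compute the drift of $Z_t$ via It\^o, and absorb each drift term into one of the finite‑variation factors $\bar\Phi_t'(0)^{\tilde b}$, $e^{-\tilde b at/2}$, $\exp\{\tfrac\cc2 m_\disk(\gamma^1_t,\gamma^2_t)\}$ and $\exp\{\tfrac{ab}4\int_0^t K_r\,\sin_2(\Theta_r)^{-2}\,dr\}$, so that everything cancels.

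First I would write down the coupled Loewner equations for the additive‑coordinate (covering‑space) maps. Writing $\Phi_{t,s},\varphi_{t,s}$ for the lifts of $\bar\Phi_{t,s},\bar\phi_{t,s}$, so that $\Phi_{t,s}=\varphi_{t,s}\circ G_s\circ\tg_t^{-1}$, the chain rule applied to the radial Loewner equations for $\tg_t$ (driven by $U_t$) and $G_s$ (driven by $X_s$), together with the capacity bookkeeping of Lemma~\ref{lemback2} applied to the factorisation $\bar g_{t,s}=\bar\Phi_{t,s}\circ\bar g_t=\bar\phi_{t,s}\circ\bar G_s$, gives
\[
\partial_t\Phi_{t,s}(z)=\tfrac a2\,\Phi_{t,s}'(U_t)^2\,\cot_2(\Phi_{t,s}(z)-U^*_{t,s})-\tfrac a2\,\Phi_{t,s}'(z)\,\cot_2(z-U_t),
\]
\[
\partial_s\Phi_{t,s}(z)=\tfrac a2\,\varphi_{t,s}'(X_s)^2\,\cot_2(\Phi_{t,s}(z)-X^*_{t,s}),
\]
and the symmetric pair for $\varphi_{t,s}$. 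The first equation is exactly the one behind Proposition~\ref{prop2sided1}, so letting $z\to U_t$ and differentiating in $z$ reproduces $\dot\Phi_t(U_t)=-\tfrac{3a}{2}\Phi_t''(U_t)$ and the analogues of \eqref{eq2sided0.8}--\eqref{eq2sided0.9}. Since $\cot_2'(w)=-\tfrac12\sin(w/2)^{-2}$, evaluating the second equation at $z=U_t$, $s=t$ produces the new cross‑contribution $-\tfrac a4\,\varphi_t'(X_t)^2\sin_2(\Theta_t)^{-2}$ to the drift of $\log\Phi_t'(U_t)$, and by symmetry $-\tfrac a4\,\Phi_t'(U_t)^2\sin_2(\Theta_t)^{-2}$ to that of $\log\varphi_t'(X_t)$.

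Next, since $U_t$ and $X_t$ are independent standard Brownian motions there is no cross‑variation, so It\^o's formula applied to $Z_t=\Phi_t'(U_t)^b\,\varphi_t'(X_t)^b$ yields
\[
\frac{dZ_t}{Z_t}=\Big[\tfrac{a\cc}{12}\big(S\Phi_t(U_t)+S\varphi_t(X_t)\big)+\tfrac{ab}{12}\big(2-K_t\big)-\tfrac{ab}{4}\,\frac{K_t}{\sin_2(\Theta_t)^2}\Big]dt+b\,\frac{\Phi_t''(U_t)}{\Phi_t'(U_t)}\,dU_t+b\,\frac{\varphi_t''(X_t)}{\varphi_t'(X_t)}\,dX_t,
\]
the first two bracketed terms being two copies of the single‑curve drift from Proposition~\ref{prop2sided1} and the third the cross term just obtained. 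It then remains to identify the loop terms. Splitting $m_\disk(\gamma^1_t,\gamma^2_t)$ into its zero‑ and nonzero‑winding parts, Proposition~5.22 of \cite{greg_book} (used as in Proposition~\ref{prop2sided1}, once for each advancing curve) gives $\partial_t\hat m_\disk(\gamma^1_t,\gamma^2_t)=-\tfrac a6(S\Phi_t(U_t)+S\varphi_t(X_t))$, while Lemma~\ref{lem2sided2} identifies the nonzero‑winding part with $\tfrac16[\tfrac{at}{2}-\log\bar\Phi_t'(0)]$; using that each advancing curve adds combined radial capacity at rate $\tfrac a2$ times the squared distortion at its driving point, i.e.\ $\partial_t\log\bar g_{t,t}'(0)=\tfrac a2 K_t$ and hence $\partial_t\log\bar\Phi_t'(0)=\tfrac a2(K_t-1)$, one gets $\partial_t m_\disk(\gamma^1_t,\gamma^2_t)=-\tfrac a6(S\Phi_t(U_t)+S\varphi_t(X_t))+\tfrac a6-\tfrac a{12}K_t$. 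Assembling $dM_t/M_t=dZ_t/Z_t+\partial_t\log F_t\,dt$ with $F_t=\bar\Phi_t'(0)^{\tilde b}e^{-\tilde b at/2}\exp\{\tfrac\cc2 m_\disk+\tfrac{ab}4\int_0^tK_r\sin_2(\Theta_r)^{-2}dr\}$, the $\tfrac{ab}4K_t\sin_2(\Theta_t)^{-2}$ terms cancel, the Schwarzian terms cancel against $\tfrac\cc2\partial_t m_\disk$, and the leftover $K_t$‑ and constant terms cancel precisely because $\tilde b=\tfrac b6+\tfrac\cc{12}$, which is the value in \eqref{constants}. Hence the drift of $M_t$ vanishes, so $M_t$ is a continuous local martingale, and its martingale part is the one inherited from $Z_t$, giving $dM_t=bM_t\big[\tfrac{\Phi_t''(U_t)}{\Phi_t'(U_t)}dU_t+\tfrac{\varphi_t''(X_t)}{\varphi_t'(X_t)}dX_t\big]$.

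The main obstacle is the two‑parameter bookkeeping: justifying the $\partial_s$ Loewner equation for $\Phi_{t,s}$ with the correct speed $\tfrac a2\varphi_{t,s}'(X_s)^2$, and the combined‑capacity identity $\partial_t\log\bar g_{t,t}'(0)=\tfrac a2 K_t$. Both rest on the factorisation $\bar g_{t,s}=\bar\Phi_{t,s}\circ\bar g_t=\bar\phi_{t,s}\circ\bar G_s$ and on the rotation ambiguity of the uniformising maps being pinned down consistently, exactly as in the setup of Figure~\ref{pic1} and Proposition~\ref{proplow}. One must also check that the simultaneous advance of both curves introduces no extra first‑order term in the loop expansion --- loops meeting the freshly grown pieces of \emph{both} $\gamma^1$ and $\gamma^2$ contribute only $o(dt)$ --- and that the Schwarzian quantities produced by Proposition~5.22 are literally the ones appearing in the It\^o expansion, which holds because the Schwarzian is invariant under the M\"obius pre‑ and post‑composition relating the disk and half‑plane pictures. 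Uniform integrability of $M_t$ and the identification of its limit, the two‑path counterpart of the second half of Proposition~\ref{prop2sided1}, would be treated separately in the results that follow.
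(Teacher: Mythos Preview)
Your proposal is correct and follows essentially the same route as the paper's proof: the two--parameter Loewner formulas for $\Phi_{t,s}$ and $\varphi_{t,s}$, the resulting It\^o expansion of $Z_t$, and the identification of the Schwarzian and $(1-\Phi'^2)$ terms with the zero-- and nonzero--winding parts of $m_\disk(\gamma^1_t,\gamma^2_t)$ via Proposition~5.22 of \cite{greg_book} and Lemma~\ref{lem2sided2}. The paper organises the cancellation in integrated form (identifying $\tilde m_\disk=\frac{a}{12}\int_0^t(2-K_r)\,dr$ directly and then invoking $ab/12=a\tilde b/2-a\cc/24$), whereas you work differentially via the capacity identity $\partial_t\log\bar g_{t,t}'(0)=\tfrac a2 K_t$; these are equivalent, since the latter is exactly \eqref{eq2sided0.7} applied once to each curve. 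Two small remarks: when you say ``evaluating the second equation at $z=U_t$'' you really mean differentiating it in $z$ first and then setting $z=U_t$ (your stated cross term is correct); and the reference to Proposition~\ref{proplow} for pinning down the rotation is misplaced --- that is the annulus Loewner equation, whereas here the normalisation is simply $\bar g_{t,s}(0)=0$, $\bar g_{t,s}'(0)>0$.
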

\begin{proof}
The It\^o's formula and similar calculations as in the last section give
\[
\frac{d{\Phi}'_{t,s}(U_t)^b}{{\Phi}'_{t,s}(U_t)^b} = \left[\frac{a\cc}{12} S\Phi_{t,s}(U_t) + \frac{ab}{12}(1-\Phi_{t,s}'(U_t)^2)\right]dt -\frac{ab\,\varphi'_{t,s}(X_s)^2}{4\sin_2(U_{t,s}^*-X_{t,s}^*)^2} ds + b\frac{\Phi''_{t,s}(U_t)}{\Phi'_{t,s}(U_t)}dU_t.
\]
Here, $\sin_2(x) = \sin(|x|/2)$.
We can derive a similar formula for $\varphi'_{t,s}(X_s)^b$. Using the two formulas and the
independence of $U_t,X_t$, we have
\begin{align*}
\frac{dZ_t}{Z_t} &= \left[\frac{a\cc}{12} S\,\Phi_t(U_t) + \frac{ab}{12}(1-\Phi_t'(U_t)^2)-\frac{ab\,\Phi'_t(U_t)^2}{4\sin_2(\Theta_t)^2}\right] dt\\
&\; + \left[\frac{a\cc}{12} S\,\varphi_t(X_t) + \frac{ab}{12}(1-\varphi_t'(X_t)^2)-\frac{ab\,\varphi'_t(X_t)^2}{4\sin_2(\Theta_t)^2}\right] dt \\
&\; + b\frac{\Phi''_t(U_t)}{\Phi'_t(U_t)}dU_t + b\frac{\varphi''_t(X_t)}{\varphi'_t(X_t)}dX_t.
\end{align*}
Here, we are also using the fact that $\Phi'_{t,s}(U_t),\,\varphi'_{t,s}(X_s)$ are $C^1$ in $t,s$.
 Note that
\[
m_1(t):=\frac{-a}{6}\int_0^t S\,\Phi_{r}'(U_r)dr
\]
is the Brownian loop measure of loops $l$ in $\disk$ that have the following properties:
\begin{itemize}
\item Winding number of $l$ is zero.
\item $l$ intersects  both $\gamma_t^1$ and $\gamma_t^2$.
\item If $T\leq t$ is the first time $l$ intersects $\gamma^1_t$, then $l\cap \gamma^2_{T}\neq \emptyset$.
\end{itemize}
The term
\[
m_2(t):=-\frac{ a}{6}\int_0^t S\varphi_{r} (X_r)dr 
\]
has a similar interpretation for $\gamma^2_t$. Hence, $\hat m_\disk(\gamma^1_t,\gamma^2_t):=m_1(t)+m_2(t)$ is the Brownian loop measure of loops in $\disk$ that have zero winding number and intersect $\gamma^1_t,\gamma^2_t$ (we are using the fact that measure of the loops hitting $\gamma^1_t,\gamma^2_t$ at the same time is zero). Moreover, 
\[
\frac{a}{12}\int_0^t(1-\Phi'_r(U_r)^2) dr
\]
is the Brownian loop measure of loops $l$ in $\disk$ with the following properties:
\begin{itemize}
\item $l$ has nonzero winding number.
\item $l$ intersects  both $\gamma_t^1$ and $\gamma_t^2$.
\item  $l$ intersects $\gamma^1$ first, that is,
if $s\leq t$ is the smallest time with $\gamma^1(s)
 \in l$, then  $l\cap \gamma^2_{s}\neq \emptyset$.
\end{itemize}
To see this, let $\tilde m_1(t)$ be the measure of loops having the properties above. Using Lemma \ref{lem2sided2} and  \eqref{eq2sided0.7},  the Brownian loop measure of loops in $\disk\sm \gamma^1_t$ that intersect both $\gamma^2_t$ and $\gamma^1(t,t+\epsilon)$ is
\[
\frac{a}{12}\left[\epsilon - \int_0^\epsilon \Phi'_{t+r,t}(U_{t+r})^2dr\right].
\]
Moreover, the Brownian measure of loops in $\disk\sm \{\gamma_t^1\cup \gamma_t^2\}$ that intersect both $\gamma^1(t,t+\epsilon)$ and $\gamma^2(t,t+\epsilon)$ is $O(\epsilon^2)$. Therefore,
\[
\partial_t \tilde m_1(t) = \frac{a(1 - \Phi'_t(U_t)^2)}{12},
\]
and by integrating the claim follows.  
Using a similar argument for $\gamma^2_t$, we can see that
\begin{equation}\label{eq2sided2}
\tilde{m}_\disk(\gamma^1_t,\gamma^2_t)=\frac{a}{12}\left[\int_0^t(1-\Phi'_r(U_r)^2) dr + \int_0^t(1-\varphi'_r(X_r)^2)dr\right]
\end{equation}
is the Brownian loop measure of loops that intersect both $\gamma^1_t$ and $\gamma^2_t$ and have nonzero winding number.
Note that $ab/12 = a\tilde{b}/2 - a\cc/24$. It follows from \eqref{eq2sided2} and Lemma \ref{lem2sided2} that
\[
\frac{ab}{12}\left[\int_0^t\left(2-\Phi'_r(U_r)^2 -\varphi'_r(X_r)^2\right)dr\right] = -\frac{\cc}{2}\tilde{m}_\disk(\gamma^1_t,\gamma^2_t)+\frac{\tilde b at}{2} - \tilde{b}\,\bar\Phi'_t(0).
\]
Therefore,
\[
M_t = Z_t\bar{\Phi}'_t(0)^{\tilde{b}}e^{-\tilde{b}at/2} \exp\left\{\frac{\cc}{2}m_\disk(\gamma^1_t,\gamma^2_t)+\frac{ab}{4}\int_0^t\frac{K_r}{\sin_2(\Theta_r)^2} dr\right\}
\]
is a local martingale satisfying the claim. 
\end{proof}


Let
\[
\tau_n = \inf\{t;\,\dist(\gamma^1_t,\gamma^2_t)\leq e^{-n})\}
\]
and recall that $\tau = \tau_\infty$.
Although $M_{t\wedge \tau}$ is only a supermartingale (positive local martingale), one can see that $M_{t\wedge \tau_n}$ is actually a martingale. To see this, note that $Z_t\leq 1,\,K_t\leq 2,\,\bar\Phi_t'(0)\leq e^{at/2}$ and $m_\disk(\gamma^1_t,\gamma^2_t)$ is uniformly bounded for all $t\leq \tau_n$. 
Let $\PP^*$ be the probability measure obtained from weighting $\PP$ by $M_{t\wedge \tau_n}$. 
Using the radial Loewner equation and equations \eqref{eq2sided0.7}, \eqref{eq2sided0.8}, we can see that
\begin{align*}
dU^*_{t,s} &= -b\,\Phi''_{t,s}(U_t) \,dt+ \frac{a}{2}\,\varphi'_{t,s}(X_s)^2\cot_2\left({U_{t,s}^*-X_{t,s}^*}\right) ds + \Phi'_{t,s}(U_t) \,dU_t,\\
dX^*_{t,s} &=   -b\,\varphi''_{t,s}(X_s)\,ds + \frac{a}{2}\Phi'_{t,s}(U_t)^2\cot_2\left({X_{t,s}^*-U_{t,s}^*}\right)\,ds + \varphi'_{t,s}(X_s) dX_s.
\end{align*}
Using the Girsanov's theorem and the It\^o's formula, we have
\[
d\Theta_t = \frac{a}{2}K_t\cot_2(\Theta_t)dt+\sqrt{K_t}dW_t,
\]
where $W_t$ is a Brownian motion with respect to $\PP^*$. By comparing to a radial Bessel process, we can see that with $\PP^*$-probability one, $\sin_2(\Theta_t)>0$ for all times $t<\infty$.
 Therefore,  $M_{t\wedge \tau_n}<\infty$ with $\PP^*$-probability one and the claim follows.

The curves $\gamma^1_t,\gamma^2_t$ have interesting distributions under the measure $\PP^*$. 
Fix $t>0$ and assume $\epsilon>0$ is small. With respect to measure $\PP$, the curve $\gamma^1(t,t+\epsilon)$ grows like radial $\sle$ from $\gamma^1(t)$ to $0$ in $\disk\sm\gamma^1_t$. 
Equivalently, $\gg(\gamma^1(t,t+\epsilon))$  has the distribution of radial $\sle$ from $U_t$ to 0 in $\disk$. 
According to Proposition \ref{prop2sided1}, weighing this process by $\Phi'_t(U_t)^b$ yields a radial $\sle$ from $\gamma^1(t)$ to 0 in $\disk\sm \{\gamma^2_t\cup\gamma^1_t\}$. 
Similarly, weighing $\gamma^2(t,t+\epsilon)$ by $\varphi_t'(X_t)^2$ gives radial $\sle$ from $\gamma^2(t)$ to 0 in $\disk\sm \{\gamma^2_t\cup\gamma^1_t\}$. 
Therefore under the probability measure $\PP^*$, at each time $t$ the curves $\gamma^1_t,\gamma^2_t$ grow like independent radial $\sle$ in $\disk\sm \{\gamma^2_t\cup\gamma^1_t\}$. 
We can call this process {\em locally independent
$\sle$}.

The calculation above was straightforward but a little complicated. It is easier to view locally independent
$\sle$ in a slightly different
parametrization.
  Let us suppose   that $(\gamma^1_t,\gamma^2_t)$ are curves as above but with a different
parametrization that is absolutely continuous with
respect to the individual capacities.  We replace 
the sixth bullet with
\begin{itemize}
\item  We assume that the parametrization is such that
 $\log\bar g_{t,t}'(0) = at$ and
\[      \log\bar g_{t+\delta,t}'(0) = at + \frac{a\delta}{2} + o(\delta), \;\;\; \delta \downarrow 0.\]
That is, each curve increases its capacity at rate $a/2$ where
the capacity is measured in $\Disk \setminus (\gamma_t^1 \cup
\gamma_t^2)$. 
Then in this new parametrization,
\[    \p_t g_{t,t}(z) = \frac{a}{2} \, \cot_2( g_{t,t}(z)
 - U_t^*) +  \frac{a}{2} \,\cot_2(g_{t,t}(z) - X_t^*) .
 \]
where
\[     dU_t^* = \frac{a}{2} \, \cot_2(U_t^*
  - X_t^*) \, dt + dB_t^1,\]
  \[  dX_t^* = 
  \frac{a}{2} \, \cot_2(X_t^*
  - U_t^*) \, dt + dB_t^2. \]  
We note that the above equations are the same as one would
get if one started with independent Brownian motions
$B_t^1,B_t^2$ and then tilted by the local martingale
$  N_t =  C_t\, |\sin_2(X_t- U_t)|^a$ where $C_t$
is a $C^1$ compensator.

\end{itemize}

Before taking our next steps, we briefly recall the definition of two-sided $\sle$. 
Roughly speaking two-sided $\sle$ from $\ubar=e^{iU_0}$ to $\xx=e^{iX_0}$ is chordal $\sle$ conditioned to go through the origin. 
  it can be defined precisely as chordal $\sle$ weighted by the Green's function, which is proportional to $\sin_2(g_t(X_0)-U_t)^{4a-1}$. Equivalently, it can be considered as  radial $\sle$ from $\ubar$ to 0 weighted by $\sin_2(g_t(X_0)-U_t)^a$. 
After reaching the origin (say at time $T$), the rest of the process has the distribution of chordal $\sle$ from $0$ to $\xx$ in $\disk\sm\gamma_T$ (see \cite{greg_twosided} for more details). Straightforward calculation using the It\^o's formula show that 
\begin{equation}\label{eq2sided0}
\sin_2\left({g_t(X_0) - U_t}\right)^a g'_t(X_0)^b e^{3a^2t/8}
\end{equation} 
is a martingale and tilting radial $\sle$ by this martingale gives two-sided $\sle$.

 We consider weighting the measure $\PP^*$ by $\sin_2(\Theta_t)^a$. Straightforward calculation using the It\^o's formula shows  that 
\[
N_t = \sin_2(\Theta_t)^a\exp\left\{\frac{3a^2}{8}\int_0^tK_r\,dr-\frac{ab}{4}\int_0^t\frac{K_r}{\sin_2 (\Theta_r)^2} dr\right\}
\]
is a local martingale satisfying
\[
dN_t = \frac{a}{2}N_t \cot_2\left({\Theta_t}\right)\sqrt{K_t}\,dW_t.
\]
Since $K_r\leq 2$ for any $r$, $N_{t\wedge \tau}$ is actually a martingale.
Let $\hat\PP$ be the probability measure obtained from weighting $\PP^*$ by $N_t$. Equivalently, $\hat\PP$ is the probability measure obtained from weighting $\PP$ by $O_t:=M_tN_t$. 
Using Lemma \ref{lem2sided2} and equation \eqref{eq2sided2}, we can see that
\begin{equation}\label{eq2sided3}
O_t = Z_t\sin_2(\Theta_t)^a\,\bar\Phi'_t(0)^{(\tilde{b}+3a/4)}\exp\left\{\frac{\cc}{2}m_\disk(\gamma^1_t,\gamma^2_t)+ \frac{3a^2t}{8}-\frac{a\tilde{b}t}{2}\right\}.
\end{equation}
Using the Girsanov's theorem, we can see that there exists a Brownian motion $ B_t$ such that with respect to the measure $\hat \PP$,
\[
d\Theta_t = aK_t \cot_2\left({\Theta_t}\right)dt + \sqrt{K_t}\,dB_t.
\]

\begin{proposition}\label{prop2sided3}
Let $\bar u = e^{iU_0},\, \xx = e^{iX_0}$ and define the measure $\nu_t(\bar u, \xx)$ with
\[
\frac{d\,\nu_t(\bar u, \xx)}{d \,\mu_\disk(\bar u,0)\times \mu_\disk( \xx,0)}(\gamma^1_t,\gamma^2_t) = O_t\,1\{t<\tau\},
\]
where $O_t$ is defined in \eqref{eq2sided3}.
Then with respect to the measure $\nu_t(\ubar,\xx)$,
\begin{itemize}
\item Marginal distribution of $\gamma^1_t$ is two-sided $\sle$ from $\ubar$ to $\xx$.
\item Marginal distribution of $\gamma^2_t$ is two-sided $\sle$ from $\xx$ to $\ubar$.
\item Conditional on $\gamma^1_t$, the process $\gamma^2_t$ has the distribution of two-sided $\sle$ from $\xx$ to $\gamma^1(t)$ in $\disk\sm\gamma^1_t$. 
\item Conditional on $\gamma^2_t$, the process $\gamma^1_t$ has the distribution of two-sided $\sle$ from $\ubar$ to $\gamma^2(t)$ in $\disk\sm\gamma^2_t$. 
\end{itemize}
\end{proposition}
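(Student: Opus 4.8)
The plan is to reduce the four assertions to two using a symmetry of $O_t$, and then to identify the relevant Radon--Nikodym derivatives by combining the boundary perturbation property for radial $\sle$ (Proposition \ref{prop2sided1}) with the description of two-sided $\sle$ recalled around \eqref{eq2sided0}. First I would observe that $O_t\,1\{t<\tau\}$ is invariant under interchanging the data $(\gamma^1_t,U_t)$ and $(\gamma^2_t,X_t)$: the factors $Z_t$, $\sin_2(\Theta_t)^a$, $m_\disk(\gamma^1_t,\gamma^2_t)$ and the deterministic exponential in \eqref{eq2sided3} are manifestly symmetric, and the choice of the two independent radial parametrizations $\log\bar g_t'(0)=\log\bar G_t'(0)=at/2$ forces
\[
\log\bar\Phi'_t(0)=\log\bar g_{t,t}'(0)-\log\bar g_t'(0)=\log\bar g_{t,t}'(0)-\log\bar G_t'(0)=\log\bar\phi'_t(0),
\]
so $\bar\Phi'_t(0)^{\tilde b+3a/4}$ is symmetric as well. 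Hence the first and second bullets are exchanged by this symmetry, as are the third and fourth, and it is enough to prove the conditional statement (fourth bullet) and the marginal statement (first bullet).

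For the conditional law of $\gamma^1_t$ given $\gamma^2_t$, I would fix a realization of $\gamma^2_t$ with $t<\tau$ and read $O_t\,1\{t<\tau\}$ as a density in the variable $\gamma^1_t$ against $\mu_\disk(\ubar,0)$, splitting it into two factors. The first factor collects $\Phi'_t(U_t)^b$, $\bar\Phi'_t(0)^{\tilde b}$ and $\exp\{\tfrac{\cc}{2}m_\disk(\gamma^1_t,\gamma^2_t)\}$; by Proposition \ref{prop2sided1} with $D=\disk\sm\gamma^2_t$ (which contains $0$ and, a.s., agrees with $\disk$ near $\ubar$) this factor is $d\mu_{\disk\sm\gamma^2_t}(\ubar,0)/d\mu_\disk(\ubar,0)$, so it turns $\gamma^1_t$ into radial $\sle$ from $\ubar$ to $0$ in $\disk\sm\gamma^2_t$. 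The remaining factor is $\varphi'_t(X_t)^b\,\sin_2(\Theta_t)^a\,\bar\Phi'_t(0)^{3a/4}\exp\{\tfrac{3a^2t}{8}-\tfrac{a\tilde b t}{2}\}$; mapping $\disk\sm\gamma^2_t$ onto $\disk$ by $\bar G_t$ (with $\gamma^2(t)\mapsto e^{iX_t}$ and $0\mapsto 0$) and using the conformal covariance of the Poisson kernel together with the behaviour of the loop terms under this map, I would check that this remaining factor is exactly the two-sided weight \eqref{eq2sided0}, up to a constant depending only on $\gamma^2_t$. By the definition of two-sided $\sle$ this identifies the conditional law of $\gamma^1_t$ given $\gamma^2_t$ as two-sided $\sle$ from $\ubar$ to $\gamma^2(t)$ in $\disk\sm\gamma^2_t$, which is the fourth bullet; the third follows by the symmetry.

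For the marginal law of $\gamma^1_t$ I would integrate out $\gamma^2$: the marginal of $\gamma^1_t$ under $\nu_t$ has density $\gamma^1_t\mapsto\EE[\,O_t\,1\{t<\tau\}\mid\gamma^1_t\,]$ against $\mu_\disk(\ubar,0)$, the expectation being over the independent curve $\gamma^2\sim\mu_\disk(\xx,0)$. Applying the factorization of the previous step with the two curves interchanged, for fixed $\gamma^1_t$ the density $O_t\,1\{t<\tau\}$ equals $c_t(\gamma^1_t)$ times the Radon--Nikodym derivative, against $\mu_\disk(\xx,0)$, that turns $\gamma^2_t$ into two-sided $\sle$ from $\xx$ to $\gamma^1(t)$ in $\disk\sm\gamma^1_t$. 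Since the two-sided weight \eqref{eq2sided0} is a (local) martingale in the parameter of the weighted radial curve, the $\mu_\disk(\xx,0)$-integral of that Radon--Nikodym derivative equals its value at $\gamma^2_t=\eset$, an explicit expression in the images $\bar g_t(\xx)$, $e^{iU_t}$ and in $\bar g_t'(\xx)$. A bookkeeping check then shows that $c_t(\gamma^1_t)$ times this explicit expression is precisely the two-sided weight \eqref{eq2sided0} for a curve from $\ubar$ to $\xx$ in $\disk$ (with $c_t$ supplying, among other things, the remaining factor $e^{3a^2t/8}$), so the marginal of $\gamma^1_t$ is two-sided $\sle$ from $\ubar$ to $\xx$; the second bullet follows by symmetry.

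The main obstacle is the bookkeeping in these two identifications: reorganizing the collection of derivative factors ($\Phi'_t(U_t)^b$, $\varphi'_t(X_t)^b$, powers of $\bar\Phi'_t(0)$, and the $g_t'$ factors produced when one maps to $\disk$), the $\sin_2$ terms, the loop term $m_\disk(\gamma^1_t,\gamma^2_t)$ from Lemma \ref{lem2sided2}, and the deterministic exponentials --- and, in moving between the capacity parametrization and the parametrization by capacity in the doubly slit domain --- into exactly a boundary-perturbation factor times a two-sided factor with nothing left over, as well as making precise the meaning of ``two-sided $\sle$ towards $\gamma^j(t)$'' when both the target point and the ambient domain vary with $t$. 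Apart from this, everything follows directly from Proposition \ref{prop2sided1}, the definition of two-sided $\sle$, and the martingale property of \eqref{eq2sided0}.
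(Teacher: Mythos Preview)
Your proposal is correct and follows essentially the same route as the paper's proof: reduce by the symmetry of $O_t$, factor $O_t$ as a boundary-perturbation weight (Proposition~\ref{prop2sided1}) times a two-sided radial weight (the martingale \eqref{eq2sided0}, after the time change induced by $\bar\varphi_t'(0)=\bar\Phi_t'(0)$), and then use the martingale property of \eqref{eq2sided0} to integrate out one curve and obtain the marginal. The only cosmetic difference is that the paper integrates out $\gamma^1$ to get the $\gamma^2$-marginal while you integrate out $\gamma^2$ to get the $\gamma^1$-marginal; by the symmetry you already established these are the same computation, and the ``bookkeeping'' you flag is exactly the step the paper carries out when it evaluates $\EE[O_t]$ via $\hat\EE$ and recognizes the result as \eqref{eq2sided0}.
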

\begin{proof}
It suffices to show that marginal measure induced on $\gamma^2_t$ by $\nu_t(\ubar,\xx)$ is two-sided $\sle$ and conditioned on $\gamma^2_t$, the distribution of $\gamma^1_t$ is  two-sided $\sle$ from $\xx$ to $\gamma^2(t)$ in $\disk\sm\gamma^2_t$. 
This is because the local martingale given in \eqref{eq2sided3} is symmetric with respect to $\gamma^1_t,\gamma^2_t$.

From Proposition \ref{prop2sided1} we know that weighting $\mu_\disk(\ubar,0)$ by 
\[
G'_t(U_0)^{-b}\,\bar\Phi'_t(0)^{\tilde{b}} \Phi'_t(U_t)^b e^{-a\tilde{b}t/2} \exp\left\{\frac{\cc}{2}m_\disk(\gamma^1_t,\gamma^2_t)\right\}1\{t<\tau\}
\]
gives $\mu_{\disk\sm \gamma^2_t}(\ubar,0)$ on paths up to time $t$. 
Moreover, we can see from an appropriate time change of \eqref{eq2sided0} that weighting $\mu_{\disk\sm \gamma^2_t}(\ubar,0)$ by 
\[
\bar\varphi'_t(0)^{3a/4}\,\varphi_t'(X_t)^b\sin_2(\Theta_t)^a
\]
gives two-sided $\sle$ from $\ubar$ to $\gamma^2(t)$ in $\disk\sm \gamma^2_t$.
Let $\EE,\,\hat \EE$ be expectations with respect to $\mu_\disk(\ubar,0),\,\mu_{\disk\sm \gamma^2_t}(\ubar,0)$. 
Then using the fact that $\bar\varphi'_t(0) = \bar\Phi_t'(0)$ and that the process given in \eqref{eq2sided0} is a martingale, we get
\begin{align*}
\EE[O_t] &= \hat\EE\left[G'_t(U_t)^b\varphi_t'(X_t)^b\,\bar\varphi'_t(0)^{3a/4}\sin_2(\Theta_t)^a\,e^{3a^2t/8}\right] \\
&= \sin_2(X_t-G_t(U_0))^a\,G'_t(U_t)^b\,e^{3a^t/8}.
\end{align*}
The proof follows from comparing this to \eqref{eq2sided0}.
\end{proof}
\subsection{Bi-chordal annulus \texorpdfstring{$\sle$}{LG}}
In this section, we derive asymptotic estimates for the partition function of bi-chordal annulus $\sle$. 
Let $\ubar,\xx\in C_0$ be distinct boundary points of $\disk$ and let $\gamma_t\subset \disk$ be a simple curve with $\gamma(0) = \ubar$. 
Define $\gg,\,g_t$ to be our usual transformations for $\gamma_t$ and choose  $0\leq u,x<2\pi$  such that $\ubar = \psi(u),\,\xx = \psi(x)$. 
\begin{lemma}\label{2slelem1}
Suppose $\kappa<8$ and $\gamma_t$ is a radial $\sle$ from $\ubar$ to 0 in $\disk$ with radial parametrization.  Then there exist constants $C,\beta>0$ such that for all $t>1$,
\[
\EE\left[H_{\disk\sm \gamma_t}(0,\xx)^b\right] =  C \sin_2(x-u)^ae^{-3a^2t/8}[1 + O(e^{-\beta t})].
\]
As before,  $\sin_2(\theta) = \sin(|\theta|/2)$.
\end{lemma}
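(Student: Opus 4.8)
\emph{Proof strategy.} The plan is to rewrite $H_{\disk\sm\gamma_t}(0,\xx)^b$ in terms of the two-sided radial $\sle$ martingale \eqref{eq2sided0} and then to recognize the remaining average as the expectation of a function of the angle process under two-sided radial $\sle$; the angle process is a time-homogeneous ergodic diffusion on $(0,2\pi)$, and its exponentially fast relaxation to equilibrium produces the error term $O(e^{-\beta t})$.

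First I would set up the reduction. Write $\Theta_t = g_t(X_0)-U_t$ as in \eqref{eq2sided0}, with $g_0=\mathrm{id}$, $U_0=u$, $X_0=x$, so that $\Theta_0=x-u$ and $\Theta_t\in(0,2\pi)$ for as long as $\xx$ is not disconnected from $0$ by $\gamma_t$ (for $\kappa\le4$ this never happens, and for $4<\kappa<8$ one works up to the disconnection time $\tau$, which is infinite a.s.\ under two-sided radial $\sle$). On $\{t<\tau\}$, conformal covariance of the Poisson kernel together with the normalization $H_\disk(0,1)=1/2$ gives $H_{\disk\sm\gamma_t}(0,\xx)=\tfrac12\,|\gg'(\xx)|=\tfrac12\,g'_t(X_0)$, so that with $M_t = \sin_2(\Theta_t)^a\,g'_t(X_0)^b\,e^{3a^2t/8}$ the martingale of \eqref{eq2sided0},
\[
H_{\disk\sm\gamma_t}(0,\xx)^b = 2^{-b}\,\sin_2(\Theta_t)^{-a}\,e^{-3a^2t/8}\,M_t\qquad\text{on }\{t<\tau\}.
\]
Since $M_t$ is a uniformly integrable martingale with $M_0=\sin_2(x-u)^a$ which vanishes on $\{\tau\le t\}$, and tilting radial $\sle$ by $M_t/M_0$ produces the law $\hat\PP$ of two-sided radial $\sle$ from $\ubar$ to $\xx$ (for $\kappa\le4$ this is immediate because $\tau=\infty$; for $4<\kappa<8$ it is part of the two-sided $\sle$ construction in \cite{greg_twosided}), taking $\EE$-expectations yields
\[
\EE\!\left[H_{\disk\sm\gamma_t}(0,\xx)^b\right]
= 2^{-b}\,\sin_2(x-u)^a\,e^{-3a^2t/8}\;\hat\EE\!\left[\sin_2(\Theta_t)^{-a}\right].
\]

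It then remains to show $\hat\EE[\sin_2(\Theta_t)^{-a}]\to C'$ exponentially fast for some finite $C'>0$, after which the lemma holds with $C=2^{-b}C'$. Under $\hat\PP$ one has $\tau=\infty$, and Girsanov applied to \eqref{eq2sided0} (a routine computation using the radial Loewner equation and the fact that, under the radial parametrization $\log\gg'(0)=at/2$, the driving function $U_t$ is a standard Brownian motion) shows that $\Theta_t$ solves the autonomous equation $d\Theta_t = a\cot_2(\Theta_t)\,dt + d\hat B_t$ on $(0,2\pi)$, with $\hat B_t$ a Brownian motion. For $\kappa<8$ we have $a=2/\kappa>1/4$, so near either endpoint $\Theta_t$ is comparable to a Bessel process of dimension $1+4a>2$; hence $\Theta_t$ never reaches $\{0,2\pi\}$, is positive recurrent, and has a unique invariant probability $\pi_\infty$ with density proportional to $\sin_2(\theta)^{4a}$. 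In particular $\sin_2^{-a}\in L^1(\pi_\infty)$ since $\int_0^{2\pi}\sin_2(\theta)^{3a}\,d\theta<\infty$, so $C':=\int_0^{2\pi}\sin_2(\theta)^{-a}\,d\pi_\infty(\theta)\in(0,\infty)$. The generator $\tfrac12\partial_\theta^2 + a\cot_2(\theta)\partial_\theta$ is a regular operator on the compact interval $[0,2\pi]$ with entrance boundaries, hence has discrete spectrum with a spectral gap $\beta>0$ and an ultracontractive semigroup, so it is uniformly geometrically ergodic. To handle the unbounded test function $\sin_2^{-a}$, I would apply the Markov property at time $\tfrac12$: the function $F(\theta):=\hat\EE^{\theta}[\sin_2(\Theta_{1/2})^{-a}]$ is bounded on $(0,2\pi)$ (the Bessel estimate $\hat\EE^{\theta}[\sin_2(\Theta_{1/2})^{-a}]\le c$ uniformly in $\theta$ holds because $a<1+4a$), and then for $t\ge1$,
\[
\hat\EE^{\,x-u}\!\left[\sin_2(\Theta_t)^{-a}\right]
= \hat\EE^{\,x-u}\!\left[F(\Theta_{t-1/2})\right]
= C' + O\!\left(e^{-\beta(t-1/2)}\right)= C' + O\!\left(e^{-\beta t}\right),
\]
with the implied constant uniform in $u,x$. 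Combined with the displayed identity above, this gives the lemma.

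The main obstacle is the last paragraph: identifying $\pi_\infty$, establishing the uniform Bessel-type bound on $F$ (equivalently, that two-sided radial $\sle$ keeps the angle process from degenerating, uniformly over a fixed time window and over the initial point), and extracting the exponential rate $\beta$ with a uniform constant. The preliminary steps --- conformal covariance of the Poisson kernel, identification of $M_t$ with \eqref{eq2sided0}, the change of measure, and the uniform integrability of $M_t$ up to $\tau$ --- are routine, the last being quoted from \cite{greg_twosided}.
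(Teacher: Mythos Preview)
Your approach is essentially the same as the paper's: rewrite $H_{\disk\sm\gamma_t}(0,\xx)^b$ via the martingale \eqref{eq2sided0}, change measure to two-sided radial so that $d\Theta_t = a\cot_2(\Theta_t)\,dt + d\hat B_t$, and use exponential convergence of this diffusion to its invariant density $c_{4a}\sin_2(\theta)^{4a}$ to evaluate $\hat\EE[\sin_2(\Theta_t)^{-a}]$. The paper cites the pointwise transition-density estimate from \cite{Japan} in place of your spectral-gap/intermediate-time argument, and writes $H_{\disk\sm\gamma_t}(0,\xx)=g_t'(x)$ without your factor $2^{-b}$, but since only the existence of $C$ is asserted this does not matter.
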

\begin{proof}
Although we will only use this lemma when $\kappa\leq 4$, we will prove it here for $\kappa<8$. 
Let $U_t,\,X_t$ be the unique $t$-continuous processes satisfying $U_0 = u, \, X_0 = x$ and $\gg(\gamma(t)) = \psi(U_t),\,\gg(\xx) = \psi(X_t)$. Let  $\Theta_t = X_t - U_t$ and 
define $\sigma=\inf\{t;\, \sin_2({\Theta_t}) = 0\}$ be the first time that $\Theta_t\in\{0,2\pi\}$. 
Since $\gamma_t$ is a radial $\sle$,  $U_t=-B_t$ is a standard Brownian motion and 
\[
d\Theta_t = \frac{a}{2}\cot_2(\Theta_t)dt + dB_t,\qquad t<\sigma,
\]
 where as before $\cot_2(\theta) = \cot (\theta/2)$. Note that when $\kappa\leq 4$,  with probability one $\sigma = \infty$ and the last equation is well-defined for all times $t$. As discussed in \eqref{eq2sided0}, let
\begin{equation}\label{eq2sided4.5}
M_t = \sin_2\left(\Theta_t\right)^a g'_t(x)^b e^{3a^2t/8}
\end{equation}
be a martingale satisfying 
\[
dM_t = \frac{a}{2}\cot_2(\Theta_t) M_t dB_t.
\]
Let $\hat\PP$ be the probability measure obtained from using the Girsanov theorem with the martingale $M_t$. Under the probability measure $\hat{\PP}$, there exists a Brownian motion $W_t$ such that
\begin{equation}\label{eq2sided5}
d\Theta_t = a\cot_2(\Theta_t) dt + dW_t.
\end{equation}
Since $\kappa<8$, we have $2a>1/2$. Comparing this to a radial Bessel process, we can see that with $\hat\PP$-probability one $\sigma = \infty$. 
Since $H_{\disk\sm \gamma_t}(0,\xx) = g'_t(x)$ we can write
\begin{align*}
\EE\left[H_{\disk\sm \gamma_t}(0,\xx)^b\right]  &= \EE\left[g'_t(x)^b;\,t<\sigma\right]\\
&=  \EE\left[M_t\sin_2(\Theta_t)^{-a}e^{-3a^2t/8};\,t<\sigma\right]\\
&=M_0\, e^{-3a^2t/8}  \,\hat\EE\left[\sin_2(\Theta_t)^{-a};\,t<\sigma\right]\\
&= \sin_2(\Theta_0)^ae^{-3a^2t/8}\,\hat\EE\left[\sin_2(\Theta_t)^{-a}\right].
\end{align*}
The last equation holds because $\hat\PP\{\sigma =\infty\} =1 $. It only remains to compute $\hat\EE\left[\sin_2(\Theta_t)^{-a}\right]$. The function 
\[
f(x) = c_{4a} \sin_2(x)^{4a},\qquad c_{4a} = \left[\int_0^{2\pi} \sin_2(x)^{4a} dx\right]^{-1}
\]
satisfies the adjoint equation of \eqref{eq2sided5} and therefore is the invariant density of $\Theta_t$. 
Let $\tilde f_t(\theta,x)$ be the transition density of $\Theta_t$ starting at $\Theta_0 = \theta$.  It follows from the properties of radial Bessel processes (see section 4 of \cite{Japan}) that there exists $\beta>0$ such that for all $\theta,\,x$ and $t>1$,
\[
\tilde{f}_t(\theta,x) = f(x)[1 + O(e^{-\beta t})].
\]
Therefore,
\[
\EE\left[H_{\disk\sm \gamma_t}(0,\xx)^b\right] = C \sin_2(\Theta_0)^ae^{-3a^2t/8}[1 + O(e^{-\beta t})],
\]
where 
\[
C = c_{4a}\int_0^{2\pi}\sin_2(x)^{3a}dx .
\]
\end{proof}

\begin{lemma}\label{2slelem2}
For every $\epsilon_0>0$ and $r_0>\pi$, there exists $c_0>0$ such that the following holds. Assume $0\leq u,x,w,y< 2\pi$ and $\pi<r<r_0$. 
Let
\[
\epsilon = \min\{|u-x+2k\pi|,\,|w-y+2m\pi|;\,m,k\in\{-1,0,1\}\}.
\]
Recall the partition function $\pf$ defined in \eqref{eq2}.
If $\epsilon>\epsilon_0$, then $\pf(r,(u,x),(w,y))>c_0$.
\end{lemma}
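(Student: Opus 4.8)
The plan is to obtain the bound from two soft facts about $\pf$ — it is strictly positive at every admissible parameter, and it is continuous there — together with the compactness of the parameter set. Recall from \eqref{eq2} that
\[
\pf(r,(u,x),(w,y))=\EE\left[Q_{A_r}\big(\psi(x),\psi(y+ir);A_r\sm\gamma\big)^b\right],
\]
where $\gamma$ has the law of $\sle$ from $\psi(u)$ to $\psi(w+ir)$ in $A_r$ and $Q_{A_r}(\cdot,\cdot;A_r\sm\gamma)\in(0,1]$.

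First I would check that $\pf(r,(u,x),(w,y))>0$ whenever $r>0$ and $\epsilon>0$. The condition $\epsilon>\epsilon_0$ means precisely that the circular distances $d(u,x)$ and $d(w,y)$ are both at least $\epsilon_0$, so in particular $\psi(x)\neq\psi(u)$ and $\psi(y+ir)\neq\psi(w+ir)$. Since $\kappa\leq 4$, the curve $\gamma$ is a.s.\ simple with $\gamma(0,t_\gamma)$ inside the open annulus and endpoints $\psi(u),\psi(w+ir)$; hence a.s.\ $\dist(\gamma,\{\psi(x),\psi(y+ir)\})>0$, and $A_r\sm\gamma$ is a (simply connected) domain having $\psi(x),\psi(y+ir)$ as nice boundary points, so $H_{A_r\sm\gamma}(\psi(x),\psi(y+ir))>0$ and thus $Q_{A_r}(\psi(x),\psi(y+ir);A_r\sm\gamma)>0$, a.s. As $\|\mu_{A_r}(\psi(u),\psi(w+ir))\|\in(0,\infty)$ by \eqref{ucsd1}, it follows that $\pf(r,(u,x),(w,y))=\EE[Q^b]>0$.

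Next, $\pf$ is $2\pi$-periodic in each of $u,x,w,y$ (it depends only on $r$ and the angular differences), so it descends to a function on the set $K=[\pi,r_0]\times\{(u,x,w,y)\in(\R/2\pi\Z)^4:\ d(u,x)\geq\epsilon_0,\ d(w,y)\geq\epsilon_0\}$, which is compact (if $\epsilon_0>\pi$ then $K=\emptyset$ and there is nothing to prove, since $d\leq\pi$ always). On $K$ we have $r>0$, $u\neq x$ and $w\neq y$, so by Proposition \ref{thmmain} the function $\pf$ is smooth, in particular continuous, on $K$. A continuous, strictly positive function on a compact set attains a positive minimum $c_0:=\min_K\pf>0$, and then $\pf(r,(u,x),(w,y))\geq c_0$ whenever $\pi<r<r_0$ and $\epsilon>\epsilon_0$.

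There is no real obstacle here: the content is the continuity of $\pf$ (already established in Proposition \ref{thmmain}) plus compactness, and the one point worth spelling out — pointwise positivity — is immediate from the representation of $\pf$ as the expectation of a strictly positive random variable. If one wished to prove the lemma independently of the smoothness theorem, one could instead bound $\pf$ from below by confining $\gamma$ to a thin tube $T_1$ around a fixed smooth crossing arc from $\psi(u)$ to $\psi(w+ir)$ — an event of probability bounded below uniformly over $K$ by the support property of annulus $\sle$ (through its absolute continuity with respect to radial $\sle$, \eqref{ucsd1.5}) — and, on that event, taking a disjoint tube $T_2$ around a crossing arc from $\psi(x)$ to $\psi(y+ir)$ in the simply connected domain $A_r\sm T_1$, so that $\gamma\subset T_1$ forces $Q_{A_r}(\psi(x),\psi(y+ir);A_r\sm\gamma)\geq H_{T_2}(\psi(x),\psi(y+ir))/H_{A_r}(\psi(x),\psi(y+ir))$ by monotonicity of the boundary Poisson kernel; in that route the substance is the uniform lower bound on the confinement probability.
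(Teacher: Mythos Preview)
Your argument is correct. Pointwise positivity of $\pf$ follows exactly as you say (a simple crossing curve in $A_r$ leaves a simply connected complement in which $\psi(x),\psi(y+ir)$ are analytic boundary points, so $Q>0$ a.s.), and since Proposition~\ref{thmmain} is proved earlier and independently of this lemma, you may legitimately invoke the smoothness of $\pf$ on the open set $\{r>0,\ u\neq x,\ w\neq y\}$; the compact set $K$ sits inside this region, and the minimum is attained and positive.

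The paper takes a different, more hands-on route: it fixes $u=0$, confines the curve $\eta$ (the lift of $\gamma$ to $S_r$) to a thin parallelogram $D_w$ joining $0$ to $w+ir$, argues this event has probability bounded below uniformly over the compact parameter range (via absolute continuity with respect to chordal $\sle$ in $S_r$), and then on this event bounds $Q_{A_r}(\bar x,\bar y;A_r\sm\gamma)$ from below using a deterministic region $K_w$ disjoint from $D_w$. This is essentially the alternative you sketch in your last paragraph.

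The trade-off is clear. Your compactness argument is cleaner and shorter, but it leans on Proposition~\ref{thmmain}, which in turn rests on H\"ormander's theorem --- heavy machinery for a lower bound that is, morally, quite crude. The paper's argument is elementary and self-contained, and it yields an explicit mechanism (confinement to a tube) that one could in principle make quantitative; it also does not depend on any regularity of $\pf$, so it would survive even in settings where smoothness is not yet available. Either proof is acceptable here.
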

\begin{proof}
Assume $u = 0$ and let $\gamma_t$ be a $\sle$ curve from $1$ to $\bar w=\psi(w+ir)$ in $A_r$ with annulus parametrization. 
We can assume $0<x$, $0\leq w\leq\pi$ and $w<y<2\pi+w$, since the other cases can be proved in a similar way. Let $\xx = \psi(x),\,\yy=\psi(y +ir)$.
From the definition, 
\[
\pf(r,(u,x),(w,y)) = \EE\left[Q_{A_r}(\xx,\bar y;\gamma_r)^b\right],
\]
where $\EE$ denotes the expectation with respect to the distribution of $\gamma_t$. The goal is to show that there exist $p^*>0$ and $c^*>0$ such that $Q_{A_r}(\xx,\bar y;\gamma_r)>c^*$ with probability at least $p^*$.

Let $\eta_t\subset S_r$ be the unique continuous curve starting from 0, ending at $w +2k\pi+ir$ and satisfying $\gamma_t = \psi (\eta_t)$ for $0\leq t\leq r$. Here, $k$ is uniquely determined by the winding number of $\gamma$.
Let $D_w$ denote the parallelogram created by the intersections of $\mathbb{R},\, \mathbb{R} + ir$, the line connecting $\epsilon_0/2,\, w+\epsilon_0/2+ir$ and the line connecting $-\epsilon_0/2,\,w-\epsilon_0/2+ir$.
First, there exists $p_1>0$ such that for all $0\leq w\leq \pi$, the probability that $\gamma_r\subset D_w$ is at least $p_1$. This is because uniformly over $0\leq w\leq \pi$ and $\pi\leq r\leq r_0$, there is a positive probability that the winding number of $\sle$ from $\ubar$ to $\wbar$ in $A_r$ is zero and therefore, $\eta(r) = w+ir$.
Given this, the distribution of $\eta_t$ is absolutely continuous with respect to the distribution of a chordal $\sle$ from 0 to $w+ir$ in $S_r$. 
The Radon-Nikodym is bounded if $\eta_r\subset D_w$. 
Moreover, there exists $p_0>0$ such that for all $0\leq w\leq\pi$ and $\pi\leq r\leq r_0$, the probability that chordal $\sle$ from 0 to $w+ir$ in $S_r$ does not exit $D_w$ is at least $p_0$. 
To see this, let $f_w:S_r\to\hp$ be a conformal transformation with $f_w(w+ir) = \infty, f_w(0) =0$. 
The domain $f_w(D_w)$ is a simply connect subdomain of $\hp$ and $\hp\sm f_w(D_w)$ is bounded. Note that if $\eta_t^*$ is a chordal $\sle$ from 0 to $\infty$ in $\hp$, then $|\eta^*(t)|\to\infty$ as $t\to \infty$. Hence, for each $w\in[0,\pi]$, the $\sle$ curve $\eta^*$ has a positive probability of staying in $f_w(D_w)$ (see \cite{greg_book} for a proof).  In addition, we can see that this probability is a continuous function of $w$. From this, the claim follows. 


Let $K_{w}$ denote the  connected component of $S_r\sm\,\{D_w\cup D_w+2\pi\}$ that has $x,\,y+ir$ on its boundary. Then there exists a constant $c_1>0$ such that for  all $\gamma_r\subset \psi(D_w)$, $0\leq w\leq \pi,\, \epsilon_0\leq x\leq 2\pi-\epsilon_0$ and $w+\epsilon_0\leq y\leq w+\epsilon_0+2\pi$,
\[
Q_{A_r}(\xx,\bar y;\gamma_r) \geq c_1 \,Q_{S_r}(x,y+ir;\,K_w).
\]
Finally, there exists a constant $c_2$ such that
\[
H_{K_w}(x,y+ir) > c_2.
\]
\end{proof}

\begin{lemma}\label{2sleeq1}
Let $\gamma_t\subset A_r$ be a simple curve with $\gamma(0+) = 1,\, |\gg'(0)| = e^{at/2},\, t\leq \frac{2(r-4)}{a}$ and let $\xx\in C_0,\,\yy\in C_r$. If $r_t = r-at/2$, then
\begin{equation*}
Q_{A_r}(\yy,\xx;\gamma_t) = \frac{2r}{r_t}H_{\disk\sm\gamma_t}(0,\xx)\left[1+\bigo\left(r_te^{-r_t}\right)\right].
\end{equation*}
\end{lemma}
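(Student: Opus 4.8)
The plan is to unfold the definition $Q_{A_r}(\yy,\xx;\gamma_t)=H_{A_r\sm\gamma_t}(\yy,\xx)/H_{A_r}(\yy,\xx)$ and estimate the two boundary Poisson kernels separately, using the conformal maps of section \ref{secnotation} together with the deterministic estimates of section \ref{confsec}. The denominator is immediate: since $\yy\in C_r$ and $\xx\in C_0$, the third estimate of Lemma \ref{lemback0} gives $H_{A_r}(\yy,\xx)=\frac{e^r}{2r}[1+\bigo(re^{-r})]$. The hypothesis $t\le 2(r-4)/a$ is exactly $r_t:=r-at/2\ge 4$, so $r\ge r_t\ge 4$ and, since $xe^{-x}$ is decreasing for $x>1$, the error $\bigo(re^{-r})$ is absorbed into $\bigo(r_te^{-r_t})$.

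For the numerator, note first that $\gamma_t\subset A_r$ forces $\overline{\disk_r}$ to be disjoint from $\gamma_t$, so $\gg(\overline{\disk_r})$ is a compact subset of $\disk$ containing the origin. The domain $A_r\sm\gamma_t=(\disk\sm\gamma_t)\cap A_r$ is doubly connected with boundary components $C_r$ and $C_0\cup\gamma_t$, and applying $\gg$ gives the conformal equivalence $A_r\sm\gamma_t\cong\disk\sm\gg(\overline{\disk_r})$. Consequently the map $\hbar_t:A_r\sm\gamma_t\to A_{r(t)}$ of \eqref{eq2.5} factors as $\hbar_t=\Psi_t\circ\gg$, where $\Psi_t:\disk\sm\gg(\overline{\disk_r})\to A_{r(t)}$ is a uniformization sending $C_0$ to $C_0$; in particular $r(t)$ is the common modulus of these domains. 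I would feed this into the two lemmas of section \ref{confsec}. Lemma \ref{lemback3} (with $D=\disk\sm\gamma_t$, $K=A_r\sm\gamma_t$, $f=\hbar_t$, $g=\gg$, $\hat r=r(t)$) gives both $r(t)=r-\log\gg'(0)+\bigo(e^{-r(t)})=r_t+\bigo(e^{-r_t})$ and, uniformly for $\yy\in C_r$, $|\hbar_t'(\yy)|=\gg'(0)\,[1+\bigo(e^{-r_t})]=e^{at/2}[1+\bigo(e^{-r_t})]$. Corollary \ref{corback1} applied to $\Psi_t$ (with $\disk\sm\gg(\overline{\disk_r})$ in the role of $K$ and $\hat r=r(t)$) gives $|\Psi_t'(z)|=1+\bigo(e^{-r(t)})$ for $z\in C_0$, hence $|\hbar_t'(\xx)|=|\gg'(\xx)|\,[1+\bigo(e^{-r_t})]$ for $\xx\in C_0\sm\{1\}$.

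Finally I would assemble everything. Conformal covariance of the boundary Poisson kernel under $\hbar_t$ gives
\[
H_{A_r\sm\gamma_t}(\yy,\xx)=|\hbar_t'(\yy)|\,|\hbar_t'(\xx)|\,H_{A_{r(t)}}(\hbar_t(\yy),\hbar_t(\xx)),
\]
and since $\hbar_t(\yy)\in C_{r(t)}$, $\hbar_t(\xx)\in C_0$, Lemma \ref{lemback0} applies once more to give $H_{A_{r(t)}}(\hbar_t(\yy),\hbar_t(\xx))=\frac{e^{r(t)}}{2r(t)}[1+\bigo(r_te^{-r_t})]$. Using $e^{at/2}e^{r(t)}=e^r[1+\bigo(e^{-r_t})]$, $1/r(t)=(1/r_t)[1+\bigo(e^{-r_t})]$, and the identity $|\gg'(\xx)|=2\,H_{\disk\sm\gamma_t}(0,\xx)$ (conformal covariance of the interior-to-boundary Poisson kernel together with $H_\disk(0,1)=1/2$), one gets $H_{A_r\sm\gamma_t}(\yy,\xx)=H_{\disk\sm\gamma_t}(0,\xx)\,\frac{e^r}{r_t}[1+\bigo(r_te^{-r_t})]$; dividing by the denominator from the first paragraph yields the claim.

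The only genuinely delicate point is the estimate for $|\hbar_t'(\xx)|$ on the outer circle, since $\xx$ may lie arbitrarily close to the base point $1$ of the slit $\gamma_t$, where $\gg$ (hence $\hbar_t$) has a square-root type singularity and $|\gg'(\xx)|$ blows up. The factorization $\hbar_t=\Psi_t\circ\gg$ quarantines all of this in $\gg$, while $\Psi_t$ --- a uniformization of $\disk$ minus a compact set near the origin --- is uniformly close to a rotation near $C_0$ by Corollary \ref{corback1}, so the $\xx$-dependence enters only inside the relative error. One must also check that the $\bigo$-constants in Lemmas \ref{lemback0}, \ref{lemback3} and Corollary \ref{corback1} are uniform once $\hat r=r(t)$ is bounded below, which is exactly what $t\le 2(r-4)/a$ guarantees.
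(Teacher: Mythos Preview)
Your argument is correct and reaches the same conclusion, but it is organized differently from the paper's proof. The paper does not push $\hbar_t$ through the boundary Poisson kernel; instead it writes the interior-to-boundary kernel as a last-exit integral over $C_r$,
\[
H_{\disk\sm\gamma_t}(0,\xx)=\frac{1}{\pi}\int_{C_r}G_{\disk\sm\gamma_t}(0,\zbar)\,H_{A_r\sm\gamma_t}(\zbar,\xx)\,|d\zbar|,
\]
shows via distortion that $G_{\disk\sm\gamma_t}(0,\zbar)=r_t/2+\bigo(e^{-r_t})$ is essentially constant on $C_r$, and then uses Lemma~\ref{lemback0} (together with Lemma~\ref{lemback3} for the derivative on $C_r$) to see that $H_{A_r\sm\gamma_t}(\zbar,\xx)=H_{A_r\sm\gamma_t}(\yy,\xx)[1+\bigo(r_te^{-r_t})]$ for all $\zbar\in C_r$. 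This pulls $H_{A_r\sm\gamma_t}(\yy,\xx)$ out of the integral directly, and dividing by $H_{A_r}(\yy,\xx)$ finishes. Your route instead factors $\hbar_t=\Psi_t\circ\gg$, isolates all the near-slit blow-up in $|\gg'(\xx)|=2H_{\disk\sm\gamma_t}(0,\xx)$, and controls the outer factor $\Psi_t$ with Corollary~\ref{corback1}. The paper's decomposition is marginally more economical in that, by taking a ratio of two points on $C_r$, it never needs to control $|\hbar_t'|$ on $C_0$ and so avoids Corollary~\ref{corback1}; your factorization, on the other hand, makes the role of $|\gg'(\xx)|$ (and hence the connection to the interior Poisson kernel) completely explicit and sidesteps the Green's-function integral. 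Both rely on the same deterministic inputs and the same uniformity coming from $r_t\ge 4$. One small citation: the identity $r(t)=r_t+\bigo(e^{-r_t})$ is equation~\eqref{theq2} in Lemma~\ref{lemback4}, not Lemma~\ref{lemback3}, though the latter uses it.
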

\begin{proof}
We can write
\begin{equation}\label{eq2sletemp}
H_{\disk\sm\gamma_t}(0,\xx) = \frac{1}{\pi}\,\int_{C_{r}}G_{\disk\sm\gamma_t}(0,\zbar)\,H_{A_r\sm\gamma_t}(\zbar,\xx)\,|d\zbar|.
\end{equation}
Here, $G_{\disk\sm \gamma_t}(0,\zbar)$ denotes the Brownian  Green's function in $\disk\sm\gamma_t$.
Using the distortion estimates (in a similar way to the proof of Lemma \ref{lemback3}),  we can see that for every $\zbar\in C_r$,
\[
|r_t+\log|\gg(\zbar)||=\bigo(e^{-r_t}). \]   Therefore, 
\[
G_{\disk\sm\gamma_t}(0,\zbar) = \frac{r_t}{2}+\bigo(e^{-r_t}),
\]
\[
H_{A_r\sm\gamma_t}(\zbar,\xx)= H_{A_r\sm\gamma_t}(\yy,\xx)\left[1 + \bigo(r_te^{-r_t})\right],
\]
where the second equality follows from Lemma \ref{lemback0}.
Using this and  \eqref{eq2sletemp} we get
\[
H_{\disk\sm\gamma_t}(0,\xx)=r_t\,e^{-r}\,H_{A_r\sm\gamma_t}(\xx,\yy) \left[1 + \bigo(r_te^{-r_t})\right].
\]
Finally, from Lemma \ref{lemback0} we know that
\[
H_{A_r}(\xx,\yy) = \frac{e^r}{2r}\left[1 + \bigo(re^{-r})\right].
\]
\end{proof}
\begin{proposition}\label{2sleprop1}
Let $\pf(r,(u,x),(w,y))$ be as in \eqref{eq2}. There exist constants $0<c_*,r_*<\infty$ such that for all $0\leq u,x,w,y< 2\pi$ and $r>r_*$,
\[
\pf(r,(u,x),(w,y)) \leq c_*\,r^b\,e^{-3ar/4}\,\sin_2(x-u)^a.
\]
Furthermore, for any $\epsilon>0$, there exists a constant $c_{\epsilon}>0$ such that if $\min\{|y-w+2k\pi|;\,k\in\{-1,0,1\}\}>\epsilon$, then
\[
c_{\epsilon}\,r^b\,e^{-3ar/4}\,\sin_2(x-u)^a\leq\pf(r,(u,x),(w,y)).
\]
\end{proposition}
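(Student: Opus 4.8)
The plan is to run $\gamma$ until it sits at a bounded ``modulus distance'' $C$ from $C_r$, invoke the Markov property there, and reduce everything to the $r$--independent Lemmas \ref{2slelem1}, \ref{2slelem2}, \ref{2sleeq1} together with the asymptotics \eqref{ucsd1}--\eqref{ucsd2} for annulus $\sle$. Fix a large absolute constant $C$, write $\ubar=\psi(u),\ \wbar=\psi(w+ir),\ \xx=\psi(x),\ \yy=\psi(y+ir)$, and recall from \eqref{eq2} (with $\xx,\yy$ in place of the $\xbar,\ybar$ there) that $\pf(r,(u,x),(w,y))=\EE\!\left[Q_{A_r}(\xx,\yy;A_r\sm\gamma)^b\right]$ for $\gamma\sim\mu^\#_{A_r}(\ubar,\wbar)$; by rotational invariance we take $\ubar=1$. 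Give $\gamma$ the radial parametrization and let $T$ be the (deterministic) time with $\log\bar g_T'(0)=r-C$; since $\gamma$ terminates at $\wbar\in C_r$ we have $T<\tau_r$, while $r_T:=r-aT/2=C$ and, by \eqref{theq2}, the modulus is $r(T)=C+\bigo(e^{-C})$. Since $A_r\sm\gamma=(A_r\sm\gamma_T)\sm(\gamma\sm\gamma_T)$ and $\hbar_T$ is conformal on $A_r\sm\gamma_T$, conformal covariance of the Poisson kernel and conformal invariance of $Q$ give
\[
Q_{A_r}(\xx,\yy;A_r\sm\gamma)=Q_{A_r}(\xx,\yy;A_r\sm\gamma_T)\ Q_{A_{r(T)}}\!\big(\hbar_T(\xx),\hbar_T(\yy);A_{r(T)}\sm\hbar_T(\gamma\sm\gamma_T)\big).
\]
Conditioning on $\mathcal{F}_T$ and using the domain Markov property, so that $\hbar_T(\gamma\sm\gamma_T)\sim\mu^\#_{A_{r(T)}}\!\big(\hbar_T(\gamma(T)),\hbar_T(\wbar)\big)$, together with the definition of $\pf$, yields
\begin{equation}\label{eqVchain}
\pf(r,(u,x),(w,y))=\EE\!\left[Q_{A_r}(\xx,\yy;A_r\sm\gamma_T)^b\ \pf\!\big(r(T),(\tilde u_T,\tilde x_T),(\tilde w_T,\tilde y_T)\big)\right],
\end{equation}
where $\tilde u_T,\tilde x_T,\tilde w_T,\tilde y_T$ denote the arguments of $\hbar_T(\gamma(T)),\hbar_T(\xx),\hbar_T(\wbar),\hbar_T(\yy)$.

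Two analytic inputs feed \eqref{eqVchain}. First, Lemma \ref{2sleeq1} (valid since $r_T=C\ge 4$) gives, for every realization of $\gamma_T$, $Q_{A_r}(\xx,\yy;A_r\sm\gamma_T)=\tfrac{2r}{C}\,H_{\disk\sm\gamma_T}(0,\xx)\,[1+\bigo(Ce^{-C})]$. Second, because $\gamma_T$ has the radial parametrization, dividing the Radon--Nikodym derivative \eqref{ucsd2} by the total mass \eqref{ucsd1} shows that on $\mathcal{F}_T$ the law $\mu^\#_{A_r}(\ubar,\wbar)$ is mutually absolutely continuous with the radial $\sle$ law $\mu_\disk(\ubar,0)$ with density $1+\bigo(e^{-qC})$ (uniformly, for $r$ large), so that $\EE_{\mu^\#_{A_r}(\ubar,\wbar)}[F(\gamma_T)]=(1+\bigo(e^{-qC}))\,\EE_{\mu_\disk(\ubar,0)}[F(\gamma_T)]$ for nonnegative $\mathcal{F}_T$--measurable $F$. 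Applying this with $F=H_{\disk\sm\gamma_T}(0,\xx)^b$ and then Lemma \ref{2slelem1} with $t=T=2(r-C)/a>1$ (note $e^{-3a^2T/8}=e^{3aC/4}e^{-3ar/4}$) gives, for a constant $c=c(C)>0$,
\[
\EE_{\mu^\#_{A_r}(\ubar,\wbar)}\!\left[H_{\disk\sm\gamma_T}(0,\xx)^b\right]=c\,\sin_2(x-u)^a\,e^{-3ar/4}\,\big[1+\bigo(e^{-\beta r})+\bigo(e^{-C})\big].
\]
For the upper bound, bound $\pf(r(T),\cdot)\le 1$ in \eqref{eqVchain} (since $Q\le 1$ and $b>0$) and combine the two inputs: fixing $C$ large and then $r>r_*$ gives $\pf(r,(u,x),(w,y))\le c_*\,r^b\,e^{-3ar/4}\,\sin_2(x-u)^a$; this step uses no separation hypothesis.

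For the lower bound we discard all of \eqref{eqVchain} except a good event $E\in\mathcal{F}_T$ on which Lemma \ref{2slelem2} forces $\pf\!\big(r(T),(\tilde u_T,\tilde x_T),(\tilde w_T,\tilde y_T)\big)\ge c_0$ for a fixed $c_0=c_0(\epsilon,C)>0$. Take $r_0=C+1$, so $r(T)\in(\pi,r_0)$ for $C$ large; it remains to make the four arguments pairwise $\epsilon_0$--separated mod $2\pi$ for a fixed $\epsilon_0$. The pair $\tilde w_T,\tilde y_T$ is separated deterministically: by Corollary \ref{corback2}, $\hbar_T$ distorts arclength on $C_r$ by $1+\bigo(e^{-r(T)})=1+\bigo(e^{-C})$, so $\min_k|\tilde w_T-\tilde y_T+2k\pi|>(1-\bigo(e^{-C}))\,\epsilon>\epsilon/2=:\epsilon_0$. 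For the pair $\tilde u_T,\tilde x_T$ we set $E=\{\min_k|\tilde u_T-\tilde x_T+2k\pi|>\epsilon_0\}$. Writing $\Theta_t$ for the process of the proof of Lemma \ref{2slelem1} (so $\Theta_0=x-u$), the factorization $\hbar_T=\bar\phi_T\circ\bar g_T$ and the boundary distortion estimate $|\bar\phi_T'|=1+\bigo(e^{-C})$ on $C_0$ --- which follows directly from Corollary \ref{corback1} with $D=\bar g_T(\disk_r)$, a simply connected domain of conformal radius $e^{-C}$ about $0$, since $\bar g_T(A_r\sm\gamma_T)=\disk\sm\bar g_T(\disk_r)$ --- give $\min_k|\tilde u_T-\tilde x_T+2k\pi|=(1+\bigo(e^{-C}))\min(\Theta_T,2\pi-\Theta_T)$, hence $E\supseteq\{\sin_2(\Theta_T)>\epsilon_0'\}$ for a fixed $\epsilon_0'=\epsilon_0'(\epsilon)>0$. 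By the change-of-measure computation in the proof of Lemma \ref{2slelem1}, weighting $\mu_\disk(\ubar,0)$ by the martingale \eqref{eq2sided4.5} produces the two-sided $\sle$ law $\hat\PP$, under which $d\Theta_t=a\cot_2(\Theta_t)\,dt+dW_t$ and, since $\sin_2(\Theta_T)^{-a}\ge 1$,
\[
\EE_{\mu_\disk(\ubar,0)}\!\left[H_{\disk\sm\gamma_T}(0,\xx)^b\,\mathbf 1_E\right]=c\,\sin_2(x-u)^a\,e^{-3ar/4}\,\hat\EE\!\left[\sin_2(\Theta_T)^{-a}\mathbf 1_E\right]\ge c\,\sin_2(x-u)^a\,e^{-3ar/4}\,\hat\PP[E].
\]
Since $\Theta_t$ is a radial Bessel--type diffusion on $(0,2\pi)$ whose transition density relaxes exponentially fast --- uniformly in the starting point $\Theta_0=x-u$ --- to the invariant density proportional to $\sin_2(\cdot)^{4a}$ (see the proof of Lemma \ref{2slelem1} and \cite{Japan}), and $T>1$, we obtain $\hat\PP[E]\ge\hat\PP[\sin_2(\Theta_T)>\epsilon_0']\ge c'>0$ with $c'$ depending only on $\epsilon$. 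Feeding $\pf(r(T),\cdot)\ge c_0\mathbf 1_E$, Lemma \ref{2sleeq1}, and the change of measure to $\mu_\disk(\ubar,0)$ back into \eqref{eqVchain} yields $\pf(r,(u,x),(w,y))\ge c_\epsilon\,r^b\,e^{-3ar/4}\,\sin_2(x-u)^a$.

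The main obstacle is the lower bound, and within it the two coupled facts used in the last paragraph: (i) translating the geometric separation $|\tilde u_T-\tilde x_T|$ --- an image under the annulus uniformization $\hbar_T$, which is defined only up to rotation and depends on the whole random initial curve $\gamma_T$ --- into the one-dimensional diffusion quantity $\sin_2(\Theta_T)$, which requires the boundary distortion of $\bar\phi_T$ on $C_0$ to be uniform over $\gamma_T$; and (ii) the \emph{uniform} (in the starting point) exponential ergodicity of $\Theta_t$ under the two-sided $\sle$ measure, which is what keeps $\hat\PP[E]$ bounded below even when $\xx$ is arbitrarily close to $\ubar$ and is thus responsible for the sharp $\sin_2(x-u)^a$ prefactor rather than a weaker bound.
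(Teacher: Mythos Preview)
Your proof is correct and follows essentially the same route as the paper: stop the annulus $\sle$ at the deterministic time $T=2(r-C)/a$, factor $Q_{A_r}$ via the Markov property to obtain \eqref{eqVchain}, convert $Q_{A_r}(\xx,\yy;\gamma_T)$ to $H_{\disk\sm\gamma_T}(0,\xx)$ by Lemma~\ref{2sleeq1}, transfer to radial $\sle$ via \eqref{ucsd2}, and then tilt by the martingale \eqref{eq2sided4.5} to reduce to the invariant density estimate for the radial Bessel process; for the lower bound you invoke Lemma~\ref{2slelem2} on the good event where the images are separated, handling the inner and outer boundary distortions with Corollaries~\ref{corback1} and~\ref{corback2} exactly as the paper does. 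The only cosmetic difference is that you bound $\hat\EE[\sin_2(\Theta_T)^{-a}\mathbf 1_E]\ge\hat\PP[E]$ whereas the paper keeps the full integral and bounds it by the invariant density directly; both work.
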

\begin{proof}
Suppose $\gamma_t$ is a $\sle$ curve from $\ubar$ to $\wbar$ and let $\mathcal{F}_t$ be the $\sigma$-algebra generated by $\gamma_t$. 
We assume $\gamma_t$ has radial parametrization and let $\tau$ be the hitting time of $C_r$. Let $\hbar_t:A_r\sm\gamma_t \to A_{r(t)},\, h_t:S_{r,t}\to S_{r(t)}$ be as in section \ref{seclow}.
As before, let $\Ubar_t = \hbar(\gamma(t)),\,\XX_t=\hbar_t(\xx),\,\Wbar_t = \hbar_t(\wbar),\,\YY_t = \hbar_t(\yy)$ and  $U_t = h_t(\eta(t)),\,X_t = h_t(x),\,W_t=\Re[h_t(w+ir)],\,Y_t=\Re[h_t(y+ir)]$.  Define 
\[
\epsilon^x_t = \min\{|U_t-X_t+2k\pi|;\,m,k\in\{-1,0,1\}\},
\]
\[
\epsilon^y_t = \min\{|W_t-Y_t+2m\pi|;\,m,k\in\{-1,0,1\}\},
\]
\[
\epsilon_t = \min\{\epsilon^x_t,\,\epsilon^y_t\}.
\]
For a fixed $t<\tau$, let $ \hat\gamma = \hbar_t(\gamma(t,\tau))$. We can write
\begin{align*}
\pf(r,(u,x),(w,y))&= \EE\left[Q_{A_r}(\xx,\yy;\gamma_\tau)^b\right] \\
&= \EE\left[Q_{A_r}(\xx,\yy;\gamma_t)^b \EE\left[Q_{A_{r(t)}}(\XX_t,\YY_t;\hat\gamma)^b\middle|\mathcal{F}_t\right]\right].
\end{align*}
Here, $r(t)$ is as in \eqref{eq2.5} and conditioned on $\mathcal{F}_t$, $\hat\gamma$ is a $\sle$ from $\Ubar_t$ to $\Wbar_t$ in $A_{r(t)}$.
Using  \eqref{ucsd2} and Corollary \ref{corback2}, we can find a constant $s_0>5$ such that if $t = \frac{2(r-s_0)}{a}$, then for all $z\in C_r$, 
\begin{equation}\label{eqluke}
\left\lvert \partial_z\,\arg\hbar_t(z)-1\right\rvert< \frac{1}{2}
\end{equation}
  and
\begin{equation}\label{eqjedi}
\left\lvert \frac{d\mu_{A_r}^\#(\ubar,\wbar)}{d\mu_{\disk}^\#(\ubar,0)}(\gamma_t)- 1\right\rvert < \frac{1}{2}.
\end{equation}
 Koebe-$1/4$ theorem implies that $|s_0 -r(t)| \leq \ln(4)$ and $C_{r-2}\subset \disk\sm\gamma_t$.
Using Lemma \ref{2slelem2}, we can see that there exists $c_0=c_{\epsilon_0/2}$ such that $1>c_0>0$ and
\begin{equation}\label{eqchew}
c_0 1\left\{\epsilon_{t} >\frac {\epsilon_0} 2\right\}<\EE\left[Q_{A_{r(t)}}(\XX_t,\YY_t;\hat\gamma)^b\middle|\mathcal{F}_t\right]<1.
\end{equation}
Let $\bar\PP$ be a probability measure under which $\gamma_t$ is a radial $\sle$ and let $\bar\EE$ denote the expectation with respect to $\bar\PP$. Using Lemma \ref{2sleeq1} and \eqref{eqjedi}, there exists a constant $c$ such that
\[
\frac{1}{c}\leq\frac{\pf(r,(u,x),(w,y))}{r^b\,\bar\EE\left[H_{\disk\sm\gamma_t}(0,\xx)^b\,1\{2\epsilon_{t} >\epsilon_0\}\right]}\leq c.
\]
Let $g_s,\,\xi_s$ be as  in section \ref{secnotation} and define $\tilde{X}_s = g_s(x)$. Equation  \eqref{eqluke} implies that $\epsilon_t^y>\epsilon_0/2$ and therefore
\[
1\left\{\epsilon_t>\frac{\epsilon_0}{2}\right\}=1\left\{\epsilon^x_t>\frac{\epsilon_0}{2}\right\}.
\]
Let 
\[
\tilde\epsilon_t = \min\{|\tilde{X}_t-\xi_t+2k\pi|;\,k\in\{-1,0,1\}\}.
\]
Considering our choice of $t$, Corollary \ref{corback1} implies
\[
\{\tilde\epsilon_t>\epsilon_0\}\subset \left\{\epsilon^x_t>\frac{\epsilon_0}{2}\right\}.
\] 
Let $\Theta_s = \tilde{X}_s - \xi_s$ and note that 
\[
d\Theta_s = \frac{a}{2}\cot_2(\Theta_s) ds + dB_s,
\]
where $B_s$ is a standard Brownian motion with respect to $\bar\PP$.
Consider the martingale
\[
M_s = \sin_2\left(\Theta_s\right)^a H_{\disk\sm\gamma_t}(0,\xx)^b e^{3a^2t/8}
\]
defined in \eqref{eq2sided4.5}.
Let $\tilde{\EE}$ be the expectation with respect to the probability measure obtained from weighing $\bar\PP$ by $M_s$. 
It follows from the Girsanov's theorem  that with respect to the new measure, there exists a Brownian motion $\tilde{B}_s$  such that
\[
d\Theta_s = a\cot_2(\Theta_s) ds + d\tilde B_s.
\]
Using properties of radial Bessel processes in a similar way to the proof of Lemma \ref{2slelem1}, we can see that 
\[
f(x) = c_{4a} \sin_2(x)^{4a},\qquad c_{4a} = \left[\int_0^{2\pi} \sin_2(x)^{4a} dx\right]^{-1},
\]
is the invariant density of $\Theta_s$. Moreover, there exists $0<c_0^*<\infty$ such that if $\tilde f_s(\theta,x)$ denotes the density of $\Theta_s$ starting at $\Theta_0 = \theta$, then for all $\theta,\,x$ and $s>1$ 
\begin{equation}\label{eq2sle3}
\frac 1 {c_0^*}\,\tilde{f}_s(\theta,x)\leq f(x)\leq c_0^*\,\tilde{f}_s(\theta,x).
\end{equation}
See section 4 of \cite{Japan} for more details.
Note that,
\begin{align*}
\bar\EE\left[H_{\disk\sm\gamma_t}(0,\xx)^b\,1\{\tilde\epsilon_t>\epsilon_0\}\right] & = \sin_2(x-u)^a\,e^{-3a^2t/8}\,\bar\EE\left[\frac{M_t}{M_0}\sin_2(\Theta_t)^{-a}1\{\tilde{\epsilon}_t>\epsilon_0\}\right]\\
& = \sin_2(x-u)^a\,e^{-3a^2t/8}\,\tilde\EE\left[\sin_2(\Theta_t)^{-a}1\{\tilde{\epsilon}_t>\epsilon_0\}\right].
\end{align*}
Now we can use \eqref{eq2sle3} to see that there exists a constant $c_0>0$ such that for large enough $t$
\[
\frac{1}{c_0}<\tilde\EE\left[\sin_2(\Theta_t)^{-a}1\{\tilde{\epsilon}_t>\epsilon_0\}\right] < c_0.
\]
Hence, there exist  constants $c_1,c_2>0$ such that
\[
c_1\leq \frac{\pf(r,(u,x),(w,y))}{r^b\,e^{-3ar/4}\,\sin_2(x-u)^a}\leq c_2.
\]
Finally, because only the lower bound in \eqref{eqchew} depends on $\epsilon_0$, we can see $c_2$ in the last inequality does not depend on $\epsilon_0$.

\end{proof}
\begin{corollary}\label{2slecor2}
There exist constants $0<c_*,r_*<\infty$ such that for all $0\leq u,x,w,y< 2\pi$ and $r>r_*$,
\[
\pf(r,(u,x),(w,y)) \leq c_*\,r^b\,e^{-3ar/4}\,\sin_2(y-w)^a.
\]
Furthermore, for any $\epsilon>0$, there exists a constant $c_{\epsilon}>0$ such that if $\min\{|x-u+2k\pi|;\,k\in\{-1,0,1\}\}>\epsilon_0$, then
\[
c_{\epsilon}\,r^b\,e^{-3ar/4}\,\sin_2(y-w)^a\leq\pf(r,(u,x),(w,y)).
\]
\end{corollary}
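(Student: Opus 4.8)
The plan is to deduce Corollary \ref{2slecor2} directly from Proposition \ref{2sleprop1} by invoking the conformal symmetry of the annulus that interchanges its two boundary circles, namely the involution $\iota(z)=e^{-r}/z$. This is a conformal automorphism of $A_r$ with $\iota(C_0)=C_r$ and $\iota(C_r)=C_0$; in the parametrization $\psi(\zeta)=e^{i\zeta}$ it sends $\psi(u)\in C_0$ to $\psi(-u+ir)\in C_r$ and $\psi(w+ir)\in C_r$ to $\psi(-w)\in C_0$, and its derivative has modulus $e^{-r}$ on $C_0$ and $e^{r}$ on $C_r$. The idea is that in each of the three factors in the definition \eqref{eq2} of $\pf$ — the two-path partition function $\Psi_{A_r}\big((\psi(u),\psi(x)),(\psi(w+ir),\psi(y+ir))\big)$ in the numerator, and $\Psi_{A_r}(\psi(u),\psi(w+ir))$ and $H_{A_r}(\psi(x),\psi(y+ir))^b$ in the denominator — the conformal-covariance Jacobian $e^{-r}$ attached to a point of $C_0$ cancels against the Jacobian $e^{r}$ attached to a point of $C_r$, so that pushing everything forward by $\iota$ introduces no net scaling. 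Concretely I would apply: the conformal covariance \eqref{eqdef0} of $\Psi_{A_r}$ and its multiple-path analogue (valid since the Brownian loop measure entering Definition \ref{def1} is conformally invariant); the analogous transformation rule for the boundary Poisson kernel $H_{A_r}$; reversibility of $\sle$ (which holds for $\kappa\le 4$) together with the fact that the weight $Y(\bgamma)$ of Definition \ref{def1} depends only on the curves as point sets, so that the bi-chordal measure, hence $\Psi_{A_r}$, is unchanged under reversing one or all of the paths; and the permutation invariance $\Psi_{A_r}(\mathbf z,\mathbf w)=\Psi_{A_r}(\mathbf z^\sigma,\mathbf w^\sigma)$, used to move the $\iota$-images of the two $C_0$ endpoints (now on $C_r$) and of the two $C_r$ endpoints (now on $C_0$) back into the standard slots. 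Carrying this out should give the reflection identity
\[
\pf\big(r,(u,x),(w,y)\big)=\pf\big(r,(-w,-y),(-u,-x)\big),
\]
all angles read modulo $2\pi$.

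Granting this identity, the corollary follows at once by applying Proposition \ref{2sleprop1} to the right-hand side. Since $\sin_2(\theta)=\sin(|\theta|/2)$, one has $\sin_2\big((-y)-(-w)\big)=\sin_2(y-w)$, so the upper bound $\pf\le c_*\,r^b e^{-3ar/4}\sin_2\big((-y)-(-w)\big)^a$ valid for $r>r_*$ is exactly $\pf(r,(u,x),(w,y))\le c_*\,r^b e^{-3ar/4}\sin_2(y-w)^a$. Similarly, the separation condition $\min_{k\in\{-1,0,1\}}\big|(-x)-(-u)+2k\pi\big|>\epsilon$ under which Proposition \ref{2sleprop1} furnishes a matching lower bound is the same as $\min_{k\in\{-1,0,1\}}|x-u+2k\pi|>\epsilon$, and the lower bound obtained, $c_\epsilon\,r^b e^{-3ar/4}\sin_2\big((-y)-(-w)\big)^a\le\pf$, is precisely the asserted one; the constants $c_*,c_\epsilon$ and the threshold $r_*$ may be taken to be those of Proposition \ref{2sleprop1}.

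The step I expect to require the most care is the bookkeeping in the first paragraph, because $\pf$ is \emph{not} symmetric under interchanging its two paths: the single-path partition function $\Psi_{A_r}(\psi(u),\psi(w+ir))$ appears in the denominator of \eqref{eq2} for path $1$, whereas path $2$ contributes the Poisson-kernel factor $H_{A_r}(\psi(x),\psi(y+ir))^b$, and in the annulus $\Psi_{A_r}(\psi(u),\psi(w+ir))\neq H_{A_r}(\psi(u),\psi(w+ir))^b$. Applying $\iota$, then reversing each path individually to restore the $C_0\to C_r$ orientation, then permuting the labels, keeps the roles of path $1$ and path $2$ distinct throughout; what remains is only the routine check that the $e^{\pm br}$ and $e^{\pm r}$ Jacobian powers cancel, which they must since $\iota$ is an automorphism of $A_r$. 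No probabilistic estimate beyond Proposition \ref{2sleprop1} is needed.
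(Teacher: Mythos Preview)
Your proposal is correct and follows essentially the same route as the paper: the paper's one-line proof invokes Proposition~\ref{2sleprop1}, reversibility of $\sle$, and the conformal involution $z\mapsto -e^{-r}/z$ of $A_r$ (the minus sign is immaterial, and the exponent in the paper is a typo for $-r$), which is exactly your $\iota$ up to a rotation. Your careful bookkeeping of the Jacobian cancellations and the reflection identity $\pf(r,(u,x),(w,y))=\pf(r,(-w,-y),(-u,-x))$ is the content the paper leaves implicit; note that no actual permutation of the two path labels is needed, since $\iota$ followed by reversal sends path~1 (resp.\ path~2) to the path occupying the first (resp.\ second) slot in the new configuration.
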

\begin{proof}
This follows from Proposition \ref{2sleprop1}, reversibility of $\sle$ and the fact that $f(z) = -e^r/z$ is a conformal transformation mapping $A_r$ to itself.
\end{proof}
\begin{proposition}\label{proplast}
Let $\pf(r,(u,x),(w,y))$ be as in \eqref{eq2}. Then uniformly over
 all $0\leq u,x,w,y< 2\pi$,
 \[
{\pf(r,(u,x),(w,y))}\asymp{r^b\,e^{-3ar/4}\,\sin_2(x-u)^a\sin_2(w-y)^a}.
 \]
 In other words, there exist constants $c_*,r_*>0$ such that for all $0\leq u,x,w,y< 2\pi$ and $r>r_*$,
\[
\frac{1}{c_*}\leq \frac{\pf(r,(u,x),(w,y))}{r^b\,e^{-3ar/4}\,\sin_2(x-u)^a\sin_2(w-y)^a}\leq c_*.
\]
\end{proposition}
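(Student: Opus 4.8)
The plan is to bootstrap from the two one-sided estimates already established. Proposition~\ref{2sleprop1} gives $\pf(r,(u,x),(w,y))\le c_*\,r^b e^{-3ar/4}\sin_2(x-u)^a$ for all configurations, together with the matching lower bound whenever $\sin_2(y-w)$ is bounded away from $0$; Corollary~\ref{2slecor2} gives the symmetric statement with the two endpoint pairs interchanged. Fix a small $\delta_0>0$. If $\sin_2(x-u)\ge\delta_0$, then the upper bound of Corollary~\ref{2slecor2} and $\sin_2(x-u)\le1$ give $\pf\le c_*\delta_0^{-a} r^b e^{-3ar/4}\sin_2(x-u)^a\sin_2(y-w)^a$, while the lower bound of Corollary~\ref{2slecor2} (whose hypothesis then holds, since $\min_k|x-u+2k\pi|$ is bounded below) combined with $\sin_2(x-u)\le1$ gives $\pf\ge c\,r^b e^{-3ar/4}\sin_2(x-u)^a\sin_2(y-w)^a$; the case $\sin_2(y-w)\ge\delta_0$ is handled symmetrically via Proposition~\ref{2sleprop1}. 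Hence it remains only to treat the regime $\sin_2(x-u)<\delta_0$ and $\sin_2(y-w)<\delta_0$.

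In this regime I would run the $\sle$ path $\gamma$ from $\psi(u)$ to $\psi(w+ir)$ in $A_r$ with the radial parametrization and stop it at the deterministic time $t_1=r/a$. By the Koebe $\tfrac14$ theorem, $\gamma_{t_1}\subset\{|z|\ge\tfrac14 e^{-r/2}\}$ and (by Lemma~\ref{lemback4}) $r(t_1)=\tfrac r2+\bigo(e^{-r/2})$. Since $Q$ is conformally invariant and a Brownian excursion avoiding $\gamma_{t_1}$ is carried by $\hbar_{t_1}$ to an excursion in $A_{r(t_1)}$, we have the deterministic factorization
\[
Q_{A_r}(\psi(x),\psi(y+ir);A_r\sm\gamma)=Q_{A_r}(\psi(x),\psi(y+ir);A_r\sm\gamma_{t_1})\;Q_{A_{r(t_1)}}\!\left(\hbar_{t_1}(\psi(x)),\hbar_{t_1}(\psi(y+ir));A_{r(t_1)}\sm\hbar_{t_1}(\gamma(t_1,\tau_r])\right).
\]
Raising to the power $b$ and conditioning on $\mathcal F_{t_1}$, the conditional expectation of the second factor is exactly $\pf\!\left(r(t_1),(U_{t_1},X_{t_1}),(W_{t_1},Y_{t_1})\right)$ in the $\psi$-preimage coordinates. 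Because $\gamma_{t_1}$ lies at distance $\gtrsim e^{-r/2}\gg e^{-r}$ from $C_r$, a distortion estimate as in Corollary~\ref{corback2} makes $\hbar_{t_1}$ a near-isometry on $C_r$, so $\sin_2(Y_{t_1}-W_{t_1})\asymp\sin_2(y-w)$, and Corollary~\ref{2slecor2} applied at modulus $r(t_1)\approx r/2$ bounds the inner factor above by $c\,r^b e^{-3ar/8}\sin_2(y-w)^a$.

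For the outer factor, Lemma~\ref{2sleeq1} (applicable since $r_{t_1}=r-at_1/2=r/2$) gives $Q_{A_r}(\psi(x),\psi(y+ir);A_r\sm\gamma_{t_1})\asymp H_{\disk\sm\gamma_{t_1}}(0,\psi(x))$ for $r$ large, reducing matters to $\EE_{A_r}\!\left[H_{\disk\sm\gamma_{t_1}}(0,\psi(x))^b\right]$. By \eqref{ucsd2} (equivalently Proposition~\ref{proprad}) the laws of $\gamma_{t_1}$ under annulus $\sle$ and under radial $\sle$ from $\psi(u)$ to $0$ in $\disk$ have, after normalization, Radon--Nikodym derivative $1+\bigo(e^{-qr/2})$, so this expectation is comparable to the one evaluated in Lemma~\ref{2slelem1}, namely $C\sin_2(x-u)^a e^{-3a^2 t_1/8}[1+\smo(1)]=C\sin_2(x-u)^a e^{-3ar/8}[1+\smo(1)]$. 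Multiplying the two factors and using $e^{-3ar/4}=e^{-3ar/8}\cdot e^{-3ar/8}$ gives the upper bound. For the lower bound I would restrict the expectation to a good event $G$ on which the angular driving coordinate $\Theta_{t_1}$ of Lemma~\ref{2slelem1} stays bounded away from $2\pi\Z$; by the Bessel-process mixing bound $\tilde f_t(\theta,x)=f(x)[1+O(e^{-\beta t})]$ this event has probability bounded below uniformly in $r$ and in $u,x,w,y$, and the $H^b$-weighted expectation still retains a fixed positive fraction of its mean on $G$, so $\EE_{A_r}[H_{\disk\sm\gamma_{t_1}}(0,\psi(x))^b\,1_G]\asymp\sin_2(x-u)^a e^{-3ar/8}$; combined with the lower bound of Corollary~\ref{2slecor2} at modulus $r(t_1)$ (applicable because on $G$ the \emph{annulus}-uniformizer difference $X_{t_1}-U_{t_1}$ is also bounded away from $2\pi\Z$) this yields the matching lower bound.

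The hard part is exactly the doubly-small regime, and within it the uniform lower bound. With $G$ chosen in terms of $\Theta_{t_1}$ the mixing estimate simultaneously gives $\PP[G]\gtrsim1$ and a uniform lower bound on the $H^b$-mass of $G$; the genuine work is to transfer the separation from the disk-radial driving coordinates (in which Lemma~\ref{2slelem1} is phrased) to the annulus-uniformizer coordinates $(U_{t_1},X_{t_1},W_{t_1},Y_{t_1})$ in which Corollary~\ref{2slecor2} is stated — which amounts to controlling the intermediate conformal maps $\phi_{t_1}$ near the driving point — and to carry every estimate uniformly as $\sin_2(x-u),\sin_2(y-w)\to0$ and $r\to\infty$ together. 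The remaining ingredients — the Koebe bound keeping $\gamma_{t_1}$ away from $C_r$, the distortion estimate on $C_r$, the passage between the annulus and radial $\sle$ laws, and the Green's-function/two-sided computation at the $\psi(u)$ end — are routine given the results already in hand.
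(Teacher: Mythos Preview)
Your proposal is correct and follows essentially the same approach as the paper's proof: reduce to the doubly-small regime, stop the path at a time consuming roughly half the modulus, factor $Q$ through the conformal map $\hbar_{t_1}$, handle the outer factor via Lemma~\ref{2sleeq1} and the two-sided/radial comparison of Lemma~\ref{2slelem1} (using \eqref{ucsd2} to pass between the annulus and radial laws), and handle the inner conditional expectation via Corollary~\ref{2slecor2} together with the near-isometry of $\hbar_{t_1}$ on $C_r$. Your identification of the one genuine technicality---transferring the separation event from the disk-radial coordinate $\Theta_{t_1}$ to the annulus coordinates $(U_{t_1},X_{t_1})$ via control of $\phi_{t_1}$ near the driving point (the paper does this through Corollary~\ref{corback1})---is exactly right, and your treatment of the singly-small case is if anything slightly more explicit than the paper's.
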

\begin{proof}
Fix $1/2>\epsilon_0>1/3$ and let $\gamma_t$ be a $\sle$ curve from $\ubar$ to $\wbar$ in $A_r$. 
Define
\[
\epsilon^y_t = \min\{|Y_t-W_t+2k\pi|;\,k\in\{-1,0,1\}\},
\]
\[
\epsilon^x_t = \min\{|X_t-U_t + 2k\pi|;\,k\in\{-1,0,1\}\}.
\]
If $\epsilon^y_0>\epsilon_0$, the result follows from Proposition \ref{2sleprop1}. So we assume $\epsilon^y_0\leq\epsilon_0$. 
It follows from  \eqref{ucsd2} and  Corollary \ref{corback2} that for sufficiently large $r$ and $t=r/a$,
\begin{equation}\label{eq2sle3.5}
\left\lvert \frac{d\mu_{A_r}^\#(\ubar,\wbar)}{d\mu_{\disk}^\#(\ubar,0)}(\gamma_t)- 1\right\rvert \leq \frac{1}{2}
\end{equation}
and 
\begin{equation}\label{eq2sle4}
\frac{1}{2}<\frac{\epsilon^y_t}{\epsilon^y_0}<2.
\end{equation}
Define $\tau$ to be the hitting time of $C_r$ by $\gamma$ and  let $t=r/2$. Recall that $r(t)$ is the unique number satisfying $\hbar_t(A_r\sm\gamma_t) = A_{r(t)}$. 
Let $ \hat\gamma = \hbar_t(\gamma(t,\tau))$ and denote by $\mathcal{F}_t$ the sigma-algebra generated by $\gamma_t$. We can write
\begin{align*}
\pf(r,(u,x),(w,y))&= \EE\left[Q_{A_r}(\xx,\yy;\gamma_\tau)^b\right] \\
&= \EE\left[Q_{A_r}(\xx,\yy;\gamma_t)^b \EE\left[Q_{A_{r(t)}}(\XX_t,\YY_t;\hat\gamma)^b\middle|\mathcal{F}_t\right]\right].
\end{align*}
Conditioned on $\mathcal{F}_t$, $\hat\gamma$ is a $\sle$ from $\Ubar_t$ to $\Wbar_t$ in $A_{r(t)}$.
It follows from \eqref{eq2sle4} and Corollary \ref{2slecor2}   that there exists a constant $c = c({\epsilon_0})$ such that
\begin{align*}
\frac{1}{c}1\{\epsilon^x_t>2\epsilon_0\}  \leq \frac{\EE_{\tilde \gamma}\left[Q_{A_{r(t)}}(\XX_t,\YY_t;\tilde{\gamma})^b\middle| \mathcal{F}_t\right]}{r(t)^b\,e^{-3ar(t)/4}\,\sin_2(y-w)^a}\leq c.
\end{align*}
Moreover, lemmas \ref{2slelem1}, \ref{2sleeq1} and equation  \eqref{eq2sle3.5} give
\[
\EE\left[Q_{A_r}(\xx,\yy;\gamma_t)^b\right]\leq c \left(\frac{r}{r-at/2}\right)^b e^{-3a^2t/8} \sin_2(x-u)^a.
\]
We can see from an argument similar to the proof of Lemma \ref{2slelem1} that
\[
\frac{1}{c}\left(\frac{r}{r-at/2}\right)^b e^{-3a^2t/8} \sin_2(x-u)^a\leq\EE\left[Q_{A_r}(\xx,\yy;\gamma_t)^b1\{\epsilon_t^x>2\epsilon_0\}\right].
\]
Moreover, Koebe-$1/4$ theorem implies that 
\[
|r - at/2 - r(t) |\leq \log(4),
\]
from which the result follows.
\end{proof}

\begin{theorem}\label{handy}
There exist constants $0<c_*,r_*<\infty$ such that for all $0\leq u,x,w,y< 2\pi, \,r>r_*$,
\[
\frac{1}{c_*}\leq \frac{\Psi_{A_r}((\ubar,\xx),(\wbar,\yy))}{r^{\cc/2}\,\sin_2(x-u)^a\sin_2(w-y)^a\,e^{r\left(2b-\tilde{b}-{3a}/{4}\right)}}\leq c_*.
\]

\end{theorem}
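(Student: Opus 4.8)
The plan is to read the bound off the factorization of the partition function recorded in \eqref{eq2} together with the one-path asymptotics established earlier. Writing $\ubar=\psi(u),\,\xx=\psi(x),\,\wbar=\psi(w+ir),\,\yy=\psi(y+ir)$ and recalling that $\pf$ was defined in \eqref{eq2} so that
\begin{equation*}
\Psi_{A_r}\big((\ubar,\xx),(\wbar,\yy)\big)=\pf(r,(u,x),(w,y))\cdot\Psi_{A_r}(\ubar,\wbar)\cdot H_{A_r}(\xx,\yy)^b ,
\end{equation*}
it suffices to estimate each of the three factors up to multiplicative constants that are uniform in $u,x,w,y$.

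First I would apply Proposition \ref{proplast}, which gives, uniformly over $0\le u,x,w,y<2\pi$ and $r>r_*$,
\begin{equation*}
\pf(r,(u,x),(w,y))\asymp r^b\,e^{-3ar/4}\,\sin_2(x-u)^a\,\sin_2(w-y)^a .
\end{equation*}
Next, the one-path asymptotics \eqref{ucsd1} (Theorem 7.6 of \cite{greg_annulus}) give $\Psi_{A_r}(\ubar,\wbar)=c_*\,r^{\cc/2}\,e^{(b-\tilde{b})r}[1+\bigo(e^{-qr})]$; since this is independent of the boundary points and the bracket is bounded away from $0$ and $\infty$ for $r$ large, $\Psi_{A_r}(\ubar,\wbar)\asymp r^{\cc/2}\,e^{(b-\tilde{b})r}$ uniformly. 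Finally, the last estimate of Lemma \ref{lemback0}, applied with $\xx\in C_0$ and $\yy\in C_r$, gives $H_{A_r}(\xx,\yy)=e^r[\tfrac{1}{2r}+\bigo(e^{-r})]$, hence $H_{A_r}(\xx,\yy)^b\asymp r^{-b}\,e^{br}$, again with a uniform implied constant (the $\bigo$ there being uniform in the points).

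Multiplying the three estimates, the powers of $r$ combine as $r^b\cdot r^{\cc/2}\cdot r^{-b}=r^{\cc/2}$, the exponentials as $e^{-3ar/4}\cdot e^{(b-\tilde{b})r}\cdot e^{br}=e^{r(2b-\tilde{b}-3a/4)}$, and the angular factors are untouched, which gives exactly
\begin{equation*}
\Psi_{A_r}\big((\ubar,\xx),(\wbar,\yy)\big)\asymp r^{\cc/2}\,\sin_2(x-u)^a\,\sin_2(w-y)^a\,e^{r(2b-\tilde{b}-3a/4)} ,
\end{equation*}
with constants depending only on $\kappa$. All of the substance is in Proposition \ref{proplast}; here there is no real obstacle beyond the bookkeeping, the only point worth a word being that each of the three cited estimates has implied constants and error terms that are uniform in the four angular variables, so their product does too.
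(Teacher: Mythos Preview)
Your proof is correct and follows exactly the paper's approach: the paper's proof is the one-line observation that $\Psi_{A_r}((\ubar,\xx),(\wbar,\yy)) = H_{A_r}(\xx,\yy)^b\,\Psi_{A_r}(\ubar,\wbar)\,\pf(r,(u,x),(w,y))$ together with Proposition \ref{proplast}, Lemma \ref{lemback0}, and \eqref{ucsd1}. Your write-up is in fact more detailed than the paper's, and the bookkeeping of the $r$-powers and exponentials is correct.
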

\begin{proof}
Recall that 
\[
\Psi_{A_r}((\ubar,\xx),(\wbar,\yy)) = H_{A_r}(\xx,\yy)^b\,\Psi_{A_r}(\ubar,\wbar)\,\pf(r,(u,x),(w,y)).
\]
Using this, the result follows from Proposition \ref{proplast}, Lemma \ref{lemback0} and \eqref{ucsd1}.
\end{proof}

For $0\leq u,x,w,y<2\pi,\,r>0$, let $\gamma^1_t,\,\gamma^2_t$ be $\sle$ curves from $\ubar=\psi(u)$ to $\wbar=\psi(w+ir)$ and from $\xx=\psi(x)$ to $\yy=\psi(y+ir)$ in $A_r$ with radial parametrization.
\begin{corollary}
Suppose $\kappa=8/3$.  There exist constants $0<c_*,r_*<\infty$ such that for all $0\leq u,x,w,y< 2\pi, \,r>r_*$,
\[
\frac{1}{c_*}\leq \frac{\PP\left[\gamma^1\cap\gamma^2 = \emptyset\right]}{e^{-11r/24}\,\sin_2(x-u)^a\,\sin_2(y-w)^a}\leq c_*.
\]
\end{corollary}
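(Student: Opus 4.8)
The plan is to derive the corollary directly from Theorem \ref{handy} and the single-path asymptotics \eqref{ucsd1}, exploiting the fact that the central charge $\cc$ vanishes when $\kappa = 8/3$.

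First I would record that for $\kappa = 8/3$ the constant $\cc$ of \eqref{constants} equals $0$, so that the Radon--Nikodym derivative in Definition \ref{def1} collapses to $Y(\bgamma) = I(\bgamma) = 1\{\gamma^1 \cap \gamma^2 = \eset\}$. Consequently
\[
\Psi_{A_r}((\ubar,\xx),(\wbar,\yy)) = \int I(\bgamma)\, d\mu_{\prd}((\ubar,\xx),(\wbar,\yy)) = \Psi_{A_r}(\ubar,\wbar)\,\Psi_{A_r}(\xx,\yy)\,\EE_{\prd}\!\left[I(\bgamma)\right],
\]
where $\EE_{\prd}$ is expectation under the normalized product $\mu^\#_{A_r}(\ubar,\wbar) \times \mu^\#_{A_r}(\xx,\yy)$. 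Since the event $\{\gamma^1 \cap \gamma^2 = \eset\}$ depends only on the traces, the radial parametrization chosen in the statement plays no role, and $\EE_{\prd}[I(\bgamma)]$ is exactly the probability $\PP[\gamma^1 \cap \gamma^2 = \eset]$ for the two independent annulus $\sle$ curves of the corollary. Hence
\[
\PP[\gamma^1 \cap \gamma^2 = \eset] = \frac{\Psi_{A_r}((\ubar,\xx),(\wbar,\yy))}{\Psi_{A_r}(\ubar,\wbar)\,\Psi_{A_r}(\xx,\yy)}.
\]

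Next I would substitute the estimates already established, keeping in mind that $\cc = 0$ at $\kappa = 8/3$ so that no power of $r$ survives. Theorem \ref{handy} gives, uniformly in $0 \le u,x,w,y < 2\pi$ and for $r$ large,
\[
\Psi_{A_r}((\ubar,\xx),(\wbar,\yy)) \asymp \sin_2(x-u)^a\,\sin_2(w-y)^a\,e^{r(2b - \tilde{b} - 3a/4)},
\]
while \eqref{ucsd1} gives $\Psi_{A_r}(\ubar,\wbar) \asymp e^{(b-\tilde{b})r}$ and $\Psi_{A_r}(\xx,\yy) \asymp e^{(b-\tilde{b})r}$, with multiplicative constants independent of the boundary points. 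Dividing, the two $\sin_2$ factors in the numerator persist and the exponential rate becomes $(2b - \tilde{b} - 3a/4) - 2(b - \tilde{b}) = \tilde{b} - 3a/4$. It then only remains to insert the $\kappa = 8/3$ values $a = 3/4$, $b = 5/8$, $\tilde{b} = 5/48$ and to check the arithmetic $\tilde{b} - 3a/4 = \tfrac{5}{48} - \tfrac{9}{16} = -\tfrac{11}{24}$; using the evenness $\sin_2(w-y) = \sin_2(y-w)$, this yields precisely the two-sided bound asserted in the corollary, valid for all $r$ past the threshold coming from Theorem \ref{handy} and \eqref{ucsd1}.

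There is no genuinely difficult step: the corollary is a bookkeeping consequence of Theorem \ref{handy}. The one point that warrants a sentence of justification is the first display — that for $\kappa = 8/3$ the bi-chordal partition function literally factors as $\Psi_{A_r}(\ubar,\wbar)\Psi_{A_r}(\xx,\yy)$ times the disjointness probability. This uses both that $\cc = 0$ trivializes the Brownian loop term in $Y(\bgamma)$ and that the partition functions in play are finite for $\kappa \le 8/3$ (as recorded in Section \ref{mcsec}), so that normalizing the product measure to a probability measure is legitimate.
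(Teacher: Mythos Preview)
Your proposal is correct and follows exactly the approach the paper uses: the paper's own proof is a single sentence citing \eqref{ucsd1} and Theorem~\ref{handy}, and you have simply spelled out the bookkeeping (that $\cc=0$ trivializes $Y(\bgamma)$ to the disjointness indicator, that the ratio of partition functions gives the probability, and the arithmetic $\tilde b - 3a/4 = -11/24$).
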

\begin{proof}
This is an straightforward consequence of \eqref{ucsd1} and Theorem \ref{handy}.
\end{proof}

\begin{theorem}
There exist constants $0<c_*,r_*<\infty$ such that  the following holds.   Suppose $\nu_t(\ubar,\xx)$ is the measure defined in Proposition \ref{prop2sided3} and let $\bgamma_t = (\gamma_t^1,\gamma_t^2)$. For all $0\leq u,x,w,y< 2\pi, \,r>r_*$ and $ t<\frac{2(r-r_*)}{a}$, if
\[
M_t := \frac{d\mu_{A_r}((\ubar,\xx),\,(\wbar,\yy))}{d\nu_t(\ubar,\xx)}(\bgamma_t),
\]
then 
\[
\frac{1} {c_*}\leq \frac{M_t}{r^{\cc/2}\,\sin_2(w-y)^a\,e^{r\left(2b-\tilde{b}-{3a}/{4}\right)}} \leq c_*.
\]
\end{theorem}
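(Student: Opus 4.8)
The plan is to compute $M_t$ as a ratio of two explicit densities against the common reference measure $\mu_\disk(\ubar,0)\times\mu_\disk(\xx,0)$ --- with both curves in the radial parametrization of Proposition \ref{prop2sided3} --- restricted to the $\sigma$-algebra $\mathcal{F}_t$ generated by $\bgamma_t=(\gamma^1_t,\gamma^2_t)$, and then to estimate the resulting factors using Theorem \ref{handy} and the conformal estimates of Section \ref{confsec}. For $t$ in the stated range the two curves are disjoint and avoid $C_r$ --- since $\log\bar g^1_t{}'(0)=\log\bar g^2_t{}'(0)=at/2$ forces $\dist(0,\gamma^j_t)\in[\tfrac14 e^{-at/2},e^{-at/2}]$ by Koebe, so $\dist(0,\gamma^j_t)>e^{-r}$ --- whence both $\mu_{A_r}((\ubar,\xx),(\wbar,\yy))$ and $\nu_t(\ubar,\xx)$ are absolutely continuous with respect to the reference on $\mathcal{F}_t$. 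Writing $A_t$ for the density of the former and recalling from \eqref{eq2sided3} that the latter is $O_t\,1\{t<\tau\}$, we have $M_t=A_t/O_t$.

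First I would derive a closed form for $A_t$, the bi-chordal annulus analogue of the single-path identity \eqref{eqidk1.9}. Starting from Definition \ref{def1}, $\mu_{A_r}((\ubar,\xx),(\wbar,\yy))=Y(\bgamma)\,[\mu_{A_r}(\ubar,\wbar)\times\mu_{A_r}(\xx,\yy)]$ with $Y(\bgamma)=I(\bgamma)\exp\{\tfrac{\cc}{2}m_{A_r}(\gamma^1,\gamma^2)\}$; restricting to $\mathcal{F}_t$, invoking the domain--Markov property for the pair (so that, conditionally on $\bgamma_t$, the continuations form a bi-chordal measure in $A_r\sm(\gamma^1_t\cup\gamma^2_t)$), the loop-splitting identity of Lemma \ref{lemwc}, and Proposition \ref{proprad} (applied to each curve) to pass from the annulus to disk radial measures, one obtains $A_t$ as a product of: conformal-derivative factors at $\wbar,\yy$ and at the two tips; the capacity factor $e^{-a\tilde b t}$; a Brownian-loop factor, which (after the inclusion--exclusion is done correctly) equals $\exp\{\tfrac{\cc}{2}[\,m_{A_r}(\gamma^1_t,\gamma^2_t)+m_\disk(\gamma^1_t\cup\gamma^2_t,C_r)\,]\}$; and the slit-annulus partition function $\Psi_{A_r\sm(\gamma^1_t\cup\gamma^2_t)}\big((\gamma^1(t),\gamma^2(t)),(\wbar,\yy)\big)$. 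Conformal covariance \eqref{eqdef0} through the uniformizing map $\hbar_t:A_r\sm(\gamma^1_t\cup\gamma^2_t)\to A_{\rho(t)}$ then rewrites the last factor as further $|\hbar_t'|^b$-derivatives times $\Psi_{A_{\rho(t)}}$ of the four image points.

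Then I would form $M_t=A_t/O_t$ and bound everything uniformly. The geometric inputs are: (i) since $\dist(0,\gamma^1_t\cup\gamma^2_t)\in[\tfrac14 e^{-at/2},e^{-at/2}]$ the combined radial capacity is $\log\bar g_{t,t}'(0)=at/2+\bigo(1)$, so by Lemmas \ref{lemback3}--\ref{lemback4} the modulus is $\rho(t)=r-at/2+\bigo(1)$ uniformly, $\log\bar\Phi_t'(0)=\bigo(1)$, and $m_\disk(\gamma^1_t\cup\gamma^2_t,C_r)=\log(r/\rho(t))+\bigo(e^{-\rho(t)})$; and for $t\le\tfrac{2(r-r_*)}{a}$ (after enlarging $r_*$) we have $\rho(t)>r_*$, so Theorem \ref{handy} gives $\Psi_{A_{\rho(t)}}\asymp\rho(t)^{\cc/2}\sin_2(\cdot)^a\sin_2(\cdot)^a e^{\rho(t)(2b-\tilde b-3a/4)}$; (ii) by Corollaries \ref{corback1}--\ref{corback2}, $\hbar_t$ is within $1+\bigo(e^{-\rho(t)})$ of an isometry on $C_0$ and on $C_r$, so the ``tip'' $\sin_2$-factor in $\Psi_{A_{\rho(t)}}$ is (multiplicatively) comparable to $\sin_2(\Theta_t)^a$ and the ``outer'' one to $\sin_2(w-y)^a$; (iii) by Lemmas \ref{lemback1}, \ref{lemback3} and \ref{lemextra1}, the derivative factors at $\wbar,\yy$ are each $\asymp e^{abt/2}$ (consistent with the single-curve check against \eqref{ucsd2}), and the tip derivatives, $Z_t$, and $\bar\Phi_t'(0)^{\tilde b+3a/4}$ are each $\bigo(1)$. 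Assembling these: the mutual-loop term $e^{\frac{\cc}{2}m_{A_r}(\gamma^1_t,\gamma^2_t)}$ cancels (up to a bounded factor) against $e^{\frac{\cc}{2}m_\disk(\gamma^1_t,\gamma^2_t)}$ in $O_t$; the factor $\rho(t)^{\cc/2}$ together with $e^{\frac{\cc}{2}m_\disk(\gamma^1_t\cup\gamma^2_t,C_r)}=(r/\rho(t))^{\cc/2}\bigo(1)$ yields $r^{\cc/2}$; the ``tip'' $\sin_2$-factor cancels $\sin_2(\Theta_t)^a$; and the remaining exponentials $e^{\rho(t)(2b-\tilde b-3a/4)}$, $e^{abt/2}$ (twice), $e^{-a\tilde b t}$, $e^{3a^2t/8-a\tilde b t/2}$ telescope to $e^{r(2b-\tilde b-3a/4)}$. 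Hence $M_t\asymp r^{\cc/2}\sin_2(w-y)^a e^{r(2b-\tilde b-3a/4)}$. (Uniformity over near-coincident marked points comes from the pointwise bounds in Proposition \ref{proplast} and Lemma \ref{2sleeq1}, used in place of the asymptotic expansion of $\Psi_{A_{\rho(t)}}$.)

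The main obstacle is the loop bookkeeping in the first step: one must organise the inclusion--exclusion so that, after restricting to $\mathcal{F}_t$ and passing to the disk reference, the loop content of $A_t$ is precisely $\exp\{\tfrac{\cc}{2}[m_{A_r}(\gamma^1_t,\gamma^2_t)+m_\disk(\gamma^1_t\cup\gamma^2_t,C_r)]\}$ --- the mutual term in exactly the form that appears in $O_t$ (so it cancels) and the union/$C_r$ term in exactly the shape that, via Lemma \ref{lemback4}, converts $\rho(t)^{\cc/2}$ into $r^{\cc/2}$. A careless split (for instance $m_\disk(\gamma^1_t,C_r)+m_\disk(\gamma^2_t,C_r)$, which differs from the correct term by the loop measure of loops meeting $\gamma^1_t$, $\gamma^2_t$ and $C_r$) would produce an unbounded power of $r$, so the care here is essential; it is the bi-chordal counterpart of the ``locally independent $\sle$'' computation of Section \ref{twosided} combined with the single-path identity of Proposition \ref{proprad}.
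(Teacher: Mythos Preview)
Your overall strategy is the paper's: write $M_t=\hat M_t/O_t$ with $\hat M_t$ the density of $\mu_{A_r}((\ubar,\xx),(\wbar,\yy))$ against $\mu_\disk(\ubar,0)\times\mu_\disk(\xx,0)$, then estimate each factor using Theorem~\ref{handy} and Lemmas~\ref{lemback3}--\ref{lemback4}. Your geometric inputs (i)--(iii) and the exponential telescoping are correct. However, the loop bookkeeping in your first step is \emph{reversed}, and this error would give the wrong power of $r$.

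When you apply Proposition~\ref{proprad} separately to $\gamma^1_t$ and to $\gamma^2_t$ to pass from $\mu_{A_r}(\ubar,\wbar)\times\mu_{A_r}(\xx,\yy)$ to the disk reference, each application contributes its own factor $\exp\{\tfrac{\cc}{2}m_\disk(\gamma^j_t,C_r)\}$. Thus the loop content of $\hat M_t$ is
\[
\exp\Bigl\{\tfrac{\cc}{2}\bigl[m_{A_r}(\gamma^1_t,\gamma^2_t)+m_\disk(\gamma^1_t,C_r)+m_\disk(\gamma^2_t,C_r)\bigr]\Bigr\},
\]
i.e.\ the \emph{sum} form you called ``careless'', not the union $m_\disk(\gamma^1_t\cup\gamma^2_t,C_r)$. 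Your claim that the mutual terms $m_{A_r}(\gamma^1_t,\gamma^2_t)$ and $m_\disk(\gamma^1_t,\gamma^2_t)$ differ only by a bounded amount is also false: by Lemma~\ref{lemback4} their difference equals $\log\tfrac{r}{r_1(t)}+\log\tfrac{r}{r_2(t)}-\log\tfrac{r}{r(t)}\sim\log\tfrac{r}{r-at/2}$, which is unbounded over the stated range (take $t$ near $\tfrac{2(r-r_*)}{a}$ and $r\to\infty$). With the correct sum form, the loop part of $M_t$ collapses exactly:
\[
\tfrac{\cc}{2}\Bigl[m_{A_r}-m_\disk+\log\tfrac{r}{r_1(t)}+\log\tfrac{r}{r_2(t)}\Bigr]=\tfrac{\cc}{2}\log\tfrac{r}{r(t)},
\]
and this times $r(t)^{\cc/2}$ from $\Psi_{A_{r(t)}}$ gives $r^{\cc/2}$. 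With your union form one obtains instead $(r-at/2)^{\cc/2}$, which does not match the statement. So the ``careless split'' is in fact the right one; the inclusion--exclusion happens not in the formula for $\hat M_t$ but in the identity $m_\disk-m_{A_r}=\log\tfrac{r}{r_1}+\log\tfrac{r}{r_2}-\log\tfrac{r}{r(t)}$ (which is how the paper invokes Lemma~\ref{lemback4}).
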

\begin{proof}
Let $\bgamma=(\gamma^1,\gamma^2)$  and define
\[
R:=\frac{d\mu_{A_r}((\ubar,\xx),\,(\wbar,\yy))}{d\,\mu_{A_r}(\ubar,\wbar)\times\mu_{A_r}(\xx,\yy)}(\bgamma) = e^{\frac{\cc}{2}m_{A_r}(\gamma^1,\gamma^2)}1\{\gamma^1\cap\gamma^2=\emptyset\}.
\]
For $i\in\{1,2\}$, let $\tau_i$ be the time $\gamma^i$ hits $C_r$. For $t<\min\{\tau_1,\tau_2\}$, define the conformal transformations $\hbar^1_t:A_r\sm\gamma_t^1 \to A_{r_1(t)},\,\hbar^2_t:A_r\sm\gamma^2_t\to A_{r_2(t)}$ and
let $\tilde{\gamma}^2_t = \hbar^1_t(\gamma^2_t),\,\tilde{\gamma}^1_t = \hbar^2_t(\gamma^1_t)$. In addition, define the conformal transformations $\tilde{h}^1_t: A_{r_2(t)}\sm\tilde{\gamma}^1_t\to A_{r(t)},\,\tilde{h}^2_t: A_{r_1(t)}\sm\tilde{\gamma}^2_t\to A_{r(t)}$.
Let
\[
 \Ubar_t = \hbar^1_t(\gamma^1(t)),\quad \XX_t = \hbar^2_t(\gamma^2(t)),\quad \Wbar_t = \hbar^1_t(\wbar),\quad \YY_t = \hbar^2_t(\yy),\]
 \[
\tilde{U}_t = \tilde{h}^2_t(\Ubar_t) ,\quad \tilde{X}_t = \tilde{h}^1_t(\XX_t),\quad \tilde{W}_t = \tilde{h}^2_t(\Wbar_t),\quad \tilde{Y}_t = \tilde{h}^1_t(\YY_t).\]
Note that 
\begin{align*}
m_{A_r}(\gamma^1,\gamma^2) &= m_{A_r}(\gamma^1_t,\gamma^2_t) + m_{A_r\sm\gamma_t^1}(\gamma^1(t,\tau_1),\gamma^2_t) + m_{A_r\sm\gamma^2_t}(\gamma^1_t,\gamma^2(t,\tau_2))\\
&\quad +m_{A_r\sm\{\gamma^1_t\cup\gamma^2_t\}}(\gamma^1(t,\tau_1),\gamma^2(t,\tau_2))
\end{align*}
and
\begin{align*}
\left\{\gamma^1\cap\gamma^2=\emptyset\right\} &= \left\{\gamma^1_t\cap\gamma^2_t=\emptyset\right\} \cap \left\{\gamma^1_t\cap\gamma^2(t,\tau_2)=\emptyset\right\} \cap \left\{\gamma^1(t,\tau_1)\cap\gamma^2_t=\emptyset\right\}\\
&\quad\cap \left\{\gamma^1(t,\tau_1)\cap\gamma^2(t,\tau_2)=\emptyset\right\}.
\end{align*}
If 
\[
R_1 = \exp\left\{\frac{\cc}{2}m_{A_r\sm\gamma_t^1}(\gamma^1(t,\tau_1),\gamma^2_t)\right\}1\{\gamma^1(t,\tau_1)\cap\gamma^2_t=\emptyset\},
\]
\[
R_2 = \exp\left\{\frac{\cc}{2}m_{A_r\sm\gamma_t^2}\left(\gamma^1_t,\gamma^2(t,\tau_2)\right)\right\}1\{\gamma^1_t\cap\gamma^2(t,\tau_2)=\emptyset\},
\]
\[
R_{1,2}=\exp\left\{\frac{\cc}{2}m_{A_r\sm\{\gamma^1_t\cup\gamma_t^2\}}\left(\gamma^1(t,\tau_1),\gamma^2(t,\tau_2\right)\right\}1\{\gamma^1(t,\tau_1)\cap\gamma^2(t,\tau_2)=\emptyset\},
\]
then 
\[
R = R_1\,R_2\,R_{1,2}\,\exp\left\{\frac{\cc}{2}m_{A_r}(\gamma^1_t,\gamma^2_t)\right\}1\{\gamma^1_t\cap\gamma^2_t=\emptyset\}.
\]
Let $\mathcal{F}_t = \sigma(\gamma^1_t,\gamma^2_t)$ be the $\sigma$-algebra generated by the curves up to time $t$ and denote by $\EE$ the expectation with respect to the product measure $\mu^\#_{A_r}(\ubar,\wbar)\times\mu^\#_{A_r}(\xx,\yy)$. 
Conditioning on $F_t$, equation \eqref{theq6} implies that $\gamma^1(t,\tau_1)$ weighted by $R_1$ has the distribution of $\sle$ from $\gamma^1(t)$ to $\wbar$ in $A_r\sm\{\gamma^1_t\cup\gamma^2_t\}$.
 Similarly,  $\gamma^2(t,\tau_2)$ weighted by $R_2$ has the distribution of $\sle$ from $\gamma^2(t)$ to $\yy$ in $A_r\sm\{\gamma^1_t\cup\gamma^2_t\}$. 
If $\EE^1,\EE^2$ denote the expectations with respect to $\mu^\#_{A_r}(\ubar,\wbar),\,\mu^\#_{A_r}(\xx,\yy)$, then equations \eqref{eqdef0}, \eqref{theq6} give us
\begin{align*}
\EE^1[R_1|F_t] & = \frac{|\tilde{h}^{2\,\prime}_t(\Ubar_t)|^b|\tilde{h}^{2\,\prime}_t(\Wbar_t)|^b\,\Psi_{A_{r(t)}}(\tilde{U}_t,\tilde{W}_t)}{\Psi_{A_{r_1(t)}}(\Ubar_t,\Wbar_t)},\\
\EE^2[R_2|F_t] & = \frac{|\tilde{h}^{1\,\prime}_t(\XX_t)|^b|\tilde{h}^{1\,\prime}_t(\YY_t)|^b\,\Psi_{A_{r(t)}}(\tilde{X}_t,\tilde{Y}_t)}{\Psi_{A_{r_2(t)}}(\XX_t,\YY_t)}.
\end{align*}
Using this and definition \ref{def1}, we get
\begin{align}\label{eqproof1}
 N_t = \EE[R\,|\,\mathcal{F}_t]&=\Psi_{A_{r(t)}}((\tilde{U}_t,\tilde{X}_t),\,(\tilde{W}_t,\tilde{Y}_t))\exp\left\{\frac{\cc}{2}m_{A_r}(\gamma^1_t,\gamma^2_t)\right\}1\{\gamma^1_t\cap\gamma^2_t=\emptyset\}\\
 &\quad \times \frac{|\tilde{h}^{2\,\prime}_t(\Ubar_t)|^b|\tilde{h}^{2\,\prime}_t(\Wbar_t)|^b\,|\tilde{h}^{1\,\prime}_t(\XX_t)|^b|\tilde{h}^{1\,\prime}_t(\YY_t)|^b }{\Psi_{A_{r_1(t)}}(\Ubar_t,\Wbar_t)\Psi_{A_{r_2(t)}}(\XX_t,\YY_t)}.\nonumber
\end{align}
From Lemma \ref{lemback4} we have 
\begin{equation}\label{eqproof2}
r_1(t) = r-\frac{at}{2} + \bigo\left(e^{-r + at/2}\right),\qquad r_2(t) = r-\frac{at}{2} + \bigo\left(e^{-r + at/2}\right). 
\end{equation}
Lemma \ref{lemback3} implies that
\[
|\tilde{h}^{2\,\prime}_t(\Wbar_t)| = e^{r_1(t)-r(t)}\left[1 + O(e^{-r(t)})\right],\qquad |\tilde{h}^{1\,\prime}_t(\YY_t)| = e^{r_2(t)-r(t)}\left[1 + O(e^{-r(t)})\right].
\]
Let $\Theta_t,\,Z_t$ be as in \eqref{fundiego}, \eqref{eq2sided1.5} (also see Figure \ref{pic1}). Using Corollary \ref{corback1}, we can see that
\[
|\tilde{h}^{2\,\prime}_t(\Ubar_t)|^b\,|\tilde{h}^{1\,\prime}_t(\XX_t)|^b = Z_t \left[1 + \bigo(e^{-r(t)})\right].
\]
Let $U_t,X_t,W_t,Y_t$ be the unique continuous processes satisfying $\tilde U_t = \psi(U_t),\,\tilde{X}_t = \psi(X_t),\,\tilde{W}_t = \psi(W_t+ir(t)),\,\tilde{Y}_t = \psi(Y_t+ir(t))$ and $U_0 = u,\,X_0 = x,\,W_0 = w,\, Y_0 = y$.
Corollaries \ref{corback1}, \ref{corback2} imply that for large enough $r_*$,
\[
\frac{1}{2}<\frac{\sin_2(W_t-Y_t)}{\sin_2(w-y)}<2,\qquad \frac{1}{2}<\frac{\sin_2(X_t-U_t)}{\sin_2(\Theta_t)}<2.
\]
It is not hard to see that
\[
\mathcal{E}_{A_r\sm\gamma^1_t}(C_r,C_0\cup\gamma_t^1) + \mathcal{E}_{A_r\sm\gamma^2_t}(C_r,C_0\cup\gamma_t^2)\geq \mathcal{E}_{A_r\sm\{\gamma_t^1\cup\gamma_t^2\}}(C_r,C_0\cup\{\gamma_t^1\cup\gamma_t^2\}).
\]
Considering this, we can use Koebe-$1/4$ theorem to see that  \eqref{eqproof2}
\[
\frac{4\pi}{r-at/2-\log(4)}\geq \frac{2\pi}{r(t)}
\]
and 
\[
r(t)\geq \frac{r-at/2-\log(4)}{2}>\frac{r_*}{2}-1.
\]
Therefore, we can use
 Theorem \ref{handy} to see
\[
\Psi_{A_{r(t)}}((\tilde{U}_t,\tilde{X}_t),\,(\tilde{W}_t,\tilde{Y}_t)) \asymp r(t)^{\cc/2}\, e^{r(t)\,(-3a/4+2b-\tilde{b})}\,\sin_2( \Theta_t)^a\,\sin_2(y-w)^a.
\]
By \eqref{ucsd1} and \eqref{eqproof2},
\[
\frac{1}{\Psi_{A_{r_1(t)}}(\Ubar_t,\Wbar_t)\,\Psi_{A_{r_2(t)}}(\XX_t,\YY_t)} = \left(\frac{1}{r_1(t)\,r_2(t)}\right)^{\cc/2} e^{(at-2r)\,(b-\tilde{b})} \left[ 1+ \bigo\left(e^{-q(r-at/2)}\right)\right],
\]
for some constant $q$. 
Plugging all these estimates into \eqref{eqproof1} gives 
\begin{align}\label{eqproof3}
 N_t &\asymp \left(\frac{r(t)}{r_1(t)r_2(t)}\right)^{\cc/2}\exp\left\{\frac{\cc}{2}m_{A_r}(\gamma^1_t,\gamma^2_t)\right\}1\{\gamma^1_t\cap\gamma^2_t=\emptyset\}Z_t \\
&\quad\times\, e^{2r\tilde b -\tilde{b}at-r(t)(\tilde{b}+3a/4)}\sin_2( \Theta_t)^a\,\sin_2(y-w)^a.\nonumber
\end{align}
Let
\begin{align*}
M_t^1:=\frac{d\mu_{A_r}(\ubar,\wbar)}{d\mu_\disk(\ubar,0)}(\gamma^1_t),
\qquad M_t^2:=\frac{d\mu_{A_r}(\xbar,\yy)}{d\mu_\disk(\xx,0)}(\gamma^2_t) .\nonumber
\end{align*}
Define $\hat M_t$ to be the martingale satisfying 
\[
\frac{d\mu_{A_r}((\ubar,\xx),\,(\wbar,\yy))}{d\,\mu_{\disk}(\ubar,0)\times\mu_{\disk}(\xx,0)}(\bgamma_t) = \hat M_t.
\]
That is, $ \hat M_t = M_t^1\,M_t^2\,N_t$. 
Using \eqref{ucsd2}, there exist constants $0<\hat c,q<\infty$ such that
\[
M_t^1= \hat c\,r^{\cc/2}e^{(b-\tilde{b})r}\left[1+\bigo\left(e^{-q(r-at/2)}\right)\right],\qquad M_t^2 =  \hat c\,r^{\cc/2}e^{(b-\tilde{b})r}\left[1+\bigo\left(e^{-q(r-at/2)}\right)\right].
\]
Using \eqref{eqproof1} and \eqref{eqproof3}, we get
\begin{align*}
 \hat M_t &\asymp \left(\frac{r^2\,r(t)}{r_1(t)r_2(t)}\right)^{\cc/2}\exp\left\{\frac{\cc}{2}m_{A_r}(\gamma^1_t,\gamma^2_t)\right\}1\{\gamma^1_t\cap\gamma^2_t=\emptyset\}Z_t \\
&\quad\times\, e^{2rb -\tilde{b}at-r(t)(\tilde{b}+3a/4)}\sin_2( \Theta_t)^a\,\sin_2(y-w)^a.\nonumber
\end{align*}
By using Lemma \ref{lemback3} we can see that 
\[
O_t \asymp \exp\left\{\frac{\cc}{2}m_{\disk}(\gamma^1_t,\gamma^2_t)\right\}1\{\gamma^1_t\cap\gamma^2_t=\emptyset\}\, Z_t\, \sin_2(\Theta_t)^a\, e^{(r-at/2-r(t))(\tilde{b}+3a/4)+at/2(3a/4-\tilde{b})}.
\]
It follows from Lemma \ref{lemback4} that
\[
m_\disk(\gamma^1_t,\gamma^2_t)- m_{A_r}(\gamma_t^1,\gamma_t^2) = \log\frac r{r_1(t)} + \log \frac r {r_2(t)}  - \log\frac r {r(t)} + \bigo(e^{-r(t)}).
\]
Therefore,
\[
M_t = \frac{\hat{M}_t}{O_t}\asymp r^{\cc/2}\,e^{r(2b-\tilde{b}-3a/4)}\sin_2(y-w)^a.
\]
\end{proof}

\end{document}